\newtheorem{theorem}{Theorem}[section]
\newtheorem{lemma}[theorem]{Lemma}
\newtheorem{corollary}[theorem]{Corollary}
\theoremstyle{definition}
\newtheorem{definition}[theorem]{Definition}
\theoremstyle{remark}
\newtheorem{remark}[theorem]{Remark}
\numberwithin{equation}{section} %numbers equations within sections.
\def\B{\mathcal{B}}
\def\C{\mathbb{C}}
\def\N{\mathbb{N}}
\def\D{\mathcal{D}}
\def\G{\mathcal{G}}
\def\F{\mathcal{F}}
\def\K{\mathbf{K}}
\def\M{\mathcal{M}}
\def\S{\mathcal{S}}
\def\I{\mathcal{I}}
\def\Z{\mathbb{Z}}
\def\R{\mathbb{R}}
\def\Rn{\mathbb{R}^n}
\def\Sn{\mathbb{S}^{n-1}}
\def\b{\mathbf{b}}
\def\p{\mathfrak{p}} 
\def\m{\mathfrak{m}} 
\def\BMO{\operatorname{BMO}}
\def\CMO{\operatorname{CMO}}
\def\loc{\operatorname{loc}}
\def\Lip{\operatorname{Lip}}
\def\supp{\operatorname{supp}}
\def\Re{\operatorname{Re}}
\def\Im{\operatorname{Im}}
\DeclareMathOperator*{\esssup}{ess\,sup}
\DeclareMathOperator*{\essinf}{ess\,inf}
\begin{document}

\title[Extrapolation for multilinear compact operators]
{Extrapolation for multilinear compact operators and applications}

\author{Mingming Cao}
\address{Mingming Cao\\
Instituto de Ciencias Matem\'aticas CSIC-UAM-UC3M-UCM\\
Con\-se\-jo Superior de Investigaciones Cient{\'\i}ficas\\
C/ Nicol\'as Cabrera, 13-15\\
E-28049 Ma\-drid, Spain} \email{mingming.cao@icmat.es}

\author{Andrea Olivo}
\address{Andrea Olivo\\
Departamento de Matem\'atica\\ 
Facultad de Ciencias Exactas y Naturales\\ 
Universidad de  Buenos Aires and IMAS-CONICET\\
Pabell\'on I (C1428EGA), Ciudad de Buenos Aires, Argentina} \email{aolivo@dm.uba.ar}

\author{K\^{o}z\^{o} Yabuta}
\address{K\^{o}z\^{o} Yabuta\\ 
Research Center for Mathematics and Data Science\\
Kwansei Gakuin University\\
Gakuen 2-1, Sanda 669-1337\\
Japan}\email{kyabuta3@kwansei.ac.jp}

\thanks{The first author is supported by Spanish Ministry of Science and Innovation through the Juan de la Cierva-Formaci\'{o}n 2018 (FJC2018-038526-I), through the ``Severo Ochoa Programme for Centres of Excellence in R\&D'' (CEX2019-000904-S), and through PID2019-107914GB-I00, and by the Spanish National Research Council through the ``Ayuda extraordinaria a Centros de Excelencia Severo Ochoa'' (20205CEX001). The second author was supported by Grants UBACyT 20020170100430BA (University of Buenos Aires), PIP 11220150100355 (CONICET) and PICT 2018-03399}

\date{December 29, 2021} 

\subjclass[2010]{42B25, 42B20, 42B35}

%42B20 Singular and oscillatory integrals (Calderon-Zygmund, etc.)
%42B25 Maximal functions, Littlewood-Paley theory
%42B35 Function spaces arising in harmonic analysis

\keywords{
Rubio de Francia extrapolation, 
Interpolation, 
Compactness,   
Multilinear Muckenhoupt weights, 
Calder\'{o}n-Zygmund operators, 
Rough singular integrals, 
Bochner-Riesz means}

%%%%%%%%%%%%%%%%%%%%% ABSTRACT ABSTRACT ABSTRACT %%%%%%%%%%%%%%%%%%%%%%
\begin{abstract}
This paper is devoted to studying the Rubio de Francia extrapolation for multilinear compact operators. It allows one to extrapolate the compactness of $T$ from just one space to the full range of weighted spaces, whenever an $m$-linear operator $T$ is bounded on weighted Lebesgue spaces. This result is indeed established in terms of the multilinear Muckenhoupt weights $A_{\vec{p}, \vec{r}}$, and the limited range of the $L^p$ scale. To show extrapolation theorems above, by means of a new weighted Fr\'{e}chet-Kolmogorov theorem, we present the weighted interpolation for multilinear compact operators. To prove the latter, we also need to build a weighted interpolation theorem in mixed-norm Lebesgue spaces. As applications, we obtain the weighted compactness of commutators of many multilinear operators, including multilinear $\omega$-Calder\'{o}n-Zygmund operators, multilinear Fourier multipliers, bilinear rough singular integrals and bilinear Bochner-Riesz means. Beyond that, we establish the weighted compactness of higher order Calder\'{o}n commutators, and commutators of Riesz transforms related to Schr\"{o}dinger operators. 
\end{abstract}
%%%%%%%%%%%%%%%%%%%%%% ABSTRACT ABSTRACT ABSTRACT %%%%%%%%%%%%%%%%%%%%%

\maketitle
\tableofcontents

%%%%%%%%%%%%%%%%%%%%%%% SECTION SECTION SECTION %%%%%%%%%%%%%%%%%%%%%%%
%%%%%%%%%%%%%%%%%%%%%%% SECTION SECTION SECTION %%%%%%%%%%%%%%%%%%%%%%%
\section{Introduction}\label{sec:intro}
The classical Rubio de Francia's extrapolation theorem \cite{RdF} states that if an operator $T$ satisfies 
\begin{equation}\label{eq:intro-RdF-1}
\begin{array}{c}
\|Tf\|_{L^{p_0}(w_0)} \leq C \|f\|_{L^{p_0}(w_0)}
\\[4pt]
\text{ for some } p_0 \in [1,\infty) \text{ and every } w_0 \in A_{p_0}, 
\end{array}
\end{equation}
then
\begin{equation}\label{eq:intro-RdF-2}
\begin{array}{c}
\|Tf\|_{L^p(w)} \leq C \|f\|_{L^p(w)}
\\[4pt]
\text{ for every } p \in (1,\infty) \text{ and every } w \in A_p. 
\end{array}
\end{equation}
Over the years, this result, along with its different versions, has become a fundamental piece to deal with many problems in harmonic analysis. For instance, one can obtain general $L^p$ estimates from an appropriate case $p=p_0$ and vector-valued weighted inequalities from the scalar-valued ones. The extrapolation theory on weighted Lebesgue spaces is systematically investigated in \cite{CMP}, which has been extended to the general function spaces in \cite{CMM} for the one-weight extrapolation, and in \cite{CO} for the two-weight case.  

Beyond the linear case, Grafakos and Martell \cite{GM} first established the Rubio de Francia extrapolation in the multivariable setting. Indeed, it was shown that if $T$ is bounded from $L^{p_1}(w_1) \times \cdots \times L^{p_m}(w_m)$ to $L^p(w_1^{p/p_1}\cdots w_m^{p/p_m})$ for some fixed exponents $\frac{1}{p}=\frac{1}{p_1}+\cdots+\frac{1}{p_m}$ with $1<p_1,\ldots,p_m <\infty$, and for all $(w_1,\ldots,w_m) \in A_{p_1} \times \cdots \times A_{p_m}$, then the same holds for all possible values of $p_j$. This result was enhanced by Cruz-Uribe and Martell \cite{CM} to the case  $p_j \in (\p_j^-, \p_j^+)$ and $w_j \in A_{p_j/\p_j^-} \cap RH_{(\p_j^+/p_j)'}$, where $1\le \p_j^-<\p_j^+\le \infty$, $j=1,\ldots,m$. Unfortunately, these two conclusions are given in each variable separately with its own Muckenhoupt class of weights and do not quite use the multivariable nature of the problem. In this direction, Li, Martell and Ombrosi \cite{LMO} introduced some new multilinear Muckenhoupt classes $A_{\vec{p}, \vec{r}}$ (cf. Definition \ref{def:Apr}), which is a generalization of the classes $A_{\vec{p}}$ in \cite{LOPTT} and contains some multivariable structure. It is worth mentioning that these classes $A_{\vec{p}, \vec{r}}$ appeared earlier in \cite{CDO} to obtain weighted norm inequalities for the bilinear Hilbert transform, and a particular case was studied in \cite{CTW} for the multilinear Calder\'{o}n-Zygmund operators with a H\"{o}rmander condition. As well as the $A_p$ classes characterize the $L^p$ boundedness of the Hardy-Littlewood maximal operator, the $A_{\vec{p}}$ classes characterize the boundedness of the multilinear Hardy-Littlewood maximal function $\M$ (cf. \eqref{eq:def-M}) from $L^{p_1}(w_1)\times \cdots \times L^{p_m}(w_m)$ to $L^p(w)$. The classes $A_{\vec{p}}$ are also the natural ones for multilinear Calder\'on-Zygmund operators, and for bilinear rough singular integrals with $\Omega \in L^{\infty}(\mathbb{S}^{2n-1})$, while the classes $A_{\vec{p}, \vec{r}}$ are related to operators with restricted ranges of boundedness such as multilinear Fourier multipliers, bilinear Hilbert transforms, and bilinear rough singular integrals with $\Omega \in L^q(\mathbb{S}^{2n-1})$ and $1<q<\infty$ (see Section \ref{sec:App}). Now we state the multilinear Rubio de Francias's extrapolation theorem from \cite{LMO, LMMOV} (see \cite{Nie} for a quantitative estimate) as follows. 
 
 \smallskip 
%%%%%%%%%%%%%%%%%%%%%% THEOREM THEOREM THEOREM %%%%%%%%%%%%%%%%%%%%%
\noindent{\bf Theorem A.} 
Let $\F$ be a collection of $(m+1)$-tuples of non-negative functions and let $\vec{r}=(r_1, \ldots, r_{m+1})$ with $1 \le r_1, \ldots, r_{m+1}<\infty$. Assume that there exists $\vec{q}=(q_1, \ldots, q_m)$ with $1 \le q_1, \ldots, q_m <\infty$ and $\vec{r} \preceq \vec{q}$ such that for all $\vec{u}=(u_1, \ldots, u_m) \in A_{\vec{q}, \vec{r}}$, 
\begin{align}
\|f\|_{L^q(u^q)} \le C \prod_{i=1}^m \|f_i\|_{L^{q_i}(u_i^{q_i})}, \quad (f, f_1, \dots, f_m) \in \F, 
\end{align}
where $\frac1q=\frac{1}{q_1}+\cdots+\frac{1}{q_m}$ and $u=\prod_{i=1}^m u_i$. Then, for all $\vec{p}=(p_1, \dots, p_m)$ with $1 < p_1, \ldots, p_m <\infty$ and $\vec{r} \prec \vec{p}$, and for all $\vec{w}=(w_1, \ldots, w_m) \in A_{\vec{p}, \vec{r}}$, we have 
\begin{align}
\|f\|_{L^p(w^p)} \le C \prod_{i=1}^m \|f_i\|_{L^{p_i}(w_i^{p_i})}, \quad (f, f_1, \dots, f_m) \in \F, 
\end{align}
where $\frac1p=\frac{1}{p_1}+\cdots+\frac{1}{p_m}$ and $w=\prod_{i=1}^m w_i$.  
%%%%%%%%%%%%%%%%%%%%%% THEOREM THEOREM THEOREM %%%%%%%%%%%%%%%%%%%%%%

On the other hand, by means of extrapolation it is possible to improve the boundedness of an operator to its compactness. In this direction, Hyt\"onen and Lappas \cite{H} first established a ``compact version'' of Rubio de Francia's extrapolation theorem. More precisely, if $T$ is a linear operator such that \eqref{eq:intro-RdF-1} holds and $T$ is compact on $L^{p_0}(w_1)$ for some $w_1 \in A_{p_0}$, then $T$ is compact on $L^p(w)$ for all $p \in (1,\infty)$ and all $w \in A_p$. This conclusion improves \eqref{eq:intro-RdF-2}. Soon after, the same authors \cite{HL} generalized the preceding compact extrapolation to the off-diagonal and the limited range cases, which refine the results in \cite[Theorem~1]{HMS} and \cite[Theorem~4.9]{AM}, respectively. 

Motivated by the work above, the purpose of this paper is to study the Rubio de Francia's extrapolation for multilinear compact operators. To set the stage, let us give the definition of compactness of $m$-linear operators. Given normed spaces $X_1,\ldots,X_m$ and a quasi-normed space $Y$, an $m$-linear operator $T: X_1 \times \cdots \times X_m \to Y$ is said to be  compact if the set $\{T(x_1,\ldots,x_m): \|x_i\| \le 1, i=1,\ldots,m\}$ is relatively compact (or precompact) in $Y$. Writing $B_i$ for the closed unit ball in $X_i$, $i=1,\ldots,m$, the definition of compactness specifically requires that for every $\{(x_1^k,\ldots,x_m^k)\}_{k \ge 1} \subset B_1 \times \cdots \times B_m$, the sequence $\{T(x_1^k, \ldots, x_m^k)\}_{k \ge 1}$ has a convergent subsequence in $Y$. More definitions and notation are given in Section \ref{sec:weight}. 
 
We formulate the extrapolation theorem for multilinear compact operators as follows. 

%%%%%%%%%%%%%%%%%%%%%% THEOREM THEOREM THEOREM %%%%%%%%%%%%%%%%%%%%%%
\begin{theorem}\label{thm:Ap}
Let $T$ be an $m$-linear operator and let $\vec{r}=(r_1, \ldots, r_{m+1})$ with $1 \le r_1, \ldots, r_{m+1}<\infty$. Assume that there exist $q_* \in (0, \infty)$ and $\vec{q}=(q_1, \ldots, q_m)$ with $1 \le q_1, \ldots, q_m <\infty$ and $\vec{r} \preceq \vec{q}$ such that for all $\vec{u}=(u_1, \ldots, u_m) \in A_{\vec{q}, \vec{r}}$, 
\begin{align}\label{eq:Ap-1}
T \text{ is bounded from $L^{q_1}(u_1^{q_1}) \times \cdots \times L^{q_m}(u_m^{q_m})$ to $L^{q_*}(u^{q_*})$}, 
\end{align}
where $\frac{1}{q_*} \le \frac1q=\frac{1}{q_1}+\cdots+\frac{1}{q_m} \le \frac{1}{q_*}+\frac{1}{r'_{m+1}}$ and $u=\prod_{i=1}^m u_i$. Assume in addition that 
\begin{align}\label{eq:Ap-2}
T \text{ is compact from $L^{q_1}(v_1^{q_1}) \times \cdots \times L^{q_m}(v_m^{q_m})$ to $L^{q_*}(v^{q_*})$}  
\end{align} 
for some $\vec{v}=(v_1, \ldots, v_m) \in A_{\vec{q}, \vec{r}}$, where $v=\prod_{i=1}^m v_i$. Then 
\begin{equation}
T \text{ is compact from $L^{p_1}(w_1^{p_1}) \times \cdots \times L^{p_m}(w_m^{p_m})$ to $L^{p_*}(w^{p_*})$}
\end{equation} 
for all $p_* \in (0, \infty)$, for all $\vec{p}=(p_1, \dots, p_m)$ with $1 < p_1, \ldots, p_m <\infty$ so that  $\vec{r} \prec \vec{p}$ and $\frac1p-\frac{1}{p_*}=\frac1q-\frac{1}{q_*}$, and for all $\vec{w}=(w_1, \ldots, w_m) \in A_{\vec{p}, \vec{r}}$, where $\frac1p=\frac{1}{p_1}+\cdots+\frac{1}{p_m}$ and $w=\prod_{i=1}^m w_i$.  
\end{theorem}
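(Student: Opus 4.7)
The strategy combines the multilinear Rubio de Francia extrapolation (Theorem A) with a Calder\'on-type complex-interpolation theorem for compact multilinear operators, following the scheme of Hyt\"onen \cite{H} and Hyt\"onen--Lappas \cite{HL} in the linear case. The plan is: first upgrade the single-point boundedness at $(\vec{q}, \vec{u})$ to the full scale of weighted $L^p$ boundedness via Theorem A, then realize the target $(\vec{p}, \vec{w})$ as an intermediate point between the compact endpoint $(\vec{q}, \vec{v})$ and a second bounded endpoint, and finally invoke a multilinear interpolation theorem for compactness.

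Theorem A applied to \eqref{eq:Ap-1} yields that $T$ is bounded from $L^{p_1}(w_1^{p_1}) \times \cdots \times L^{p_m}(w_m^{p_m})$ to $L^p(w^p)$ for every $\vec{r} \prec \vec{p}$ and every $\vec{w} \in A_{\vec{p}, \vec{r}}$. Fix such a target $(\vec{p}, \vec{w})$. Using the openness (self-improvement) properties of the class $A_{\vec{p}, \vec{r}}$ in both the exponent and the weight, one can choose a parameter $\theta \in (0,1)$ and an auxiliary pair $(\vec{p}_\ast, \vec{w}_\ast)$ with $\vec{r} \prec \vec{p}_\ast$ and $\vec{w}_\ast \in A_{\vec{p}_\ast, \vec{r}}$ satisfying
\[
\frac{1}{p_i} = \frac{1-\theta}{q_i} + \frac{\theta}{p_{\ast,i}}, \qquad w_i = v_i^{1-\theta}\, w_{\ast,i}^{\theta}, \qquad i = 1, \ldots, m.
\]
Summing these relations over $i$ gives the analogous ones for $(p, w)$ with endpoints $(q, v)$ and $(p_\ast, w_\ast)$, so Calder\'on's complex-interpolation formula for weighted Lebesgue spaces yields
\[
\bigl[L^{q_i}(v_i^{q_i}),\, L^{p_{\ast,i}}(w_{\ast,i}^{p_{\ast,i}})\bigr]_{\theta} = L^{p_i}(w_i^{p_i}), \qquad \bigl[L^q(v^q),\, L^{p_\ast}(w_\ast^{p_\ast})\bigr]_{\theta} = L^p(w^p).
\]

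By the first part, $T$ is bounded at the endpoint $(\vec{p}_\ast, \vec{w}_\ast)$, while by hypothesis \eqref{eq:Ap-2} it is compact at $(\vec{q}, \vec{v})$. A multilinear Calder\'on-type interpolation theorem for compact operators---asserting that boundedness at two compatible endpoints of a Banach-lattice couple together with compactness at one of them implies compactness at every intermediate space of the complex method---then transfers compactness to the intermediate space and produces the desired compactness of $T$ at $(\vec{p}, \vec{w})$.

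The hardest part is this final step: securing, or proving, a multilinear version of Calder\'on's compact-interpolation theorem adapted to the product scales of weighted Lebesgue spaces at hand. In the linear case this is classical (Calder\'on, Cwikel) and underlies the proofs in \cite{H, HL}; a natural route in the multilinear setting is to reduce to the linear case by freezing all but one variable and carrying out the compact interpolation uniformly, but making this rigorous with the correct uniformity is delicate. A secondary technical point is to verify that the self-improvement of $A_{\vec{p}, \vec{r}}$ in both the exponent and the weight really permits the construction of the auxiliary endpoint $(\vec{p}_\ast, \vec{w}_\ast)$ needed for the interpolation; the strict inequality $\vec{r} \prec \vec{p}$ in the hypothesis should be exactly what provides the necessary room on both sides of the target.
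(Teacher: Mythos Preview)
Your proposal is correct and follows essentially the same route as the paper: the auxiliary-endpoint construction you describe is exactly Lemma~\ref{lem:int-Lp} (proved via the reverse H\"older/openness argument of Lemma~\ref{lem:AAA}), and the multilinear compact-interpolation step you flag as the hardest part is stated as Theorem~\ref{thm:interApr}, with its proof deferred to the companion paper \cite{COY}. The two technical points you single out at the end are precisely the two ingredients the paper isolates and establishes before assembling them in the proof of Theorem~\ref{thm:Ap}.
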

%%%%%%%%%%%%%%%%%%%%%% THEOREM THEOREM THEOREM %%%%%%%%%%%%%%%%%%%%%%

We also establish the limited range extrapolation in the multilinear case. 
%%%%%%%%%%%%%%%%%%%%%% THEOREM THEOREM THEOREM %%%%%%%%%%%%%%%%%%%%%%
\begin{theorem}\label{thm:lim}
Let $T$ be an $m$-linear operator and let $1 \leq \p_i^{-} < \p_i^{+} \leq \infty$, $i=1,\ldots,m$. Assume that for each $i=1,\ldots,m$, there exists $q_i \in [\p_i^{-}, \p_i^{+}]$ such that for all $u_i^{q_i} \in A_{\frac{q_i}{\p_i^{-}}} \cap RH_{\big(\frac{\p_i^{+}}{q_i}\big)'}$, 
\begin{align}\label{eq:lim-1}
T \text{ is bounded from $L^{q_1}(u_1^{q_1}) \times \cdots \times L^{q_m}(u_m^{q_m})$ to $L^q(u^q)$}, 
\end{align}  
where $\frac1q=\frac{1}{q_1}+\cdots+\frac{1}{q_m}$ and $u=\prod_{i=1}^m u_i$. Assume in addition that 
\begin{align}\label{eq:lim-2}
T \text{ is compact from $L^{q_1}(v_1^{q_1}) \times \cdots \times L^{q_m}(v_m^{q_m})$ to $L^q(v^q)$} 
\end{align} 
for some $v_i^{q_i} \in A_{\frac{q_i}{\p_i^{-}}} \cap RH_{\big(\frac{\p_i^{+}}{q_i}\big)'}$, $i=1,\ldots,m$, where $v=\prod_{i=1}^m v_i$. Then 
\begin{align}
T\text{ is compact from $L^{p_1}(w_1^{p_1}) \times \cdots \times L^{p_m}(w_m^{p_m})$ to $L^p(w^p)$}
\end{align} 
for all exponents $p_i \in (\p_i^{-}, \p_i^{+})$ and for all weights $w_i^{p_i} \in A_{\frac{p_i}{\p_i^{-}}} \cap RH_{\big(\frac{\p_i^{+}}{p_i}\big)'}$, $i=1,\ldots,m$, where $\frac1p=\frac{1}{p_1}+\cdots+\frac{1}{p_m}$ and $w=\prod_{i=1}^m w_i$.
\end{theorem}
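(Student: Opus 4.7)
The plan is to deduce Theorem~\ref{thm:lim} directly from Theorem~\ref{thm:Ap} by recognizing the limited-range conditions on individual weights as a special case of the multilinear class $A_{\vec p,\vec r}$ for an appropriately chosen parameter vector. The natural choice is to set $r_i=\p_i^-$ for $i=1,\ldots,m$ and to pick $r_{m+1}$ so that the reverse H\"older exponent $(\p_i^+/p_i)'$ in Theorem~\ref{thm:lim} matches the corresponding factor in the definition of $A_{\vec p,\vec r}$ from Section~2. With this identification the collection of tuples $\vec w$ satisfying the individual conditions $w_i^{p_i}\in A_{p_i/\p_i^-}\cap RH_{(\p_i^+/p_i)'}$ coincides with (or is at least contained in) the joint class $A_{\vec p,\vec r}$, and analogously for $\vec q$ and $\vec u,\vec v$.

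Given this correspondence the proof breaks into three routine steps. First, the partial-order requirements of Theorem~\ref{thm:Ap} are verified: $p_i\in(\p_i^-,\p_i^+)$ yields $\vec r\prec\vec p$, while $q_i\in[\p_i^-,\p_i^+]$ yields $\vec r\preceq\vec q$. Second, the two hypotheses of Theorem~\ref{thm:lim} are translated into the language of Theorem~\ref{thm:Ap}: the weight-class matching converts the uniform boundedness assumption \eqref{eq:lim-1} into boundedness of $T$ for every $\vec u\in A_{\vec q,\vec r}$, and the compactness assumption \eqref{eq:lim-2} supplies compactness of $T$ at a specific tuple $\vec v\in A_{\vec q,\vec r}$. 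Third, Theorem~\ref{thm:Ap} is applied to obtain compactness of $T$ for all $(\vec p,\vec w)$ with $\vec w\in A_{\vec p,\vec r}$, which in particular covers every admissible target configuration in Theorem~\ref{thm:lim}.

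The principal obstacle is the first, bookkeeping step. The joint multilinear class $A_{\vec p,\vec r}$ is not in general comparable to the Cartesian product of individual $A_{p_i/\p_i^-}\cap RH_{(\p_i^+/p_i)'}$ classes, and one must pin down $\vec r$ so that the correspondence holds in both directions---one direction to transfer the uniform boundedness hypothesis, and the other to ensure that the given $\vec v$ lies genuinely inside $A_{\vec q,\vec r}$. This requires invoking the structural characterization of $A_{\vec p,\vec r}$ in terms of $A_p$ and reverse H\"older conditions on each $w_i$ and on the product weight $w$, a characterization due in essence to Li--Martell--Ombrosi \cite{LMO} and already exploited by Cruz-Uribe--Martell \cite{CM} in the analogous boundedness extrapolation. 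Once that correspondence is set up, the reduction to Theorem~\ref{thm:Ap} is a direct consequence and no additional compactness argument is needed.
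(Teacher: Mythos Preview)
Your reduction has a genuine gap at the first step. The class $A_{\vec q,\vec r}$ is not equal to the Cartesian product $\prod_{i=1}^m\bigl(A_{q_i/\p_i^-}\cap RH_{(\p_i^+/q_i)'}\bigr)$ for any choice of $\vec r$; Lemma~\ref{lem:Apw}\eqref{list:Apr-3} records that the product class is always a \emph{strict} subset of $A_{\vec q,\vec r}$. The structural reason is that $A_{\vec q,\vec r}$ imposes one condition on each $w_i$ (governed by $r_i$) together with one condition on the product $w=\prod_i w_i$ (governed by $r_{m+1}$), whereas the limited-range hypotheses impose \emph{two} conditions on each $w_i$ and none on $w$. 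In particular your proposed $r_{m+1}$ cannot encode the reverse H\"older exponents $(\p_i^+/q_i)'$, since $r_{m+1}$ enters only the factor for $w$ in Definition~\ref{def:Apr}, not the factors for the individual $w_i$.

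This breaks your transfer of hypothesis~\eqref{eq:lim-1} to hypothesis~\eqref{eq:Ap-1}. Assumption~\eqref{eq:lim-1} gives boundedness of $T$ only for weight tuples in the product class, but Theorem~\ref{thm:Ap} requires boundedness for \emph{all} $\vec u\in A_{\vec q,\vec r}$, a strictly larger family. Neither the Cruz-Uribe--Martell extrapolation \cite{CM} nor Theorem~A upgrades boundedness from the product class to the full $A_{\vec q,\vec r}$ class. (The inclusions you need for $\vec v$ and for the conclusion do go the right way; it is precisely the direction needed for the boundedness input that fails.)

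The paper accordingly does not reduce Theorem~\ref{thm:lim} to Theorem~\ref{thm:Ap} but runs a parallel argument entirely within the product-class setting: Lemma~\ref{lem:int-lim} produces the interpolation identities $L^{p_i}(w_i^{p_i})=[L^{s_i}(u_i^{s_i}),L^{q_i}(v_i^{q_i})]_\theta$ with each $u_i^{s_i}$ again in $A_{s_i/\p_i^-}\cap RH_{(\p_i^+/s_i)'}$; \cite[Theorem~1.3]{CM} extrapolates~\eqref{eq:lim-1} to boundedness on the $L^{s_i}(u_i^{s_i})$ spaces; and the limited-range compact interpolation Theorem~\ref{thm:interlim} then yields compactness on $L^{p_i}(w_i^{p_i})$.
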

%%%%%%%%%%%%%%%%%%%%%% THEOREM THEOREM THEOREM %%%%%%%%%%%%%%%%%%%%%%

Theorems \ref{thm:Ap} and \ref{thm:lim} are vastly generalized. From Section \ref{sec:weight}, one can see that the weights class $A_{\vec{p}, \vec{r}}$ covers both $A_{\vec{p}}$ and $A_{\vec{p}, q}$, where they are formulated to investigate the  multilinear Calder\'on-Zygmund operators \cite{LOPTT} and fractional integrals \cite{M}, respectively. Additionally, picking $\p_i^-=1$ and $\p_i^+=\infty$, it follows that the class $A_{p_i/\p_i^-} \cap RH_{(\p_i^+/p_i)'}$ becomes $A_{p_i}$, $i=1,\ldots,m$. Thus, we extend the extrapolation theorems in \cite{CM, GM, LMO} to the corresponding compact version. On the other hand, in the $m$-linear setting, we successfully obtain the full range for $p$ and $\vec{p}=(p_1, \ldots, p_m)$. Inspired by \cite{H, HL}, to show Theorem \ref{thm:Ap}, we consider the target space $L^{p_i}(w^{p_i})$ as an interpolation space of another two different weighted spaces $L^{s_i}(u_i^{s_i})$ and $L^{q_i}(v_i^{q_i})$, meanwhile, there holds that $\vec{u}=(u_1, \ldots, u_m) \in A_{\vec{s}, \vec{r}}$ and $\vec{v}=(v_1, \ldots, v_m) \in A_{\vec{q}, \vec{r}}$, where $\vec{v}$ is the same as in the assumption \eqref{eq:Ap-2}. Then, the desired compactness from $L^{p_1}(w_1^{p_1}) \times \cdots \times L^{p_m}(w_m^{p_m})$ to $L^p(w^p)$ will follow from the interpolation for the $m$-linear compact operators on weighted spaces. Unfortunately, there are not good enough results to establish the $m$-linear compact interpolation since $L^p(w^p)$ is not a Banach space whenever $p \in (0, 1)$. That is the main reason why the approach in \cite{H, HL} can not be used to get the full range in the bilinear or the general multilinear case although a bilinear compact interpolation was given in \cite{CFM} for Banach couples. The proof that we present is a bit involved. In order to obtain an $m$-linear compact interpolation on weighted Lebesgue spaces (cf. Theorem \ref{thm:WMIP-4}), we first build a criterion for compactness in $L^p(w)$ with $p \in (0, \infty)$ by means of a new weighted Fr\'{e}chet-Kolmogorov theorem (cf. Theorem \ref{thm:FK-3}). To verify the criterion above, we also need to establish weighted interpolation theorems in both Lebesgue spaces (cf. Theorem \ref{thm:WMIP-1}) and mixed-norm Lebesgue spaces (cf. Theorem \ref{thm:WMIP-3}). The strategy of the proof of Theorem \ref{thm:lim} is similar. 

As a consequence of Theorems \ref{thm:Ap} and \ref{thm:lim}, we obtain compact extrapolation results for multilinear commutators, which allow us to present several applications for many singular integral operators. In the linear case, Uchiyama \cite{Uch} showed that the commutators of Calder\'on-Zygmund operators and pointwise multiplication with a symbol belonging to $\CMO$ are compact on $L^p(\Rn)$ with $1<p<\infty$. This result was extended to the bilinear setting in \cite{BT} and \cite{BDMT1}. Even more, B\'enyi et al \cite{BDMT2} proved the weighted compactness from $L^{p_1}(w_1) \times L^{p_2}(w_2)$ to $L^p(w)$ for $\frac1p=\frac{1}{p_1} + \frac{1}{p_2}$ with $1<p,p_1,p_2<\infty$ and $(w_1, w_2) \in A_p \times A_p$, where $w=w_1^{p/p_1} w_2^{p/p_2}$. Certainly, this is an incomplete result since the restriction on weights and exponents are not natural. We will see that in Section \ref{sec:App} our extrapolation (see Theorem \ref{thm:Tb} below) will deal with this problem. 

In order to present extrapolation theorems for compact commutators, let us introduce relevant notation and some definitions. We say that a locally integrable function $b \in \BMO$ if 
\begin{equation*}
\|b\|_{\BMO} :=\sup_{Q} \fint_{Q} |b(x)-b_Q| \, dx < \infty. 
\end{equation*}
where the supremum is taken over the collection of all cubes $Q \subset \Rn$ and $b_Q :=\fint_Q b\, dx$. Let $\CMO$ denote the closure of $\mathscr{C}_c^{\infty}(\Rn)$ in $\BMO$. Additionally, the space $\CMO$ is endowed with the norm of $\BMO$. Here $\mathscr{C}_c^{\infty}(\Rn)$ is the collection of $\mathscr{C}^{\infty}(\Rn)$ functions with compact supports.

Let T denote an $m$-linear operator from $X_1  \times \cdots \times X_m$ into $Y$, where $X_1,\ldots,X_m$ are some normed spaces and $Y$ is a quasi-normed space. For $(f_1, \ldots ,f_m) \in X_1 \times \cdots \times X_m$ and for a measurable vector $\b=(b_1, \ldots, b_m)$, and $1 \leq j \leq m$, we define, whenever it makes sense, the first order commutators 
\begin{align*}
[T, \b]_{e_j} (f_1, \ldots, f_m)=b_j T(f_1,\ldots, f_j,\ldots, f_m) - T(f_1, \ldots, b_j f_j, \ldots, f_m);
\end{align*}
we denoted by $e_j$ the basis element taking the value $1$ at component $j$ and $0$ in every other component, therefore expressing the fact that the commutator acts as a linear one in the $j$-th variable and leaving the rest of the entries of 
$(f_1, \ldots, f_m)$ untouched. Then, if $k \in \N_+$, we define 
\begin{align*}
[T, \b]_{ke_j}=[\cdots[[T, \b]_{e_j}, \b]_{e_j} \cdots, \b]_{e_j}, 
\end{align*}
where the commutator is performed $k$ times. Finally, if $\alpha=(\alpha_1, \ldots, \alpha_m) \in \N^m$ is a multi-index, we define 
\begin{align*}
[T, \b]_{\alpha}=[\cdots[[T, \b]_{\alpha_1 e_1}, \b]_{\alpha_2 e_2} \cdots, \b]_{\alpha_m e_m}.
\end{align*}

%%%%%%%%%%%%%%%%%%%%%%% THEOREM THEOREM THEOREM %%%%%%%%%%%%%%%%%%%%%
\begin{theorem}\label{thm:Tb}
Let $T$ be an $m$-linear operator and let $\vec{r}=(r_1, \ldots, r_{m+1})$ with $1 \le r_1, \ldots, r_{m+1}<\infty$. Let $\alpha \in \N^m$ be a multi-index and $\b=(b_1,\ldots, b_m) \in \BMO^m$. Assume that there exist $q_* \in (0, \infty)$ and $\vec{q}=(q_1, \ldots, q_m)$ with $1 \le q_1, \ldots, q_m <\infty$ and $\vec{r} \preceq \vec{q}$ such that for all $\vec{u}=(u_1, \ldots, u_m) \in A_{\vec{q}, \vec{r}}$, 
\begin{align}\label{eq:bT-1}
T \text{ is bounded from $L^{q_1}(u_1^{q_1}) \times \cdots \times L^{q_m}(u_m^{q_m})$ to $L^{q_*}(u^{q_*})$}, 
\end{align}
where $\frac{1}{q_*} \le \frac1q=\frac{1}{q_1}+\cdots+\frac{1}{q_m} \le \frac{1}{q_*}+\frac{1}{r'_{m+1}}$ and $u=\prod_{i=1}^m u_i$. Assume in addition that 
\begin{align}\label{eq:bT-2}
[T, \b]_{\alpha} \text{ is compact from $L^{q_1}(\Rn) \times \cdots \times L^{q_m}(\Rn)$ to $L^{q_*}(\Rn)$}. 
\end{align} 
Then 
\begin{align}
[T, \b]_{\alpha} \text{ is compact from $L^{p_1}(w_1^{p_1}) \times \cdots \times L^{p_m}(w_m^{p_m})$ to $L^{p_*}(w^{p_*})$} 
\end{align}
for all $p_* \in (0, \infty)$, for all $\vec{p}=(p_1, \dots, p_m)$ with $1 < p_1, \ldots, p_m <\infty$ so that $\vec{r} \prec \vec{p}$ and $\frac1p-\frac{1}{p_*}=\frac1q-\frac{1}{q_*}$, and for all $\vec{w}=(w_1, \ldots, w_m) \in A_{\vec{p}, \vec{r}}$, where $\frac1p=\frac{1}{p_1}+\cdots+\frac{1}{p_m}$ and $w=\prod_{i=1}^m w_i$.  
\end{theorem}
%%%%%%%%%%%%%%%%%%%%%%% THEOREM THEOREM THEOREM %%%%%%%%%%%%%%%%%%%%%

%%%%%%%%%%%%%%%%%%%%%%% THEOREM THEOREM THEOREM %%%%%%%%%%%%%%%%%%%%%
\begin{theorem}\label{thm:limTb}
Let $T$ be an $m$-linear operator and let $1 \leq \p_i^{-} < \p_i^{+} \leq \infty$, $i=1,\ldots,m$. Let $\alpha \in \N^m$ be a multi-index and $\b=(b_1,\ldots, b_m) \in \BMO^m$. Assume that for each $i=1,\ldots,m$, there exists $q_i \in [\p_i^{-}, \p_i^{+}]$ such that for all $u_i^{q_i} \in A_{\frac{q_i}{\p_i^{-}}} \cap RH_{\big(\frac{\p_i^{+}}{q_i}\big)'}$, 
\begin{align}\label{eq:limTb-1}
T \text{ is bounded from $L^{q_1}(u_1^{q_1}) \times \cdots \times L^{q_m}(u_m^{q_m})$ to $L^q(u^q)$}, 
\end{align}  
where $\frac1q=\frac{1}{q_1}+\cdots+\frac{1}{q_m}$ and $u=\prod_{i=1}^m u_i$. Assume in addition that 
\begin{align}\label{eq:limTb-2}
[T,\b]_{\alpha} \text{ is compact from $L^{q_1}(\Rn) \times \cdots \times L^{q_m}(\Rn)$ to $L^q(\Rn)$}. 
\end{align} 
Then 
\begin{align}
[T, \b]_{\alpha} \text{ is compact from $L^{p_1}(w_1^{p_1}) \times \cdots \times L^{p_m}(w_m^{p_i})$ to $L^p(w^p)$}
\end{align} 
for all exponents $p_i \in (\p_i^{-}, \p_i^{+})$ and for all weights $w_i^{p_i} \in A_{\frac{p_i}{\p_i^{-}}} \cap RH_{\big(\frac{\p_i^{+}}{p_i}\big)'}$, $i=1,\ldots,m$, where $\frac1p=\frac{1}{p_1}+\cdots+\frac{1}{p_m}$ and $w=\prod_{i=1}^m w_i$.
\end{theorem}
%%%%%%%%%%%%%%%%%%%%%%% THEOREM THEOREM THEOREM %%%%%%%%%%%%%%%%%%%%%

The rest of the paper is organized as follows. In Section \ref{sec:pre}, we give some definitions and properties about multilinear Muckenhoupt weights, and the weighted Fr\'{e}chet-Kolmogorov theorems to characterize the relative compactness of subsets in $L^p(w)$. Section \ref{sec:inter} is devoted to establishing the weighted interpolation theorems for multilinear compact operators, which will be the key point to demonstrate the compact extrapolation results aforementioned. In Section \ref{sec:com},  we present the proofs of our main theorems about extrapolation for compact operators. To conclude, in Section \ref{sec:App}, we include many applications of Theorems \ref{thm:Ap}--\ref{thm:limTb}.

\medskip
\noindent\textbf{Acknowledgements.} 
The authors would like to thank J.M. Martell for his helpful discussions about his work on multilinear extrapolation theorems, which enables us to improve Theorem \ref{thm:Ap} to the off-diagonal case. The authors were indebted to anonymous referees for his/her constructive suggestions.

%%%%%%%%%%%%%%%%%%%%%%% SECTION SECTION SECTION %%%%%%%%%%%%%%%%%%%%%%%
%%%%%%%%%%%%%%%%%%%%%%% SECTION 2 %%%%%%%%%%%%%%%%%%%%%%%
\section{Preliminaries}\label{sec:pre} 
A measurable function $w$ on $\Rn$ is called a weight if $0<w(x)<\infty$ for a.e.~$x \in \Rn$. For $1<p<\infty$, we define the Muckenhoupt class $A_p$ as the collection of all weights $w$ on $\Rn$ satisfying 
\begin{equation*}
[w]_{A_p}:=\sup_{Q} \bigg(\fint_{Q} w\, dx \bigg) \bigg(\fint_{Q}w^{1-p'}\, dx \bigg)^{p-1}<\infty,
\end{equation*} 
where the supremum is taken over all cubes $Q \subset \Rn$. As for the case $p=1$, we say that $w\in A_1$ if  
\begin{equation*}
[w]_{A_1} :=\sup_{Q} \bigg(\fint_Q w\, dx\bigg) \esssup_Q w^{-1}<\infty.
\end{equation*}
Then, we define $A_{\infty} :=\bigcup_{p\geq 1}A_p$ and $[w]_{A_{\infty}}=\inf_{p>1} [w]_{A_p}$.

Given $1\le p\le q<\infty$, we say that $w \in A_{p,q}$ if it satisfies 
\begin{align*}
[w]_{A_{p,q}} := \sup_{Q} \bigg(\fint_{Q} w^q \, dx\bigg)^{\frac1q} \bigg(\fint_{Q} w^{-p'} dx\bigg)^{\frac{1}{p'}}<\infty.  
\end{align*}
Observe that 
\begin{align*}
w \in A_{p,q} &\iff w^q \in A_{1+\frac{q}{p'}} \iff w^{-p'} \in A_{1+\frac{p'}{q}}
\\
&\iff w^p \in A_p \quad\text{and}\quad w^q \in A_q.  
\end{align*}

For $s\in(1,\infty]$, we define the reverse H\"{o}lder class $RH_s$ as the collection of all weights $w$ such that 
\begin{equation*}
[w]_{RH_s} := \sup_{Q} \left(\fint_Q w^s\,dx\right)^{\frac1s} \left(\fint_Q w\,dx\right)^{-1}<\infty. 
\end{equation*}
When $s=\infty$, $(\fint_Q w^s\,dx)^{1/s}$ is understood as $(\esssup_{Q}w)$. Define $RH_1 := \bigcup\limits_{1<s \le \infty} RH_s$. Then we see that $RH_1=A_{\infty}$. It was proved in \cite{JN} that for all $p\in[1,\infty)$ and $s\in(1,\infty)$,
\begin{align}\label{eq:JN}
w\in A_p \cap RH_s \quad \Longleftrightarrow\quad w^s\in A_{\tau}, \quad \tau=s(p-1)+1.
\end{align}

Let us recall the sharp reverse H\"{o}lder's inequality from \cite{CGPSZ, HP12, LOP}. 
%%%%%%%%%%%%%%%%%%%%%%%% LEMMA LEMMA LEMMA %%%%%%%%%%%%%%%%%%%%%%%%
\begin{lemma}\label{lem:RH} 
For every $w \in A_p$ with $1 \le p \le \infty$,  
\begin{align}
\left(\fint_{Q} w^{r_w} dx\right)^{\frac{1}{r_w}} \le 2 \fint_Q w \, dx, 
\end{align}
for every cube $Q$, where 
\begin{equation*}
r_w=
\begin{cases}
1+\frac{1}{2^{n+1}[w]_{A_1}}, & p=1, \\
1+\frac{1}{2^{n+1+2p}[w]_{A_p}}, &p \in (1, \infty), \\
1+\frac{1}{2^{n+11}[w]_{A_{\infty}}}, &p=\infty. 
\end{cases}
\end{equation*}
\end{lemma}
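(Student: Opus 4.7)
Since $A_p \subset A_\infty$ with quantitative inclusions $[w]_{A_\infty} \leq c_{n,p}[w]_{A_p}$ (trivially with $c_{n,1}=1$, and with $c_{n,p}$ of order $2^{2p}$ for $p \in (1,\infty)$), the plan is to prove the $p=\infty$ version first and then substitute. The core of the $A_\infty$ case is an iterated Calder\'on-Zygmund stopping-time argument combined with the layer-cake formula, with the exponent $r_w-1$ tuned so that a geometric series just barely converges.

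Fix a cube $Q$ and write $w_Q = \fint_Q w\,dx$. I would perform a Calder\'on-Zygmund decomposition of $w$ on $Q$ at height $2 w_Q$ relative to the dyadic children of $Q$, producing disjoint maximal stopping cubes $\{Q_j^1\}$ with $2 w_Q < \fint_{Q_j^1} w\,dx \leq 2^{n+1} w_Q$ and $w \leq 2 w_Q$ almost everywhere on $Q \setminus \bigcup_j Q_j^1$. Iterating the construction inside each $Q_j^1$ at its own doubled height yields generations $\{Q_j^k\}$ containing the superlevel sets $E_k := \{x \in Q : w(x) > 2^k w_Q\}$. The Fujii--Wilson form of the $A_\infty$ condition, namely $\int_{Q'} M(w\chi_{Q'})\,dx \leq C_n [w]_{A_\infty}\, w(Q')$ for every cube $Q'$, together with the weak-type estimate for the dyadic maximal function, produces a geometric contraction $|E_{k+1}| \leq \tfrac{1}{2}|E_k|$ provided the stopping gap is calibrated by $[w]_{A_\infty}$; hence $|E_k| \leq 2^{-k}|Q|$.

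Plugging this into the layer-cake representation
\begin{equation*}
\fint_Q w^{r_w}\,dx = r_w \int_0^\infty \lambda^{r_w - 1}\, \frac{|\{x \in Q : w(x) > \lambda\}|}{|Q|}\, d\lambda,
\end{equation*}
and splitting the integral at $\lambda = 2 w_Q$, the low range contributes at most $(2w_Q)^{r_w}$, while the tail reduces to a geometric sum of the form $\sum_{k \geq 1}(2^k w_Q)^{r_w - 1}\, 2^{-k}$. This sum converges precisely when $r_w - 1$ is of order $1/[w]_{A_\infty}$, and it is dominated by a multiple of $(2 w_Q)^{r_w}$. Taking $r_w$-th roots yields the bound $(\fint_Q w^{r_w}\,dx)^{1/r_w} \leq 2\, w_Q$. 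The $p=1$ and $p \in (1,\infty)$ cases then follow by substituting the respective inclusions into $A_\infty$, which accounts for the numerical factors $2^{n+1}$ and $2^{n+1+2p}$ in the statement.

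The chief obstacle is extracting the sharp numerical constant $\frac{1}{2^{n+11}[w]_{A_\infty}}$ in $r_w$. This demands careful bookkeeping in the stopping-time iteration, in particular controlling how the contraction ratio $\tfrac{1}{2}$ depends on the doubling loss $2^n$ from Calder\'on-Zygmund selection, the weak-type $(1,1)$ constant of the dyadic maximal operator, and the exact form of the Fujii--Wilson inequality. Any slack at this stage propagates through the geometric-series threshold and inflates the final exponent, so the sharpness relies on simultaneously optimizing the stopping height and the threshold $r_w$ against the $A_\infty$ characteristic.
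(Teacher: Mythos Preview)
The paper does not actually prove this lemma; it simply quotes it from the references \cite{CGPSZ, HP12, LOP} with the sentence ``Let us recall the sharp reverse H\"{o}lder's inequality from \cite{CGPSZ, HP12, LOP}.'' So there is no proof in the paper for your proposal to be compared against.

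That said, your outline is essentially the argument of Hyt\"{o}nen and P\'{e}rez \cite{HP12} for the $A_\infty$ case: an iterated Calder\'{o}n--Zygmund stopping-time decomposition controlling the measure of superlevel sets, followed by the layer-cake formula and a geometric-series summation that dictates the admissible $r_w$. Your reduction of the $A_1$ and $A_p$ cases to the $A_\infty$ case via the inclusion $[w]_{A_\infty}\lesssim [w]_{A_p}$ is also the standard route. One point of caution: the specific constants $2^{n+1}$, $2^{n+1+2p}$, $2^{n+11}$ recorded in the statement are lifted verbatim from those references and arise from slightly different arguments in each case (in particular, the $A_1$ constant in \cite{LOP} does not go through $A_\infty$ but uses the $A_1$ condition directly), so your plan of deriving all three from the $A_\infty$ bound would reproduce constants of the right order but not necessarily the exact numerics stated. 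Since the paper merely cites the result, this is not a defect in your comparison; it is just worth knowing that the three cases in the literature are handled by parallel but distinct optimizations rather than a single substitution.
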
 
%%%%%%%%%%%%%%%%%%%%%%%% LEMMA LEMMA LEMMA %%%%%%%%%%%%%%%%%%%%%%%%

%%%%%%%%%%%%%%%%%%%% SUBSECTION SUBSECTION SUBSECTION %%%%%%%%%%%%%%%%%%%
\subsection{Multilinear Muckenhoupt weights}\label{sec:weight}
The multilinear maximal operator is defined by 
\begin{equation}\label{eq:def-M}
\M(\vec{f})(x):= \sup_{Q \ni x} \prod_{i=1}^m \fint_Q |f_i(y_i)| dy_i, 
\end{equation}
where the supremum is taken over all cubes $Q$ containing $x$. 

We are going to present the definition of the multilinear Muckenhoupt classes $A_{\vec{p}, \vec{r}}$ introduced in \cite{LMO}.  Given $\vec{p}=(p_1, \ldots, p_m)$ with $1 \le p_1, \ldots, p_m \le \infty$ and $\vec{r}=(r_1, \ldots, r_{m+1})$ with $1 \le r_1, \ldots, r_{m+1} < \infty$, we say that $\vec{r} \preceq \vec{p}$ whenever
\begin{align*}
r_i  \le p_i,\, i=1, \ldots, m, \text{ and } r'_{m+1} \ge p, \text{ where } \frac1p:=\frac{1}{p_1}+\dots+\frac{1}{p_m}.
\end{align*}
Analogously, we say that $\vec{r} \prec \vec{p}$ if $\vec{r} \preceq \vec{p}$ and moreover $r_i<p_i$ for each $i=1, \ldots, m$, and $r'_{m+1}>p$.  

%%%%%%%%%%%%%%%%%%%%% DEFINITION  DEFINITION  DEFINITION %%%%%%%%%%%%%%%%%%%
\begin{definition}\label{def:Apr}
Let $\vec{p}=(p_1,\ldots,p_m)$ with $1\leq p_1, \ldots, p_m<\infty$ and let $\vec{r}=(r_1, \ldots, r_{m+1})$ with $1 \le r_1, \ldots, r_{m+1} < \infty$ such that $\vec{r} \preceq \vec{p}$. Suppose that $\vec{w}=(w_1,\ldots,w_m)$ and each $w_i$ is a weight on $\Rn$. We say that $\vec{w} \in A_{\vec{p}, \vec{r}}$ if 
\begin{align}\label{eq:w-Apr}
[\vec{w}]_{A_{\vec{p}, \vec{r}}} 
:=\sup_Q \bigg(\fint_Q w^{\frac{r'_{m+1}p}{r'_{m+1}-p}}\, dx\bigg)^{\frac{1}{p}-\frac{1}{r'_{m+1}}} 
\prod_{i=1}^m \bigg(\fint_Q w_i^{\frac{r_i p_i}{r_i-p_i}}dx \bigg)^{\frac{1}{r_i}-\frac{1}{p_i}}< \infty,
\end{align} 
where $w=\prod_{i=1}^m w_i$ and the supremum is taken over all cubes $Q \subset \Rn$. When $p=r'_{m+1}$, the term corresponding to $w$ needs to be replaced by $\esssup_Q w$ and, analogously, when $p_i=r_i$, the term corresponding to $w_i$ should be $\esssup_Q w_i^{-1}$. When $r_{m+1}=1$, the term corresponding to $w$ needs to be replaced by $\big(\fint_Q w^p\, dx\big)^{1/p}$.
\end{definition}
%%%%%%%%%%%%%%%%%%%%% DEFINITION  DEFINITION  DEFINITION %%%%%%%%%%%%%%%%%%%

Let us turn to a particular class of $A_{\vec{p}, \vec{r}}$ weights, called $A_{\vec{p},q}$ weights from \cite{LOPTT} and \cite{M}. Indeed, pick $\vec{r}=(1,\ldots,1,r_{m+1})$ with $\frac{1}{r'_{m+1}}=\frac1p-\frac1q$ in \eqref{eq:w-Apr}.  Then we see that $A_{\vec{p}, \vec{r}}$ agrees with $A_{\vec{p},q}$ below. 
%%%%%%%%%%%%%%%%%%%%% DEFINITION  DEFINITION  DEFINITION %%%%%%%%%%%%%%%%%%%
\begin{definition}\label{def:Apq}
Let $0<p \le q<\infty$ and $\frac1p= \frac{1}{p_1} + \cdots+ \frac{1}{p_m}$ with $1<p_1, \ldots, p_m <\infty$. Suppose that $\vec{w} = (w_1, \ldots, w_m)$ and each $w_i$ is a nonnegative locally measurable function on $\Rn$. We say that $\vec{w} \in A_{\vec{p}, q}$ if 
\begin{align}\label{eq:w-Apq}
[\vec{w}]_{A_{\vec{p},q}} := \sup_Q \bigg(\fint_Q w^q \, dx \bigg)^{\frac1q}
\prod_{i=1}^m \bigg(\fint_Q w_i^{-p_i'} dx \bigg)^{\frac{1}{p_i'}} < \infty,
\end{align}
where $w=\prod_{i=1}^m w_i$ and the supremum is taken over all cubes $Q \subset \Rn$. When $p_i = 1$, $\big(\fint_Q w_i^{-p_i'}\big)^{1/{p_i'}}$ is understood as
$(\essinf_Q w_i)^{-1}$. 
\end{definition}
%%%%%%%%%%%%%%%%%%%%% DEFINITION  DEFINITION  DEFINITION %%%%%%%%%%%%%%%%%%%

In the sequel we will just simply denote $A_{\vec{p}, p}$ by $A_{\vec{p}}$. Note that for $1 < p_1, \ldots, p_m < \infty$, in light of \eqref{eq:w-Apq}, $\vec{w} \in A_{\vec{p}}$ means that  
\begin{align}\label{eq:w-Ap}
[\vec{w}]_{A_{\vec{p}}} := [\vec{w}]_{A_{\vec{p},p}} = \sup_Q \bigg(\fint_Q w^p \, dx \bigg)^{\frac1p}
\prod_{i=1}^m \bigg(\fint_Q w_i^{-p_i'} dx \bigg)^{\frac{1}{p_i'}} < \infty,
\end{align}
where $w=\prod_{i=1}^m w_i$ and $\frac1p= \frac{1}{p_1} + \cdots+ \frac{1}{p_m}$. On the other hand, $A_{\vec{p}}$ agrees with $A_{\vec{p}, (1,\ldots,1)}$ in Definition \ref{def:Apr}. We would like to observe that our definition of the classes $A_{\vec{p}}$ and $A_{\vec{p}, \vec{r}}$ is slightly different to that in \cite{LOPTT} and \cite{LMO}. Essentially, they are the same since picking $w_i=v_i^{p_i}$ for every $i=1, \ldots, m$ in \eqref{eq:w-Apr} and \eqref{eq:w-Ap}, we see that $\vec{v}=(v_1, \ldots, v_m)$ belongs to $A_{\vec{p}}$ in \cite{LOPTT} and to $A_{\vec{p}, \vec{r}}$ in \cite{LMO}, respectively. This change enables us to state our results  uniformly and conveniently no matter the weights $\vec{w}$ belong to $A_{\vec{p}}$,  $A_{\vec{p}, q}$ or $A_{\vec{p}, \vec{r}}$. 

Given $\vec{p}=(p_1, \ldots, p_m)$ with $1 \le p_1, \ldots, p_m \le \infty$ and $\vec{r}=(r_1, \ldots, r_{m+1})$ with $1 \le r_1, \ldots, r_{m+1} < \infty$ such that $\vec{r} \preceq \vec{p}$, we set 
\begin{align}\label{eq:notation-1}
\frac{1}{r}:=\sum_{i=1}^{m+1}\frac{1}{r_i}, \quad \frac{1}{p_{m+1}} := 1-\frac1p, \quad 
\frac{1}{\delta_i} := \frac{1}{r_i} - \frac{1}{p_i}, \quad i=1,\ldots, m+1. 
\end{align}
and 
\begin{align}\label{eq:notation-2}
\frac{1}{\theta_i} := \frac1r-1-\frac{1}{\delta_i} 
=\bigg(\sum_{j=1}^{m+1}\frac{1}{\delta_j} \bigg)-\frac{1}{\delta_i}, \quad i=1,\ldots,m. 
\end{align}

A characterization of $A_{\vec{p}, \vec{r}}$ was given in \cite[Lemma~5.3]{LMO} as follows. 
%%%%%%%%%%%%%%%%%%%%%%%%% LEMMA LEMMA LEMMA %%%%%%%%%%%%%%%%%%%%%%%
\begin{lemma}\label{lem:Apr}
Let $\vec{p}=(p_1,\ldots,p_m)$ with $1 \le p_1,\ldots,p_m < \infty$ and $\vec{r}=(r_1,\ldots,r_{m+1})$ with $1\le r_1,\ldots,r_{m+1}<\infty$ such that $\vec{r} \preceq \vec{p}$. Then $\vec{w} \in A_{\vec{p}, \vec{r}}$ if and only if 
\begin{align}\label{eq:Aprw}
w^{\delta_{m+1}} \in A_{\frac{1-r}{r}\delta_{m+1}} \quad\text{and}\quad  
w_i^{\theta_i} \in A_{\frac{1-r}{r}\theta_i}, \quad i=1,\ldots,m. 
\end{align}
\end{lemma}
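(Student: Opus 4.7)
The plan is to rewrite $[\vec{w}]_{A_{\vec{p},\vec{r}}}$ using the exponents $\delta_i,\theta_i$ and then prove each implication by combining Hölder's inequality (forward) with Jensen's inequality (reverse). First I would observe the elementary identities
\[
\tfrac{r'_{m+1}p}{r'_{m+1}-p}=\delta_{m+1}, \quad \tfrac{r_ip_i}{r_i-p_i}=-\delta_i, \quad \tfrac{1}{p}-\tfrac{1}{r'_{m+1}}=\tfrac{1}{\delta_{m+1}}, \quad \tfrac{1}{r_i}-\tfrac{1}{p_i}=\tfrac{1}{\delta_i},
\]
so that, writing $A(Q):=\big(\fint_Q w^{\delta_{m+1}}\,dx\big)^{1/\delta_{m+1}}$ and $B_i(Q):=\big(\fint_Q w_i^{-\delta_i}\,dx\big)^{1/\delta_i}$, the definition becomes $[\vec{w}]_{A_{\vec{p},\vec{r}}}=\sup_Q A(Q)\prod_{i=1}^m B_i(Q)$. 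The telescoping identity $\sum_{j=1}^{m+1}\delta_j^{-1}=\tfrac{1-r}{r}$ yields $\theta_i^{-1}=\sum_{j\ne i}\delta_j^{-1}$. Setting $s_{m+1}:=\tfrac{1-r}{r}\delta_{m+1}$ and $s_i:=\tfrac{1-r}{r}\theta_i$, one checks $s_{m+1},s_i>1$, $s_{m+1}-1=\delta_{m+1}\beta$ with $\beta:=\sum_{j=1}^m\delta_j^{-1}$, $s_i-1=\theta_i/\delta_i$, and $\theta_i(1-s_i')=-\delta_i$. Accordingly, the $A_{s_{m+1}}$ condition on $w^{\delta_{m+1}}$ is equivalent to $\sup_Q A(Q)\big(\fint_Q w^{-1/\beta}\big)^\beta<\infty$, and the $A_{s_i}$ condition on $w_i^{\theta_i}$ is equivalent to $\sup_Q \big(\fint_Q w_i^{\theta_i}\big)^{1/\theta_i}B_i(Q)<\infty$.

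For the forward implication I would fix a cube $Q$ and apply Hölder's inequality twice. To derive the $A_{s_{m+1}}$ bound, write $w^{-1/\beta}=\prod_i w_i^{-1/\beta}$ and use Hölder with exponents $c_i:=\delta_i\beta$ (for which $\sum_i 1/c_i=\beta^{-1}\sum_i 1/\delta_i=1$), obtaining $\big(\fint_Q w^{-1/\beta}\big)^\beta\le \prod_i B_i(Q)$; multiplication by $A(Q)$ gives $A(Q)\big(\fint_Q w^{-1/\beta}\big)^\beta\le [\vec{w}]_{A_{\vec{p},\vec{r}}}$. To derive the $A_{s_i}$ bound, factor $w_i^{\theta_i}=w^{\theta_i}\prod_{j\ne i,\,j\le m}w_j^{-\theta_i}$ and apply Hölder with exponent $\delta_{m+1}/\theta_i$ on the $w^{\theta_i}$ factor and $\delta_j/\theta_i$ on each $w_j^{-\theta_i}$ factor; the identity $\sum_{j\ne i,\,j\le m+1}1/\delta_j=1/\theta_i$ makes the reciprocals of these Hölder exponents sum to one, giving $\big(\fint_Q w_i^{\theta_i}\big)^{1/\theta_i}\le A(Q)\prod_{j\ne i}B_j(Q)$, which multiplied by $B_i(Q)$ is $\le [\vec{w}]_{A_{\vec{p},\vec{r}}}$.

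For the reverse implication I would multiply the $A_{s_{m+1}}$ bound and the $m$ bounds coming from $w_i^{\theta_i}\in A_{s_i}$ to produce
\[
A(Q)\prod_i B_i(Q)\cdot \Big(\fint_Q w^{-1/\beta}\Big)^{\beta}\prod_i\Big(\fint_Q w_i^{\theta_i}\Big)^{1/\theta_i}\le C,
\]
so the problem reduces to showing that the trailing factor is always $\ge 1$. This is supplied by Jensen's inequality applied to the convex function $-\log$: $\big(\fint_Q w^{-1/\beta}\big)^\beta\ge \exp\!\big(-\fint_Q\log w\big)$ and $\big(\fint_Q w_i^{\theta_i}\big)^{1/\theta_i}\ge \exp\!\big(\fint_Q\log w_i\big)$, and because $\log w=\sum_i\log w_i$ the exponentials cancel after multiplication. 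The local integrability of $\log w_i$ (and hence $\log w$) is a standard consequence of the $A_\infty$ memberships hypothesized. Therefore $\sup_Q A(Q)\prod_i B_i(Q)\le C$, i.e., $\vec{w}\in A_{\vec{p},\vec{r}}$.

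The degenerate cases where $p=r'_{m+1}$, $p_i=r_i$, or $r_{m+1}=1$ replace one of the averages in the definition of $[\vec{w}]_{A_{\vec{p},\vec{r}}}$ by an $\esssup$ or by an $L^p$-average, in accordance with $\delta_{m+1},\delta_i\to\infty$ or $\delta_{m+1}\to p$; both Hölder and Jensen admit natural limiting versions, so the plan carries over. The main obstacle is really just the bookkeeping: selecting the right Hölder exponents in each direction, verifying via the $\delta_i,\theta_i,\beta$ identities that they sum appropriately, and recognising that Jensen's inequality is precisely the tool that reverses the Hölder step on the product weight $w=\prod_i w_i$.
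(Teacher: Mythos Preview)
The paper does not actually prove this lemma; it simply cites \cite[Lemma~5.3]{LMO}. Your argument is correct and is essentially the standard proof of such characterizations (the same H\"older/Jensen scheme used in \cite{LOPTT} for $A_{\vec{p}}$ and in \cite{LMO} for $A_{\vec{p},\vec{r}}$): the algebraic identities you record are right, the two H\"older applications in the forward direction use the correct exponents, and the Jensen step in the reverse direction is exactly what closes the loop since $\log w=\sum_i\log w_i$.
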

%%%%%%%%%%%%%%%%%%%%%%%%% LEMMA LEMMA LEMMA %%%%%%%%%%%%%%%%%%%%%%%

For the $A_{\vec{p}, q}$ class, the characterizations can be formulated in the following way. 
%%%%%%%%%%%%%%%%%%%%%%%% LEMMA LEMMA LEMMA %%%%%%%%%%%%%%%%%%%%%%%%
\begin{lemma}\label{lem:Apqw}
Let $0<p \leq q<\infty$ and $\frac1p=\frac{1}{p_1} + \cdots + \frac{1}{p_m}$ with $1 \leq p_1, \ldots, p_m< \infty$. Then 
\begin{enumerate}
\item[(a)] $\vec{w} \in A_{\vec{p}, q}$ if and only if 
\begin{align}\label{eq:Apqw-1}
w^q  \in A_{mq} \quad\text{and}\quad w_i^{-p_i'} \in A_{mp_i'}, \quad i=1,\ldots,m.
\end{align}
When $p_i = 1$, $w_i^{-p_i'}$ is understood as $w_i^{1/m} \in A_1$. \vspace{0.2cm}
\item[(b)] $\vec{w} \in A_{\vec{p}, q}$ if and only if 
\begin{align}\label{eq:Apqw-2}
w^q  \in A_{(m-\frac1p+\frac1q)q} \quad\text{and}\quad w_i^{-p_i'} \in A_{(m-\frac1p+\frac1q)p_i'}, \quad i=1,\ldots,m.
\end{align} 
When $p_i = 1$, $w_i^{-p_i'} \in A_{(m-\frac1p+\frac1q)p_i'}$ is understood as $w_i^{1/(m-\frac1p+\frac1q)} \in A_1$.
\end{enumerate} 
\end{lemma}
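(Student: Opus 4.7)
My plan is to view $A_{\vec{p}, q}$ as a particular instance of the general class $A_{\vec{p}, \vec{r}}$ introduced in Definition \ref{def:Apr}, and then invoke Lemma \ref{lem:Apr}. Taking $\vec{r} = (1, 1, \ldots, 1, r_{m+1})$ with $\tfrac{1}{r_{m+1}'} = \tfrac{1}{p} - \tfrac{1}{q}$, I first verify that $\vec{r} \preceq \vec{p}$ (since $p \le q$) and that substituting into Definition \ref{def:Apr}---with $r_i = 1$ collapsing each $w_i$-factor to $\bigl(\fint_Q w_i^{-p_i'}\bigr)^{1/p_i'}$ and the chosen $r_{m+1}$ collapsing the $w$-factor to $\bigl(\fint_Q w^q\bigr)^{1/q}$---yields $[\vec{w}]_{A_{\vec{p}, \vec{r}}} = [\vec{w}]_{A_{\vec{p}, q}}$. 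A direct computation of the auxiliary parameters of Lemma \ref{lem:Apr} gives $\delta_{m+1} = q$, $\delta_i = p_i'$ for $i \le m$, $\tfrac{1-r}{r} = m - \tfrac{1}{p} + \tfrac{1}{q} =: \sigma$, and $\tfrac{1}{\theta_i} = \sigma - \tfrac{1}{p_i'}$.

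For part (b), Lemma \ref{lem:Apr} gives the equivalence $\vec{w} \in A_{\vec{p}, q}$ iff $w^q \in A_{\sigma q}$ and $w_i^{\theta_i} \in A_{\sigma\theta_i}$ for each $i$. I convert the latter condition using the standard $A_p$-duality $v \in A_s \iff v^{1-s'} \in A_{s'}$. The algebraic identity $\sigma\theta_i - 1 = \theta_i/p_i'$ (a consequence of $\tfrac{1}{\theta_i} = \sigma - \tfrac{1}{p_i'}$) gives $(\sigma\theta_i)' = \sigma p_i'$ and $\theta_i(1 - (\sigma\theta_i)') = -p_i'$; hence $w_i^{\theta_i} \in A_{\sigma\theta_i}$ is equivalent to $w_i^{-p_i'} \in A_{\sigma p_i'}$. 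Combined with $w^q \in A_{\sigma q}$ this is exactly part (b). The $p_i = 1$ boundary case is handled by the stated convention $w_i^{1/\sigma} \in A_1$.

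For part (a), the implication (b) $\Rightarrow$ (a) is immediate from the inclusion $A_s \subset A_t$ for $s \le t$, applied with $\sigma q \le mq$ and $\sigma p_i' \le mp_i'$ (both valid since $p \le q$ gives $\sigma \le m$). For the converse (a) $\Rightarrow \vec{w} \in A_{\vec{p}, q}$, H\"older's inequality applied to $w^{-q/(mq-1)} = \prod_i w_i^{-q/(mq-1)}$ with exponents $\alpha_i = p_i'(mq-1)/q$---augmented by a dummy indicator exponent $\alpha_0$ with $\tfrac{1}{\alpha_0} = \tfrac{q-p}{p(mq-1)}$ when $p < q$ so that $\sum_{j} 1/\alpha_j = 1$ and the resulting $|Q|^{1/\alpha_0}$ factor cancels upon averaging---produces the universal bound $\fint w^{-q/(mq-1)} \le \prod_i \bigl(\fint w_i^{-p_i'}\bigr)^{q/(p_i'(mq-1))}$. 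Under (a), both $w^q$ and $w_i^{-p_i'}$ belong to $A_\infty$; the $A_\infty$ structure, quantified via the sharp reverse H\"older inequality of Lemma \ref{lem:RH}, implies that the $L^1$-averages of these weights are comparable to their geometric means $\exp(\fint \log(\cdot))$, and using $\log w = \sum_i \log w_i$ one verifies that the two sides of the H\"older inequality are in fact comparable up to a constant. Combining this reverse comparison with the $A_{mq}$-inequality for $w^q$ yields the $A_{\vec{p}, q}$-product bound. The main obstacle is precisely this $A_\infty$-comparison step via geometric means, which drives the equivalence of the coarse characterization (a) with the sharp one (b); once it is in place the remaining bookkeeping of H\"older exponents is routine.
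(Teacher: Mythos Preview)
Your treatment of part (b) is identical to the paper's: both specialize Lemma~\ref{lem:Apr} to $\vec{r}=(1,\ldots,1,r_{m+1})$ with $1/r_{m+1}'=1/p-1/q$, compute $\delta_{m+1}=q$, $\tfrac{1-r}{r}=\sigma=m-\tfrac1p+\tfrac1q$, and convert $w_i^{\theta_i}\in A_{\sigma\theta_i}$ into $w_i^{-p_i'}\in A_{\sigma p_i'}$ via the duality $v\in A_s\iff v^{1-s'}\in A_{s'}$.

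The approaches diverge on part (a). The paper does not argue at all: it simply cites \cite[Theorem~3.6]{LOPTT} for $p=q$ and \cite[Theorem~3.4]{M} for $p<q$. You instead close the implication cycle $(\text{b})\Rightarrow(\text{a})\Rightarrow\vec{w}\in A_{\vec p,q}\Rightarrow(\text{b})$ in a self-contained way, the nontrivial link being $(\text{a})\Rightarrow\vec{w}\in A_{\vec p,q}$. Your argument there is correct: from the hypothesis each of $w^q$, $w^{-q/(mq-1)}$ and $w_i^{-p_i'}$ lies in $A_\infty$, and the standard fact that $\fint_Q v\simeq\exp\bigl(\fint_Q\log v\bigr)$ for $A_\infty$ weights together with $\log w=\sum_i\log w_i$ gives
\[
\Bigl(\fint_Q w^{-q/(mq-1)}\Bigr)^{(mq-1)/q}\ \simeq\ \exp\Bigl(-\fint_Q\log w\Bigr)\ \simeq\ \prod_{i=1}^m\Bigl(\fint_Q w_i^{-p_i'}\Bigr)^{1/p_i'},
\]
after which the $A_{mq}$ bound for $w^q$ finishes the job. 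Two small remarks: (i) the H\"older step you set up produces only the inequality $\le$ between the two displayed quantities, which is the wrong direction; since the geometric-mean identity already yields the two-sided comparison, the H\"older scaffolding is unnecessary and can be dropped. (ii) The comparison $\fint v\simeq\exp(\fint\log v)$ for $v\in A_\infty$ does not come from the sharp reverse H\"older inequality of Lemma~\ref{lem:RH}; it follows directly from the $A_p$ definition combined with Jensen's inequality applied to $t\mapsto t^{-1/(p-1)}$. With those adjustments your route gives an elegant, reference-free proof of (a) that the paper does not attempt.
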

%%%%%%%%%%%%%%%%%%%%%%%% LEMMA LEMMA LEMMA %%%%%%%%%%%%%%%%%%%%%%%%

Indeed, \eqref{eq:Apqw-1} was proved in \cite[Theorem~3.6]{LOPTT} for $p=q$ and \cite[Theorem~3.4]{M} for $p<q$, while \eqref{eq:Apqw-2} is a consequence of \eqref{eq:Aprw}. To see the latter, we take $\vec{r}=(1,\ldots,1,r_{m+1})$ with $\frac{1}{r'_{m+1}}=\frac1p-\frac1q$ in \eqref{eq:Aprw}. Then, $\frac1r=m+\frac{1}{p'}+\frac1q$ and hence, $w^{\delta_{m+1}} \in A_{\frac{1-r}{r}\delta_{m+1}}$ becomes $w^q  \in A_{(m-\frac1p+\frac1q)q}$. In addition, $w_i^{\theta_i} \in A_{\frac{1-r}{r}\theta_i}$ becomes $w_i^{-p'_i[1-((m-\frac1p+\frac1q)p'_i)']} \in A_{((m-\frac1p+\frac1q)p'_i)'}$, which is equivalent to $w_i^{-p'_i} \in A_{(m-\frac1p+\frac1q)p'_i}$. This shows \eqref{eq:Apqw-2}. On the other hand, it is worth pointing out that the characterization \eqref{eq:Apqw-2} refines \cite[Theorem~3.7]{CWX} by removing the restriction $1\le p_1,\ldots,p_m<mn/\alpha$ and $\frac1p-\frac1q=\frac{\alpha}{n}$.

Beyond that, the $A_{\vec{p}, \vec{r}}$ class enjoys the following properties. 
%%%%%%%%%%%%%%%%%%%%%%%% LEMMA LEMMA LEMMA %%%%%%%%%%%%%%%%%%%%%%%%
\begin{lemma}\label{lem:Apw}
Let $\vec{p}=(p_1,\ldots,p_m)$ with $1 \le p_1,\ldots,p_m < \infty$ and $\vec{r}=(r_1,\ldots,r_{m+1})$ with $1\le r_1,\ldots,r_{m+1}<\infty$ such that $\vec{r} \prec \vec{p}$. Then the following statements hold:
\begin{list}{\textup{(\theenumi)}}{\usecounter{enumi}\leftmargin=1cm \labelwidth=1cm \itemsep=0.2cm 
			\topsep=.2cm \renewcommand{\theenumi}{\arabic{enumi}}}
\item\label{list:Apr-1} $A_{\vec{p}, \vec{s}} \subsetneq A_{\vec{p}, \vec{r}}$ for any $\vec{r} \prec \vec{s} \prec \vec{p}$. 
\item\label{list:Apr-2} $A_{\vec{p}, \vec{r}}=\bigcup_{\vec{r} \prec \vec{s} \prec \vec{p}} A_{\vec{p}, \vec{s}}=\bigcup_{1<t<t_0} A_{\vec{p}, \gamma_t(\vec{r})}$, where $t_0=\min_{1\le i \le m}\{p_i/r_i\}$ and $\gamma_t(\vec{r})=(tr_1, \ldots, tr_m, r_{m+1})$. 
\item\label{list:Apr-3} $A_{s_1, t_1} \times \cdots \times A_{s_m, t_m} \subsetneq A_{\vec{p}, \vec{r}}$ for all $\vec{s}=(s_1,\ldots,s_m) \preceq \vec{t}=(t_1,\ldots,t_m)$ with $\frac{1}{s_i}=1-\frac{1}{r_i}+\frac{1}{p_i}$ and $\frac{1}{t_1}+\cdots+\frac{1}{t_m}=\frac1p-\frac{1}{r'_{m+1}}$, where $\frac1p=\frac{1}{p_1}+\cdots+\frac{1}{p_m}$. 
\end{list}
\end{lemma}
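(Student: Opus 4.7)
The plan is to derive all three assertions from the $A_p$-type characterization of $A_{\vec{p}, \vec{r}}$ in Lemma \ref{lem:Apr}, together with two standard ingredients: H\"older's inequality applied to the defining averages, and the openness (self-improvement) of Muckenhoupt classes supplied by the sharp reverse H\"older inequality of Lemma \ref{lem:RH}.

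For (1), I would assume $\vec{w} \in A_{\vec{p}, \vec{s}}$ with $\vec{r} \prec \vec{s} \prec \vec{p}$ and first observe, by a direct computation with the definitions of $r$, $\delta_i$, and $\theta_i$, that $\delta_i^{(r)} \le \delta_i^{(s)}$ for $i = 1, \ldots, m+1$, $\theta_i^{(r)} \le \theta_i^{(s)}$, and $\frac{1-r}{r} \ge \frac{1-s}{s}$. Then I would pass from $w^{\delta_{m+1}^{(s)}} \in A_{\frac{1-s}{s}\delta_{m+1}^{(s)}}$ and $w_i^{\theta_i^{(s)}} \in A_{\frac{1-s}{s}\theta_i^{(s)}}$ to their $\vec{r}$-analogues by combining the monotonicity $A_{p_1} \subset A_{p_2}$ (for $p_1 \le p_2$) with H\"older's inequality to lower the powers of the weights. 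Strictness will be witnessed by a power weight $|x|^{a_i}$ with $a_i$ in the critical gap.

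For (2), the inclusions $\bigcup_{\vec{r}\prec\vec{s}\prec\vec{p}} A_{\vec{p},\vec{s}} \subset A_{\vec{p},\vec{r}}$ and $\bigcup_{1<t<t_0} A_{\vec{p}, \gamma_t(\vec{r})} \subset A_{\vec{p}, \vec{r}}$ follow at once from (1). For the reverse containment, given $\vec{w} \in A_{\vec{p}, \vec{r}}$, Lemma \ref{lem:Apr} places $w^{\delta_{m+1}^{(r)}}$ and $w_i^{\theta_i^{(r)}}$ in explicit Muckenhoupt classes whose exponents admit a slight reduction via the sharp reverse H\"older inequality of Lemma \ref{lem:RH}. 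Tracking the continuous dependence of $\delta_i$, $\theta_i$, and $r$ on $t$, I would choose $t > 1$ sufficiently close to $1$ (and below $t_0$) so that the parameters associated with $\gamma_t(\vec{r})$ remain inside these improved classes, delivering both equalities at once.

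For (3), setting $\frac{1}{s} := \frac{1}{p} - \frac{1}{r'_{m+1}} = \sum_{i=1}^m \frac{1}{t_i}$, H\"older's inequality with exponents $t_i/s$ yields
\begin{align*}
\bigg(\fint_Q w^{\frac{r'_{m+1} p}{r'_{m+1}-p}}\,dx\bigg)^{\frac{1}{p}-\frac{1}{r'_{m+1}}} \le \prod_{i=1}^m \bigg(\fint_Q w_i^{t_i}\,dx\bigg)^{\frac{1}{t_i}}.
\end{align*}
Since $\frac{1}{s_i'} = \frac{1}{r_i} - \frac{1}{p_i}$, the remaining factors in the definition of $[\vec{w}]_{A_{\vec{p}, \vec{r}}}$ match the $A_{s_i, t_i}$-constants of each $w_i$ exactly, giving $[\vec{w}]_{A_{\vec{p}, \vec{r}}} \le \prod_i [w_i]_{A_{s_i, t_i}}$. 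Strictness is shown by correlated singular weights, e.g.\ $w_1(x) = |x|^a$ and $w_2(x) = |x|^{-a}$ with a suitable $a$, which satisfy the joint $A_{\vec{p}, \vec{r}}$ condition while failing each $A_{s_i, t_i}$ individually. The hardest step will be the self-improvement in (2): one must carefully quantify how the reverse H\"older exponents in Lemma \ref{lem:RH} translate into a definite perturbation range for $t$, and verify that this range intersects $(1, t_0)$.
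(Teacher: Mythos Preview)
Your proposal is correct and follows essentially the same strategy as the paper for parts (2) and (3): both use the characterization of Lemma~\ref{lem:Apr} together with the reverse H\"older self-improvement for (2), and both apply H\"older's inequality directly to the defining averages for the inclusion in (3). One minor difference: for the inclusion in (1), the paper bypasses Lemma~\ref{lem:Apr} entirely and applies Jensen's inequality directly to the averages in Definition~\ref{def:Apr}, noting that $\frac{r'_{m+1}}{r'_{m+1}-p} < \frac{s'_{m+1}}{s'_{m+1}-p}$ and $\frac{r_i}{p_i-r_i}<\frac{s_i}{p_i-s_i}$; your route through the characterization works but is more roundabout. It is also worth noting that the paper's proof of (3) only verifies the inclusion and does not exhibit the strictness that the statement claims, whereas you do sketch an example; your correlated power-weight idea is the right one to fill that gap.
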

%%%%%%%%%%%%%%%%%%%%%%%% LEMMA LEMMA LEMMA %%%%%%%%%%%%%%%%%%%%%%%%

%%%%%%%%%%%%%%%%%%%%%%%% PROOF PROOF PROOF %%%%%%%%%%%%%%%%%%%%%%%%%
\begin{proof}
We begin with showing \eqref{list:Apr-1}. Note that for any $\vec{r} \prec \vec{s} \preceq \vec{p}$, one has $\frac{r'_{m+1}}{r'_{m+1}-p} < \frac{s'_{m+1}}{s'_{m+1}-p}$ and $\frac{r_i}{p_i-r_i}<\frac{s_i}{p_i-s_i}$, $i=1,\ldots,m$. Then, this and Jensen's inequality give that $A_{\vec{p}, \vec{s}} \subset A_{\vec{p}, \vec{r}}$. In order to conclude \eqref{list:Apr-1}, it remains to find a vector of weights $\vec{w}$ such that $\vec{w} \in A_{\vec{p},\vec{r}}$ and $\vec{w} \not\in A_{\vec{p},\vec{s}}$. By definition, $\theta_i \leq \delta_{m+1}$ for each $i=1,\ldots,m$. Since the $A_p$ classes are increasing, we have $A_{\frac{1-s}{s}\theta_1} \subset A_{\frac{1-r}{r}\theta_1} \subset A_{\frac{1-r}{r}\delta_{m+1}}$. Pick $p_0:= \frac{1-s}{s}\theta_1$ and $w_0 =|x|^{n(p_0 - 1)}$. Then, it is easy to see that $w_0 \notin A_{\frac{1-s}{s}\theta_1}$ and $w_0 \in A_{\frac{1-r}{r}\theta_1}$. In addition, $w_1:= w_0^{1/\theta_1}$ satisfies that $w_1^{\theta_1} \in A_{\frac{1-r}{r}\theta_1}$, but $w_1^{\theta_1} \notin A_{\frac{1-s}{s}\theta_1}$. Even more, $w_1^{\delta_{m+1}} = |x|^{n(p_0 -1)\frac{\delta_{m+1}}{\theta_1}} \in A_{\frac{1-s}{s}\delta_{m+1}}$ and then $w_1^{\delta_{m+1}} \in  A_{\frac{1-r}{r}\delta_{m+1}}$.
Therefore, taking $\vec{w}:=(w_1,1,\dots,1)$, by Lemma \ref{lem:Apr} we conclude that $\vec{w} \in A_{\vec{p},\vec{r}}$, but $\vec{w} \notin A_{\vec{p},\vec{s}}$.

We next turn to \eqref{list:Apr-2}. We first demonstrate $A_{\vec{p}, \vec{r}}=\bigcup_{\vec{r} \prec \vec{s} \prec \vec{p}} A_{\vec{p}, \vec{s}}$. In view of \eqref{list:Apr-1}, it suffices to prove that for any $\vec{w} \in A_{\vec{p}, \vec{r}}$, there exists $\vec{r}\prec \vec{s} \prec \vec{p}$ such that $\vec{w} \in A_{\vec{p}, \vec{s}}$. Fix $\vec{w} \in A_{\vec{p}, \vec{r}}$. By Lemma \ref{lem:Apr}, one has  
\begin{align}\label{eq:wrr}
w^{\delta_{m+1}} \in A_{\frac{1-r}{r}\delta_{m+1}} \quad\text{and}\quad  
w_i^{\theta_i} \in A_{\frac{1-r}{r}\theta_i}, \quad i=1,\ldots,m. 
\end{align}
Recall that $v \in A_q$ with $1<q<\infty$ implies that $v^{\tau} \in A_{q/\kappa}$ for some $1<\kappa<q$ and $1<\tau<\infty$. Using this fact and \eqref{eq:wrr}, we obtain that 
\begin{align}\label{eq:wkap}
w_i^{\tau_i\theta_i} \in A_{\frac{1-r}{r\kappa_i}\theta_i}, \quad i=1,\ldots,m+1,  
\end{align}
for some $1<\kappa_i<\frac{1-r}{r}\theta_i$ and $1<\tau_i<\infty$, where $\theta_{m+1}:=\delta_{m+1}$. Let $\varepsilon \in (0, 1)$ chosen later. Define 
\begin{align*}
\frac{1}{s_i} := \frac{1-\varepsilon}{r_i} + \frac{\varepsilon}{p_i},\quad 
\frac{1}{\widetilde{\delta}_i} := \frac{1}{s_i} - \frac{1}{p_i}, \quad i=1,\ldots, m+1, 
\quad \widetilde{\theta}_{m+1} := \widetilde{\delta}_{m+1}, 
\end{align*}
and 
\begin{align*}
\frac{1}{s}:=\sum_{i=1}^{m+1}\frac{1}{s_i}, \quad 
\frac{1}{\widetilde{\theta}_i} := \frac1s-1-\frac{1}{\widetilde{\delta}_i} 
=\bigg(\sum_{j=1}^{m+1}\frac{1}{\widetilde{\delta}_j} \bigg)-\frac{1}{\widetilde{\delta}_i}, \quad i=1,\ldots,m. 
\end{align*}
Then we see that $\vec{s}$, $\widetilde{\delta}_i$ and $\widetilde{\theta}_i$ depend on $\varepsilon$, $\vec{r} \prec \vec{s} \preceq \vec{p}$ for every $\varepsilon \in (0, 1)$, and 
\begin{align*}
\frac{\widetilde{\theta}_i}{\theta_i} \to 1^{+} \quad\text{and} 
\quad \frac{\frac1r-1}{\frac1s-1} \to 1^{+}, \quad \text{as } \varepsilon \to 0. 
\end{align*}
This means that one can pick $\varepsilon \in (0, 1)$ small enough such that 
\begin{align}\label{eq:ttss}
\widetilde{\theta}_i \le \tau_i \theta_i \quad\text{and}\quad 
\frac{1-r}{r\kappa_i} \le \frac{1-s}{s}, \quad i=1,\ldots,m+1. 
\end{align}
From \eqref{eq:wkap} and \eqref{eq:ttss}, we have 
\begin{align}\label{eq:Ass}
w_i^{\widetilde{\theta}_i} \in A_{\frac{1-s}{s}\theta_i} \subset A_{\frac{1-s}{s}\widetilde{\theta}_i}, \quad i=1,\ldots,m+1.  
\end{align}
Therefore, it follows from \eqref{eq:Ass} and Lemma \ref{lem:Apr} that $\vec{w} \in A_{\vec{p}, \vec{s}}$. Likewise, one can get $A_{\vec{p},\vec{r}}=\bigcup_{1<t<t_0} A_{\vec{p}, \gamma_t(\vec{r})}$. 

Finally, let us demonstrate \eqref{list:Apr-3}. Fix $\vec{s}=(s_1,\ldots,s_m) \preceq \vec{t}=(t_1,\ldots,t_m)$ with $\frac{1}{s_i}=1-\frac{1}{r_i}+\frac{1}{p_i}$ and $\frac{1}{t_1}+\cdots+\frac{1}{t_m}=\frac1p-\frac{1}{r'_{m+1}}$. Let $\vec{w} \in A_{s_1, t_1} \times \cdots \times A_{s_m, t_m}$. Then H\"{o}lder's inequality gives that 
\begin{align*}
&\bigg(\fint_Q w^{\frac{r'_{m+1}p}{r'_{m+1}-p}}\, dx\bigg)^{\frac{1}{p}-\frac{1}{r'_{m+1}}} 
\prod_{i=1}^m \bigg(\fint_Q w_i^{\frac{r_i p_i}{r_i-p_i}}dx \bigg)^{\frac{1}{r_i}-\frac{1}{p_i}}
\\%%%%%%%%%%%%%
&\quad\le \prod_{i=1}^m \bigg(\fint_Q w_i^{t_i}\, dx\bigg)^{\frac{1}{t_i}} 
\bigg(\fint_Q w_i^{-\frac{1}{s'_i}}dx \bigg)^{\frac{1}{s'_i}}
\le  \prod_{i=1}^m [w_i]_{A_{s_i, t_i}}, 
\end{align*} 
which implies $[\vec{w}]_{A_{\vec{p}, \vec{r}}} \le \prod_{i=1}^m [w_i]_{A_{s_i, t_i}}$ and so, $A_{s_1, t_1} \times \cdots \times A_{s_m, t_m} \subset A_{\vec{p}, \vec{r}}$. To show the strict containment, we construct an example such that $\vec{w} \in A_{\vec{p}, \vec{r}}$ and $\vec{w} \not\in A_{s_1,t_1} \times \cdots \times A_{s_m,t_m}$. We pick $w_1(x)=|x|^{-n/t_1}$. Then $w_1^{t_1} \not\in L^1_{\loc}(\Rn)$, but $w_1^{\delta_{m+1}}=|x|^{-nt/t_1} \in A_1$, where $\frac1t:=\frac{1}{t_1}+\cdots+\frac{1}{t_m}=\frac1p-\frac{1}{r'_{m+1}}=\frac{1}{\delta_{m+1}}$. Since $\theta_1<\delta_{m+1}$, we have $w_1^{\theta_1} \in A_1 \subset A_{\frac{1-r}{r}\theta_1}$. Hence, from Lemma \ref{lem:Apr}, we see that $\vec{w}:=(w_1,1,\ldots,1) \in A_{\vec{p}, \vec{r}}$, but $\vec{w} \not\in A_{s_1, t_1} \times \cdots \times A_{s_m, t_m}$. 
\end{proof}
%%%%%%%%%%%%%%%%%%%%%%%% END END END PROOF %%%%%%%%%%%%%%%%%%%%%%%%%

%%%%%%%%%%%%%%%%%%%%%%%% LEMMA LEMMA LEMMA %%%%%%%%%%%%%%%%%%%%%%%%
\begin{lemma}\label{lem:lim-Apr}
Let $1\le \p_i^{-} <\p_i^{+} \le \infty$ and $p_i \in (\p_i^{-}, \p_i^{+})$, $i=1,\ldots,m$. If $w_i^{p_i} \in A_{\frac{p_i}{\p_i^{-}}} \cap RH_{\big(\frac{\p_i^{+}}{p_i}\big)'}$, $i=1,\ldots,m$, then $\vec{w}=(w_1,\ldots,w_m) \in A_{\vec{t}, \vec{r}}$, where $\vec{r}=(r_1,\ldots,r_m,1)$ $t_i=p_i(\p_i^{+}/p_i)'$, $r_i=t_i/\tau_i$, and $\tau_i=\big(\frac{\p_i^+}{p_i}\big)'(\frac{p_i}{\p_i^{-}}-1)+1$, $i=1,\ldots,m$.
\end{lemma}
%%%%%%%%%%%%%%%%%%%%%%%% LEMMA LEMMA LEMMA %%%%%%%%%%%%%%%%%%%%%%%%

%%%%%%%%%%%%%%%%%%%%%%%% PROOF PROOF PROOF %%%%%%%%%%%%%%%%%%%%%%%%%
\begin{proof}
Let $w_i^{p_i} \in A_{\frac{p_i}{\p_i^{-}}} \cap RH_{\big(\frac{\p_i^{+}}{p_i}\big)'}$, $i=1,\ldots,m$. Then by \eqref{eq:JN}, we see that $w_i^{t_i} \in A_{\tau_i}$, $i=1,\ldots,m$. Note that $r_i=t_i/\tau_i \ge 1$. Set $s'_i=t_i(\tau'_i-1)$. Then 
\begin{equation}\label{eq:rst}
\frac{1}{s_i}=1-\frac{1}{s'_i}=1-\frac{\tau_i}{t_i}+\frac{1}{t_i}=1-\frac{1}{r_i}+\frac{1}{t_i}. 
\end{equation}
On the other hand, by definition, 
\begin{align*}
[w_i^{t_i}]_{A_{\tau_i}} 
&= \sup_{Q} \bigg(\fint_Q w_i^{t_i}\, dx\bigg) \bigg(\fint_Q w_i^{-t_i(\tau'_i-1)}\bigg)^{\tau_i-1} 
\\
&=\sup_{Q} \bigg[\bigg(\fint_Q w_i^{t_i}\, dx\bigg)^{\frac{1}{t_i}} \bigg(\fint_Q w_i^{-s'_i}\bigg)^{\frac{1}{s'_i}}\bigg]^{t_i}
=[w_i]_{A_{s_i, t_i}}^{t_i},  
\end{align*}
which shows that $\vec{w}=(w_1,\ldots,w_m) \in A_{s_1, t_1} \times \cdots \times A_{s_m, t_m}$. This along with \eqref{eq:rst} and Lemma \ref{lem:Apw} (3) implies $\vec{w} \in A_{\vec{t}, \vec{r}}$. 
\end{proof}
%%%%%%%%%%%%%%%%%%%%%%%% END END END PROOF %%%%%%%%%%%%%%%%%%%%%%%%%

%%%%%%%%%%%%%%%%%%%% SUBSECTION SUBSECTION SUBSECTION %%%%%%%%%%%%%%%%%%%
\subsection{Characterizations of compactness}  
The weighted Fr\'{e}chet-Kolmogorov theorem below provides a way to characterize the relative compactness of a set in $L^p(w)$. In the unweighted setting, it was proved by Yosida \cite[p.~275]{Yosida} in the case $1\le p<\infty$, which is extended by Tsuji \cite{Tsuji} to the case $0<p<1$.  Hereafter, we always denote $\tau_hf(x)=f(x+h)$.  

%%%%%%%%%%%%%%%%%%%%%% THEOREM THEOREM THEOREM %%%%%%%%%%%%%%%%%%%%%%
\begin{theorem}\label{thm:FK-1}
Let $p \in (0, \infty)$, and let $w$ be a weight on $\Rn$ such that $w, w^{-\lambda} \in L^1_{\loc}(\Rn)$ for some $\lambda \in(0, \infty)$. 
\begin{enumerate}
\item[(a)] A subset $\mathcal{G} \subset L^p(w)$ is relatively compact if the following are satisfied: 
\begin{enumerate}
\item[(a-1)] $\sup\limits_{f \in \mathcal{G}} \|f\|_{L^p(w)}<\infty$, \vspace{0.2cm}
\item[(a-2)] $\lim\limits_{A\to\infty}\sup\limits_{f \in \mathcal{G}} \|f \mathbf{1}_{\{|x|>A\}}\|_{L^p(w)}=0$, \vspace{0.2cm}
\item[(a-3)] $\lim\limits_{|h|\to 0} \sup\limits_{f \in \mathcal{G}} \|\tau_hf - f\|_{L^p(w)}=0$. 
\end{enumerate}
\item[(b)] The conditions {\rm (a-1)} and {\rm (a-2)} are necessary, but {\rm (a-3)} is not. 
\item[(c)] If there exists $\delta>0$ such that $\tau_h w \lesssim w$ uniformly for any $|h|<\delta$, then the conditions {\rm (a-1)}, {\rm (a-2)} and {\rm (a-3)} are necessary. 
\end{enumerate}
\end{theorem}
%%%%%%%%%%%%%%%%%%%%%% THEOREM THEOREM THEOREM %%%%%%%%%%%%%%%%%%%%%%

%%%%%%%%%%%%%%%%%%%%%%%% PROOF PROOF PROOF %%%%%%%%%%%%%%%%%%%%%%%%%
\begin{proof}
We only focus on parts (b) and (c) since part (a) is contained in \cite[Lemma~4.2]{XYY}. Indeed, checking the proof of \cite[Lemma~4.2]{XYY}, one can find that it only requires $w, w^{1-p'_0} \in L^1_{\loc}(\Rn)$ for some $p_0 \in (1, \infty)$. Then taking $p_0=1+\frac{1}{\lambda}$, part (a) follows. To show part (b), let $\mathcal{G}$ be relatively compact in $L^p(w)$. Then $\mathcal{G}$ is bounded, and (a-1) holds. Let $\varepsilon>0$ be given. Then there exists a finite number of functions $f_1,\ldots,f_m \in L^p(w)$ such that, for each $f\in L^p(w)$ there is an $f_j$ with $\|f-f_j\|_{L^p(w)}\le \varepsilon$. Otherwise, we would have an infinite sequence $\{f_j\}\subset \mathcal{G}$ with $\|f_j-f_i\|_{L^p(w)}> \varepsilon$ for $i\ne j$, which is contrary to the relative compactness of $\mathcal{G}$. We then find simple functions (finitely-valued functions with compact support) $g_1,\ldots,g_m$ such that $\|f_j-g_j\|_{L^p(w)}\le \varepsilon$ $(j=1,2,\dots, m)$. Since each simple function $g_j(x)$ vanishes outside some sufficienty large ball $B(0,A)$, we have for any $f\in \mathcal{G}$, 
\begin{align*}
\|f \mathbf{1}_{\{|x|>A\}}\|_{L^p(w)} 
&\lesssim \|(f-g_j) \mathbf{1}_{\{|x|>A\}}\|_{L^p(w)} + \|g_j\mathbf{1}_{\{|x|>A\}}\|_{L^p(w)}
\\
&\lesssim \|f-f_j\|_{L^p(w)}+\|f_j-g_j\|_{L^p(w)}+0 \le 2\varepsilon.
\end{align*}
This proves (a-2). 

Next, we construct some examples to show that the condition (a-3) is not necessary. Let $w(x)=|x|^{1/2}$ and $f(x)=|x|^{-3/5} {\bf 1}_{\{|x| \le 1\}}$. Then, $w\in A_2(\R)$ and $f\in L^2(w)$. But, 
\begin{equation*}
\|f(\cdot+h)\|_{L^2(w)}=\infty\ \text{ for any } h\not=0.
\end{equation*}
Let $\mathcal{G}:=\{f\}$. Then $\mathcal{G}$ is a compact set in $L^2(w)$. However, 
\begin{equation*}
\int_{\R}|f(x+h)-f(x)|^2 w(x)dx=+\infty\ \text{ for any } h\not=0.
\end{equation*}
Thus $\mathcal{G}$ does not satisfy (a-3). Let us give another example. Let $1<p_0<p<\infty$ and $1/p<\alpha<p_0/p$. Set 
\begin{align*}
w(x)=|x|^{p_0-1} \quad\text{ and }\quad f(x)=|x|^{-\alpha}\mathbf{1}_{\{|x|\le1\}}.  
\end{align*} 
Then we get $p_0-1-p\alpha>-1$ and $p\alpha>1$, and hence 
\begin{align*}
w\in A_p(\R),\quad f\in L^p(w), \quad\text{ but}\quad \tau_h f \notin L^p(w),\quad \forall h\not=0.
\end{align*} 
Hence, letting $\mathcal{G}=\{f\}$, we see that $\mathcal{G}$ is a compact set in $L^p(w)$, but $\mathcal{G}$ does not satisfy (a-3).

To conclude part (c), it suffices to prove (a-3) is necessary under the additional assumption in (c). Let $\varepsilon>0$ and $f \in \mathcal{G}$. Since $\mathcal{G}$ is relatively compact, there exists a finite number of functions $\{f_j\}_{j=1}^m \subset L^p(w)$ such that for each $f \in \mathcal{G}$, there exists some $f_j$ such that $\|f-f_j\|_{L^p(w)}<\varepsilon$. Since $\mathscr{C}_c^{\infty}(\Rn)$ is dense in $L^p(w)$, there exists $g_j \in \mathscr{C}_c^{\infty}(\Rn)$ such that $\|f_j-g_j\|_{L^p(w)}<\varepsilon$. Additionally, there exists $\delta_0>0$  such that for any $|h|<\delta_0$, 
\begin{equation}\label{eq:tgg}
\|\tau_h g_j-g_j\|_{L^p(w)} < \varepsilon. 
\end{equation}
Now, since $\tau_h w \lesssim w$ for all $|h|<\delta$, 
\begin{equation}\label{eq:tftf}
\|\tau_h f - \tau_h f_j\|_{L^p(w)} =\|f-f_j\|_{L^p(\tau_{-h}w)} \lesssim \|f-f_j\|_{L^p(w)} < \varepsilon.  
\end{equation}
Similarly, 
\begin{equation}\label{eq:tftg}
\|\tau_h f_j - \tau_h g_j\|_{L^p(w)} \lesssim  \varepsilon.  
\end{equation}
Collecting \eqref{eq:tgg}, \eqref{eq:tftf} and \eqref{eq:tftg}, we get for any $|h|<\min\{\delta, \delta_0\}$, 
\begin{align*}
\|\tau_h f - f\|_{L^p(w)} &\le \|\tau_h f - \tau_h f_j\|_{L^p(w)} + \|\tau_h f_j - \tau_h g_j\|_{L^p(w)} 
\\
&\quad+ \|\tau_h g_j - g_j\|_{L^p(w)} + \|g_j - f_j\|_{L^p(w)} + \|f_j - f\|_{L^p(w)}
\\
& \lesssim  \varepsilon,  
\end{align*}
which gives that 
\[
\lim_{|h|\to 0} \|\tau_hf - f\|_{L^p(w)}=0,\quad\text{uniformly in } f \in \mathcal{G}.
\]
This completes the proof.  
\end{proof}
%%%%%%%%%%%%%%%%%%%%%%%% END END END PROOF %%%%%%%%%%%%%%%%%%%%%%%%%

We present another characterization of the relative compactness of a subset in $L^p(w)$. 
%%%%%%%%%%%%%%%%%%%%%% THEOREM 2.9 THEOREM %%%%%%%%%%%%%%%%%%%%%%
\begin{theorem}\label{thm:FK-2}
Let $1<p<\infty$ and $w \in A_p$. Then a subset $\G \subseteq L^p(w)$ is relatively compact if and only if the following are satisfied:
\begin{list}{\rm (\theenumi)}{\usecounter{enumi}\leftmargin=1cm \labelwidth=1cm \itemsep=0.2cm \topsep=.2cm \renewcommand{\theenumi}{\arabic{enumi}}}
 
\item\label{list-1} $\sup\limits_{f \in \G} \|f\|_{L^p(w)} < \infty$, 
\item\label{list-2} $\lim\limits_{A \to \infty} \sup\limits_{f \in \G}\|f \mathbf{1}_{\{|x|>A\}}\|_{L^p(w)}=0$, 
\item\label{list-3} $\lim\limits_{r \to 0} \sup\limits_{f \in \G}\|f-f_{B(\cdot,r)}\|_{L^p(w)}=0$. 
\end{list}
\end{theorem}
%%%%%%%%%%%%%%%%%%%%%% end THEOREM 2.9 THEOREM %%%%%%%%%%%%%%%%%%%%%%

%%%%%%%%%%%%%%%%%%%%%%%%%% PROOF PROOF PROOF %%%%%%%%%%%%%%%%%%%%%%%
\begin{proof}
The sufficiency is essentially contained in the proof of \cite[Lemma~4.1]{XYY}. We present the detailed proof for the convenience of the reader. Suppose that \eqref{list-1}, \eqref{list-2}, and \eqref{list-3} hold. Then by \eqref{list-2} and \eqref{list-3}, for any fixed $\varepsilon>0$, there exist $A>0$ and $\delta>0$ such that for $0<r<\delta$, 
\begin{align}\label{eq:compactset-1} 
\|f \mathbf{1}_{\{|x|>A\}}\|_{L^p(w)} < \varepsilon \quad\text{ and }\quad 
\|f-f_{B(\cdot, r)}\|_{L^p(w)}<\varepsilon \ \text{ for all } f \in \G.
\end{align}
Fix such an $r>0$. For any $x, y \in \Rn$, by H\"{o}lder's inequality, we have 
\begin{equation}\label{eq:bdd}
|f_{B(x, r)}| 
\le \bigg(\fint_{B(x, r)} |f(y)|^p w(y) \, dy \bigg)^{\frac1p}
\bigg(\fint_{B(x, r)} w(y)^{1-p'} dy \bigg)^{\frac1{p'}}, 
\end{equation}
and 
\begin{align}\label{eq:continue}
|f_{B(x, r)}-f_{B(y, r)}| 
& = \frac{1}{|B(0, r)|}\int_{\Rn}|\mathbf{1}_{B(x,r)} - \mathbf{1}_{B(y,r)}||f(z)| \,dz
\\ \nonumber 
& \le \frac{ \|f\|_{L^p(w)}}{|B(0, r)|}
\bigg(\int_{\Rn}|\mathbf{1}_{B(x,r)} - \mathbf{1}_{B(y,r)}|w(z)^{1-p'} \,dz \bigg)^{\frac1{p'}}. 
\end{align} 
Since $w \in A_p$ is equivalent to $w^{1-p'} \in A_{p'}$, it follows that $w^{1-p'} \in L^1_{\loc}(\Rn)$, which in turn implies that there exists $C_0>0$ such that 
 \begin{align}\label{eq:BA}
\int_{B(x, r)} w(z)^{1-p'} dz \le C_0 \quad\text{for all } |x| \le A. 
\end{align}
From \eqref{eq:bdd}--\eqref{eq:BA}, we see that $\{f_{B(x, r)}\}_{f \in \G}$ is equi-bounded and equi-continuous on the closed ball $B(0, A)$. Then, by Ascoli-Arzel\'a theorem, it is relatively compact and so totally bounded in $\mathscr{C}(B(0, A))$. Consequently, there exists a finite number of functions $\{f_j\}_{j=1}^N \subset \G$ such that 
\begin{equation*}
\inf_{j}\sup_{|x|\le A}|f_{B(x, r)}-f_{j, B(x, r)}| < \varepsilon \, w(B(0, A))^{-\frac1p} \ \text{ for all }f\in \G.
\end{equation*}
It follows that for each $f\in \G$ there exists $j \in \{1,\ldots, N\}$ such that 
\begin{equation}\label{eq:compactset-4}
\sup_{|x|\le A}|f_{B(x, r)} - f_{j, B(x, r)}| < \varepsilon \, w(B(0,A))^{-\frac1p}.
\end{equation}
Then invoking \eqref{eq:compactset-1} and \eqref{eq:compactset-4}, we obtain 
\begin{align*}
\|f-f_j\|_{L^p(w)}
&\le \|(f-f_j)\mathbf{1}_{\{|x|\le A\}}\|_{L^p(w)} + \|(f-f_j) \mathbf{1}_{\{|x|> A\}}\|_{L^p(w)}
\\
&\le \|(f-f_{B(x, r)}) \mathbf{1}_{\{|x| \le A\}}\|_{L^p(w)}
+ \|(f_{B(x, r)} - f_{j, B(x, r)}) \mathbf{1}_{\{|x|\le A\}}\|_{L^p(w)}
\\
&\quad+\|(f_{j,B(x, r)}-f_j) \mathbf{1}_{\{|x|\le A\}}\|_{L^p(w)}
+ \|f \mathbf{1}_{\{|x|>A\}}\|_{L^p(w)} + \|f_j \mathbf{1}_{\{|x|>A\}}\|_{L^p(w)} 
\\
&\le \|f-f_{B(x, r)}\|_{L^p(w)}
+ \varepsilon \, w(B(0, A))^{-\frac1p} \|\mathbf{1}_{\{|x|\le A\}}\|_{L^p(w)}
\\
&\quad+\|f_j - f_{j,B(x, r)}\|_{L^p(w)}
+ \|f \mathbf{1}_{\{|x|>A\}}\|_{L^p(w)} + \|f_j \mathbf{1}_{\{|x|>A\}}\|_{L^p(w)}
\\
&< 5\varepsilon,
\end{align*}
which implies that $\G$ is totally bounded. Therefore, $\G$ is relatively compact in $L^p(w)$.

Let us next prove the necessity. Let $\varepsilon>0$. Since $\G$ is relatively compact, it is totally bounded. Thus, there exists a finite number of functions $\{f_j\}_{j=1}^N \subset \G$ such that $\G \subseteq \bigcup_{k=1}^N B(f_k,\varepsilon)$. Let $f \in \G$ be an arbitrary function. Then for some $k \in \{1,\ldots,N\}$, 
\begin{align}\label{eq:fk-f}
\|f_k-f\|_{L^{p}(w)} < \varepsilon.
\end{align}
The condition \eqref{list-1} is satisfied because 
\begin{align*}
\|f\|_{L^{p}(w)} 
\leq \|f-f_k\|_{L^{p}(w)} + \|f_k\|_{L^{p}(w)}
<1+\max_{1 \leq k \leq N} \|f_k\|_{L^{p}(w)}. 
\end{align*}
Since $f_k \in L^{p}(w)$, there exists $A_k>0$ such that 
\begin{equation}\label{eq:fkAk}
\|f_k \mathbf{1}_{\{|x|>A_k\}}\|_{L^{p}(w)} < \varepsilon, \quad k=1,\ldots,N. 
\end{equation}
Set $A:=\max\{A_k: k=1,\ldots,N\}$. Then by \eqref{eq:fk-f} and \eqref{eq:fkAk},  
\begin{align*}
\|f \mathbf{1}_{\{|x|>A\}}\|_{L^{p}(w)} \leq \|f-f_k\|_{L^{p}(w)} + 
\|f_k \mathbf{1}_{\{|x|>A_k\}}\|_{L^{p}(w)}
< 2\varepsilon. 
\end{align*}
This shows \eqref{list-2} holds. Now with \eqref{eq:fk-f} in hand, we split 
\begin{align*}
\|f-f_{B(\cdot,r)}\|_{L^{p}(w)} 
\leq \|f-f_k\|_{L^{p}(w)} + \|f_k-(f_k)_{B(\cdot,r)}\|_{L^{p}(w)}
+ \|(f_k)_{B(\cdot,r)}-f_{B(\cdot,r)}\|_{L^{p}(w)}. 
\end{align*}
The first term is controlled by $\varepsilon$. Note that 
\begin{align}\label{eq:fkfk-1}
|f_k(x)-(f_k)_{B(x,r)}| \lesssim |f_k(x)| + Mf_k(x) \in L^p(w),  
\end{align} 
and $L^p(w) \subset L^1_{\loc}(\Rn)$. By this and Lebesgue differentiation theorem, 
\begin{align}\label{eq:fkfk-2}
(f_k)_{B(x,r)} \to f_k(x), \text{ as } r \to 0^+, \quad\text{a.e. } x \in \Rn. 
\end{align}
Then, in light of \eqref{eq:fkfk-1} and \eqref{eq:fkfk-2}, the Lebesgue domination convergence theorem gives that 
\begin{align*}
\|f_k-(f_k)_{B(\cdot,r)}\|_{L^{p}(w)} < \varepsilon,\quad \forall r \in (0, \delta), 
\end{align*}
for some $\delta>0$. As for the last term, one has 
\begin{align*}
|(f_k)_{B(x,r)}-f_{B(x,r)}| \leq \fint_{B(x,r)} |f_k(y)-f(y)| dy \leq M(f_k-f)(x). 
\end{align*}
Hence, we obtain 
\begin{align*}
\|(f_k)_{B(\cdot,r)}-f_{B(\cdot,r)}\|_{L^{p}(w)} 
\leq \|M(f_k-f)\|_{L^{p}(w)}  
\lesssim \|f_k-f\|_{L^{p}(w)}  
\lesssim \varepsilon. 
\end{align*}
Collecting these estimates, we deduce that for any $0<t<\delta$, 
\begin{align*}
\|f-f_{B(\cdot,r)}\|_{L^{p}(w)} \lesssim \varepsilon,\quad\text{uniformly in } f \in \G.  
\end{align*}
This concludes that \eqref{list-3} holds. 
\end{proof}
%%%%%%%%%%%%%%%%%%%%%%%%%% END PROOF of Theorem 2.9%%%%%%%%%%%%%%%%%%%%%%%

We will extend Theorem \ref{thm:FK-2} to the case $0<p \le 1$ as follows.  
%%%%%%%%%%%%%%%%%%%%%% THEOREM 2.10 THEOREM %%%%%%%%%%%%%%%%%%%%%%
\begin{theorem}\label{thm:FK-3}
Let $0<p<\infty$ and $w \in A_{p_0}$ with $1<p_0<\infty$. Then a subset $\G \subseteq L^p(w)$ is relatively compact if and only if the following are satisfied:
\begin{enumerate}
\item $\sup\limits_{f \in \G} \|f\|_{L^p(w)} < \infty$, 
\item $\lim\limits_{A \to \infty} \sup\limits_{f \in \G}\|f \mathbf{1}_{\{|x|>A\}}\|_{L^p(w)}=0$, 
\item ${\displaystyle \lim\limits_{r \to 0} \sup\limits_{f \in \G} \int_{\Rn} \bigg(\fint_{B(0, r)} |f(x)-f(x+y)|^{\frac{p}{p_0}} dy\bigg)^{p_0}w(x)dx=0}$. 
\end{enumerate}
\end{theorem}
%%%%%%%%%%%%%%%%%%%%%% end THEOREM 2.10 THEOREM %%%%%%%%%%%%%%%%%%%%%%

%%%%%%%%%%%%%%%%%%%%%%%%%% PROOF PROOF PROOF %%%%%%%%%%%%%%%%%%%%%%%
\begin{proof}
Assume that (1), (2), and (3) hold. We first consider the case $p \ge p_0$. Observe that 
\begin{align*}
|f(x)-f_{B(x, r)}| \le \fint_{B(0, r)} |f(x)-f(x+y)| dy 
\le \bigg(\fint_{B(0, r)} |f(x)-f(x+y)|^{\frac{p}{p_0}} dy \bigg)^{\frac{p_0}{p}}. 
\end{align*}
This and (3) imply that 
\begin{equation}\label{eq:f-fB}
\lim\limits_{r \to 0} \sup\limits_{f \in \G}\|f-f_{B(\cdot,r)}\|_{L^p(w)}=0.
\end{equation} 
Note that $w \in A_{p_0} \subset A_p$. With (1), (2) and \eqref{eq:f-fB} in hand, by Theorem \ref{thm:FK-2}, we deduce that $\G$ is relatively compact in $L^p(w)$. 

Let us handle the case $p<p_0$. We first consider the case when $\G$ is a family of non-negative functions. Write $a:=p/p_0<1$. Then we see that 
\begin{align}\label{eq:fafa}
\big|f	^a(x) - (f^a)_{B(x, r)} \big| \le \fint_{B(0, r)} |f(x)-f(x+y)|^{\frac{p}{p_0}} dy, 
\end{align}
and, (1) and (2) are equivalent to 
\begin{align}\label{eq:fGfG}
\sup\limits_{f \in \G} \| f^a\|_{L^{p_0}(w)} < \infty \quad\text{and}\quad 
\lim\limits_{A \to \infty} \sup\limits_{f \in \G}\| f^a \mathbf{1}_{\{|x|>A\}}\|_{L^{p_0}(w)}=0. 
\end{align}
By \eqref{eq:fafa} and (3), there holds
\begin{equation}\label{eq:fa-fB}
\lim\limits_{r \to 0} \sup\limits_{f \in \G}\| f^a - (f^a)_{B(\cdot, r)}\|_{L^{p_0}(w)}=0.
\end{equation} 
Hence, from \eqref{eq:fGfG}, \eqref{eq:fa-fB}, $w \in A_{p_0}$ and Theorem \ref{thm:FK-2}, it follows that $\G^a:=\{f^a: f \in \G\}$ is relatively compact in $L^{p_0}(w)$. Now let $\{f_j\}$ be a sequence of functions in $\G$. Since $\G^a$ is relatively 
compact in $L^{p_0}(w)$, there exists a Cauchy subsequence of $\{f_j^a\}$, which we denote again by $\{f_j^a\}$ for simplicity. Then for any $\varepsilon>0$, there exists an integer $N$ such that for all $i, j\ge N$, 
\begin{equation}\label{eq:faij}
\int_{\Rn} \big|f_i^a(x) - f_j^a(x) \big|^{p_0}w(x)dx<\varepsilon ^{p_0}.
\end{equation}
For fixed $i, j \in \N$, we wet 
\begin{equation*}
E_\varepsilon :=\bigg\{x \in \Rn: \frac{f_i(x)+f_j(x)}{|f_i(x)-f_j(x)|}\le \frac{1}{\varepsilon} \bigg\}, \quad\forall\varepsilon>0.
\end{equation*}
By elementary calculation (see \cite{Tsuji}), for any $a \in (0,1)$
\begin{equation}\label{eq:sata}
|s^a-t^a|\le |s-t|^a \le \frac{1}{a}\bigg(\frac{s+t}{|s-t|}\bigg)^{1-a}|s^a-t^a|,\quad \text{ for all }s,t>0.
\end{equation}
Then, using $p_0 a=p$, \eqref{eq:faij} and \eqref{eq:sata}, we have
\begin{align*}
\int_{E_\varepsilon}|f_i(x)-f_j(x)|^{p}w(x)dx
&\le a^{-p_0}\varepsilon^{(a-1)p_0}
\int_{E_\varepsilon}|f_i^a(x)-f_j^a(x)|^{p_0}w(x)dx
\\
&\le a^{-p_0}\varepsilon^{(a-1)p_0}\varepsilon^{p_0}
=a^{-p_0}\varepsilon^{p}.
\end{align*}
On the other hand, \eqref{eq:sata} and (1) give 
\begin{align*}
\int_{E_\varepsilon^c} & |f_i(x)-f_j(x)|^{p}w(x)dx
\le \int_{E_\varepsilon^c}|\varepsilon(f_i(x)+f_j(x))|^{p}w(x)dx
\\
&\qquad\le \varepsilon^p \bigg(\int_{E_\varepsilon^c}|f_i(x)|^{p}w(x)dx
+\int_{E_\varepsilon^c}|f_j(x)|^{p}w(x)dx\bigg)
\le 2K^p\varepsilon^p, 
\end{align*}
where $K:=\sup\limits_{f \in \G} \|f\|_{L^p(w)}<\infty$. The two estimates above show that $\{f_j\}$ is a Cauchy sequence in 
$\G \subset L^p(w)$. Thus $\G$ is relatively compact in $L^p(w)$. 

To handle the general case, we denote 
\[
\G^+ := \{f^+: f \in \G\} \quad\text{and}\quad \G^- := \{f^-: f \in \G\}, 
\]
where 
\[
f^+(x) :=\max\{f(x), 0\} \quad\text{and}\quad 
f^-(x) := \max\{-f(x), 0 \}, \quad\forall f \in \G. 
\]
Then we see that for all $f \in \G$ and $x, y \in \Rn$, 
\begin{align*}
0 \le f^+(x) \le |f(x)|, \qquad |f^+(x) - f^+(x+y)| \le |f(x) - f(y)|, 
\\
0 \le f^-(x) \le |f(x)|, \qquad |f^-(x) - f^-(x+y)| \le |f(x) - f(y)|. 
\end{align*}
This means that 
\begin{align}\label{eq:GG}
\text{both $\G^+$ and $\G^-$ satisfy the conditions (1), (2), and (3)}.
\end{align} 
Let $\{f_j\}$ be an arbitrary sequence of functions in $\G$. By \eqref{eq:GG} and the conclusion in the preceding case, we conclude that for any $\varepsilon>0$ there exists $N_0 \in \N$ such that for all $i, j \ge N_0$, 
\begin{equation*}
\|f_i^+ - f_j^+\|_{L^p(w)} < \varepsilon \quad\text{and}\quad 
\|f_i^- - f_j^-\|_{L^p(w)} < \varepsilon, 
\end{equation*}
which implies 
\begin{align*}
\|f_i - f_j\|_{L^p(w)} 
\le \|f_i^+ - f_j^+\|_{L^p(w)} + \|f_i^- - f_j^-\|_{L^p(w)} 
< 2 \varepsilon. 
\end{align*}
Accordingly, $\{f_j\}$ is a Cauchy sequence in $\G \subset L^p(w)$, so $\G$ is relatively compact in $L^p(w)$. 

 Next, we show the necessity. Assume that $\G$ is relatively compact in $L^p(w)$. Since $w \in A_{p_0}$, $w \in L^1_{\loc}(\Rn)$ and $w^{1-p'_0} \in L^1_{\loc}(\Rn)$. Then together with Theorem \ref{thm:FK-1}, this gives (1) and (2) immediately. It remains to show (3). Let $\varepsilon>0$. Since $\G$ is relatively compact, there exists a finite number of functions $\{f_j\}_{j=1}^N \subset \G$ such that for any $g \in \G$, one can find $j \in \{1,\ldots,N\}$ satisfying $\|g-f_j\|_{L^p(w)}<\varepsilon$. 
Fix $f \in \G$. Then there is some $f_j \in \G$ such that 
\begin{align}\label{eq:ffe}
\|f-f_j\|_{L^p(w)} <\varepsilon. 
\end{align}
Observe that 
\begin{align}\label{eq:Ifr}
\mathcal{I}(f, r) &:= \int_{\Rn} \bigg(\fint_{B(0, r)} |f(x)-f(x+y)|^{\frac{p}{p_0}} dy\bigg)^{p_0}w(x)dx
\\ \nonumber
&\lesssim \int_{\Rn} \bigg(\fint_{B(0, r)} |f(x)-f_j(x)|^{\frac{p}{p_0}} dy\bigg)^{p_0}w(x)dx
\\ \nonumber
&\quad+ \int_{\Rn} \bigg(\fint_{B(0, r)} |f_j(x)-f_j(x+y)|^{\frac{p}{p_0}} dy\bigg)^{p_0}w(x)dx
\\ \nonumber
&\quad+ \int_{\Rn} \bigg(\fint_{B(0, r)} |f_j(x+y)-f(x+y)|^{\frac{p}{p_0}} dy\bigg)^{p_0}w(x)dx
\\ \nonumber
&=: \mathcal{I}_1 + \mathcal{I}_2 + \mathcal{I}_3. 
\end{align}
From \eqref{eq:ffe}, one has 
\begin{align}\label{eq:Ifr-1} 
\mathcal{I}_1 = \int_{\Rn} |f(x)-f_j(x)|^{p}w(x)dx < \varepsilon. 
\end{align}
For $\mathcal{I}_3$, we have 
\begin{align}\label{eq:Ifr-3} 
\mathcal{I}_3 \le \int_{\Rn} M(|f-f_j|^{\frac{p}{p_0}})(x)^{p_0} w(x)dx 
\lesssim \int_{\Rn} |f(x)-f_j(x)|^{p}w(x)dx < \varepsilon, 
\end{align}
where we used that $w \in A_{p_0}$ and \eqref{eq:ffe}. To deal with $\mathcal{I}_2$, we see that $w \in L^1_{\loc}(\Rn)$, and hence, $\mathscr{C}_c^{\infty}(\Rn)$ is dense in $L^p(w)$ for any $p \in (0, \infty)$. So, we can find $g_j \in \mathscr{C}_c^{\infty}(\Rn)$ such that 
\begin{equation}\label{eq:fge}
\|f_j-g_j\|_{L^p(w)} < \varepsilon. 
\end{equation}
We may assume that there exist $r_0, A_0>0$ such that $\supp(g_j) \subset B(0, A_0)$ and 
\begin{align}\label{eq:gjgj}
\sup_{|y| \le r_0} \|g_j(\cdot) - g_j(\cdot+y)\|_{L^{\infty}(\Rn)} < \varepsilon. 
\end{align}
Using \eqref{eq:fge}, \eqref{eq:gjgj}, we obtain that for any $0<r<r_0$, 
\begin{align}\label{eq:Ifr-2}
\mathcal{I}_2  &\le \int_{\Rn} \bigg(\fint_{B(0, r)} |f_j(x)-g_j(x)|^{\frac{p}{p_0}} dy\bigg)^{p_0}w(x)dx
\\ \nonumber 
&\qquad+ \int_{\Rn} \bigg(\fint_{B(0, r)} |g_j(x)-g_j(x+y)|^{\frac{p}{p_0}} dy\bigg)^{p_0}w(x)dx
\\ \nonumber
&\qquad+ \int_{\Rn} \bigg(\fint_{B(0, r)} |g_j(x+y)-f_j(x+y)|^{\frac{p}{p_0}} dy\bigg)^{p_0}w(x)dx
\\ \nonumber
&\le  \int_{\Rn} |f_j-g_j|^p w\, dx 
+ \sup_{|y| \le r_0} \|g_j(\cdot) - g_j(\cdot+y)\|_{L^{\infty}(\Rn)}^p w(B(0,A+r))
\\ \nonumber
&\qquad+ \int_{\Rn} M(|g_j-f_j|^{\frac{p}{p_0}})(x)^{p_0} w(x)dx 
\\ \nonumber
&\lesssim \varepsilon^p + \varepsilon^p w(B(0,A+r_0)) + \|f_j-g_j\|_{L^p(w)}^p 
\lesssim \varepsilon^p. 
\end{align}
Collecting \eqref{eq:Ifr}, \eqref{eq:Ifr-1}, \eqref{eq:Ifr-3} and \eqref{eq:Ifr-2}, we conclude that for any $0<r<r_0$, 
\begin{align*}
\mathcal{I}(f, r) \lesssim \varepsilon + \varepsilon^p, 
\end{align*}
where the implicit constant is independent of $f$ and $r$. This proves (3) and completes the proof.  
\end{proof}
%%%%%%%%%%%%%%%%%%%%%%%%%% END PROOF of Theorem 2.10 %%%%%%%%%%%%%%%%%%%%%%%

The following result will provide us great convenience in practice. 
%%%%%%%%%%%%%%%%%%%%%%%% LEMMA 2.11 LEMMA %%%%%%%%%%%%%%%%%%%%%%%%
\begin{lemma}\label{lem:bTTj}
Let $\frac1p=\frac{1}{p_1}+\cdots+\frac{1}{p_m}$ with $1<p_1,\ldots,p_m<\infty$, and fix $k\in \{1,\ldots,m\}$. Assume that an m-linear operator $T$ satisfies the following: 
\begin{enumerate}
\item[(i)]  $\|[b, T]_{e_k}\|_{L^{p_1}(\Rn) \times \cdots \times L^{p_m}(\Rn) \to L^p(\Rn)} \lesssim \|b\|_{\BMO}$ for any $b \in \BMO$; 
\vspace{0.2cm}
\item[(ii)]  $T=\sum_{j \ge 0} T_j$, where $T_j$ is also an m-linear operator such that  \vspace{0.2cm} 
\begin{enumerate}
\item[(ii-1)] $\|T_j\|_{L^{p_1}(\Rn)\times \cdots \times L^{p_m}(\Rn) \to L^p(\Rn)} \lesssim 2^{-\delta j}$ for each $j \ge 0$, where $\delta>0$ is a fixed number. 
\item[(ii-2)] For any $b \in \CMO$, $[b, T_j]_{e_k}$ is compact from $L^{p_1}(\Rn)\times \cdots \times L^{p_m}(\Rn)$ to $L^p(\Rn)$ for each $j \ge 0$. 
\end{enumerate}
\end{enumerate}
Then, $[b, T]_{e_k}$ is compact from $L^{p_1}(\Rn) \times \cdots \times L^{p_m}(\Rn)$ to $L^p(\Rn)$ for any $b \in \CMO$. 
\end{lemma}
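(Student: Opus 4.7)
The plan is to realize $[b,T]_{e_k}$ as an operator-norm limit of compact $m$-linear operators, and then invoke the standard fact that the space of compact $m$-linear operators between Banach spaces is closed under operator-norm convergence. Hypothesis (i) is the tool for absorbing any $\BMO$-small perturbation of the symbol into a small operator, while (ii-1) combined with the trivial $L^\infty$-bound for commutators will deliver summability of the series $\sum_j [b_\varepsilon,T_j]_{e_k}$ once $b$ has been replaced by a bounded approximant $b_\varepsilon$.

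Concretely, I would fix $b\in\CMO$ and $\varepsilon>0$ and pick $b_\varepsilon\in C_c^\infty(\Rn)$ with $\|b-b_\varepsilon\|_{\BMO}<\varepsilon$, which is possible because $\CMO$ is by definition the closure of $C_c^\infty(\Rn)$ in $\BMO$. Writing
\[
[b,T]_{e_k}=[b-b_\varepsilon,T]_{e_k}+[b_\varepsilon,T]_{e_k},
\]
hypothesis (i) gives $\|[b-b_\varepsilon,T]_{e_k}\|_{\mathrm{op}}\lesssim\|b-b_\varepsilon\|_{\BMO}<C\varepsilon$, so it is enough to show that $[b_\varepsilon,T]_{e_k}$ is compact. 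For that, I would use that $b_\varepsilon\in L^\infty(\Rn)$ to obtain the trivial estimate
\[
\|[b_\varepsilon,T_j]_{e_k}\|_{\mathrm{op}}\leq 2\|b_\varepsilon\|_{L^\infty}\|T_j\|_{\mathrm{op}}\lesssim\|b_\varepsilon\|_{L^\infty}2^{-\delta j},
\]
so that $\sum_{j\geq 0}[b_\varepsilon,T_j]_{e_k}$ converges absolutely in operator norm. Since (ii-1) simultaneously yields $T=\sum_j T_j$ in operator norm, and since pre- and post-multiplication by the bounded function $b_\varepsilon$ are continuous on the relevant Lebesgue spaces, the sum of this series is exactly $[b_\varepsilon,T]_{e_k}$.

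Now each partial sum $\sum_{j=0}^N [b_\varepsilon,T_j]_{e_k}$ is a finite sum of operators that are compact by (ii-2), hence itself compact (a finite sum of totally bounded images is totally bounded). An operator-norm limit of compact $m$-linear operators is again compact by the standard total-boundedness argument: if $\|S-S_n\|_{\mathrm{op}}<\varepsilon$ and the image of $B_1\times\cdots\times B_m$ under $S_n$ is covered by finitely many $\varepsilon$-balls in $L^p(\Rn)$, then the same balls inflated by $\varepsilon$ cover its image under $S$. Applying this principle once, $[b_\varepsilon,T]_{e_k}$ is compact; consequently $[b,T]_{e_k}$ lies within $C\varepsilon$ in operator norm of a compact operator for every $\varepsilon>0$, and applying the closedness principle a second time yields that $[b,T]_{e_k}$ itself is compact.

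The only genuinely non-trivial input I expect to need is the operator-norm closedness of compact $m$-linear operators (together with the innocuous fact that $[b_\varepsilon,\cdot]_{e_k}$ is continuous in operator norm when $b_\varepsilon$ is bounded); this is the main, though routine, technical point. Everything else is bookkeeping built on hypotheses (i), (ii-1) and (ii-2), arranged so that the $\BMO$-smallness provided by $\CMO$ and the geometric decay $2^{-\delta j}$ are used for complementary pieces of the decomposition.
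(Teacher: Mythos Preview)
Your proof is correct and follows essentially the same route as the paper: approximate $b\in\CMO$ by $b_\varepsilon\in C_c^\infty$, use hypothesis (i) to absorb the $\BMO$-small remainder $[b-b_\varepsilon,T]_{e_k}$, and for the bounded symbol $b_\varepsilon$ combine the trivial $L^\infty$ commutator estimate with the geometric decay (ii-1) to realize $[b_\varepsilon,T]_{e_k}$ as an operator-norm limit of the compact partial sums guaranteed by (ii-2). The paper organizes the steps in the opposite order (first proving compactness for all $b\in C_c^\infty$, then passing to $\CMO$), but the ingredients and logic are identical.
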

%%%%%%%%%%%%%%%%%%%%%%%% end LEMMA 2.11 LEMMA %%%%%%%%%%%%%%%%%%%%%%%%

%%%%%%%%%%%%%%%%%%%%%%%% PROOF PROOF PROOF %%%%%%%%%%%%%%%%%%%%%%%%%
\begin{proof}
For any $N,M\in\N$ with $N<M$, by (ii-1), we have
\begin{equation*}
\Big\|\sum_{j \le N}T_j(\vec{f})- \sum_{j \le M}T_j(\vec{f})\Big\|_{L^p(\Rn)}
\le \sum_{N<j \le M}2^{-\delta j}\prod_{j=1}^{m}\|f_j\|_{L^{p_j}(\Rn)}. 
\end{equation*} 
Letting $M\to\infty$, we get
\begin{equation*}
\Big\|T(\vec{f})- \sum_{j\le N}T_j (\vec{f})\Big\|_{L^p(\Rn)}
\le \sum_{j>N}2^{-\delta j}\prod_{j=1}^{m}\|f_{j}\|_{L^{p_j}(\Rn)},
\end{equation*} 
which implies
\begin{equation}\label{eq:TTj}
\Big\|T- \sum_{j\le N}T_j \Big\|_{L^{p_1}(\Rn)\times\cdots\times L^{p_m}(\Rn)\to L^p(\Rn)} \le \sum_{j>N}2^{-\delta j}.
\end{equation}
Now for $b\in C_c^\infty(\Rn)$ and $f_{j}\in L^{p_{j}}(\Rn)$, 
\begin{align*}
&\Big\|[b,T]_{e_k}(\vec{f})- \sum_{j\le N}[b,T_j]_{e_k}(\vec{f})\Big\|_{L^p(\Rn)}
\\%%%%%%%%%%%%%%%
&\le \Big\|b \Big(T- \sum_{j\le N}T_j \Big)(\vec{f})\Big\|_{L^p(\Rn)}
+ \Big\| \Big(T- \sum_{j\le N}T_j \Big)(f_1,\ldots,bf_k,\ldots,f_m) \Big\|_{L^p(\Rn)}\\
&\le 2\|b\|_{L^{\infty}(\Rn)} \Big\|T- \sum_{j\le N}T_j\Big\|_{L^{p_1}(\Rn) \times \cdots \times L^{p_m}(\Rn)\to L^p(\Rn)} \prod_{j=1}^{m}\|f_j\|_{L^{p_j}(\Rn)}\\
&\le 2\|b\|_{L^{\infty}(\Rn)} \sum_{j>N} 2^{-\delta j}\prod_{j=1}^{m}\|f_j\|_{L^{p_j}(\Rn)}, 
\end{align*}
where \eqref{eq:TTj} was used in the last inequality. Hence, for $b\in \mathscr{C}_c^\infty(\Rn)$, 
\begin{equation*}
\Big\|[b,T]_{e_k}- \sum_{j\le N}[b,T_j]_{e_k} \Big\|_{L^{p_1}(\Rn)\times\cdots\times L^{p_m}(\Rn) \to L^p(\Rn)} \to 0,\quad  \text{as }N\to\infty.
\end{equation*}
From (ii-2), we see that $[b,T]_{e_k}$ is compact from $L^{p_1}(\Rn) \times\cdots\times L^{p_m}(\Rn)$ to $L^p(\Rn)$ whenever $b\in \mathscr{C}_c^\infty(\Rn)$. 

Next, let $b \in \CMO$ and take $b_j\in \mathscr{C}_c^\infty(\Rn)$ so that $\lim_{j\to\infty}\|b-b_j\|_{\BMO}=0$. Then using (i), 
\begin{equation}\label{eq:bjlim}
\|[b_j,T]_{e_{k}}-[b,T]_{e_k}\|_{L^{p_1}(\Rn)\times\cdots\times L^{p_m}(\Rn)\to L^p(\Rn)}  \lesssim \|b_j-b\|_{\BMO}.
\end{equation}
Since $[b_j,T]_{e_{k}}$ is compact from $L^{p_1}(\Rn)\times\cdots\times L^{p_m}(\Rn)$ to $L^p(\Rn)$, this and \eqref{eq:bjlim} yield that $[b,T]_{e_{k}}$ is compact from $L^{p_1}(\Rn)\times\cdots\times L^{p_m}(\Rn)$ to $L^p(\Rn)$.
\end{proof}
%%%%%%%%%%%%%%%%%%%%%%%% END PROOF of Lemma 2.11 %%%%%%%%%%%%%%%%%%%%%%%%%

%%%%%%%%%%%%%%%%%%%%%%% SECTION SECTION SECTION %%%%%%%%%%%%%%%%%%%%%%%
%%%%%%%%%%%%%%%%%%%%%%% SECTION  3  SECTION %%%%%%%%%%%%%%%%%%%%%%%
\section{Interpolation for multilinear operators}\label{sec:inter} 

In this section, we will study the weighted interpolation for multilinear 
operators. We first generalize the results in \cite{CZ, S} 
to the weighted case. 

%%%%%%%%%%%%%%%%%%%%%%%% THEOREM 3.1 THEOREM %%%%%%%%%%%%%%%%%%%%
\begin{theorem}\label{thm:WMIP-1}
Suppose that $(\Sigma_0, \mu_0), \ldots, (\Sigma_m, \mu_m)$ are 
measure spaces, and $\mathscr{S}_j$ is the collection of all simple functions 
on $\Sigma_j$, $j=1,\dots,m$. Denote by $\mathfrak{M}(\Sigma_0)$ the set of 
all measurable functions on $\Sigma_0$. 
Let $T: \mathscr{S}=\mathscr{S}_1 \times \cdots \times \mathscr{S}_m \to 
\mathfrak{M}(\Sigma_0)$ be an $m$-linear operator. 
Let $0<p_0, q_0< \infty$, $1\le p_j, q_j\le \infty$ $(j=1,\dots,m)$, and let 
$w_j, v_j$ be weights on $\Sigma_j$ $(j=0,\dots,m)$. 
Assume that there exist $M_1, M_2 \in (0, \infty)$ such that 
\begin{align}
&\|T\|_{L^{p_1}(\Sigma_1,\, w_1^{p_1}) \times \cdots 
\times L^{p_m}(\Sigma_m,\, w_m^{p_m}) \to L^{p_0}(\Sigma_0,\,  w_0^{p_0})} 
\le M_1,  
\\
&\|T\|_{L^{q_1}(\Sigma_1,\, v_1^{q_1}) \times \cdots \times 
L^{q_m}(\Sigma_m,\,  v_m^{q_m}) \to L^{q_0}(\Sigma_0,\,  v_0^{q_0})} \le M_2,  
\end{align}
Then, we have 
\begin{equation}
\|T\|_{L^{r_1}(\Sigma_1,\,  u_1^{r_1}) \times \cdots \times 
L^{r_m}(\Sigma_m,\, u_m^{r_m}) \to L^{r_0}(\Sigma_0,\,  u_0^{r_0})} 
\le M_1^{1-\theta} M_2^{\theta},   
\end{equation}
for all exponents satisfying 
\begin{equation}\label{eq:exp}
0<\theta<1,\quad \frac{1}{r_j}=\frac{1-\theta}{p_j}+\frac{\theta}{q_j} 
\quad\text{and}\quad u_j=w_j^{1-\theta} v_j^{\theta},\quad j=0,\dots,m.
\end{equation}
\end{theorem}
%%%%%%%%%%%%%%%%%%%%%%%% end THEOREM 3.1 THEOREM %%%%%%%%%%%%%%%%%%%%

Obviously, Theorem \ref{thm:WMIP-1} is a consequence of Lemma \ref{lem:WMIP} and Lemma \ref{lem:dense} below. 
%%%%%%%%%%%%%%%%%%%%%%%%%% LEMMA 3.2 LEMMA %%%%%%%%%%%%%%%%%%%%%%
\begin{lemma}\label{lem:WMIP}
Suppose that $(\Sigma_0, \mu_0), \ldots, (\Sigma_m, \mu_m)$ are measure spaces, and $\mathscr{S}_j$ is the collection of all simple functions on $\Sigma_j$, $j=1,\dots,m$. Denote by $\mathfrak{M}(\Sigma_0)$ the set of all measurable functions on $\Sigma_0$. Let $T: \mathscr{S}=\mathscr{S}_1 \times \cdots \times \mathscr{S}_m \to \mathfrak{M}(\Sigma_0)$ be an $m$-linear operator. Let $0<p_0, q_0< \infty$, $1\le p_j, q_j\le \infty$ $(j=1,\dots,m)$, and let $w_j, v_j$ be weights on $\Sigma_j$ $(j=0,\dots,m)$. Assume that there exist $M_1, M_2 \in (0, \infty)$ such that 
\begin{align}\label{eq:WMIP-1} 
\|T(\vec{f})\, w_0\|_{L^{p_0}(\Sigma_0,\, \mu_0)} \le M_1\prod_{j=1}^{m}\|f_j\, w_j\|_{L^{p_j}(\Sigma_j,\, \mu_j)}, 
\end{align}
for all $\vec{f}=(f_1,\ldots,f_m) \in \mathscr{S}$ with $\|f_j\,w_j\|_{L^{p_j}(\Sigma_j,\, \mu_j)}<\infty$, $j=1,\ldots,m$, and 
\begin{align}\label{eq:WMIP-2} 
\|T(\vec{f})\, v_0\|_{L^{q_0}(\Sigma_0,\, \mu_0)} \le M_2\prod_{j=1}^{m}\|f_j\, v_j\|_{L^{q_j}(\Sigma_j,\, \mu_j)},  
\end{align}
for all $\vec{f}=(f_1,\ldots,f_m) \in \mathscr{S}$ with $\|f_j\,v_j\|_{L^{q_j}(\Sigma_j,\, \mu_j)}<\infty$, $j=1,\ldots,m$. Then, for all exponents satisfying \eqref{eq:exp},  
\begin{equation}\label{eq:WMIP-3}
\|T(\vec{f})\, u_0\|_{L^{r_0}(\Sigma_0,\, \mu_0)} \le M_1^{1-\theta} M_2^{\theta} \prod_{j=1}^{m}\|f_j\, u_j\|_{L^{r_j}(\Sigma_j,\, \mu_j)}, 
\end{equation}
for all $\vec{f}=(f_1,\ldots,f_m) \in \mathscr{S}$ with $\|f_j\,w_j\|_{L^{p_j}(\Sigma_j,\, \mu_j)}<\infty$ and $\|f_j\,v_j\|_{L^{q_j}(\Sigma_j,\, \mu_j)}<\infty$, $j=1,\ldots,m$. 
\end{lemma}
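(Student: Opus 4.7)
The proof follows the Stein--Calder\'on complex method of interpolation, with the weights absorbed into an analytic family so that the interpolation reduces to a statement between unweighted Lebesgue spaces. By homogeneity I may normalize $\|f_j u_j\|_{L^{r_j}(\mu_j)}=1$ for $j=1,\dots,m$. Set $\tfrac{1}{R_j(z)}:=\tfrac{1-z}{p_j}+\tfrac{z}{q_j}$, so that $R_j(\theta)=r_j$, and introduce
\[
F_j(z,x) := |f_j(x)|^{r_j/R_j(z)}\,\operatorname{sgn}(f_j(x))\,\frac{u_j(x)^{r_j/R_j(z)}}{w_j(x)^{1-z}\,v_j(x)^{z}}, \qquad j=1,\dots,m,
\]
(with $F_j(z,x):=0$ where $f_j(x)=0$), together with the entire output weight $U_0(z,y):=w_0(y)^{1-z}v_0(y)^{z}$, so that $U_0(\theta,\cdot)=u_0$. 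For every $x$ the map $z\mapsto F_j(z,x)$ is entire on the closed strip $\overline{S}:=\{0\le\Re z\le 1\}$, $F_j(\theta,\cdot)=f_j$, and using $\Re(r_j/R_j(z))=r_j/p_j$ on $\Re z=0$ (resp.\ $r_j/q_j$ on $\Re z=1$) together with $|w_j^{1-z}v_j^{z}|=w_j^{1-\Re z}v_j^{\Re z}$, a direct computation yields the boundary identities
\[
\|F_j(is)\,w_j\|_{L^{p_j}(\mu_j)}=\|F_j(1{+}is)\,v_j\|_{L^{q_j}(\mu_j)}=1,\qquad s\in\mathbb{R}.
\]

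After extending $T$ from $\mathscr{S}$ by density to $L^{p_j}(w_j^{p_j})+L^{q_j}(v_j^{q_j})$ using the two boundedness hypotheses, put
\[
G(z) := T\bigl(F_1(z),\dots,F_m(z)\bigr)\cdot U_0(z,\cdot).
\]
The $m$-linearity of $T$, combined with the representation of each $F_j(z,x)$ as a finite sum of characteristic functions times entire scalar factors of $z$, makes $G(z)(y)$ an entire function of $z$ for $\mu_0$-a.e.\ $y$, of bounded (hence admissible) growth on vertical lines in $\overline{S}$, with $G(\theta)=T(\vec f)\,u_0$. Since $|U_0(is,\cdot)|=w_0$ and $|U_0(1{+}is,\cdot)|=v_0$, the boundary identities for $F_j$ combined with the hypotheses \eqref{eq:WMIP-1} and \eqref{eq:WMIP-2} give the unweighted boundary bounds
\[
\|G(is)\|_{L^{p_0}(\mu_0)}\le M_1,\qquad \|G(1{+}is)\|_{L^{q_0}(\mu_0)}\le M_2.
\]
Thus \eqref{eq:WMIP-3} is equivalent to $\|G(\theta)\|_{L^{r_0}(\mu_0)}\le M_1^{1-\theta}M_2^{\theta}$, which is the conclusion of the complex interpolation theorem applied to the analytic family $\{G(z)\}_{z\in\overline{S}}$ between the unweighted Lebesgue spaces $L^{p_0}(\mu_0)$ and $L^{q_0}(\mu_0)$.

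For $r_0\ge 1$ this final step is the classical Hadamard three-lines lemma applied via duality to the scalar entire function $\Phi(z):=\int_{\Sigma_0}G(z)\,h\,d\mu_0$, where $h$ ranges over simple functions on $\Sigma_0$ with $\|h\|_{L^{r_0'}(\mu_0)}\le 1$. The principal obstacle is the case $r_0<1$, in which $L^{r_0}(\mu_0)$ is only a quasi-Banach space and this duality is unavailable. In this regime the required estimate is provided by Calder\'on's complex interpolation for $L^p$ with $p\in(0,\infty)$ (equivalently, Sagher's \cite{S} subharmonic-function version of three lines): pointwise holomorphy of $G(z)(y)$ makes $z\mapsto\int|G(z)(y)|^{r_0}d\mu_0(y)$ log-subharmonic in $\overline{S}$, and the Phragm\'en--Lindel\"of principle on the strip converts the $L^{p_0}$ and $L^{q_0}$ boundary bounds into the intermediate $L^{r_0}$-bound $\|G(\theta)\|_{L^{r_0}(\mu_0)}\le M_1^{1-\theta}M_2^{\theta}$. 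Together the two regimes establish \eqref{eq:WMIP-3}.
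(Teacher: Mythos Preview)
Your overall strategy---absorbing the weights into an analytic family and applying a three-lines/Phragm\'en--Lindel\"of argument---is exactly the paper's approach. However, two steps are not justified as written.

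First, your claim that each $F_j(z,\cdot)$ is ``a finite sum of characteristic functions times entire scalar factors of $z$'' is false: although $f_j$ is simple, the weight factor $u_j^{\,r_j/R_j(z)}/(w_j^{1-z}v_j^{z})$ varies with $x$ through the general weights $w_j,v_j$, so $F_j(z,\cdot)$ is \emph{not} a simple function and $T$ is a priori undefined on it. Your density extension does not rescue the pointwise analyticity of $G(z)(y)$ that you need, because boundedness of the extended $T$ gives only analyticity into the target Banach (or quasi-Banach) space, not pointwise in $y$. The paper resolves this by first proving the inequality for \emph{simple} approximations $w_j',v_j'$ of the weights (so that $F_j(z,\cdot)$ genuinely is simple and $T(\vec{F}_z)$ is a finite $m$-linear combination of fixed values of $T$ with entire coefficients), and only afterwards passing to general weights by monotone and dominated convergence.

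Second, your treatment of the quasi-Banach case $r_0<1$ does not work as stated. Even granting that $z\mapsto\int|G(z)|^{r_0}\,d\mu_0$ is subharmonic, the boundary data you have are $\|G(is)\|_{L^{p_0}}\le M_1$ and $\|G(1+is)\|_{L^{q_0}}\le M_2$, which control $\int|G|^{p_0}$ and $\int|G|^{q_0}$, not $\int|G|^{r_0}$; a single application of Phragm\'en--Lindel\"of to one subharmonic function cannot interpolate between different exponents. The paper's device is to choose $k>\max\{1/p_0,1/q_0\}$ so that $kr_0>1$, dualize $\|T(\vec{f})u_0\|_{L^{r_0}}^{1/k}$ against a simple $g$ with $\|g\|_{L^{(kr_0)'}}=1$, and build into the analytic family a $z$-dependent test function $G_z=g^{(1-1/(kr_0(z)))/(1-1/(kr_0))}$ together with a damping factor $e^{k(z^2-1)/\ell}$; this is what makes the boundary estimates close up and the subharmonic maximum principle applicable. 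Without this trick (or an explicit citation of a theorem that packages it), the $r_0<1$ case is a genuine gap.
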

%%%%%%%%%%%%%%%%%%%%%%%%%% end LEMMA 3.2 LEMMA %%%%%%%%%%%%%%%%%%%%%%

%%%%%%%%%%%%%%%%%%%%%%%% PROOF PROOF PROOF %%%%%%%%%%%%%%%%%%%%%%%%%
\begin{proof}
We begin with a claim that given $\mu_j$-measurable sets $F_j \subset \Sigma_j$ with $\mu_j(F_j)<\infty$, $j=1,\ldots,m$, under the assumptions in Lemma \ref{lem:WMIP}, for any fixed $\varepsilon>0$ and simple functions $w'_j, v'_j, u'_j$ on $\Sigma_j$ $(j=0,\ldots,m)$ satisfying $w_j \le w'_j$, $v_j \le v'_j$ on the set $F'_j:=\{x \in F_j: \varepsilon \le w_j(x), v_j(x) \le 1/\varepsilon\}$, $w'_j(x)=v'_j(x)=0$ on $\Sigma_j \setminus F'_j$ $(j=1,\ldots,m)$, $w'_0 \le w_0$, $v'_0 \le v_0$, and $u'_j=(w'_j)^{1-\theta}(v'_j)^{\theta}$ $(j=0,\ldots,m)$, it holds 
\begin{equation}\label{eq:Tfu-1}
\|T(\vec{f})\, u'_0\|_{L^{r_0}(\Sigma_0,\, \mu_0)}
\le M_1^{1-\theta} M_2^{\theta}\prod_{j=1}^{m}\|f_j\, u'_j\|_{L^{r_j}(\Sigma_j,\, \mu_j)}, 
\end{equation}
for any simple functions $f_j$ with $f_j=0$ on $\Sigma_j \setminus F'_j$, $j=1,\ldots,m$. 

We momentarily assume \eqref{eq:Tfu-1} holds. Letting $w'_j \to w_j$ and $v'_j \to v_j$ on $F'_j$ $(j=1,\ldots,m)$, and by Lebesgue's dominated convergence theorem, we obtain from \eqref{eq:Tfu-1} that  
\begin{equation}\label{eq:Tfu-2}
\|T(\vec{f})\, u'_0\|_{L^{r_0}(\Sigma_0,\, \mu_0)}
\le M_1^{1-\theta} M_2^{\theta}\prod_{j=1}^{m}\|f_j\, u_j\|_{L^{r_j}(\Sigma_j,\, \mu_j)}, 
\end{equation}
for any simple functions $f_j$ with $f_j=0$ on $\Sigma_j \setminus F'_j$, $j=1,\ldots,m$. Then using \eqref{eq:Tfu-2}, letting $w'_0 \to w_0$ and $v'_0 \to v_0$ increasingly, and by Fatou's lemma, we get 
\begin{equation}\label{eq:Tfu-3}
\|T(\vec{f})\, u_0\|_{L^{r_0}(\Sigma_0,\, \mu_0)}
\le M_1^{1-\theta} M_2^{\theta}\prod_{j=1}^{m}\|f_j\, u_j\|_{L^{r_j}(\Sigma_j,\, \mu_j)}, 
\end{equation}
for any simple functions $f_j$ with $f_j=0$ on $\Sigma_j \setminus F'_j$, $j=1,\ldots,m$. 

We are going to conclude \eqref{eq:WMIP-3} by means of \eqref{eq:Tfu-3}. Let $f_j$ be a simple function on $\Sigma_j$ satisfying $f_j w_j \in L^{p_j}(\Sigma_j, \mu_j)$ and $f_j v_j \in L^{q_j}(\Sigma_j, \mu_j)$, $j=1,\ldots,m$. Then there are measurable sets $F_j \subset \Sigma_j$ with $\mu_j(F_j)<\infty$ such that $f_j=0$ on $\Sigma_j \setminus F_j$, $j=1,\ldots,m$. Note that H\"{o}lder's inequality gives that 
\begin{align*}
\|f_ju_j\|_{L^{r_j}(\Sigma_j, \mu_j)} \le \|f_jw_j\|_{L^{p_j}(\Sigma_j,\, \mu_j)}^{1-\theta} \|f_jv_j\|_{L^{q_j}(\Sigma_j,\,  \mu_j)}^{\theta}. 
\end{align*}
Denote $F_{j,k} :=\{x \in F_j: 1/k \le w_j(x), v_j(x) \le k\}$ and $f_{j,k}=f_j {\bf 1}_{F_{j,k}}$, $j=1\ldots,m$. Then $f_{j,k}$ is a simple function in $\Sigma_j$ and $f_{j,k}=0$ on $\Sigma_j \setminus F_{j,k}$. By Lebesgue's dominated convergence theorem, we see that $f_{j,k} \to f_j$ in $L^{p_j}(\Sigma_j, w_j^{p_j})$, $L^{q_j}(\Sigma_j, v_j^{q_j})$ and $L^{r_j}(\Sigma_j, u_j^{r_j})$ for each $j=1,\ldots,m$. Hence, \eqref{eq:WMIP-1} gives that $T(f_{1,k}, \ldots, f_{m,k}) w_0$ tends to $T(\vec{f}) w_0$ in $L^{p_0}(\Sigma_0, \mu_0)$. On the other hand, from \eqref{eq:Tfu-3}, we see that $\{T(f_{1,k},\ldots,f_{m,k}) u_0\}_{k \ge 1}$ is a Cauchy sequence in $L^{r_0}(\Sigma_0, \mu_0)$. These two facts yield that $T(f_{1,k},\ldots,f_{m,k}) u_0$ tends to $T(\vec{f}) u_0$ in $L^{r_0}(\Sigma_0, \mu_0)$, which implies 
\begin{equation*}
\|T(\vec{f})\, u_0\|_{L^{r_0}(\Sigma_0,\, \mu_0)}
\le M_1^{1-\theta} M_2^{\theta}\prod_{j=1}^{m}\|f_j\, u_j\|_{L^{r_j}(\Sigma_j,\, \mu_j)}. 
\end{equation*}
This coincides with \eqref{eq:WMIP-3}. 

Now, we proceed to demonstrate \eqref{eq:Tfu-1}. For the sake of simplicity, we use $w_j, v_j$ and $u_j$ instead of $w_j', v_j'$ and $u_j'$, respectively. Pick $k\in\N$ so that $k>\max\{\frac{1}{p_0}, \frac{1}{q_0}\}$, which gives that $kr_0>1$. Hence we have
\begin{align}\label{eq:WMIP-5} 
\|T(\vec{f})\, u_0\|^{1/k}_{L^{r_0}(\Sigma_0,\, \mu_0)} 
= \sup_{g}\int_{\Sigma_0} |T(\vec{f})u_0|^{1/k} g\, d\mu_0,
\end{align}
where $g$ is nonnegative simple functions on $\Sigma_0$ satisfying $\|g\|_{L^{(kr_0)'}(\Sigma_0, \mu_0)}=1$. Let us fix $\vec{f}=(f_1,\ldots,f_m)$ and $g$. We may assume $\|f_j\,u_j\|_{L^{r_j}(\Sigma_j, \mu_j)}<\infty$ for each $j=1,\ldots,m$. Write $\widetilde{f}_j=f_j u_j$ and $\widetilde{f}_j=|\widetilde{f}_j|e^{is_j}$, $j=1,\ldots,m$. Set 
\begin{equation*}
A_1 :=\prod_{j=1}^{m}\|\widetilde{f}_j\|_{L^{r_j}(\Sigma_j,\, \mu_j)}^{r_j/p_j} \quad \text{and}\quad 
A_2 :=\prod_{j=1}^{m}\|\widetilde{f}_j\|_{L^{r_j}(\Sigma_j,\, \mu_j)}^{r_j/q_j}
\end{equation*} 
Define  for $\ell\in \N$ 
\begin{equation}\label{eq:WMIP-8}
\Phi_\ell(z) :=\int_{\Sigma_0} |U_\ell(z)|^{\frac1k}\, d\mu_0,
\end{equation}
where  
\begin{align*}
& U_\ell(z) := e^{k(z^2-1)/\ell}
(A_1M_1)^{z-1}(A_2M_2)^{-z}T(\vec{F_z})w^{1-z}_0v_0^{z} G_z^k, 
\\
& G_z := g^{\frac{1-1/(kr_0(z))}{1-1/(kr_0)}}, \quad 
\frac{1}{r_j(z)} := \frac{1-z}{p_j} + \frac{z}{q_j}, \, j=0, 1, \dots, m. 
\\
&F_{z,j} :=|\widetilde{f}_j|^{\frac{r_j}{r_j(z)}} e^{is_j}w^{z-1}_jv_j^{-z}, \, j=1,\dots,m, 
\end{align*}
We claim that 
\begin{equation}\label{eq:Phi-ell}
\Phi_\ell(z) \le 1, \quad\forall \ell \in \N,  
\end{equation}
which in turn implies 
\begin{align*}
\Phi_\infty(z) := \lim_{\ell\to\infty}\Phi_\ell(z) \le 1, 
\end{align*} 
and further, 
\begin{equation*}
\|T(\widetilde{f}_1u_1^{-1},\dots, \widetilde{f}_mu_m^{-1})\, u_0\|_{L^{r_0}(\Sigma_0,\, \mu_0)}
\le M_1^{1-\theta} M_2^{\theta}\prod_{j=1}^{m}\|\widetilde{f}_j\|_{L^{r_j}(\Sigma_j,\, \mu_j)}. 
\end{equation*}
The latter is equivalent to \eqref{eq:Tfu-1}.

It remains to show \eqref{eq:Phi-ell}. We easily see that $U_\ell(z)$ is holomorphic in the strip $S:=\{z\in \C: 0<\Re z<1\}$ and hence $|U_\ell(z)|^{1/k}$ is subharmonic in $S$. It is continuous on $\overline{S}$. For any circle $\{z\in \C: |z-z_0|<r\}$ in $S$, we have
\begin{align}\label{eq:sub}
\frac{1}{2\pi r}\int_{0}^{2\pi}\Phi_\ell(re^{i t}-z_0)dt
&=\int_{\Sigma_0}\frac{1}{2\pi r}
\int_{0}^{2\pi}|U_\ell(re^{i t}-z_0)|^{\frac1k}\, dt\, d\mu_0
\\ \nonumber
&\ge \int_{\Sigma_0}|U_\ell(re^{z_0})|^{\frac1k}\, d\mu_0=\Phi_\ell(z_0),
\end{align}
and so $\Phi_\ell(z)$ is subharmonic in $S$. We also see that $\Phi_\ell(z)$ is continuous on $\overline{S}$. Next, we would like to get that it is bounded on $\overline{S}$. Fix $z \in S$. If we write $h_j:=e^{is_j} w_j^{z-1}v_j^z$, $j=1,\ldots,m$, then 
\begin{align*}
|T(\vec{F_z}) w^{1-z}_0 v_0^z G_z^k|^{\frac1k} 
\lesssim \sum_{l_0, l_1, \ldots, l_m} 
|T(h_1 {\bf 1}_{I_{1, l_1}},\dots, h_m {\bf 1}_{I_{m, l_m}}) |^{\frac1k} 
{\bf 1}_{I_{0, l_0}}.
\end{align*}
Therefore, together with H\"older's inequality and \eqref{eq:WMIP-1}, 
\begin{align*}
\Phi_\ell(z) & \lesssim e^{-|\Im z|^2/\ell}\sum_{l_0, \ldots, l_m}
\|T(h_1 {\bf 1}_{I_{1, l_1}},\dots, h_m 
{\bf 1}_{I_{m, l_m}}) w_0\|_{L^{p_0}(\Sigma_0,\, \mu_0)}^{\frac1k}\,  
\mu_0(I_{0, l_0})^{\frac{1}{(k p_0)'}}
\\
&\lesssim e^{-|\Im z|^2/\ell}\sum_{l_0,\ldots, l_m}  \prod_{j=1}^{m} 
\|h_j {\bf 1}_{I_{j, l_j}}w_j\|_{L^{p_j}(\Sigma_j,\, \mu_j)}^{\frac1k}
\mu_0 (I_{0, l_0})^{\frac{1}{(kp_0)'}} 
\\
&\lesssim e^{-|\Im z|^2/\ell}\sum_{l_0,\ldots,l_m}
\prod_{j=1}^{m} \mu_j(I_{j, l_j})^{\frac{1}{kp_j}}
\mu_0 (I_{0, l_0})^{\frac{1}{(kp_0)'}}\lesssim e^{-|\Im z|^2/\ell}<\infty,
\end{align*}
which shows that $\Phi_\ell(z)$ is bounded on $\overline{S}$. Also, for each $\ell \in \N$, 
\begin{equation}\label{eq:WMIP-8-1}
\lim_{|\Im z|\to\infty} \Phi_\ell(z) = 0 \quad \text{uniformly for }0\le \Re z\le 1.
\end{equation}

Let us consider $z=x+iy$ with $\Re(z)=0$. Then, $\Re(r_j(z))=p_j$ for each $j=0,\dots,m$. Note that 
\begin{equation}\label{eq:Giy}
\|G_{iy}\|_{L^{(kp_0)'}(\Sigma_0,\, \mu_0)}
=\|g^{\frac{1-1/(kp_0)}{1-1/(kr_0)}}\|_{L^{(kp_0)'}(\Sigma_0,\, \mu_0)}
=\|g\|_{L^{(kr_0)'}(\Sigma_0,\, \mu_0)}^{\frac{(kr_0)'}{(kp_0)'}}
=1.
\end{equation}
Thus, by H\"older inequality, \eqref{eq:WMIP-1} and \eqref{eq:Giy}, we obtain 
\begin{align}\label{eq:Phi-1}
|\Phi_\ell(iy)| 
& \le e^{-|\Im z|^2/\ell}(A_1M_1)^{-1/k}
\|T(\vec{F}_{iy})w^{1-iy}_0 v_0^{iy}\|_{L^{p_0}(\Sigma_0,\, \mu_0)}^{1/k} 
\|G_{iy}\|_{L^{(kp_0)'}(\Sigma_0,\, \mu_0)}
\nonumber\\
&\le e^{-|\Im z|^2/\ell}(A_1M_1)^{-1/k}
\|T(\vec{F}_{iy})w_0\|_{L^{p_0}(\Sigma_0,\, \mu_0)}^{1/k} \|G_{iy}\|_{L^{(kp_0)'}(\Sigma_0,\, \mu_0)}
\nonumber\\
&\le e^{-|\Im z|^2/\ell}(A_1M_1)^{-1/k}
M_1^{1/k} \prod_{j=1}^{m} \|F_{iy,j} w_j\|^{1/k}_{L^{p_j}(\Sigma_j,\, \mu_j)} 
\nonumber\\
&=e^{-|\Im z|^2/\ell}A_1^{-1/k} 
\prod_{j=1}^{m} \|\,|\widetilde{f}_{j}|^{r_j/p_j}|\,\|^{1/k}_{L^{p_j}(\Sigma_j,\, \mu_j)} 
\nonumber\\
&=e^{-|\Im z|^2/\ell}A_1^{-1/k} 
\prod_{j=1}^{m} \|\widetilde{f}_{j}\|_{L^{r_j}(\Sigma_j,\, \mu_j)}^{r_j/(kp_j)}
\le 1.  
\end{align}

Next, we treat the case $\Re(z)=1$. In this case, we have $\Re(r_j(z))=q_j$ for each $j=0,\ldots,m$. Since 
\begin{equation*}
\|G_{1+iy}\|_{L^{(kq_0)'}(\Sigma_0,\, \mu_0)} 
=\|g^{\frac{1-1/(kq_0)}{1-1/(kr_0)}}\|_{L^{(kq_0)'}(\Sigma_0,\, \mu_0)}
=\|g\|_{L^{(kr_0)'}(\Sigma_0,\, \mu_0)}^{\frac{(kr_0)'}{(kq_0)'}}
=1,
\end{equation*}
H\"older inequality and \eqref{eq:WMIP-2} imply 
\begin{align}\label{eq:Phi-2}
\Phi_\ell(1+iy) & \le e^{-|\Im z|^2/\ell}(A_2M_2)^{-1/k}
\|T(\vec{F}_{1+iy}) w^{-iy}_0 v_0^{1+iy}\|_{L^{q_0}(\Sigma_0,\, \mu_0)}^{1/k} 
\|G_{iy}\|_{L^{(kq_0)'}(\Sigma_0,\, \mu_0)}
\nonumber\\
&\le e^{-|\Im z|^2/\ell}(A_2M_2)^{-1/k}
\|T(\vec{F}_{1+iy})v_0\|_{L^{q_0}(\Sigma_0,\, \mu_0)}^{1/k} 
\|G_{iy}\|_{L^{(kq_0)'}(\Sigma_0,\, \mu_0)}
\nonumber\\
&\le e^{-|\Im z|^2/\ell}A_2^{-1/k}
\prod_{j=1}^{m} \|F_{1+iy, j} v_j\|_{L^{q_j}(\Sigma_j,\, \mu_j)}^{1/k}
\nonumber\\
&=e^{-|\Im z|^2/\ell}A_2^{-1/k}
\prod_{j=1}^{m} \|\,|\widetilde{f}_j|^{r_j/q_j}\,\|^{1/k}_{L^{q_j}(\Sigma_j,\, \mu_j)}
\nonumber\\
&=e^{-|\Im z|^2/\ell}A_2^{-1/k}
\prod_{j=1}^{m} \|\widetilde{f}_j\|_{L^{r_j}(\Sigma_j,\, \mu_j)}^{r_j/(kq_j)}\le 1. 
\end{align}
Consequently, \eqref{eq:Phi-ell} follows from \eqref{eq:WMIP-8-1}, \eqref{eq:Phi-1}, \eqref{eq:Phi-2}, and the subharmonicity of $\Phi_\ell(z)$. This completes the proof of Lemma \ref{lem:WMIP}. 
\end{proof}
%%%%%%%%%%%%%%%%%%%%%%%% END PROOF of Lemma 3.2 %%%%%%%%%%%%%%%%%%%%%%%%%

%%%%%%%%%%%%%%%%%%%%%%%% LEMMA 3.3 LEMMA %%%%%%%%%%%%%%%%%%%%%%%%
\begin{lemma}\label{lem:dense}
Let $w$ and $v$ be weights on $(\Sigma, \mu)$, and let $1\le p, q<\infty$. Denote 
\[
\mathfrak{S}_{p,q} :=\big\{\text{simple functions } a \in L^p(\Sigma, w^p) \cap L^q(\Sigma, v^q) \big\}. 
\]
Then 
\begin{align}\label{eq:Spq}
\mathfrak{S}_{p,q} \text{ is dense in } L^r(\Sigma, u^r),
\end{align}
whenever $\theta \in (0, 1)$, $u=w^{1-\theta} v^{\theta}$ and $\frac{1}{r}=\frac{1-\theta}{p}+\frac{\theta}{q}$. 
\end{lemma}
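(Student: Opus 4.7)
The plan is to prove density of $\mathfrak{S}_{p,q}$ in $L^r(\Sigma, u^r d\mu)$ by a two-stage truncation: first approximate a general $f \in L^r(u^r)$ by simple functions already lying in $L^r(u^r)$, and then further restrict the support to a region where both weights $w$ and $v$ are pinched between $1/n$ and $n$, where automatic boundedness of the weights forces membership in $L^p(w^p) \cap L^q(v^q)$.

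For the first stage, fix $f \in L^r(\Sigma, u^r d\mu)$. Writing $f = f_1 - f_2 + i(f_3 - f_4)$ with $f_i \geq 0$, reduce to the case $f \geq 0$. Set $A_k := \{x \in \Sigma : 1/k \leq f(x) \leq k\}$; then $\int_{A_k} u^r d\mu \leq k^r \|f\|_{L^r(u^r)}^r < \infty$. Let $s_k$ be the standard dyadic simple approximation of $f$ truncated to $A_k$, so that $0 \leq s_k \leq f$, $s_k \to f$ pointwise a.e., and $s_k = \sum_{j=1}^{N_k} c_{k,j} \mathbf{1}_{E_{k,j}}$ with each $E_{k,j} \subset A_k$ satisfying $\int_{E_{k,j}} u^r d\mu < \infty$. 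Dominated convergence with dominant $f^r u^r$ gives $s_k \to f$ in $L^r(u^r)$.

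For the second stage, fix one such simple function $s = \sum_j c_j \mathbf{1}_{E_j}$ with $\int_{E_j} u^r d\mu < \infty$, and define
\begin{equation*}
F_n := \{x \in \Sigma : 1/n \leq w(x) \leq n \text{ and } 1/n \leq v(x) \leq n\}, \qquad s^{(n)} := \sum_j c_j \mathbf{1}_{E_j \cap F_n}.
\end{equation*}
On $F_n$ one has $u = w^{1-\theta} v^\theta \geq n^{-1}$, hence $\mu(E_j \cap F_n) \leq n^r \int_{E_j} u^r d\mu < \infty$; together with $w^p \leq n^p$ and $v^q \leq n^q$ on $F_n$, this yields $\mathbf{1}_{E_j \cap F_n} \in L^p(w^p) \cap L^q(v^q)$, so $s^{(n)} \in \mathfrak{S}_{p,q}$. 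Since $w$ and $v$ are finite and positive a.e., $\bigcup_n F_n = \Sigma$ up to a null set, hence $s^{(n)} \to s$ a.e. A further application of dominated convergence (with dominant $|s|^r u^r$) gives $s^{(n)} \to s$ in $L^r(u^r)$. Combining the two stages by diagonal extraction produces the desired sequence in $\mathfrak{S}_{p,q}$ converging to $f$.

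The argument is essentially bookkeeping, and I expect no serious obstacle. The one technical point worth flagging is that the ambient measure $\mu$ need not be $\sigma$-finite; however, the measure $u^r d\mu$ is automatically $\sigma$-finite on $\{f \neq 0\} = \bigcup_k A_k$, which is all that is needed to produce the simple-function truncations in the first stage. The two-parameter dominated convergence is safe because each dominant is independent of the truncation parameter and lies in $L^1(d\mu)$ by construction.
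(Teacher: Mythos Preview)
Your proof is correct and follows essentially the same two-stage strategy as the paper: first approximate by simple functions in $L^r(u^r)$, then restrict the support to a set where both $w$ and $v$ are bounded above and below so that membership in $L^p(w^p)\cap L^q(v^q)$ is automatic. The only cosmetic difference is that the paper carries out the second stage via an $\varepsilon$--$\delta$ argument using absolute continuity of the integral on a set of finite $\mu$-measure, whereas you use a monotone exhaustion $F_n\uparrow\Sigma$ together with dominated convergence; your observation that $u\ge 1/n$ on $F_n$ forces $\mu(E_j\cap F_n)<\infty$ neatly sidesteps the need to assume $\mu(E_j)<\infty$ at the outset.
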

%%%%%%%%%%%%%%%%%%%%%%%% end LEMMA 3.3 LEMMA %%%%%%%%%%%%%%%%%%%%%%%%

%%%%%%%%%%%%%%%%%%%%%%%% PROOF PROOF PROOF %%%%%%%%%%%%%%%%%%%%%%%%%
\begin{proof}
We first deal with a particular case: for any weight $\sigma$ on $(\Sigma, \mu)$ and for any $1 \le s<\infty$, 
\begin{align}\label{eq:SSr}
\mathfrak{S}_s := \big\{\text{simple functions } a \in L^s(\Sigma, \sigma^s) \big\} \text{ is dense in } L^s(\Sigma, \sigma^s),
\end{align}
Indeed, for $f \in L^s(\Sigma, \sigma^s)$, we assume that $f \ge 0$ $\mu$-a.e..  Let $\varepsilon>0$. Then there exists a simple function $a(x)=\sum_{i=1}^{\ell_0} a_i {\bf 1}_{E_i}(x)$ such that $a\le f\sigma$ and $\|f\sigma-a\|_{L^s(\Sigma, \mu)}<\varepsilon/2^{1/s}$, where $a_i>0$, $\{E_i\}_{i=1}^{\ell_0}$ is a disjoint family and $0<\mu(E_i)<\infty$. Set $E=\bigcup_{i=1}^{\ell_0} E_i$. Observe that 
\[
\varepsilon^s/2 > \|f\sigma-a\|_{L^r(\Sigma,\, \mu)}^s 
= \int_{E} |f\sigma-a|^s\, d\mu + \int_{\Sigma \setminus E} |f\sigma|^s\, d\mu, 
\]
and hence, 
\begin{equation}\label{eq:fuE}
\|f\sigma\|_{L^s(\Sigma \setminus E,\, \mu)}^s < \varepsilon^s/2. 
\end{equation}
On the other hand, there exist simple functions $b_j(x)=\sum_{i=1}^{\ell_j}b_{j,i}{\bf 1}_{F_{j,i}}(x)$ such that $\supp(b_j) \subset E$ and $\lim\limits_{j \to \infty}b_j(x)=f(x)$ for all $x \in E$. Then 
\[
\lim_{j\to \infty} \|(f-b_j)\sigma\|_{L^s(E,\, \mu)}=0, 
\]
which implies that there exists $j_0 \in \N$ so that 
\begin{align}\label{eq:fbu}
\|(f-b_{j_0})\sigma\|_{L^s(E,\, \mu)}<\varepsilon^s/2.  
\end{align}
Therefore, it follows from that 
\begin{equation*}
\|f-b_{j_0}\|_{L^s(\Sigma,\, \sigma^s)}^s=\int_{\Sigma \setminus E} |f\sigma|^s\, d\mu + \int_{E}|(f-b_{j_0})\sigma|^s\, d\mu
< \varepsilon^s/2 + \varepsilon^s/2=\varepsilon^s. 
\end{equation*}
This shows \eqref{eq:SSr}. 

We next turn to the proof of \eqref{eq:Spq}. By \eqref{eq:SSr}, it suffices 
to show that for any $E \subset \Sigma$ with $\mu(E)<\infty$ and $u \in 
L^r(E, \mu)$, and for any $\varepsilon>0$, there exists a simple function $a$ 
such that 
\begin{equation}\label{eq:Eaa}
a \in L^p(\Sigma, w^p) \cap L^q(\Sigma, v^q) \quad\text{and}\quad 
\|\mathbf{1}_E-a\|_{L^r(\Sigma,\, u^r)}<\varepsilon. 
\end{equation}
Let $\varepsilon>0$. Since $u \in L^r(E, \mu)$, there exists $\delta>0$ 
such that 
\begin{equation}\label{eq:EF}
\forall F \subset E: \mu(F)<\delta \quad\Longrightarrow\quad 
\|u\|_{L^r(F,\, \mu)}<\varepsilon. 
\end{equation}
Note that $0<w<\infty$ $\mu$-a.e. and $\mu(E)<\infty$. Then there exists 
$K_1>0$ such that $\mu(\{x \in E: w(x)^p>K_1\})<\delta/2$. Similarly, there 
exists $K_2>0$ such that $\mu(\{x \in E: v(x)^q>K_2\})<\delta/2$. Set 
\[
F_0:=\{x \in E: w(x)^p>K_1\} \cup \{x \in E: v(x)^q>K_2\}.
\] 
Then $\mu(F_0)<\delta$ and $\|u\|_{L^r(F_0, \mu)}<\varepsilon$ by 
\eqref{eq:EF}. By definition, we have $w \in L^p(E\setminus F_0, \mu)$ and 
$v \in L^q(E\setminus F_0, \mu)$. 
Picking $a(x)=\mathbf{1}_{E\setminus F_0}(x)$, we see that 
$a \in L^p(\Sigma, w^p) \cap L^q(\Sigma, v^q)$ and 
\[
\|{\bf 1}_E -a\|_{L^r(\Sigma,\, u^r)}=\|{\bf 1}_{F_0}\|_{L^r(\Sigma,\, u^r)}
=\|u\|_{L^r(F_0,\, \mu)}<\varepsilon. 
\]
This proves \eqref{eq:Eaa} and completes the proof.  
\end{proof}
%%%%%%%%%%%%%%%%%%%%%%%% END PROOF of Lemma 3.3 %%%%%%%%%%%%%%%%%%%%%%%%%

With Theorem \ref{thm:WMIP-1} in hand, we will establish the interpolation for multilinear compact operators. 

%%%%%%%%%%%%%%%%%%%%%%%% THEOREM THEOREM THEOREM %%%%%%%%%%%%%%%%%%%%
\begin{theorem}\label{thm:WMIP-2}
Suppose that $(\Sigma_1, \mu_1), \ldots, (\Sigma_m, \mu_m)$ are measure spaces, and $\mathscr{S}_j$ is the collection of all simple functions on $\Sigma_j$, $j=1,\dots,m$. Denote by $\mathfrak{M}(\Rn)$ the set of all measurable functions on $\Rn$. Let $T: \mathscr{S}=\mathscr{S}_1 \times \cdots \times \mathscr{S}_m \to \mathfrak{M}(\Rn)$ be an $m$-linear operator. Let $0<p_0, q_0< \infty$ and $1\le p_j, q_j\le \infty$ $(j=1,\dots,m)$. Assume that 
\begin{align}
\label{eq:WMIP-21} &T \text{ is bounded from $L^{p_1}(\Sigma_1) \times \cdots \times L^{p_m}(\Sigma_m)$ to $L^{p_0}(\Rn)$},  
\\ 
\label{eq:WMIP-22} &T \text{ is compact from $L^{q_1}(\Sigma_1) \times \cdots \times L^{q_m}(\Sigma_m)$ to $L^{q_0}(\Rn)$}.  
\end{align} 
Then, $T$ is also a compact operator from $L^{r_1}(\Sigma_1) \times \cdots \times L^{r_m}(\Sigma_m)$ to $L^{r_0}(\Rn)$ for all exponents satisfying 
\begin{equation*}
0<\theta<1 \quad\text{and}\quad \frac{1}{r_j}=\frac{1-\theta}{p_j}+\frac{\theta}{q_j},\quad j=0,\dots,m. 
\end{equation*}
\end{theorem}
%%%%%%%%%%%%%%%%%%%%%%%% THEOREM THEOREM THEOREM %%%%%%%%%%%%%%%%%%%%

%%%%%%%%%%%%%%%%%%%%%%%% PROOF PROOF PROOF %%%%%%%%%%%%%%%%%%%%%%%%%
\begin{proof}
It follows from \eqref{eq:WMIP-21} that there exists $M_1<\infty$ such that 
\begin{align}\label{eq:WMIP-21'}
\|T(\vec{f})\|_{L^{p_0}(\Rn)} \le M_1 \prod_{j=1}^m \|f_j\|_{L^{p_j}(\Sigma_j)}. 
\end{align}
From \eqref{eq:WMIP-22} and Theorem \ref{thm:FK-1}, we have the following: 
\begin{align}
\label{eq:th-1} &\|T(\vec{f})\|_{L^{q_0}(\Rn)} \le M_2 \prod_{j=1}^m \|f_j\|_{L^{q_j}(\Sigma_j)}, 
\\
\label{eq:th-2} & \lim_{A \to \infty} \|T(\vec{f}) {\bf 1}_{\{|x|>A\}}\|_{L^{q_0}(\Rn)} \bigg/ \prod_{j=1}^m \|f_j\|_{L^{q_j}(\Sigma_j)} =0, 
\\ 
\label{eq:th-3} & \lim_{|h| \to 0} \|\tau_h(T\vec{f}) - T(\vec{f})\|_{L^{q_0}(\Rn)} \bigg/ \prod_{j=1}^m \|f_j\|_{L^{q_j}(\Sigma_j)} =0. 
\end{align}
By \eqref{eq:WMIP-21'} and \eqref{eq:th-1}, Theorem \ref{thm:WMIP-1} yields that 
\begin{align}\label{eq:th-4}
\|T(\vec{f})\|_{L^{r_0}(\Rn)} \le M_1^{1-\theta} M_2^{\theta} \prod_{j=1}^m \|f_j\|_{L^{r_j}(\Sigma_j)}. 
\end{align}
Additionally, it follows from \eqref{eq:th-2} that for any $\varepsilon>0$, there exists $A_{\varepsilon}>0$ such that for all $A>A_{\varepsilon}$, 
\begin{equation}\label{eq:th-5}
\|T(\vec{f}) {\bf 1}_{\{|x|>A\}}\|_{L^{q_0}(\Rn)} <\varepsilon \prod_{j=1}^m \|f_j\|_{L^{q_j}(\Sigma_j)}. 
\end{equation}
Then, \eqref{eq:WMIP-21'}, \eqref{eq:th-5} and Theorem \ref{thm:WMIP-1} applied to $T(\vec{f}) {\bf 1}_{\{|x|>A\}}$ imply that 
\begin{equation*}
\|T(\vec{f}) {\bf 1}_{\{|x|>A\}}\|_{L^{r_0}(\Rn)} 
< M_1^{1-\theta} \varepsilon^{\theta} \prod_{j=1}^m \|f_j\|_{L^{r_j}(\Sigma_j)}, 
\end{equation*}
which gives that 
\begin{equation}\label{eq:th-6}
\lim_{A\to \infty} \|T(\vec{f}) {\bf 1}_{\{|x|>A\}}\|_{L^{r_0}(\Rn)}=0 
\end{equation}
uniformly for all $\vec{f}$ such that $f_j \in L^{r_j}(\Sigma_j)$ with $\|f_j\|_{L^{r_j}(\Sigma_j)} \le 1$, $j=1,\ldots,m$. On the other hand, by \eqref{eq:WMIP-21'}
\begin{align}\label{eq:th-7}
\|\tau_h(T\vec{f}) -T(\vec{f})\|_{L^{p_0}(\Rn)} \le 2M_1 \prod_{j=1}^m \|f_j\|_{L^{p_j}(\Sigma_j)}. 
\end{align}
The equation \eqref{eq:th-3} gives that for any $\varepsilon>0$, there exists $\eta>0$ such that for all $|h|<\eta$, 
\begin{align}\label{eq:th-8}
\|\tau_h(T\vec{f})-T(\vec{f})\|_{L^{q_0}(\Rn)} \le \varepsilon \prod_{j=1}^m \|f_j\|_{L^{q_j}(\Sigma_j)}. 
\end{align}
Since $\tau_hT-T$ is also an $m$-linear operator, \eqref{eq:th-7}, \eqref{eq:th-8} and Theorem \ref{thm:WMIP-1} applied to the operator $\tau_h T -T$ lead that for all $|h|<\eta$, 
\begin{align*}
\|\tau_h(T\vec{f})-T(\vec{f})\|_{L^{r_0}(\Rn)} 
\le (2M_1)^{1-\theta} \varepsilon^{\theta} \prod_{j=1}^m \|f_j\|_{L^{r_j}(\Sigma_j)}. 
\end{align*}
This means that 
\begin{align}\label{eq:th-9}
\lim_{|h| \to 0} \|\tau_h(T\vec{f})-T(\vec{f})\|_{L^{r_0}(\Rn)} =0, 
\end{align}
uniformly for all $\vec{f}$ such that $f_j \in L^{r_j}(\Sigma_j)$ with $\|f_j\|_{L^{r_j}(\Sigma_j)} \le 1$, $j=1,\ldots,m$. Now gathering \eqref{eq:th-4}, \eqref{eq:th-6} and \eqref{eq:th-9}, we by Theorem \ref{thm:FK-1} conclude that $T$ is a compact operator from $L^{r_1}(\Sigma_1) \times \cdots \times L^{r_m}(\Sigma_m)$ to $L^{r_0}(\Rn)$. 
\end{proof}
%%%%%%%%%%%%%%%%%%%%%%%% END END END PROOF %%%%%%%%%%%%%%%%%%%%%%%%%

Next, we are going to establish the weighted version of Theorem \ref{thm:WMIP-2}. Unfortunately, the approach used above is invalid in the weighted setting. To overcome this difficulty, we present a variation of Theorem \ref{thm:WMIP-1}. 

%%%%%%%%%%%%%%%%%%%%%% THEOREM THEOREM THEOREM %%%%%%%%%%%%%%%%%%%%%%
\begin{theorem}\label{thm:WMIP-3}
Suppose that $(\widetilde{\Sigma}_0, \widetilde{\mu}_0)$, $(\Sigma_0, \mu_0)$, $(\Sigma_1, \mu_1), \ldots, (\Sigma_m, \mu_m)$ are measure spaces, and $\mathscr{S}_j$ is the collection of all simple functions on $\Sigma_j$, $j=1,\dots,m$. Denote by $\mathfrak{M}(\widetilde{\Sigma}_0 \times \Sigma_0)$ the set of all measurable functions on $\widetilde{\Sigma}_0 \times \Sigma_0$. Let $T: \mathscr{S}=\mathscr{S}_1 \times \cdots \times \mathscr{S}_m \to \mathfrak{M}(\widetilde{\Sigma}_0 \times \Sigma_0)$ be an $m$-linear operator. Let $0<\widetilde{p}_0, \widetilde{q}_0, p_0, q_0 < \infty$, $1\le p_j, q_j\le \infty$ $(j=1,\dots,m)$, and let $w_j, v_j$ be weights on $\Sigma_j$, $(j=1,\dots,m)$, and $w_0, v_0$ be weights on $\Sigma_0$. Assume that there exist $M_1, M_2 \in (0, \infty)$ such that 
\begin{align}\label{eq:WMIP-31} 
\bigg[\int_{\Sigma_0} \bigg(\int_{\widetilde{\Sigma}_0} 
|T(\vec{f})(x,y)|^{\widetilde{p}_0}\, d\widetilde{\mu}_0(y) \bigg)^{\frac{p_0}
{\widetilde{p}_0}} w_0(x)^{p_0} d\mu_0(x) \bigg]^{\frac{1}{p_0}} 
\le M_1 \prod_{j=1}^m \|f_j\|_{L^{p_j}(\Sigma_j, w_j^{p_j})}  
\end{align}
for all $\vec{f}=(f_1,\ldots,f_m) \in \mathscr{S}$ with $\|f_j\,w_j\|_{L^{p_j}(\Sigma_j,\, \mu_j)}<\infty$, $j=1,\ldots,m$, and 
\begin{align}\label{eq:WMIP-32} 
\bigg[\int_{\Sigma_0} \bigg(\int_{\widetilde{\Sigma}_0} |T(\vec{f})(x,y)|^{\widetilde{q}_0}\, d\widetilde{\mu}_0(y)  \bigg)^{\frac{q_0}{\widetilde{q}_0}} v_0(x)^{q_0} d\mu_0(x) \bigg]^{\frac{1}{q_0}} 
 \le M_2 \prod_{j=1}^m \|f_j\|_{L^{p_j}(\Sigma_j, v_j^{q_j})} 
\end{align}
for all $\vec{f}=(f_1,\ldots,f_m) \in \mathscr{S}$ with $\|f_j\,v_j\|_{L^{q_j}(\Sigma_j,\, \mu_j)}<\infty$, $j=1,\ldots,m$. Then, 
for all exponents satisfying $0<\theta<1$, and 
\begin{equation}\label{eq:exp-3}
\frac{1}{\widetilde{r}_0}=\frac{1-\theta}{\widetilde{p}_0}+\frac{\theta}{\widetilde{q}_0} ,\quad 
\frac{1}{r_j}=\frac{1-\theta}{p_j}+\frac{\theta}{q_j},  
\quad u_j=w_j^{1-\theta} v_j^{\theta},\quad j=0,\dots,m, 
\end{equation} 
we have 
\begin{equation}\label{eq:WMIP-33}
\bigg[\int_{\Sigma_0} \bigg(\int_{\widetilde{\Sigma}_0} |T(\vec{f})(x,y)|^{\widetilde{r}_0}\, d\widetilde{\mu}_0(y) \bigg)^{\frac{r_0}{\widetilde{r}_0}} u_0(x)^{r_0} d\mu_0(x) \bigg]^{\frac{1}{r_0}}
\le M_1^{1-\theta}M_2^{\theta}\prod_{j=1}^{m}\|f_j\|_{L^{r_j}(\Sigma_j, u_j^{r_j})}  
\end{equation}
for all $\vec{f}=(f_1,\ldots,f_m) \in \mathscr{S}$ with $\|f_j\,w_j\|_{L^{p_j}(\Sigma_j,\, \mu_j)}<\infty$ and $\|f_j\,v_j\|_{L^{q_j}(\Sigma_j,\, \mu_j)}<\infty$, $j=1,\ldots,m$. 
\end{theorem}
%%%%%%%%%%%%%%%%%%%%%% THEOREM THEOREM THEOREM %%%%%%%%%%%%%%%%%%%%%%

%%%%%%%%%%%%%%%%%%%%%%%%%% PROOF PROOF PROOF %%%%%%%%%%%%%%%%%%%%%%%
\begin{proof}
The proof is similar to that of Theorem \ref{thm:WMIP-1}. We modify it by following the ideas in the proof of an interpolation theorem in mixed $L^p$ spaces in \cite{BP}. We begin with \eqref{eq:WMIP-5}.  Pick $k \in \mathbb{N}$ so that $k> \max\{ \frac{1}{\widetilde{p}_0}, \frac{1}{\widetilde{q}_0}, \frac{1}{p_0}, \frac{1}{q_0}\}$, which implies that $k>\max\{\frac{1}{\widetilde{r}_0}, \frac{1}{r_0}\}$. By \cite[Theorem~1]{BP}, we have 
\begin{align}\label{eq:Tf-dual}
&\bigg[\int_{\Sigma_0} \bigg(\int_{\widetilde{\Sigma}_0} |T(\vec{f})(x, y)|^{\widetilde{r}_0}\, d\widetilde{\mu}_0(y)  \bigg)^{\frac{r_0}{\widetilde{r}_0}}\, u_0(x)^{r_0} 
d\mu_0(x)\bigg]^{\frac{1}{kr_0}}
\nonumber\\ 
&\quad = \sup_{g} \int_{\Sigma_0} \int_{\widetilde{\Sigma}_0} |T(\vec{f})(x,y) u_0(x)|^{\frac{1}{k}} g(x,y) \, d\widetilde{\mu}_0(y)\, d\mu_0(x),
\end{align}
where the supremum is taken over all nonnegative simple functions $g$ on $\widetilde{\Sigma}_0 \times \Sigma_0$ satisfying $\|g\|_{L^{((k\widetilde{r}_0)', (kr_0)' )}(\widetilde{\Sigma}_0, \Sigma_0)}=1$. Fix $\vec{f}=(f_1,\ldots,f_m)$ and $g$. We may assume $\|f_j\,u_j\|_{L^{r_j}(\Sigma_j, \mu_j)}<\infty$ for each $j=1,\ldots,m$. Write $\widetilde{f}_j=f_j u_j$ and $\widetilde{f}_j=|\widetilde{f}_j|e^{is_j}$ for each $j=1,\ldots,m$. Set 
\begin{equation}\label{eq:Lrnorm}
A_1 :=\prod_{j=1}^{m}\|\widetilde{f}_j\|_{L^{r_j}(\Sigma_j,\, \mu_j)}^{r_j/p_j} \quad \text{and}\quad 
A_2 :=\prod_{j=1}^{m}\|\widetilde{f}_j\|_{L^{r_j}(\Sigma_j,\, \mu_j)}^{r_j/q_j} . 
\end{equation} 
Define  for $\ell\in \N$ 
\begin{equation}\label{eq:Phiz}
\Phi_\ell(z) 
:= \int_{\Sigma_0} \int_{\widetilde{\Sigma}_0} |U_\ell(z)|^{\frac1k}\, d\widetilde{\mu}_0 \, d\mu_0,
\end{equation}
where 
\begin{align*}
& U_{\ell}(z) := e^{k(z^2-1)/\ell}(A_1 M_1)^{z-1}(A_2 M_2)^{-z} T(\vec{F}_z)w_0(x)^{1-z}v_0(x)^z G^k_z, 
\\ 
& F_{z,j} :=|\widetilde{f}_j|^{\frac{r_j}{r_j(z)}} e^{is_j}w^{z-1}_jv_j^{-z}, \quad 
\frac{1}{r_j(z)} := \frac{1-z}{p_j} + \frac{z}{q_j}, \, j=1,\dots,m, 
\\
& \frac{1}{\widetilde{r}_0(z)} := \frac{1-z}{\widetilde{p}_0} +\frac{z}{\widetilde{q}_0}, 
\quad \frac{1}{r_0(z)} = \frac{1-z}{p_0} +\frac{z}{q_0},  
\\ 
& G_z := g^{\frac{(k\widetilde{r}_0)'}{(k\widetilde{r}_0(z))'}} \Big(\|g(\cdot, y)\|_{L^{(k\widetilde{r}_0)'} (\widetilde{\Sigma}_0)}\Big)^{\frac{(kr_0)'}{(kr_0(z))'} - \frac{(k\widetilde{r}_0)'}{(k\widetilde{r}_0(z))'}}. 
\end{align*}
Analogously to \eqref{eq:Phi-ell}, it suffices to show 
\begin{align}\label{eq:Phi-ell-2}
\Phi_{\ell}(z) \le 1, \quad\forall \ell \in \N. 
\end{align} 
Then one has $\lim\limits_{\ell \to \infty} \Phi_{\ell}(z) \le 1$, which along with \eqref{eq:Tf-dual} and \eqref{eq:Phiz} implies \eqref{eq:WMIP-33}. 

In order to demonstrate \eqref{eq:Phi-ell-2}, applying the same arguments as in \eqref{eq:sub} and \eqref{eq:WMIP-8-1}, one can verify that $\Phi_{\ell}(z)$ is subharmonic in $S$ and continuous on $\overline{S}$. Furthermore, for each $\ell \in \mathbb{N}$, 
\begin{equation}\label{eq:Imz}
\lim_{|\Im z|\to\infty} \Phi_\ell(z) = 0 \, \text{ uniformly for }0\le \Re z\le 1.
\end{equation}
As shown in \cite[p.~315]{BP}, we have 
\begin{equation}\label{eq:GGiy}
\|G_{iy}\|_{L^{((k\widetilde{p}_0)',(kp_0)')}}=1 \quad \text{and} \quad \|G_{1+iy}\|_{L^{((k\widetilde{q}_0)',(kq_0)')}}=1.
\end{equation}
Now, we need to see what happens to $\Phi_\ell(iy)$ and $\Phi_\ell(1+iy)$. By H\"{o}lder's inequality, \eqref{eq:Lrnorm} and \eqref{eq:GGiy}, we deduce that 
\begin{align}\label{eq:iy}
\Phi_\ell(iy) 
& \leq e^{-|\Im(z)|^2/\ell}(A_1 M_1)^{-\frac1k} \int_{\widetilde{\Sigma}_0} \int_{\Sigma_0} 
|T(\vec{F}_{iy}) w_0(x) G^k_{iy}|^{\frac1k} \,d\widetilde{\mu}_0\, d\mu_0
\\  \nonumber
&\leq e^{-|\Im(z)|^2/\ell}(A_1 M_1)^{-\frac1k} \|G_{iy}\|_{L^{((k\widetilde{p}_0)',(kp_0)')}}
\\  \nonumber
&\qquad\times \bigg[ \int_{\Sigma_0} \bigg( \int_{\widetilde{\Sigma}_0} 
\big(|T(\vec{F}_{iy})w_0(x)|^{\frac{1}{k}} \big)^{k\widetilde{p}_0} \, 
d\widetilde{\mu}_0 \bigg)^{\frac{kp_0}{k\widetilde{p}_0}} \, d\mu_0
\bigg]^{\frac{1}{kp_0}} 
\\  \nonumber
&\le (A_1 M_1)^{-\frac1k} \bigg[ \int_{\Sigma_0} \bigg( \int_{\widetilde{\Sigma}_0} |T(\vec{F}_{iy})|^{\widetilde{p}_0} \, dy \bigg)^{\frac{p_0}{\widetilde{p}_0}} w_0(x)^{p_0}\, dx\bigg]^{\frac{1}{kp_0}} 
\\  \nonumber
&\leq A_1^{-\frac1k} \prod_{j=1}^m \|F_{iy,j} w_j\|_{L^{p_j}(\Sigma_j,\mu_j)}^{\frac{1}{k}} 
= A_1^{-\frac1k} \prod_{j=1}^m \||\widetilde{f}_j|^{r_j/p_j}\|_{L^{p_j}(\Sigma_j,\mu_j)}^{\frac{1}{k}} 
=1. 
\end{align}
Analogously, 
\begin{align}\label{eq:iiy}
\Phi_\ell(1+iy) \le 1. 
\end{align}
Therefore, \eqref{eq:Phi-ell-2} is a consequence of the subharmonicity of $\Phi_{\ell}(z)$, \eqref{eq:Imz}, \eqref{eq:iy} and \eqref{eq:iiy}. The proof is complete. 
\end{proof}
%%%%%%%%%%%%%%%%%%%%%%%%%% END END END PROOF %%%%%%%%%%%%%%%%%%%%%%%

Now let us see how to derive a weighted interpolation for $m$-linear compact operators from Theorem \ref{thm:WMIP-3}. 

%%%%%%%%%%%%%%%%%%%%%%%% THEOREM THEOREM THEOREM %%%%%%%%%%%%%%%%%%%%
\begin{theorem}\label{thm:WMIP-4}
Suppose that $(\Sigma_1, \mu_1)\ldots (\Sigma_m, \mu_m)$ are measure spaces, and $\mathscr{S}_j$ is the collection of all simple functions on $\Sigma_j$, $j=1,\ldots,m$. Denote by $\mathfrak{M}(\mathbb{R}^n)$ the set of all measurable functions on $\mathbb{R}^n$. Let $T: \mathscr{S}=  \mathscr{S}_1 \times \cdots \times  \mathscr{S}_m \rightarrow \mathfrak{M}(\mathbb{R}^n)$ be an $m$-linear operator. Let $0< p_0,q_0<\infty$, $1\leq p_j,q_j\leq \infty$, $j=1,\ldots,m$ and let $w_0^{p_0}, v_0^{q_0} \in A_\infty (\mathbb{R}^n)$ and $w_j,v_j$ be weights on $\Sigma_j$. 
Assume that 
\begin{align}
\label{eq:WMIP-41} &T \text{ is bounded from $L^{p_1}(\Sigma_1, w_1^{p_1}) \times \cdots \times L^{p_m}(\Sigma_m, w_m^{p_m})$ to $L^p(\Rn, w_0^{p_0})$}, 
\\
\label{eq:WMIP-42} &T \text{ is compact from $L^{q_1}(\Sigma_1, v_1^{q_1})\times \cdots \times L^{q_m}(\Sigma_m, v_m^{q_m})$ to $L^{q_0}(\Rn, v_0^{q_0})$}. 
\end{align}
Then, $T$ can be extended as a compact operator from $L^{r_1}(\Sigma_1, u_1^{r_1})\times \cdots \times L^{r_m}(\Sigma_m, u_m^{r_m})$ to $L^{r_0}(\Rn, u_0^{r_0})$ for all exponents satisfying \eqref{eq:exp}.
\end{theorem}
%%%%%%%%%%%%%%%%%%%%%%%% THEOREM THEOREM THEOREM %%%%%%%%%%%%%%%%%%%%

%%%%%%%%%%%%%%%%%%%%%%%%%% PROOF PROOF PROOF %%%%%%%%%%%%%%%%%%%%%%%
\begin{proof}
Since $w_0^{p_0}, v_0^{q_0} \in A_{\infty}$, there exists $r \in (1, \infty)$ such that $w_0^{p_0}, v_0^{q_0} \in A_r$. Given $\rho >0$, let us consider
\[
\mathcal{N}(f,\rho) := \bigg[\int_{\Rn} \bigg( \fint_{B(0,\rho)} |f(x)- f(x+y)|^{\frac{p_0}{r}} \, dy\bigg)^{r} w_0^{p_0}(x)\, dx\bigg]^{\frac{1}{p_0}}.
\] 
The fact $w_0^{p_0} \in A_r$ implies 
\begin{align}\label{eq:Iff}
\mathcal{N}(f, \rho) &\lesssim \bigg(\int_{\Rn} |f|^{p_0} w_0^{p_0} \, dx \bigg)^{\frac{1}{p_0}} 
+ \bigg(\int_{\Rn} M(|f|^{\frac{p_0}{r}})^{r} w_0^{p_0}\, dx \bigg)^{\frac{1}{p_0}} 
\lesssim \|f\|_{L^{p_0}(w_0^{p_0})}.
\end{align}
In what follows, we always denote $\mathbb{T}(\vec{f})(x, y):=T(\vec{f})(x)- T(\vec{f})(x+y)$. Note that $\mathbb{T}$ is an $m$-linear operator. Then, \eqref{eq:Iff} and \eqref{eq:WMIP-41} yield that for any $\rho>0$,
\begin{equation}\label{eq:WMIP-43}
\bigg[\int_{\Rn} \bigg(\fint_{B(0,\rho)} |\mathbb{T}(\vec{f})(x, y)|^{\frac{p_0}{r}} dy\bigg)^{r} w_0^{p_0}(x)\, dx\bigg]^{\frac{1}{p_0}} \leq M_1 \prod_{j=1}^m \|f_j\|_{L^{p_j}(w_j^{p_j})}.
\end{equation}
On the other hand, from \eqref{eq:WMIP-42} and Theorem \ref{thm:FK-3} we have  
\begin{align}
\label{eq:WMIP-44} &\|T(\vec{f})\|_{L^{q_0}(v_0^{q_0})} \leq  M_2 \prod_{j=1}^m \|f_j\|_{L^{q_j}(v_j^{q_j})}, 
\\
\label{eq:WMIP-45} \lim_{A \to \infty} & \|T(\vec{f})\mathbf{1}_{\{|x|>A\}}\|_{L^{q_0}(v_0^{q_0})} 
\bigg/\prod_{j=1}^m \|f_j\|_{L^{q_j}(v_j^{q_j})} =0, 
\\ 
\label{eq:WMIP-46} \lim_{\rho \to \infty}  \bigg[\int_{\Rn} \bigg(\fint_{B(0,\rho)} & |\mathbb{T}(\vec{f})(x, y)|^{\frac{q_0}{r}} dy\bigg)^r v_0(x)^{q_0} dx\bigg]^{\frac{1}{q_0}} \bigg/\prod_{j=1}^m \|f_j\|_{L^{q_j}(v_j^{q_j})} =0.
\end{align} 
By \eqref{eq:WMIP-41} with the bound $\widetilde{M}_1$, \eqref{eq:WMIP-44} and Theorem \ref{thm:WMIP-1},  there holds 
\begin{equation}\label{eq:WMIP-47}
\|T(\vec{f})\|_{L^{r_0}(u_0^{r_0})} \leq \widetilde{M}_1^{1-\theta}M_2^{\theta} \prod_{j=1}^m \|f_j\|_{L^{r_j}(u_j^{r_j})}.
\end{equation}
From \eqref{eq:WMIP-45}, we obtain that for any $\varepsilon>0$ there exists $A_\varepsilon$ such that for all $A >A_\varepsilon$, 
\begin{equation}\label{eq:WMIP-48}
\|T(\vec{f})\mathbf{1}_{\{|x|>A\}}\|_{L^{q_0}(v_0^{q_0})} 
< \varepsilon \prod_{j=1}^m \|f_j\|_{L^{q_j}(v_j^{q_j})}. 
\end{equation}
Thus, \eqref{eq:WMIP-41} with the bound $\widetilde{M}_1$, \eqref{eq:WMIP-48} and Theorem \ref{thm:WMIP-1} applied to $T(\vec{f}) \mathbf{1}_{\{|x|>A\}}$ give 
\[
\|T(\vec{f})\mathbf{1}_{\{|x|>A\}}\|_{L^{r_0}(u_0^{r_0})} 
\leq \widetilde{M}_1^{1-\theta} \varepsilon^{\theta} \prod_{j=1}^m \|f_j\|_{L^{r_j}(u_j^{r_j})}, 
\]
which asserts  
\begin{equation}\label{eq:WMIP-49}
\lim_{A \to \infty} \|T(\vec{f})\mathbf{1}_{\{|x|>A\}}\|_{L^{r_0}(u_0^{r_0})} 
\bigg/\prod_{j=1}^m \|f_j\|_{L^{r_j}(u_j^{r_j})}=0.
\end{equation}
Additionally,  invoking \eqref{eq:WMIP-46}, we have that for any $\varepsilon>0$ there exists $\rho_0=\rho_0(\varepsilon)>0$ such that for all $0<\rho<\rho_0$,
\begin{align}\label{eq:T_epsilon_2}
& \bigg[\int_{\Rn} \bigg(\fint_{B(0,\rho)} |\mathbb{T}(\vec{f})(x, y)|^{\frac{q_0}{r}} \, dy\bigg)^r v_0(x)^{q_0}   \,dx\bigg]^{\frac{1}{q_0}} 
\leq  \varepsilon \prod_{j=1}^m \|f_j\|_{L^{q_j}(v_j^{q_j})} .
\end{align}
Hence, Theorem \ref{thm:WMIP-3} applied to \eqref{eq:WMIP-43} and \eqref{eq:T_epsilon_2} leads 
\[
\bigg[\int_{\Rn} \bigg(\fint_{B(0,\rho)} |\mathbb{T}(\vec{f})(x, y)|^{\frac{r_0}{r}} \, dy\bigg)^r u_0(x)^{r_0} dx\bigg]^{\frac{1}{r_0}} 
\leq M_1^{1-\theta} \varepsilon^{\theta} \prod_{j=1}^m \|f_j\|_{L^{r_j}(u_j^{r_j})},
\]
 which shows that
 \begin{equation}\label{eq:T_rho}
 \lim_{\rho \to \infty} \bigg[\int_{\Rn} \bigg(\fint_{B(0,\rho)} |\mathbb{T}(\vec{f})(x, y)|^{\frac{r_0}{r}} \, dy\bigg)^r u_0(x)^{r_0} dx\bigg]^{\frac{1}{r_0}} \bigg/\prod_{j=1}^m \|f_j\|_{L^{r_j}(u_j^{r_j})} =0. 
 \end{equation}
Therefore, the desired result follows at once from \eqref{eq:WMIP-47}, \eqref{eq:WMIP-49} and \eqref{eq:T_rho} and Theorem  \ref{thm:FK-3}.
\end{proof}
%%%%%%%%%%%%%%%%%%%%%%%%%% END END END PROOF %%%%%%%%%%%%%%%%%%%%%%%

Finally, we obtain the weighted interpolation for multilinear compact operators when the weights belong to $A_{\vec{p}, \vec{r}}$ classes and the limited range case. To state our results conveniently, we will use $[L^p(w^p), L^q(v^q)]_{\theta}$ to denote the space $L^r(u^r)$ whenever $u=w^{1-\theta} v^{\theta}$, $\frac1r=\frac{1-\theta}{p}+\frac{\theta}{q}$ and $0<p,q<1$.

%%%%%%%%%%%%%%%%%%%%% COROLLARY COROLLARY COROLLARY %%%%%%%%%%%%%%%%%%
\begin{corollary}\label{cor:interApr}
Fix $\vec{r}=(r_1, \ldots, r_{m+1})$ with $1 \le r_1, \ldots, r_{m+1}<\infty$. Let $0<p_*, q_* < \infty$, $\vec{r} \preceq \vec{p}$ with $\frac{1}{p_*} \le \frac1p=\frac{1}{p_1}+\cdots+\frac{1}{p_m} \le \frac{1}{p_*}+\frac{1}{r'_{m+1}}$, $\vec{r} \preceq \vec{q}$ with $\frac{1}{q_*} \le \frac1q=\frac{1}{q_1}+\cdots+\frac{1}{q_m} \le \frac{1}{q_*}+\frac{1}{r'_{m+1}}$, and let $\vec{w} \in A_{\vec{p}, \vec{r}}$ and $\vec{v} \in A_{\vec{q}, \vec{r}}$. Assume that $T$ is an m-linear operator such that 
\begin{align}
&T \text{ is bounded from $L^{p_1}(w_1^{p_1}) \times \cdots \times L^{p_m}(w_m^{p_m})$ to $L^{p_*}(w^{p_*})$},  
\\
&T \text{ is compact from $L^{q_1}(v_1^{q_1}) \times \cdots \times L^{q_m}(v_m^{q_m})$ to $L^{q_*}(v^{q_*})$},  
\end{align}
where $w=\prod_{i=1}^m w_i$ and $v=\prod_{i=1}^m v_i$. Then $T$ is compact from $[L^{p_1}(w_1^{p_1}), L^{q_1}(v_1^{q_1})]_{\theta} \times \cdots \times [L^{p_m}(w_m^{p_m}), L^{q_m}(v_m^{q_m})]_{\theta}$ to $[L^{p_*}(w^{p_*}), L^{q_*}(v^{q_*})]_{\theta}$ for any $0<\theta<1$.  
\end{corollary}
%%%%%%%%%%%%%%%%%%%%% COROLLARY COROLLARY COROLLARY %%%%%%%%%%%%%%%%%%

%%%%%%%%%%%%%%%%%%%%%%%%%% PROOF PROOF PROOF %%%%%%%%%%%%%%%%%%%%%%%
\begin{proof}
Let $\vec{w} \in A_{\vec{p}, \vec{r}}$ and $\vec{v} \in A_{\vec{q}, \vec{r}}$. We use the same notation as in \eqref{eq:notation-1} and \eqref{eq:notation-2}. It follows from Lemma \ref{lem:Apr} that $w^{\delta_{m+1}} \in A_{\frac{1-r}{r}\delta_{m+1}}$. By definition, we see that 
\begin{align}
\frac{1}{\delta_{m+1}} = \frac{1}{r_{m+1}} - \frac{1}{p_{m+1}}=\frac1p-\frac{1}{r'_{m+1}} \le \frac{1}{p_*}. 
\end{align}
That is, $p_* \le \delta_{m+1}$. This implies that 
\begin{align}\label{eq:wpAi} 
w^{p_*} \in A_{\frac{1-r}{r}\delta_{m+1}} \subset A_{\infty}.
\end{align} 
Similarly, one has $v^{q_*} \in A_{\infty}$. Therefore, Corollary \ref{cor:interApr} is a consequence of Theorem \ref{thm:WMIP-4}. 
\end{proof}
%%%%%%%%%%%%%%%%%%%%%%%%%% END END END PROOF %%%%%%%%%%%%%%%%%%%%%%%

%%%%%%%%%%%%%%%%%%%%% COROLLARY COROLLARY COROLLARY %%%%%%%%%%%%%%%%%%
\begin{corollary}\label{cor:interlim}
Let $1 \leq \p_i^{-} < \p_i^{+} \leq \infty$, $p_i, q_i \in [\p_i^{-}, \p_i^{+}]$, and let $w_i^{p_i} \in A_{\frac{p_i}{\p_i^{-}}} \cap RH_{\big(\frac{\p_i^{+}}{p_i}\big)'}$ and $v_i^{q_i} \in A_{\frac{q_i}{\p_i^{-}}} \cap RH_{\big(\frac{\p_i^{+}}{q_i}\big)'}$, $i=1,\ldots,m$. Assume that $T$ is an m-linear operator such that 
\begin{align}
&T \text{ is bounded from $L^{p_1}(w_1^{p_1}) \times \cdots \times L^{p_m}(w_m^{p_m})$ to $L^p(w^p)$},  
\\
&T \text{ is compact from $L^{q_1}(v_1^{q_1}) \times \cdots \times L^{q_m}(v_m^{q_m})$ to $L^q(v^q)$},  
\end{align}
where $w=\prod_{i=1}^m w_i$ and $v=\prod_{i=1}^m v_i$. Then $T$ is compact from $[L^{p_1}(w_1^{p_1}), L^{q_1}(v_1^{q_1})]_{\theta} \times \cdots \times [L^{p_m}(w_m^{p_m}), L^{q_m}(v_m^{q_m})]_{\theta}$ to $[L^p(w^p), L^q(v^q)]_{\theta}$ for any $0<\theta<1$.  
\end{corollary}
%%%%%%%%%%%%%%%%%%%%% COROLLARY COROLLARY COROLLARY %%%%%%%%%%%%%%%%%%

%%%%%%%%%%%%%%%%%%%%%%%%%% PROOF PROOF PROOF %%%%%%%%%%%%%%%%%%%%%%%
\begin{proof}
Let $w_i^{p_i} \in A_{\frac{p_i}{\p_i^{-}}} \cap RH_{\big(\frac{\p_i^{+}}{p_i}\big)'}$ and $v_i^{q_i} \in A_{\frac{q_i}{\p_i^{-}}} \cap RH_{\big(\frac{\p_i^{+}}{q_i}\big)'}$, $i=1,\ldots,m$. By Lemma \ref{lem:lim-Apr}, there holds $\vec{w}=(w_1,\ldots,w_m) \in A_{\vec{t}, \vec{r}}$, where $\vec{t}$ and $\vec{r}$ are defined in Lemma \ref{lem:lim-Apr}. In view of \eqref{eq:wpAi}, we obtain 
\begin{align}\label{eq:wtAi}
w^t \in A_{\infty},\quad\text{where}\quad \frac{1}{t}=\frac{1}{t_1}+\cdots+\frac{1}{t_m}. 
\end{align}
Observe that $t_i=p_i(\p_i^{+}/p_i)' \ge p_i$ for each $i=1,\ldots,m$, which implies $p \le t$. This and \eqref{eq:wtAi} yield 
$w^p \in A_{\infty}$. Analogously, $v^q \in A_{\infty}$. Hence, Corollary \ref{cor:interlim} immediately follows from these  and Theorem \ref{thm:WMIP-4}. 
\end{proof}
%%%%%%%%%%%%%%%%%%%%%%%%%% END END END PROOF %%%%%%%%%%%%%%%%%%%%%%%

In Section \ref{sec:com}, we will use Corollaries \ref{cor:interApr} and \ref{cor:interlim} to show Theorems  \ref{thm:Ap} and \ref{thm:lim}.

%%%%%%%%%%%%%%%%%%%%%%% SECTION SECTION SECTION %%%%%%%%%%%%%%%%%%%%%%%
%%%%%%%%%%%%%%%%%%%%%%% SECTION SECTION SECTION %%%%%%%%%%%%%%%%%%%%%%%
\section{Extrapolation of compactness}\label{sec:com}
The goal of this section is to present the proof of Theorems \ref{thm:Ap}--\ref{thm:limTb}. For this purpose, we establish a fundamental result about $A_p$ weights below, which generalizes the main points in weighted interpolation theorems involving $A_{\vec{p}, \vec{r}}$ and limited range weights, see Lemmas \ref{lem:int-Lp} and \ref{lem:int-lim}. 

%%%%%%%%%%%%%%%%%%%%%%%% LEMMA LEMMA LEMMA %%%%%%%%%%%%%%%%%%%%%%%%
\begin{lemma}\label{lem:AAA}
Fix $1<\gamma_i, \widetilde{\gamma}_i, \eta_i, \widetilde{\eta}_i<\infty$ such that $\frac{\eta_i}{\gamma_i}=\frac{\widetilde{\eta}_i}{\widetilde{\gamma}_i}$, $i=1,\ldots,m$. Assume that $w_i^{\gamma_i} \in A_{\eta_i}$ and $v_i^{\widetilde{\gamma}_i} \in A_{\widetilde{\eta}_i}$ for each $i=1,\ldots,m$. Then there exists $\theta \in (0, 1)$ such that \begin{align}\label{eq:ur}
u_i^{\widehat{\gamma}_i} \in A_{\widehat{\eta}_i},\quad i=1,\ldots,m, 
\end{align}
where 
\begin{align}\label{eq:gaga}
w_i=u_i^{1-\theta} v_i^{\theta},\quad 
\frac{1}{\gamma_i}=\frac{1-\theta}{\widehat{\gamma}_i}+\frac{\theta}{\widetilde{\gamma}_i},\quad  
\frac{1}{\eta_i}=\frac{1-\theta}{\widehat{\eta}_i} + \frac{\theta}{\widetilde{\eta}_i}, \quad i=1,\ldots,m. 
\end{align}
\end{lemma}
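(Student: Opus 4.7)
My plan is to perturb off the trivial ``base case'' $\theta=0$, at which $u_i=w_i$, $\widehat\gamma_i=\gamma_i$, $\widehat\eta_i=\eta_i$, so that \eqref{eq:ur} is exactly the standing hypothesis $w_i^{\gamma_i}\in A_{\eta_i}$. (One first checks from the definitions of $\widehat\gamma_i$ and $\widehat\eta_i$ that the hypothesis $\eta_i/\gamma_i=\widetilde\eta_i/\widetilde\gamma_i$ forces the ratio identity $\widehat\eta_i/\widehat\gamma_i=\eta_i/\gamma_i$.) The goal is to show that for all sufficiently small $\theta>0$, a direct H\"older-plus-reverse-H\"older estimate propagates the base case to the perturbed setting.

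First I will substitute $u_i=w_i^{1/(1-\theta)}v_i^{-\theta/(1-\theta)}$, with the abbreviations $A_i:=\widehat\gamma_i/(1-\theta)$ and $B_i:=\theta\widehat\gamma_i/(1-\theta)$, so that the $A_{\widehat\eta_i}$-testing quantity for $u_i^{\widehat\gamma_i}$ takes the form
\[
\sup_Q \bigg(\fint_Q w_i^{A_i}v_i^{-B_i}\bigg)\bigg(\fint_Q w_i^{-A_i/(\widehat\eta_i-1)}v_i^{B_i/(\widehat\eta_i-1)}\bigg)^{\widehat\eta_i-1},
\]
with $A_i\to\gamma_i$, $B_i\to 0$, $\widehat\eta_i\to\eta_i$ as $\theta\to 0^+$. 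Next I will apply H\"older's inequality to each average with conjugate exponent pairs $(p_i,p_i')$ and $(q_i,q_i')$ to separate the $w_i$- and $v_i$-contributions, and then invoke the sharp reverse H\"older inequality (Lemma~\ref{lem:RH}) on the four resulting integrals, using
\[
w_i^{\gamma_i}\in A_{\eta_i},\ \ w_i^{-\gamma_i/(\eta_i-1)}\in A_{\eta_i'},\ \ v_i^{\widetilde\gamma_i}\in A_{\widetilde\eta_i},\ \ v_i^{-\widetilde\gamma_i/(\widetilde\eta_i-1)}\in A_{\widetilde\eta_i'},
\]
which yield reverse H\"older exponents $s_i,s_i^{\ast},\widetilde s_i,\widetilde s_i^{\ast}>1$ depending only on the given $A_p$ constants. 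The ratio identity above is precisely what makes the four pieces recombine: the $w_i$-factors collapse into a power of $[w_i^{\gamma_i}]_{A_{\eta_i}}$ and the $v_i$-factors into a power of $[v_i^{\widetilde\gamma_i}]_{A_{\widetilde\eta_i}}$, giving
\[
[u_i^{\widehat\gamma_i}]_{A_{\widehat\eta_i}}\;\lesssim\; [w_i^{\gamma_i}]_{A_{\eta_i}}^{A_i/\gamma_i}\,[v_i^{\widetilde\gamma_i}]_{A_{\widetilde\eta_i}}^{B_i/\widetilde\gamma_i}.
\]

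Finally, I will verify that the four H\"older/reverse-H\"older constraints, each schematically of the form ``coefficient$(\theta)$ times one of $p_i,p_i',q_i,q_i'$ $\le$ reverse H\"older exponent'', can all be met simultaneously for a single $\theta>0$. At $\theta=0$ the two constraints controlled by $B_i$ become trivial (coefficient $0$), while the remaining two reduce to $1<p_i\le s_i$ and $1<q_i\le s_i^{\ast}$, both admitting valid choices. A continuity argument then produces an interval $(0,\theta_i)$ of admissible parameters for each index, and one sets $\theta:=\min_{1\le i\le m}\theta_i$. The main obstacle is the bookkeeping of these four exponent constraints: a direct Stein-Weiss interpolation of $A_p$ weights does \emph{not} apply, because $w_i$ sits as the \emph{middle} weight between $u_i$ and $v_i$ and one would have to invert the interpolation; the slack needed for this inversion is precisely the self-improvement of $A_p$ furnished by reverse H\"older.
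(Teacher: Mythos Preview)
Your proposal is correct and follows essentially the same route as the paper's proof: substitute $u_i=w_i^{1/(1-\theta)}v_i^{-\theta/(1-\theta)}$, split each average in the $A_{\widehat\eta_i}$-testing quantity via H\"older into a $w_i$-factor and a $v_i$-factor, apply the sharp reverse H\"older inequality (the paper's Lemma~\ref{lem:RH}) to absorb the slight overshoot in exponents, and use the ratio identity $\eta_i/\gamma_i=\widetilde\eta_i/\widetilde\gamma_i=\widehat\eta_i/\widehat\gamma_i$ to recombine into powers of $[w_i^{\gamma_i}]_{A_{\eta_i}}$ and $[v_i^{\widetilde\gamma_i}]_{A_{\widetilde\eta_i}}$; the paper even obtains the identical final bound $[u_i^{\widehat\gamma_i}]_{A_{\widehat\eta_i}}\lesssim [w_i^{\gamma_i}]_{A_{\eta_i}}^{\widehat\gamma_i/(\gamma_i(1-\theta))}[v_i^{\widetilde\gamma_i}]_{A_{\widetilde\eta_i}}^{\theta\widehat\gamma_i/(\widetilde\gamma_i(1-\theta))}$, matching your $A_i/\gamma_i$ and $B_i/\widetilde\gamma_i$. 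The only cosmetic difference is that the paper fixes its H\"older splitting exponents explicitly as $1+\alpha_i$ with $\alpha_i=\theta\eta_i/\widetilde\eta_i'$ (and $1+\beta_i$ with $\beta_i=\theta\eta_i'/\widetilde\eta_i$ for the dual average), then shows the resulting amplification factors $\kappa_i(\theta),\widetilde\kappa_i(\theta)$ tend to $1$ as $\theta\to 0^+$, whereas you leave $(p_i,p_i')$, $(q_i,q_i')$ as free parameters and run the same continuity argument.
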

%%%%%%%%%%%%%%%%%%%%%%%% LEMMA LEMMA LEMMA %%%%%%%%%%%%%%%%%%%%%%%%

%%%%%%%%%%%%%%%%%%%%%%%% PROOF PROOF PROOF %%%%%%%%%%%%%%%%%%%%%%%%%
\begin{proof}
Let $w_i^{\gamma_i} \in A_{\eta_i}$ and $v_i^{\widetilde{\gamma}_i} \in A_{\widetilde{\eta}_i}$, $i=1,\ldots,m$.  In view of Lemma \ref{lem:RH}, there exist $\tau_i, \widetilde{\tau}_i \in (1, \infty)$  such that 
\begin{align}\label{eq:RH}
\bigg(\fint_{Q} w_i^{\gamma_i \tau_i} dx\bigg)^{\frac{1}{\tau_i}} 
\lesssim \fint_{Q} w_i^{\gamma_i} dx \quad\text{and}\quad 
\bigg(\fint_{Q} v_i^{\widetilde{\gamma}_i \widetilde{\tau}_i} dx\bigg)^{\frac{1}{\widetilde{\tau}_i}} 
\lesssim \fint_{Q} v_i^{\widetilde{\gamma}_i} dx, 
\end{align} 
for every cube $Q \subset \Rn$. Given $\theta \in (0, 1)$, we define $u_i$, $\widehat{\gamma}_i$ and $\widehat{\eta}_i$ as in \eqref{eq:gaga}, and pick 
\begin{align*}
\alpha_i=\alpha_i(\theta) := \theta\eta_i/\widetilde{\eta}'_i \quad\text{and}\quad 
\beta_i=\beta_i(\theta) := \theta \eta'_i/\widetilde{\eta}_i, \quad i=1,\ldots,m.  
\end{align*}
Then one can verify that 
\begin{align}
\label{eq:ki} \kappa_i=\kappa_i(\theta) &:=\frac{\widehat{\gamma}_i (1+\alpha_i)}{\gamma_i(1-\theta)}
=\frac{\widehat{\eta}_i \theta(1+\alpha_i)}{\widetilde{\eta}'_i (1-\theta)\alpha_i} 
=\frac{\widehat{\gamma}_i \big(\widetilde{\eta}'_i+\theta\eta_i\big)}{\gamma_i \widetilde{\eta}'_i(1-\theta)}, 
\\%%%%%%%%%%%%%
\label{eq:kki} \widetilde{\kappa}_i=\widetilde{\kappa}_i (\theta)
&:=\frac{\widehat{\eta}'_i (1+\beta_i)}{\eta_i' (1-\theta)} 
=\frac{\widehat{\eta}'_i \theta(1+\beta_i)}{\widetilde{\eta}_i(1-\theta)\beta_i}  
= \frac{\widehat{\eta}'_i \big(\widetilde{\eta}_i+\theta \eta_i'\big)}{\eta_i' \widetilde{\eta}_i (1-\theta)}. 
\end{align}
From \eqref{eq:gaga}, we see that $\widehat{\gamma}_i=\widehat{\gamma}_i(\theta)$ depends only on $\theta$ and $\widehat{\gamma}_i(0^+)=\gamma_i$. Together with \eqref{eq:ki} and \eqref{eq:kki}, the latter in turn gives that $\kappa_i (0^+)=\widehat{\gamma}_i(0^+)/\gamma_i=1$ and $\widetilde{\kappa}_i(0^+)=(\widehat{\eta}_i(0^+))'/\eta_i'=1$. Hence, by continuity, one has 
\begin{align}\label{eq:kkk}
\kappa_i=\kappa_i(\theta) <\tau_i \quad\text{and}\quad 
\widetilde{\kappa}_i=\widetilde{\kappa}_i(\theta) < \widetilde{\tau}_i, \quad i=1,\ldots,m, 
\end{align}
if $\theta \in (0, 1)$ is small enough. Hereafter, we fix $\theta \in (0, 1)$ sufficiently small such that \eqref{eq:kkk} holds. 

By our assumption and \eqref{eq:gaga}, there holds 
\begin{equation}\label{eq:erer}
\frac{\eta_i}{\gamma_i}=\frac{\widetilde{\eta}_i}{\widetilde{\gamma}_i} 
=\frac{\widehat{\eta}_i}{\widehat{\gamma}_i},\quad i=1,\ldots,m. 
\end{equation}
Now, using $w_i=u_i^{1-\theta} v_i^{\theta}$, H\"{o}lder's inequality, \eqref{eq:ki}, \eqref{eq:RH} and \eqref{eq:kkk}, we conclude that 
\begin{align}\label{eq:ui-1}
\fint_{Q} u_i^{\widehat{\gamma}_i}\, dx 
&=\fint_{Q} w_i^{\frac{\widehat{\gamma}_i}{1-\theta}} v_i^{-\frac{\theta \widehat{\gamma}_i}{1-\theta}} dx
=\fint_{Q} (w_i^{\gamma_i})^{\frac{\widehat{\gamma}_i}{\gamma_i(1-\theta)}} \big(v_i^{\widetilde{\gamma}_i (1-\widetilde{\eta}'_i)}\big)^{\frac{\widehat{\eta}_i \theta}{\widetilde{\eta}'_i (1-\theta)}} dx
\\ \nonumber
&\le \bigg(\fint_{Q} (w_i^{\gamma_i})^{\frac{\widehat{\gamma}_i (1+\alpha_i)}{\gamma_i(1-\theta)}} dx\bigg)^{\frac{1}{1+\alpha_i}}
\bigg(\fint_{Q} \big(v_i^{\widetilde{\gamma}_i (1-\widetilde{\eta}'_i)}\big)^{\frac{\widehat{\eta}_i \theta (1+\alpha_i)}{\widetilde{\eta}'_i (1-\theta) \alpha_i}} dx\bigg)^{\frac{\alpha_i}{1+\alpha_i}} 
\\ \nonumber
&=\bigg(\fint_{Q} w_i^{\gamma_i \kappa_i} \, dx\bigg)^{\frac{\widetilde{\eta}'_i}{\widetilde{\eta}'_i + \theta \eta_i}} 
\bigg(\fint_{Q} v_i^{\widetilde{\gamma}_i (1-\widetilde{\eta}'_i) \kappa_i} dx\bigg)^{\frac{\theta \eta_i}{\widetilde{\eta}'_i+\theta \eta_i}} 
\\ \nonumber
&\lesssim \bigg(\fint_{Q} w_i^{\gamma_i} dx\bigg)^{\frac{\kappa_i \widetilde{\eta}'_i}{\widetilde{\eta}'_i + \theta \eta_i}} 
\bigg(\fint_{Q} v_i^{\widetilde{\gamma}_i (1-\widetilde{\eta}'_i)} dx\bigg)^{\frac{\kappa_i \theta \eta_i}{\widetilde{\eta}'_i+\theta \eta_i}} 
\\ \nonumber
&= \bigg(\fint_{Q} w_i^{\gamma_i} \, dx\bigg)^{\frac{\widehat{\gamma}_i}{\gamma_i(1-\theta)}} 
\bigg(\fint_{Q} v_i^{\widetilde{\gamma}_i (1-\widetilde{\eta}'_i)} dx\bigg)^{\frac{\widehat{\eta}_i \theta}{\widetilde{\eta}'_i(1-\theta)}}.  
\end{align}
Analogously, we have 
\begin{align}\label{eq:ui-2}
\fint_{Q} u_i^{\widehat{\gamma}_i (1-\widehat{\eta}'_i)} dx
&=\fint_{Q} (w_i^{\gamma_i(1-\eta_i')})^{\frac{\widehat{\eta}'_i}{\eta_i'(1-\theta)}} (v_i^{\widetilde{\gamma}_i})^{\frac{\widehat{\eta}_i \theta}{\widetilde{\eta}_i(1-\theta)}} dx  
\\ \nonumber
&\le \fint_{Q} (w_i^{\gamma_i(1-\eta_i')})^{\frac{\widehat{\eta}'_i (1+\beta_i)}{\eta_i' (1-\theta)}} dx \bigg)^{\frac{1}{1+\beta_i}}
\bigg(\fint_{Q}(v_i^{\widetilde{\gamma}_i})^{\frac{\widehat{\eta}_i \theta(1+\beta_i)}{\widetilde{\eta}_i(1-\theta)\beta_i}} dx \bigg)^{\frac{\beta_i}{1+\beta_i}}
\\ \nonumber
&=\bigg(\fint_{Q}w_i^{\gamma_i(1-\eta_i')\widetilde{\kappa}_i} dx\bigg)^{\frac{\widetilde{\eta}_i}{\widetilde{\eta}_i+\theta \eta_i'}}  
\bigg(\fint_{Q} v_i^{\widetilde{\gamma}_i \widetilde{\kappa}_i} dx\bigg)^{\frac{\theta \eta_i'}{\widetilde{\eta}_i+\theta \eta_i'}}   
\\ \nonumber
&\lesssim \bigg(\fint_{Q}w_i^{\gamma_i(1-\eta_i')} dx\bigg)^{\frac{\widetilde{\kappa}_i \widetilde{\eta}_i}{\widetilde{\eta}_i+\theta \eta_i'}}  
\bigg(\fint_{Q} v_i^{\widetilde{\gamma}_i} dx\bigg)^{\frac{\widetilde{\kappa}_i \theta \eta_i'}{\widetilde{\eta}_i+\theta \eta_i'}}  
\\ \nonumber
&=\bigg(\fint_{Q}w_i^{\gamma_i(1-\eta_i')} dx\bigg)^{\frac{\widehat{\eta}'_i}{\eta_i'(1-\theta)}}   
\bigg(\fint_{Q} v_i^{\widetilde{\gamma}_i} dx\bigg)^{\frac{\widehat{\eta}'_i \theta}{\widetilde{\eta}_i (1-\theta)}}.    
\end{align}
Gathering \eqref{eq:erer}, \eqref{eq:ui-1} and \eqref{eq:ui-2}, we obtain 
\begin{align*}
\bigg(\fint_{Q} & u_i^{\widehat{\gamma}_i}\, dx\bigg) 
\bigg(\fint_{Q} u_i^{\widehat{\gamma}_i (1-\widehat{\eta}_i')} dx\bigg)^{\widehat{\eta}_i-1}
\\%%%%%%%%%%%%%
&\lesssim \bigg(\fint_{Q} w_i^{\gamma_i} \, dx\bigg)^{\frac{\widehat{\gamma}_i}{\gamma_i(1-\theta)}} 
\bigg(\fint_{Q}w_i^{\gamma_i(1-\eta_i')} dx\bigg)^{\frac{\widehat{\eta}_i}{\eta_i'(1-\theta)}}   
\\%%%%%%%%%%%%%
&\qquad\times \bigg(\fint_{Q} v_i^{\widetilde{\gamma}_i} dx\bigg)^{\frac{\widehat{\eta}_i \theta}{\widetilde{\eta}_i (1-\theta)}}   
\bigg(\fint_{Q} v_i^{\widetilde{\gamma}_i (1-\widetilde{\eta}_i')} dx\bigg)^{\frac{\widehat{\eta}_i \theta}{\widetilde{\eta}_i' (1-\theta)}} 
\\%%%%%%%%%%%%%
&= \bigg\{\bigg(\fint_{Q} w_i^{\gamma_i} \, dx\bigg)
\bigg(\fint_{Q}w_i^{\gamma_i(1-\eta_i')} dx\bigg)^{\eta_i-1} \bigg\}^{\frac{\widehat{\gamma}_i}{\gamma_i(1-\theta)}} 
\\%%%%%%%%%%%%%
&\qquad\times \bigg\{\bigg(\fint_{Q} v_i^{\widetilde{\gamma}_i} dx\bigg)
\bigg(\fint_{Q} v_i^{\widetilde{\gamma}_i (1-\widetilde{\eta}'_i)} dx\bigg)^{\widetilde{\eta}_i-1} \bigg\}^{\frac{\widehat{\gamma}_i \theta}{\widetilde{\gamma}_i (1-\theta)}}   
\\%%%%%%%%%%%%%
&\le [w_i^{\gamma_i}]_{A_{\eta_i}}^{\frac{\widehat{\gamma}_i}{\gamma_i(1-\theta)}}  
[v_i^{\widetilde{\gamma}_i}]_{A_{\widetilde{\eta}_i}}^{\frac{\widehat{\gamma}_i \theta}{\widetilde{\gamma}_i(1-\theta)}}  
=[w_i^{\gamma_i}]_{A_{\eta_i}}^{\frac{\widetilde{\gamma}_i}{\widetilde{\gamma}_i-\theta \gamma_i}}  
[v_i^{\widetilde{\gamma}_i}]_{A_{\widetilde{\eta}_i}}^{\frac{\theta \gamma_i}{\widetilde{\gamma}_i -\theta \gamma_i}},  
\end{align*}
where we used \eqref{eq:gaga} in the last step. This gives that $u_i^{\widehat{\gamma}_i} \in A_{\widehat{\eta}_i}$ for each $i=1,\ldots,m$, and hence shows \eqref{eq:ur}.  
\end{proof}
%%%%%%%%%%%%%%%%%%%%%%%% END END END PROOF %%%%%%%%%%%%%%%%%%%%%%%%%

We recall an interpolation theorem due to Stein-Weiss \cite{SW}. 
%%%%%%%%%%%%%%%%%%%%%%%% LEMMA LEMMA LEMMA %%%%%%%%%%%%%%%%%%%%%%%%
\begin{lemma}\label{lem:SW}
Let $1 \le p_0, p_1<\infty$ and let $w_0$, $w_1$ be two weights. Then for any $\theta \in (0, 1)$, 
\begin{align*}
[L^{p_0}(w_0^{p_0}), L^{p_1}(w_1^{p_1})]_{\theta} = L^p(w^p),
\end{align*}
where $\frac1p=\frac{1-\theta}{p_0}+\frac{\theta}{p_1}$ and $w=w_0^{1-\theta} w_1^{\theta}$.
\end{lemma}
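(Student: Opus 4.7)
\emph{Plan.} The plan is to prove both inclusions $L^p(w)\subseteq [L^{p_0}(w_0),L^{p_1}(w_1)]_\theta$ and the reverse, using Calder\'on's complex-interpolation machinery while carefully tracking the weight factors. The algebraic identities that drive the proof are $1/p=(1-\theta)/p_0+\theta/p_1$ and $w^{1/p}=w_0^{(1-\theta)/p_0}w_1^{\theta/p_1}$, which make the analytic extensions collapse correctly at $z=\theta$.

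\emph{Step 1: Upper bound.} After normalizing $\|f\|_{L^p(w)}=1$ and reducing to bounded simple $f$ supported where all weights are bounded above and below (density via the argument in Lemma~\ref{lem:dense}), set $\varphi(z):=(1-z)/p_0+z/p_1$ and define
\[
F_\delta(z):=e^{\delta(z^2-\theta^2)}\,\operatorname{sgn}(f)\,|f|^{p\varphi(z)}\,w^{\varphi(z)}\,w_0^{-(1-z)/p_0}\,w_1^{-z/p_1},\qquad \delta>0.
\]
At $z=\theta$, the weight exponents collapse to zero and $p\varphi(\theta)=1$, so $F_\delta(\theta)=f$. On $\Re z=0$, since $\Re\varphi(it)=1/p_0$, the factors combine to give $|F_\delta(it)|^{p_0}w_0=e^{-2\delta t^2}|f|^p w$, hence $\|F_\delta(it)\|_{L^{p_0}(w_0)}\le 1$, and symmetrically $\|F_\delta(1+it)\|_{L^{p_1}(w_1)}\le 1$. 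The factor $e^{\delta(z^2-\theta^2)}$ provides boundedness on $\overline{S}$ and decay in $|\Im z|$, so $F_\delta$ is a legitimate analytic extension, yielding $\|f\|_{[L^{p_0}(w_0),L^{p_1}(w_1)]_\theta}\le 1$.

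\emph{Step 2: Lower bound, by duality.} A direct computation shows that the conjugate data $(p_0', p_1'; w_0^{1-p_0'}, w_1^{1-p_1'})$ satisfy exactly the same Stein--Weiss relations:
\[
\frac{1}{p'}=\frac{1-\theta}{p_0'}+\frac{\theta}{p_1'},\qquad (w^{1-p'})^{1/p'}=(w_0^{1-p_0'})^{(1-\theta)/p_0'}(w_1^{1-p_1'})^{\theta/p_1'}.
\]
Applying Step~1 to the dual couple, every unit $g\in L^{p'}(w^{1-p'})=(L^p(w))^*$ admits an analytic extension $G$ with $\|G(it)\|_{L^{p_0'}(w_0^{1-p_0'})}\le 1$ and $\|G(1+it)\|_{L^{p_1'}(w_1^{1-p_1'})}\le 1$. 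For any analytic $F$ realizing the interpolation norm of $f$ to within $M$, the function $\Phi(z):=\int F(z)G(z)\,dx$ is bounded and analytic on $\overline{S}$, and H\"older's inequality in the pairing $L^{p_j}(w_j)\times L^{p_j'}(w_j^{1-p_j'})\to L^1$ gives $|\Phi(it)|,|\Phi(1+it)|\le M$. Hadamard's three-lines lemma yields $|\langle f,g\rangle|=|\Phi(\theta)|\le M$, and taking the supremum over $g$ and the infimum over $F$ produces $\|f\|_{L^p(w)}\le \|f\|_{[L^{p_0}(w_0),L^{p_1}(w_1)]_\theta}$.

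\emph{Main obstacle.} The technical heart of the argument is ensuring that $F_\delta(z)$ and $\Phi(z)$ are genuinely bounded analytic functions on $\overline{S}$ with values in $L^{p_0}(w_0)+L^{p_1}(w_1)$. This forces one to truncate $f$ both in amplitude and in support (so that $|f|^{p\varphi(z)}$ is controlled when $\Re\varphi(z)$ approaches $0$) and to restrict to regions where $w_0,w_1$ are bounded away from $0$ and $\infty$; the regularizing factor $e^{\delta(z^2-\theta^2)}$ plays its customary role of enforcing integrability uniformly in $\Im z$. Once these analytic preliminaries are in place, a density and approximation step, letting $\delta\to 0$ and the truncation parameters tend to their natural limits, finishes the proof.
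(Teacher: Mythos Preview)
The paper does not prove this lemma at all; it is simply quoted as a classical result due to Stein and Weiss \cite{SW}, so there is no ``paper's own proof'' to compare against. Your sketch is the standard complex-interpolation proof of the Stein--Weiss identity and is essentially correct.

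Two small remarks are worth making. First, a minor arithmetic slip: at $z=it$ one has $\Re(z^2-\theta^2)=-(t^2+\theta^2)$, so the modulus of the regularizing factor is $e^{-\delta(t^2+\theta^2)}$, not $e^{-2\delta t^2}$; this does not affect the conclusion since the bound is still $\le 1$. Second, and more substantively, in Step~2 you ``apply Step~1 to the dual couple'', but the hypothesis of the lemma allows $p_0=1$ or $p_1=1$, in which case $p_0'=\infty$ or $p_1'=\infty$ and the construction of Step~1 (which was set up for finite exponents) does not apply verbatim. This is a routine technical point---one can either handle the $L^\infty$ endpoint directly in the construction of $G$, or observe that the duality pairing $\langle f,g\rangle$ still makes sense and the three-lines argument goes through with the obvious modifications---but since the statement explicitly includes $p_j=1$ you should say a word about it.
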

%%%%%%%%%%%%%%%%%%%%%%%% LEMMA LEMMA LEMMA %%%%%%%%%%%%%%%%%%%%%%%%

For convenience, in what follows, the notation $[L^p(w^p), L^q(v^q)]_{\theta}$ will denote the space $L^r(u^r)$ whenever $u=w^{1-\theta} v^{\theta}$, $\frac1r=\frac{1-\theta}{p}+\frac{\theta}{q}$ and $0<p,q<1$.

%%%%%%%%%%%%%%%%%%%%%%% LEMMA LEMMA LEMMA %%%%%%%%%%%%%%%%%%%%%%%%%
\begin{lemma}\label{lem:int-Lp} 
Let $\vec{r}=(r_1, \ldots, r_{m+1})$ with $1 \le r_1, \ldots, r_{m+1}<\infty$, and let $\vec{p}=(p_1,\ldots,p_m)$ with $\vec{r} \preceq \vec{p}$ and $\vec{q}=(q_1,\ldots,q_m)$ with $\vec{r} \preceq \vec{q}$. If $\vec{w} \in A_{\vec{p}, \vec{r}}$ and $\vec{v} \in A_{\vec{q}, \vec{r}}$, then there exist $\theta \in (0, 1)$,  $\vec{s}=(s_1,\ldots,s_m)$ with $\vec{r} \preceq \vec{s}$, and $\vec{u} \in A_{\vec{s}, \vec{r}}$ such that 
\begin{align*}
L^{p_*}(w^{p_*}) = [L^{s_*}(u^{s_*}), L^{q_*}(v^{q_*})]_{\theta} \quad\text{and}\quad 
L^{p_i}(w_i^{p_i}) = [L^{s_i}(u_i^{s_i}), L^{q_i}(v_i^{q_i})]_{\theta}, 
\end{align*}
for each $i=1,\ldots,m$, where $\frac1p-\frac{1}{p_*}=\frac1q-\frac{1}{q_*}=\frac1s-\frac{1}{s_*}$, $\frac{1}{p}=\sum_{i=1}^m \frac{1}{p_i}$, $\frac{1}{q}=\sum_{i=1}^m \frac{1}{q_i}$, $\frac{1}{s}=\sum_{i=1}^m \frac{1}{s_i}$, $w=\prod_{i=1}^m w_i$, $u=\prod_{i=1}^m u_i$ and $v=\prod_{i=1}^m v_i$. 
\end{lemma}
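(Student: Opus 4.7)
The approach combines Stein-Weiss interpolation (Lemma~\ref{lem:SW}) with the $A_p$-characterization of $A_{\vec{p},\vec{r}}$ from Lemma~\ref{lem:Apr} and the $A_p$-interpolation lemma (Lemma~\ref{lem:AAA}). I may assume $\vec{r} \prec \vec{p}$ and $\vec{r} \prec \vec{q}$ (the non-strict case can be reduced to this by first invoking Lemma~\ref{lem:Apw}\eqref{list:Apr-2}). Let $\delta_i^{\vec{p}}, \theta_i^{\vec{p}}$ (resp.\ $\delta_i^{\vec{q}}, \theta_i^{\vec{q}}$) denote the quantities associated with $\vec{p},\vec{r}$ (resp.\ $\vec{q},\vec{r}$) as defined in Section~\ref{sec:pre}.

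First, I would translate the hypotheses via Lemma~\ref{lem:Apr}, which yields
\begin{equation*}
w_i^{\theta_i^{\vec{p}}} \in A_{\frac{1-r}{r}\theta_i^{\vec{p}}},\quad
v_i^{\theta_i^{\vec{q}}} \in A_{\frac{1-r}{r}\theta_i^{\vec{q}}},\quad
w^{\delta_{m+1}^{\vec{p}}} \in A_{\frac{1-r}{r}\delta_{m+1}^{\vec{p}}},\quad
v^{\delta_{m+1}^{\vec{q}}} \in A_{\frac{1-r}{r}\delta_{m+1}^{\vec{q}}}.
\end{equation*}
In each of these $m+1$ pairs the ratio of second to first exponent is the common constant $\frac{1-r}{r}$, which is precisely the compatibility hypothesis of Lemma~\ref{lem:AAA}. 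Applying that lemma to all $m+1$ pairs simultaneously (taking the minimum of the finitely many admissible values of $\theta$, which is legitimate since its conclusion propagates to smaller $\theta$), I obtain $\theta \in (0,1)$ and weights $u_i$, defined by $w_i = u_i^{1-\theta} v_i^{\theta}$, such that $u_i^{\widehat{\theta}_i} \in A_{\frac{1-r}{r}\widehat{\theta}_i}$ and $u^{\widehat{\delta}_{m+1}} \in A_{\frac{1-r}{r}\widehat{\delta}_{m+1}}$, where $u = \prod_i u_i$ and the interpolated exponents obey $\frac{1}{\theta_i^{\vec{p}}} = \frac{1-\theta}{\widehat{\theta}_i} + \frac{\theta}{\theta_i^{\vec{q}}}$ together with the analogous identity for $\widehat{\delta}_{m+1}$.

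Second, I would define $\vec{s}$ by $\frac{1}{p_i} = \frac{1-\theta}{s_i} + \frac{\theta}{q_i}$ for $i=1,\ldots,m$. Summing over $i$ and invoking $\frac{1}{p_{m+1}} = 1-\frac{1}{p}$ shows that the same relation holds at $i=m+1$; subtracting $\frac{1}{r_j} = \frac{1-\theta}{r_j} + \frac{\theta}{r_j}$ then gives $\frac{1}{\delta_j^{\vec{p}}} = \frac{1-\theta}{\delta_j^{\vec{s}}} + \frac{\theta}{\delta_j^{\vec{q}}}$ for every $j=1,\ldots,m+1$. Summing over $j \neq i$ yields $\frac{1}{\theta_i^{\vec{p}}} = \frac{1-\theta}{\theta_i^{\vec{s}}} + \frac{\theta}{\theta_i^{\vec{q}}}$, and comparing with the identity from Lemma~\ref{lem:AAA} forces $\widehat{\theta}_i = \theta_i^{\vec{s}}$ and $\widehat{\delta}_{m+1} = \delta_{m+1}^{\vec{s}}$. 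The converse direction of Lemma~\ref{lem:Apr} therefore delivers $\vec{u} \in A_{\vec{s}, \vec{r}}$. Moreover, for $\theta$ sufficiently small one has $r_i < s_i$ (which is equivalent to $\theta < \delta_i^{\vec{q}}/\delta_i^{\vec{p}}$) and $s < r_{m+1}'$ (a similar bound at the $(m{+}1)$-st slot), so $\vec{r} \prec \vec{s}$.

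Finally, Stein-Weiss interpolation (Lemma~\ref{lem:SW}) applied to each pair $(L^{s_i}(u_i^{s_i}), L^{q_i}(v_i^{q_i}))$ delivers $[L^{s_i}(u_i^{s_i}), L^{q_i}(v_i^{q_i})]_\theta = L^{p_i}(w_i^{p_i})$, since $\frac{1}{p_i} = \frac{1-\theta}{s_i} + \frac{\theta}{q_i}$ and $(u_i^{s_i})^{(1-\theta)/s_i}(v_i^{q_i})^{\theta/q_i} = u_i^{1-\theta} v_i^{\theta} = w_i = (w_i^{p_i})^{1/p_i}$; the analogous computation at the product-weight level gives $L^p(w^p) = [L^s(u^s), L^q(v^q)]_\theta$. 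The main obstacle is the algebraic identification $\widehat{\theta}_i = \theta_i^{\vec{s}}$ and $\widehat{\delta}_{m+1} = \delta_{m+1}^{\vec{s}}$ in the second step: it is a piece of linear-algebraic bookkeeping across the three tuples $\vec{p}, \vec{q}, \vec{s}$ that is the single nontrivial link making Lemma~\ref{lem:AAA} speak the language of $A_{\vec{s}, \vec{r}}$; everything else is packaging.
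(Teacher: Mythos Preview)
Your proposal is correct and follows essentially the same route as the paper: translate the $A_{\vec{p},\vec{r}}$ and $A_{\vec{q},\vec{r}}$ hypotheses into scalar $A_p$ conditions via Lemma~\ref{lem:Apr}, apply Lemma~\ref{lem:AAA} (the common ratio $\frac{1-r}{r}$ being the compatibility condition), identify the interpolated exponents with the $\theta_i^{\vec s},\delta_{m+1}^{\vec s}$ of a new tuple $\vec s$, and finish with Stein--Weiss (Lemma~\ref{lem:SW}). The only cosmetic difference is ordering: the paper first produces $\widehat{\theta}_i,\widehat{\delta}_i$ from Lemma~\ref{lem:AAA} and then \emph{defines} $s_i$ via $\frac{1}{r_i}-\frac{1}{s_i}=\frac{1}{\widehat{\delta}_i}$, whereas you define $s_i$ from the interpolation relation $\frac{1}{p_i}=\frac{1-\theta}{s_i}+\frac{\theta}{q_i}$ and then check that the Lemma~\ref{lem:AAA} exponents coincide with $\theta_i^{\vec s},\delta_{m+1}^{\vec s}$; these are the same computation read in opposite directions.
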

%%%%%%%%%%%%%%%%%%%%%%% LEMMA LEMMA LEMMA %%%%%%%%%%%%%%%%%%%%%%%%%

%%%%%%%%%%%%%%%%%%%%%%%% PROOF PROOF PROOF %%%%%%%%%%%%%%%%%%%%%%%%%
\begin{proof}
Let $\vec{w} \in A_{\vec{p}, \vec{r}}$ and $\vec{v} \in A_{\vec{q}, \vec{r}}$. We claim that there exist $\theta \in (0, 1)$, $\vec{s}=(s_1,\ldots,s_m)$ with $\vec{r} \preceq \vec{s}$, and $\vec{u} \in A_{\vec{s}, \vec{r}}$ such that 
\begin{align}\label{eq:pw}
\frac{1}{p_i}=\frac{1-\theta}{s_i}+\frac{\theta}{q_i} 
\quad\text{ and }\quad w_i=u_i^{1-\theta} v_i^{\theta},\quad i=1,\ldots,m. 
\end{align}
Once \eqref{eq:pw} is proved, it follows from Lemma \ref{lem:SW} that 
\begin{align*}
L^{p_i}(w_i^{p_i}) = [L^{s_i}(u_i^{s_i}), L^{q_i}(v_i^{q_i})]_{\theta},\quad i=1,\ldots,m. 
\end{align*}
In addition, from \eqref{eq:pw}, we see that 
\begin{align}\label{eq:pp}
\frac1p=\frac{1-\theta}{s}+\frac{\theta}{q}, \qquad \frac{1}{p_*}=\frac{1-\theta}{s_*}+\frac{\theta}{q_*}, 
\end{align}
and 
\begin{align}\label{eq:ww}
w=\prod_{i=1}^m w_i = \bigg(\prod_{i=1}^m u_i\bigg)^{1-\theta} 
\bigg(\prod_{i=1}^m v_i\bigg)^{\theta} = u^{1-\theta} v^{\theta}. 
\end{align} 
Therefore, \eqref{eq:pp} and \eqref{eq:ww} imply 
\begin{align*}
L^{p_*}(w^{p_*}) = [L^{s_*}(u^{s_*}), L^q(v^{q_*})]_{\theta}. 
\end{align*}

It remains to show our claim \eqref{eq:pw}. To proceed, we let $\vec{w} \in A_{\vec{p}, \vec{r}}$ and $\vec{v} \in A_{\vec{q}, \vec{r}}$. Set $\frac{1}{p_{m+1}}:=1-\frac1p$, $\frac{1}{q_{m+1}} :=1-\frac1q$, 
\begin{align}\label{eq:drp}
\frac{1}{r}:=\sum_{i=1}^{m+1}\frac{1}{r_i},\quad 
\frac{1}{\delta_i} := \frac{1}{r_i} - \frac{1}{p_i}, \quad 
\frac{1}{\widetilde{\delta}_i} := \frac{1}{r_i} - \frac{1}{q_i}, \quad i=1,\ldots, m+1,  
\end{align}
and 
\begin{align}\label{eq:titi}
\frac{1}{\theta_i} := \frac1r-1-\frac{1}{\delta_i}, \quad
\frac{1}{\widetilde{\theta}_i} := \frac1r-1-\frac{1}{\widetilde{\delta}_i}, \quad i=1,\ldots,m. 
\end{align}
For convenience, denote $\theta_{m+1}:=\delta_{m+1}$, $\widetilde{\theta}_{m+1}:=\widetilde{\delta}_{m+1}$, $w_{m+1}:=w$ and $v_{m+1}:=v$. Then, it follows from Lemma \ref{lem:Apr} that 
\begin{align*}
w_i^{\theta_i} \in A_{\frac{1-r}{r}\theta_i} =: A_{\eta_i} \quad\text{and}\quad 
v_i^{\widetilde{\theta}_i} \in A_{\frac{1-r}{r} \widetilde{\theta}_i} =:A_{\widetilde{\eta}_i}, \quad i=1,\ldots,m+1.  
\end{align*}  
By Lemma \ref{lem:AAA}, there exists $\theta \in (0, 1)$ such that $u_i^{\widehat{\theta}_i} \in A_{\widehat{\eta}_i}$, $i=1,\ldots,m+1$, where 
\begin{align}\label{eq:ttt}
w_i=u_i^{1-\theta} v_i^{\theta},\quad 
\frac{1}{\theta_i}=\frac{1-\theta}{\widehat{\theta}_i}+\frac{\theta}{\widetilde{\theta}_i},\quad 
\frac{1}{\eta_i}=\frac{1-\theta}{\widehat{\eta}_i} + \frac{\theta}{\widetilde{\eta}_i},\quad i=1,\ldots,m+1. 
\end{align}
Using \eqref{eq:ttt}, $w=\prod_{i=1}^m w_i$, $v=\prod_{i=1}^m v_i$, $\eta_i=\frac{1-r}{r}\theta_i$ and $\widetilde{\eta}_i=\frac{1-r}{r}\widetilde{\theta}_i$, we obtain 
\begin{equation}\label{eq:ueta} 
u_{m+1}=u=\prod_{i=1}^m u_i \quad\text{and}\quad  
\widehat{\eta}_i=\frac{1-r}{r}\widehat{\theta}_i,\quad i=1,\ldots,m+1.
\end{equation} 
This gives that 
\begin{align}\label{eq:uii}
u_i^{\widehat{\theta}_i} \in A_{(\frac1r-1)\widehat{\theta}_i},\quad i=1,\ldots,m+1. 
\end{align}
Pick $s_i$ such that 
\begin{align}\label{eq:risi} 
\frac{1}{r_i} - \frac{1}{s_i}=\frac{1}{\widehat{\delta}_i},  \quad i=1,\ldots,m+1, 
\end{align}
where 
\begin{align}\label{eq:dm}
\frac{1}{\widehat{\delta}_i}:= \frac1r-1-\frac{1}{\widehat{\theta}_i}, \quad i=1,\ldots,m,\quad 
\text{and}\quad \widehat{\delta}_{m+1}:=\widehat{\theta}_{m+1}. 
\end{align}
Inserting \eqref{eq:titi} and \eqref{eq:dm} into the second term in \eqref{eq:ttt}, we obtain that $\frac{1}{\delta_i} = \frac{1-\theta}{\widehat{\delta}_i}+\frac{\theta}{\widetilde{\delta}_i}$, which together with \eqref{eq:drp} and \eqref{eq:risi} gives that 
\begin{align}\label{eq:pw-1}
\frac{1}{p_i}=\frac{1-\theta}{s_i}+\frac{\theta}{q_i}, \quad i=1,\ldots,m. 
\end{align}
Additionally, from \eqref{eq:uii} and \eqref{eq:dm}, one has 
\begin{align}\label{eq:uu}
u^{\widehat{\delta}_{m+1}} \in A_{(\frac1r-1)\widehat{\delta}_{m+1}} \quad\text{and}\quad 
u_i^{\widehat{\theta}_i} \in A_{(\frac1r-1)\widehat{\theta}_i}, \quad i=1,\ldots,m. 
\end{align}
As a consequence, Lemma \ref{lem:Apr} and \eqref{eq:uu} imply at once that $\vec{u} \in A_{\vec{s}, \vec{r}}$. This shows \eqref{eq:pw} and completes the proof.   
\end{proof}
%%%%%%%%%%%%%%%%%%%%%%%% END END END PROOF %%%%%%%%%%%%%%%%%%%%%%%%%

%%%%%%%%%%%%%%%%%%%%%%% LEMMA LEMMA LEMMA %%%%%%%%%%%%%%%%%%%%%%%%%
\begin{lemma}\label{lem:int-lim}
Let $1 \leq \p_i^{-} < \p_i^{+} \leq \infty$ and $p_i, q_i \in (\p_i^{-}, \p_i^{+})$, $i=1,\ldots,m$. If $w_i^{p_i} \in A_{\frac{p_i}{\p_i^{-}}} \cap RH_{\big(\frac{\p_i^{+}}{p_i}\big)'}$ and $v_i^{q_i} \in A_{\frac{q_i}{\p_i^{-}}} \cap RH_{\big(\frac{\p_i^{+}}{q_i}\big)'}$, $i=1,\ldots,m$, then there exist $s_i \in (\p_i^{-}, \p_i^{+})$ and $\theta \in (0, 1)$ such that $u_i^{s_i} \in A_{\frac{s_i}{\p_i^{-}}} \cap RH_{\big(\frac{\p_i^{+}}{s_i}\big)'}$, 
\begin{align*}
L^p(w^p) = [L^s(u^s), L^q(v^q)]_{\theta} \quad\text{ and }\quad 
L^{p_i}(w_i^{p_i}) = [L^{s_i}(u_i^{s_i}), L^{q_i}(v_i^{q_i})]_{\theta}, 
\end{align*}
where $\frac1p=\frac{1}{p_1}+\cdots+\frac{1}{p_m}$, $\frac{1}{q}=\frac{1}{q_1}+\cdots+\frac{1}{q_m}$, $\frac1s=\frac{1}{s_1}+\cdots+\frac{1}{s_m}$, $w=\prod_{i=1}^m w_i$, $v=\prod_{i=1}^m v_i$ and $u=\prod_{i=1}^m u_i$.
\end{lemma}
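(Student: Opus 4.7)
The plan is to reduce this limited-range statement to the pure Muckenhoupt setting of Lemma \ref{lem:AAA}, apply that lemma to produce the interpolated weights, and then conclude via the Stein--Weiss identity of Lemma \ref{lem:SW}, in direct parallel with the proof of Lemma \ref{lem:int-Lp}. The bridge that converts an $A \cap RH$ hypothesis into a pure $A$ hypothesis is the Jawerth--Torchinsky equivalence \eqref{eq:JN}.

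For each $i$, I would set $\gamma_i := p_i(\p_i^+/p_i)' = p_i\p_i^+/(\p_i^+-p_i)$ and $\eta_i := (\p_i^+/p_i)'(p_i/\p_i^- - 1) + 1$, and define $\widetilde{\gamma}_i$, $\widetilde{\eta}_i$ analogously from $q_i$. The hypothesis $w_i^{p_i} \in A_{p_i/\p_i^-} \cap RH_{(\p_i^+/p_i)'}$ together with \eqref{eq:JN} gives $w_i^{\gamma_i} \in A_{\eta_i}$, and similarly $v_i^{\widetilde{\gamma}_i} \in A_{\widetilde{\eta}_i}$. A direct computation collapses $\eta_i/\gamma_i$ to $(\p_i^+-\p_i^-)/(\p_i^+\p_i^-)$, a quantity depending only on $\p_i^+, \p_i^-$ and not on the choice of $p_i$; hence $\eta_i/\gamma_i = \widetilde{\eta}_i/\widetilde{\gamma}_i$ and Lemma \ref{lem:AAA} applies. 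Invoking it produces $\theta \in (0,1)$ (which may be taken arbitrarily small, as is transparent from its proof) and weights $u_i$ with $u_i^{\widehat{\gamma}_i} \in A_{\widehat{\eta}_i}$, satisfying $w_i = u_i^{1-\theta} v_i^\theta$, $1/\gamma_i = (1-\theta)/\widehat{\gamma}_i + \theta/\widetilde{\gamma}_i$, and $1/\eta_i = (1-\theta)/\widehat{\eta}_i + \theta/\widetilde{\eta}_i$.

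Next I define $s_i$ by $1/s_i := 1/\widehat{\gamma}_i + 1/\p_i^+$. Since $1/\gamma_i = 1/p_i - 1/\p_i^+$ and $1/\widetilde{\gamma}_i = 1/q_i - 1/\p_i^+$, this choice automatically yields $1/p_i = (1-\theta)/s_i + \theta/q_i$. The $p$-invariance of $\eta/\gamma$ propagates through the affine relations defining $\widehat{\gamma}_i$ and $\widehat{\eta}_i$ and forces $\widehat{\eta}_i/\widehat{\gamma}_i = (\p_i^+-\p_i^-)/(\p_i^+\p_i^-)$; a short calculation then identifies $\widehat{\eta}_i = (\p_i^+/s_i)'(s_i/\p_i^- - 1) + 1$, so a second application of \eqref{eq:JN} gives $u_i^{s_i} \in A_{s_i/\p_i^-} \cap RH_{(\p_i^+/s_i)'}$. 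The requirement $s_i \in (\p_i^-, \p_i^+)$ is ensured by shrinking $\theta$: as $\theta \to 0^+$ one has $1/s_i \to 1/p_i \in (1/\p_i^+, 1/\p_i^-)$, so this open condition is compatible with the smallness already used in applying Lemma \ref{lem:AAA}.

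Finally, summing $1/p_i = (1-\theta)/s_i + \theta/q_i$ over $i$ gives $1/p = (1-\theta)/s + \theta/q$, and multiplying $w_i = u_i^{1-\theta} v_i^\theta$ over $i$ gives $w = u^{1-\theta} v^\theta$. Two applications of Lemma \ref{lem:SW}, one to each factor and one to the target space, then yield the claimed identities $L^{p_i}(w_i^{p_i}) = [L^{s_i}(u_i^{s_i}), L^{q_i}(v_i^{q_i})]_\theta$ and $L^p(w^p) = [L^s(u^s), L^q(v^q)]_\theta$. I expect the main obstacle to be the exponent bookkeeping of the third paragraph: one must verify that the single parameter $\theta$ produced by Lemma \ref{lem:AAA} simultaneously realizes the correct $s_i$ through the affine relation on $1/p_i$ and also produces exactly the exponent needed for the $A \cap RH$ class after translating back through \eqref{eq:JN}. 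The crucial, slightly hidden fact that makes this work is the $p$-independence of $\eta/\gamma$, which keeps the two conversions compatible.
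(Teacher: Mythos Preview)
Your proposal is correct and follows essentially the same route as the paper: convert the $A\cap RH$ hypotheses to pure Muckenhoupt conditions via \eqref{eq:JN}, observe that $\eta_i/\gamma_i=1/\p_i^{-}-1/\p_i^{+}$ is independent of $p_i$ so that Lemma~\ref{lem:AAA} applies, define $s_i$ by $1/\widehat{\gamma}_i=1/s_i-1/\p_i^{+}$, translate back through \eqref{eq:JN}, and conclude with Lemma~\ref{lem:SW}. One small efficiency you may note: you do not actually need to shrink $\theta$ further to force $s_i\in(\p_i^{-},\p_i^{+})$, since $\widehat{\gamma}_i>0$ already gives $s_i<\p_i^{+}$, and $\widehat{\eta}_i>1$ together with $\widehat{\eta}_i/\widehat{\gamma}_i=1/\p_i^{-}-1/\p_i^{+}$ forces $s_i>\p_i^{-}$ automatically.
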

%%%%%%%%%%%%%%%%%%%%%%% LEMMA LEMMA LEMMA %%%%%%%%%%%%%%%%%%%%%%%%%

%%%%%%%%%%%%%%%%%%%%%%%% PROOF PROOF PROOF %%%%%%%%%%%%%%%%%%%%%%%%%
\begin{proof}
Let $w_i^{p_i} \in A_{\frac{p_i}{\p_i^{-}}} \cap RH_{\big(\frac{\p_i^{+}}{p_i}\big)'}$ and $v_i^{q_i} \in A_{\frac{q_i}{\p_i^{-}}} \cap RH_{\big(\frac{\p_i^{+}}{q_i}\big)'}$, $i=1,\ldots,m$. As we did in the proof of Lemma \ref{lem:int-Lp}, it suffices to show that there exist $s_i \in (\p_i^{-}, \p_i^{+})$, $u_i^{s_i} \in A_{\frac{s_i}{\p_i^{-}}} \cap RH_{\big(\frac{\p_i^{+}}{s_i}\big)'}$ and $\theta \in (0, 1)$ such that 
\begin{align}\label{eq:pswu}
\frac{1}{p_i}=\frac{1-\theta}{s_i}+\frac{\theta}{q_i} 
\quad\text{ and }\quad w_i=u_i^{1-\theta} v_i^{\theta},\quad i=1,\ldots,m. 
\end{align}

Denote 
\begin{align}
\label{eq:gap} \gamma_i &:=p_i(\p_i^+/p_i)',\quad 
\eta_i:=\bigg(\frac{\p_i^+}{p_i}\bigg)'\bigg(\frac{p_i}{\p_i^-}-1\bigg)+1, \quad i=1,\ldots,m, 
\\
\label{eq:gaq} \widetilde{\gamma}_i &:=q_i(\p_i^+/q_i)',\quad 
\widetilde{\eta}_i:=\bigg(\frac{\p_i^+}{q_i}\bigg)'\bigg(\frac{q_i}{\p_i^-}-1\bigg)+1, \quad i=1,\ldots,m. 
\end{align}
Then it follows from \eqref{eq:JN} that $w_i^{\gamma_i} \in A_{\eta_i}$ and $v_i^{\widetilde{\gamma}_i} \in A_{\widetilde{\eta}_i}$, $i=1,\ldots,m$. Observe that 
\begin{equation}\label{eq:etag}
\frac{\eta_i}{\gamma_i}=\frac{\widetilde{\eta}_i}{\widetilde{\gamma}_i}
=\frac{1}{\p_i^-} - \frac{1}{\p_i^+},\quad i=1,\ldots,m. 
\end{equation}
Thus, by Lemma \ref{lem:AAA}, there exists $\theta \in (0, 1)$ such that $u_i^{\widehat{\gamma}_i} \in A_{\widehat{\eta}_i}$, $i=1,\ldots,m$, where 
\begin{align}\label{eq:con-1}
w_i=u_i^{1-\theta} v_i^{\theta},\quad 
\frac{1}{\gamma_i}=\frac{1-\theta}{\widehat{\gamma}_i}+\frac{\theta}{\widetilde{\gamma}_i},\quad 
\frac{1}{\eta_i}=\frac{1-\theta}{\widehat{\eta}_i} + \frac{\theta}{\widetilde{\eta}_i},\quad i=1,\ldots,m. 
\end{align}
Pick $s_i \in (\p_i^-, \p_i^+)$ such that 
\begin{equation}\label{eq:srp} 
\frac{1}{\widehat{\gamma}_i}=\frac{1}{s_i}-\frac{1}{\p_i^+},\quad i=1,\ldots,m.
\end{equation} 
Inserting \eqref{eq:srp} into the second term in \eqref{eq:con-1}, and using \eqref{eq:gap} and \eqref{eq:gaq}, we deduce that 
\[
\frac{1}{p_i}-\frac{1}{\p_i^+} = (1-\theta) \bigg(\frac{1}{s_i} - \frac{1}{\p_i^+}\bigg) + \theta\bigg(\frac{1}{q_i}-\frac{1}{\p_i^+}\bigg), 
\]
and hence, 
\begin{equation}\label{eq:con-2}
\frac{1}{p_i}=\frac{1-\theta}{s_i}+\frac{\theta}{q_i},\quad i=1,\ldots,m. 
\end{equation}
Furthermore, from \eqref{eq:etag}, \eqref{eq:con-1} and \eqref{eq:srp}, we have 
\begin{align}\label{eq:wide}
\widehat{\eta}_i = \widehat{\gamma}_i \bigg(\frac{1}{\p_i^-} - \frac{1}{\p_i^+}\bigg)
=\bigg(\frac{\p_i^+}{s_i}\bigg)' \bigg(\frac{s_i}{\p_i^-}-1\bigg)+1, \quad i=1,\ldots,m. 
\end{align}
Using \eqref{eq:srp}, \eqref{eq:wide} and \eqref{eq:JN}, we see that $u_i^{\widehat{\gamma}_i} \in A_{\widehat{\eta}_i}$ is equivalent to  
\begin{equation}\label{eq:con-3}
u_i^{s_i}  \in A_{\frac{s_i}{\p_i^-}} \cap RH_{\big(\frac{\p_i^+}{s_i}\big)'},\quad i=1,\ldots,m. 
\end{equation}
Therefore, \eqref{eq:pswu} follows from the first one in \eqref{eq:con-1}, \eqref{eq:con-2} and \eqref{eq:con-3}. 
\end{proof}
%%%%%%%%%%%%%%%%%%%%%%%% END END END PROOF %%%%%%%%%%%%%%%%%%%%%%%%%

In order to show Theorem \ref{thm:Ap}, we first need to establish an off-diagonal extrapolation for the boundedness below, which is a generalization of Theorem {\bf A}. 
%%%%%%%%%%%%%%%%%%%%%% THEOREM THEOREM THEOREM %%%%%%%%%%%%%%%%%%%%%%
\begin{theorem}\label{thm:Apq}
Let $\F$ be a collection of $(m+1)$-tuples of non-negative functions. Let $\vec{r}=(r_1, \ldots, r_{m+1})$ with $1 \le r_1, \ldots, r_{m+1}<\infty$. Assume that there exist $p_{*} \in (0, \infty)$ and $\vec{p}=(p_1, \ldots, p_m)$ with $\vec{r} \preceq \vec{p}$ and $\frac{1}{p_*} \le \frac1p=\frac{1}{p_1}+\cdots+\frac{1}{p_m}$ such that for all $\vec{w}=(w_1, \ldots, w_m) \in A_{\vec{p}, \vec{r}}$, 
\begin{align}
\|f\|_{L^{p_*}(w^{p_*})} \le C \prod_{i=1}^m \|f_i\|_{L^{p_i}(w_i^{p_i})}, \quad (f, f_1, \dots, f_m) \in \F, 
\end{align}
where $w=\prod_{i=1}^m w_i$. Then, for all $q_* \in (0, \infty)$, for all $\vec{q}=(q_1,\ldots,q_m)$ with $\vec{r} \prec \vec{q}$, $\frac1q=\frac{1}{q_1}+\cdots+\frac{1}{q_m}$ and $\frac1q-\frac{1}{q_*}=\frac1p-\frac{1}{p_*}$, and for all $\vec{v}=(v_1, \ldots, v_m) \in A_{\vec{q}, \vec{r}}$, we have
\begin{align}
\|f\|_{L^{q_*}(v^{q_*})} \le C \prod_{i=1}^m \|f_i\|_{L^{q_i}(v_i^{q_i})}, \quad (f, f_1, \dots, f_m) \in \F, 
\end{align}
where $v=\prod_{i=1}^m v_i$. 
\end{theorem}
%%%%%%%%%%%%%%%%%%%%%% THEOREM THEOREM THEOREM %%%%%%%%%%%%%%%%%%%%%%

%%%%%%%%%%%%%%%%%%%%%%%%% PROOF PROOF PROOF %%%%%%%%%%%%%%%%%%%%%%%%
\begin{proof}
Note that \cite[Lemma 3.2]{LMO} presents a general characterization of $A_{\vec{p}, \vec{r}}$. It is still valid in the current scenario. Keeping the same notation, one can verify that for $\frac{1}{\gamma} := \frac{1}{r'_{m+1}}+\frac{1}{p_*}-\frac1p$, 
\begin{align}
\label{eq:fW-1} \|f\|_{L^{p_*}(w^{p_*})} 
&=\Big\|\Big(f\widehat{w}^{-\frac{1}{\gamma}}\Big)^{r_m}W\Big\|_{L^{\frac{p_*}{r_m}}(\widehat{w})}^{\frac{1}{r_m}}, 
\\
\label{eq:fW-2} \|f\|_{L^{p_m}(w_m^{p_m})} 
&=\Big\|\Big(f\widehat{w}^{-\frac{1}{r_m}}\Big)^{r_m}W\Big\|_{L^{\frac{p_m}{r_m}}(\widehat{w})}^{\frac{1}{r_m}}. 
\end{align}
Following the strategy in \cite{LMO} and replacing \cite[(3.7)]{LMO} by \eqref{eq:fW-1}, one can conclude  Theorem \ref{thm:Apq}. The details are left to the reader.  
\end{proof}
%%%%%%%%%%%%%%%%%%%%%%%%% END END END PROOF %%%%%%%%%%%%%%%%%%%%%%%%

Next, let us turn to prove our main theorems. 
%%%%%%%%%%%%%%%%%%%%%%%% PROOF PROOF PROOF %%%%%%%%%%%%%%%%%%%%%%%%%
\begin{proof}[\textbf{Proof of Theorem $\ref{thm:Ap}$.}] 
Let $\vec{p}=(p_1, \dots, p_m)$ with $\vec{r} \prec \vec{p}$ and $\vec{w}=(w_1, \ldots, w_m) \in A_{\vec{p}, \vec{r}}$. Recall that $\vec{v}=(v_1,\ldots,v_m) \in A_{\vec{q}, \vec{r}}$. Then Lemma \ref{lem:int-Lp} gives that for every $i=1, \ldots, m$,  
\begin{align}\label{eq:Lpsq}
L^{p_*}(w^{p_*}) = [L^{s_*}(u^{s_*}), L^{q_*}(v^{q_*})]_{\theta}, \quad 
L^{p_i}(w_i^{p_i}) = [L^{s_i}(u_i^{s_i}), L^{q_i}(v_i^{q_i})]_{\theta},
\end{align}
for some $\theta \in (0, 1)$, $\vec{s}=(s_1,\ldots,s_m)$ with $\vec{r} \prec \vec{s}$ and $\vec{u} \in A_{\vec{s}, \vec{r}}$. Here, all the exponents are the same as in Lemma \ref{lem:int-Lp}. 

On the other hand, by Theorem \ref{thm:Apq}, the assumption \eqref{eq:Ap-1} implies that 
\begin{align}\label{eq:Apmu}
T \text{ is bounded from $L^{t_1}(\mu_1^{t_1}) \times \cdots \times L^{t_m}(\mu_m^{t_m})$ to $L^{t_*}(\mu^{t_*})$}, 
\end{align}
for all $t_* \in (0, \infty)$, for all $\vec{t}=(t_1, \dots, t_m)$ with $\vec{r} \prec \vec{t}$ and $\frac1t-\frac{1}{t_*}=\frac1q-\frac{1}{q_*}$, and for all $\vec{\mu} \in A_{\vec{t}, \vec{r}}$, where $\frac1t=\frac{1}{t_1}+\cdots+\frac{1}{t_m}$ and $\mu=\prod_{i=1}^m \mu_i$. Hence, \eqref{eq:Apmu} applied to $\vec{u} \in A_{\vec{s}, \vec{r}}$ yields 
\begin{align}\label{eq:Lsu}
T \text{ is bounded from $L^{s_1}(u_1^{s_1}) \times \cdots \times L^{s_m}(u_m^{s_m})$ to $L^{s_*}(u^{s_*})$}. 
\end{align}
In addition, recalling \eqref{eq:Ap-2}, we have 
\begin{align}\label{eq:Lqv}
T \text{ is compact from $L^{q_1}(v_1^{q_1}) \times \cdots \times L^{q_m}(v_m^{q_m})$ to $L^{q_*}(v^{q_*})$}.  
\end{align} 
Observe that $\frac{1}{s_*} \le \frac1s \le \frac{1}{s_*} +\frac{1}{r'_{m+1}}$ and $\frac{1}{q_*} \le \frac1q \le \frac{1}{q_*} +\frac{1}{r'_{m+1}}$. Consequently, from \eqref{eq:Lpsq}, \eqref{eq:Lsu}, \eqref{eq:Lqv} and Corollary \ref{cor:interApr}, we deduce that $T$ is compact from $L^{p_1}(w_1^{p_1}) \times \cdots \times L^{p_m}(w_m^{p_m})$ to $L^{p_*}(w^{p_*})$. The proof is complete. 
\end{proof} 
%%%%%%%%%%%%%%%%%%%%%%%% END END END PROOF %%%%%%%%%%%%%%%%%%%%%%%%%

%%%%%%%%%%%%%%%%%%%%%%%% PROOF PROOF PROOF %%%%%%%%%%%%%%%%%%%%%%%%%
\begin{proof}[\textbf{Proof of Theorem $\ref{thm:lim}$.}] 
Let $p_i \in (\p_i^{-}, \p_i^{+})$ and $w_i^{p_i} \in A_{\frac{p_i}{\p_i^{-}}} \cap RH_{\big(\frac{\p_i^{+}}{p_i}\big)'}$, $i=1,\ldots,m$. Recall that $v_i^{q_i} \in A_{\frac{q_i}{\p_i^{-}}} \cap RH_{\big(\frac{\p_i^{+}}{q_i}\big)'}$, $i=1,\ldots,m$. Then Lemma \ref{lem:int-lim} gives that for each $i=1, \ldots, m$,  
\begin{align}\label{eq:limLpq}
L^p(w^p) = [L^s(u^s), L^q(v^q)]_{\theta}, \quad \, 
L^{p_i}(w_i^{p_i}) = [L^{s_i}(u_i^{s_i}), L^{q_i}(v_i^{q_i})]_{\theta},
\end{align}
for some $\theta \in (0, 1)$, $\vec{s}=(s_1,\ldots,s_m)$ with $s_i \in (\p_{-}, \p_{+})$ and $u_i^{s_i} \in A_{\frac{s_i}{\p_i^{-}}} \cap RH_{\big(\frac{\p_i^{+}}{s_i}\big)'}$.  

In view of \cite[Theorem~1.3]{CM}, the assumption \eqref{eq:lim-1} yields that 
\begin{align}\label{eq:limmu}
T \text{ is bounded from $L^{q_1}(\mu_1^{q_1}) \times \cdots \times L^{q_m}(\mu_m^{q_m})$ to $L^q(\mu^q)$}, 
\end{align}
for all $q_i \in (\p_i^{-}, \p_i^{+})$ and for all $\mu_i^{q_i} \in A_{\frac{q_i}{\p_i^{-}}} \cap RH_{\big(\frac{\p_i^{+}}{q_i}\big)'}$, $i=1,\ldots,m$, where $\frac1q=\frac{1}{q_1}+\cdots+\frac{1}{q_m}$ and $\mu=\prod_{i=1}^m \mu_i$. From \eqref{eq:limmu} and $u_i^{s_i} \in A_{\frac{s_i}{\p_i^{-}}} \cap RH_{\big(\frac{\p_i^{+}}{s_i}\big)'}$, $i=1,\ldots,m$, we obtain that 
\begin{align}\label{eq:limLsu}
T \text{ is bounded from $L^{s_1}(u_1^{s_1}) \times \cdots \times L^{s_m}(u_m^{s_m})$ to $L^s(u^s)$}. 
\end{align}
Moreover, \eqref{eq:lim-2} states that 
\begin{align}\label{eq:limLqv}
T \text{ is compact from $L^{q_1}(v_1^{q_1}) \times \cdots \times L^{q_m}(v_m^{q_m})$ to $L^q(v^q)$}.  
\end{align} 
Therefore, by \eqref{eq:limLpq}, \eqref{eq:limLsu}, \eqref{eq:limLqv} and Corollary \ref{cor:interlim}, $T$ is compact from $L^{p_1}(w_1^{p_1}) \times \cdots \times L^{p_m}(w_m^{p_m})$ to $L^p(w^p)$. This shows Theorem \ref{thm:lim}. 
\end{proof} 
%%%%%%%%%%%%%%%%%%%%%%%% END END END PROOF %%%%%%%%%%%%%%%%%%%%%%%%%

%%%%%%%%%%%%%%%%%%%%%%%% PROOF PROOF PROOF %%%%%%%%%%%%%%%%%%%%%%%%%
\begin{proof}[\textbf{Proof of Theorem $\ref{thm:Tb}$.}] 
Let $T$ be an $m$-linear operator. Let $\vec{q}=(q_1, \ldots, q_m)$ with $\vec{r} \preceq \vec{q}$ be the same as in  \eqref{eq:bT-1}. By the same argument as in the proof of \cite[Proposition~5.1]{LMO}, the hypothesis \eqref{eq:bT-1} implies that 
\begin{align}\label{eq:bT-3}
[T, \b]_{\alpha} \text{ is bounded from $L^{q_1}(u_1^{q_1}) \times \cdots \times L^{q_m}(u_m^{q_m})$ to $L^{q_*}(u^{q_*})$}, 
\end{align}
for all $\vec{u}=(u_1, \ldots, u_m) \in A_{\vec{q}, \vec{r}}$, where $u=\prod_{i=1}^m u_i$. Then, \eqref{eq:bT-3} and \eqref{eq:bT-2} respectively verifies \eqref{eq:Ap-1} and \eqref{eq:Ap-2} with $\vec{v}=(1,\ldots,1)$ for $[T, \b]_{\alpha}$ in place of $T$. Invoking Theorem \ref{thm:Ap}, we conclude Theorem  \ref{thm:Tb}. 
\end{proof} 
%%%%%%%%%%%%%%%%%%%%%%%% END END END PROOF %%%%%%%%%%%%%%%%%%%%%%%%%

%%%%%%%%%%%%%%%%%%%%%%%% PROOF PROOF PROOF %%%%%%%%%%%%%%%%%%%%%%%%%
\begin{proof}[\textbf{Proof of Theorem $\ref{thm:limTb}$.}] 
Let $T$ be an $m$-linear operator. Let $\vec{q}=(q_1, \ldots, q_m)$ with $q_i \in [\p_i^{-}, \p_i^{+}]$ be the same as in  \eqref{eq:limTb-1}. In view of \cite[Theorem~4.3]{BMMST}, the hypothesis \eqref{eq:limTb-1} gives that 
\begin{align}\label{eq:limTb-3}
[T, \b]_{\alpha} \text{ is bounded from $L^{q_1}(u_1^{q_1}) \times \cdots \times L^{q_m}(u_m^{q_m})$ to $L^q(u^q)$}, 
\end{align}
for all $u_i^{q_i} \in A_{\frac{q_i}{\p_i^{-}}} \cap RH_{\big(\frac{\p_i^{+}}{q_i}\big)'}$, $i=1,\ldots,m$, where $\frac1q=\frac{1}{q_1}+\cdots+\frac{1}{q_m}$ and $u=\prod_{i=1}^m u_i$. Hence, \eqref{eq:limTb-3} and \eqref{eq:limTb-2} respectively verifies \eqref{eq:lim-1} and \eqref{eq:lim-2} with $\vec{v}=(1,\ldots,1)$ for $[T, \b]_{\alpha}$ instead of $T$. As a consequence, Theorem \ref{thm:limTb} follows from Theorem \ref{thm:lim}. 
\end{proof} 
%%%%%%%%%%%%%%%%%%%%%%%% END END END PROOF %%%%%%%%%%%%%%%%%%%%%%%%%

%%%%%%%%%%%%%%%%%%%%%%% SECTION SECTION SECTION %%%%%%%%%%%%%%%%%%%%%%%
%%%%%%%%%%%%%%%%%%%%%%% SECTION SECTION SECTION %%%%%%%%%%%%%%%%%%%%%%%
\section{Applications}\label{sec:App}
In this section, we present some applications of compact extrapolation theorems obtained above. More specifically, we will establish the compactness of commutators for several kinds of multilinear operators on the weighted Lebesgue spaces. 

%%%%%%%%%%%%%%%%%%% SUBSECTION SUBSECTION SUBSECTION %%%%%%%%%%%%%%%%%%%%
\subsection{Multilinear $\omega$-Calder\'{o}n-Zygmund operators}
Let $\omega: [0, \infty) \to [0, \infty)$ be a modulus of continuity, which means that $\omega$ is increasing, subadditive and $\omega(0)=0$. We say that a function $K:\R^{n(m+1)} \setminus \{x=y_1=\cdots=y_m\} \to \C$ is an $\omega$-Calder\'{o}n-Zygmund kernel, if there exists a constant $A>0$ such that 
\begin{align*}
|K(x,\vec{y})| & \le  \frac{A}{\big(\sum_{j=1}^{m}|x-y_j|\big)^{mn}},
\\ %%%%%%%%%%%%%%
|K(x,\vec{y}) - K(x',\vec{y})| & \le \frac{A}{\big(\sum_{j=1}^{m}|x-y_j|\big)^{mn}} 
\omega\bigg(\frac{|x-x'|}{\sum_{j=1}^{m}|x-y_j|}\bigg),
\end{align*}
whenever $|x-x'| \leq \frac12 \max\limits_{1\leq j \leq m}|x-y_j|$, and for each $i=1,\ldots,m$, 
\begin{align*}
|K(x,\vec{y}) - K(x,y_1,\ldots,y'_i,\ldots,y_m)| & \le \frac{A}{\big(\sum_{j=1}^{m}|x-y_j|\big)^{mn}} 
\omega\bigg(\frac{|y_i-y'_i|}{\sum_{j=1}^{m}|x-y_j|}\bigg),
\end{align*}
whenever
$|y_i-y_i'| \leq \frac12 \max\limits_{1\leq j \leq m}|x-y_j|$.

An $m$-linear operator $T: \S(\Rn) \times \cdots \times \S(\Rn) \to \S'(\Rn)$ is called an $\omega$-Calder\'{o}n-Zygmund operator if there exists an $\omega$-Calder\'{o}n-Zygmund kernel $K$ such that 
\begin{align*}
T(\vec{f})(x) =\int_{(\Rn)^m} K(x, \vec{y}) f_1(y_1)\cdots f_m(y_m) d\vec{y},  
\end{align*}
whenever $x \not\in \bigcap_{i=1}^m \supp(f_i)$ and $\vec{f}=(f_1,\ldots,f_m) \in \mathscr{C}_c^{\infty}(\Rn) \times \cdots \times \mathscr{C}_c^{\infty}(\Rn)$, and $T$ can be boundedly extended from $L^{q_1}(\Rn) \times \cdots \times L^{q_m}(\Rn)$ to $L^q(\Rn)$ for some $\frac1q=\frac{1}{q_1}+\cdots+\frac{1}{q_m}$ with $1<q_1,\ldots,q_m<\infty$. 

For a modulus of continuity $\omega$, we say that $\omega$ satisfies the Dini condition (or, $\omega \in \text{Dini}$) if it verifies 
\begin{align*}
\|\omega\|_{{\rm Dini}}:=\int_{0}^{1}\omega(t) \frac{dt}{t}<\infty.
\end{align*} 
An example of Dini condition is $\omega(t)=t^{\delta}$ with $\delta>0$. In this case, an $\omega$-Calder\'{o}n-Zygmund operator $T$ is called a (standard) Calder\'{o}n-Zygmund operator, which was studied by Grafakos and Torres \cite{GT}. For the general $\omega$, the linear $\omega$-CZO was introduced by the third author in \cite{Y}, while it was extended by Maldonado and Naibo \cite{MN} to the bilinear case. 

Now we state the main result of this subsection as follows. 
%%%%%%%%%%%%%%%%%%%%%%% THEOREM THEOREM THEOREM %%%%%%%%%%%%%%%%%%%%%
\begin{theorem}\label{thm:CZO}
Let $T$ be an m-linear $\omega$-Calder\'{o}n-Zygmund operator with $\omega \in \text{Dini}$. If $b \in \CMO$, then for each $j=1,\ldots,m$, $[T, b]_{e_j}$ is compact from $L^{p_1}(w_1^{p_1}) \times \cdots \times L^{p_m}(w_m^{p_m})$ to $L^p(w^p)$ for all $\vec{p}=(p_1,\ldots,p_m)$ with $1<p_1,\ldots,p_m<\infty$, and for all $\vec{w} \in A_{\vec{p}}$, where $\frac1p=\frac{1}{p_1}+\cdots+\frac{1}{p_m}$ and $w=\prod_{i=1}^m w_i$.
\end{theorem}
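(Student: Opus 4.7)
The plan is to deduce Theorem \ref{thm:CZO} from Corollary \ref{cor:Tb} by taking $\vec{r} = (1, \ldots, 1)$ of length $m+1$. With this choice the class $A_{\vec{p}, \vec{r}}$ agrees with the multilinear Muckenhoupt class of Lerner--Ombrosi--P\'erez--Torres--Trujillo appearing in the statement, after the routine identification between the two weight normalization conventions used ($L^{p_i}(u_i^{p_i})$ in the Corollary versus $L^{p_i}(w_i)$ with $w = \prod_i w_i^{p/p_i}$ in the statement, reconciled by $u_i = w_i^{1/p_i}$). Hypothesis \eqref{eq:bT-1} is then the known multilinear weighted boundedness of $\omega$-Calder\'on--Zygmund operators with $\omega \in \mathrm{Dini}$ at a single tuple of exponents and every $\vec{u} \in A_{\vec{q}}$, derivable from sparse domination or from a Grafakos--Torres-style argument adapted to Dini kernels (extending Maldonado--Naibo's bilinear theory). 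What remains is hypothesis \eqref{eq:bT-2}: compactness of $[T, b]_{e_j}$ on a single unweighted multilinear Lebesgue space.

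To verify \eqref{eq:bT-2}, I fix any exponents $1 < q_1, \ldots, q_m < \infty$ with $1/q = \sum_i 1/q_i$. The multilinear version of the P\'erez--Torres commutator bound $\|[T, b]_{e_j}\|_{L^{q_1}(\Rn) \times \cdots \times L^{q_m}(\Rn) \to L^q(\Rn)} \lesssim \|b\|_{\BMO}$, combined with the density of $C_c^\infty(\Rn)$ in $\CMO$ and the closedness of compact multilinear operators under operator-norm limits, reduces the task to proving compactness when $b \in C_c^\infty(\Rn)$. For such $b$ I would then apply the Fr\'echet--Kolmogorov criterion (Lemma \ref{lem:FK-1} with $w \equiv 1$) to the family $\{[T, b]_{e_j}(\vec{f}) : \|f_i\|_{L^{q_i}(\Rn)} \le 1\}$: uniform $L^q$-boundedness is immediate from the commutator bound; uniform decay at infinity follows from the compact support of $b$ combined with the off-diagonal size estimate $|K(x, \vec{y})| \lesssim (\sum_i|x-y_i|)^{-mn}$, which forces $[T, b]_{e_j}(\vec{f})(x)$ to decay at rate $|x|^{-mn}$ outside a bounded set uniformly in $\vec{f}$; and the equicontinuity $\lim_{|h|\to 0}\sup_{\vec{f}}\|\tau_h[T,b]_{e_j}(\vec{f}) - [T,b]_{e_j}(\vec{f})\|_{L^q} = 0$ is the main step.

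The principal obstacle is exactly this uniform translation-continuity. Using the decomposition
\[ \tau_h[T, b]_{e_j}(\vec{f}) - [T, b]_{e_j}(\vec{f}) = \bigl(b(\cdot+h)-b\bigr)\,\tau_h T(\vec{f}) + b\,\bigl(\tau_h T(\vec{f}) - T(\vec{f})\bigr) - \bigl(\tau_h T - T\bigr)(f_1, \ldots, bf_j, \ldots, f_m), \]
the first term is controlled by $|h|\,\|\nabla b\|_\infty\,\|T(\vec{f})\|_{L^q}$ thanks to the smoothness of $b$. The remaining two terms reduce to estimating the $L^q$ translation modulus of $T$ applied to an $m$-tuple with one compactly supported coordinate. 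This is done by splitting the $\vec{y}$-integration into the near-diagonal region $\{\sum_i |x-y_i| \le 2|h|\}$---handled by pointwise size bounds and H\"older's inequality---and the far region, where the first-variable $\omega$-regularity of $K$ yields an extra factor $\omega(|h|/\rho)$ in the kernel estimate with $\rho = \sum_i|x-y_i|$. Summing these contributions dyadically in $\rho$ and invoking the Dini hypothesis $\int_0^1 \omega(t)\,dt/t < \infty$ produces a bound of the form $\bigl(|h|\,\|\nabla b\|_\infty + \omega^*(|h|)\bigr)\prod_i\|f_i\|_{L^{q_i}}$ with $\omega^*(|h|) \to 0$ as $|h| \to 0$, delivering the required equicontinuity. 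This is the multilinear Dini analogue of Uchiyama's original linear argument, with bilinear precedents in B\'enyi--Torres and B\'enyi--Dami\'an--Moen--Torres; an alternative route would be to invoke Lemma \ref{lem:bTTj} applied to a near-diagonal dyadic kernel truncation whose operator norms are summable by the Dini condition.
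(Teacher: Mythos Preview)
Your overall strategy---reduce to Corollary~\ref{cor:Tb} with $\vec r=(1,\dots,1)$, cite the weighted bound for $T$, pass to $b\in C_c^\infty$ by density and the $\BMO$ commutator estimate, then verify Fr\'echet--Kolmogorov on one unweighted space---is exactly the paper's route, and your treatment of boundedness and of the decay at infinity is fine.

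The gap is in the equicontinuity step. Your decomposition
\[
\tau_h[T,b]_{e_j}(\vec f)-[T,b]_{e_j}(\vec f)
=(b(\cdot+h)-b)\,\tau_hT(\vec f)+b\,(\tau_hT-T)(\vec f)-(\tau_hT-T)(f_1,\dots,bf_j,\dots,f_m)
\]
discards the commutator structure in the last two terms, and neither of them tends to $0$ uniformly: the translation modulus of a non-compact Calder\'on--Zygmund operator does \emph{not} vanish uniformly over the unit ball, even when one input has fixed compact support or when the output is localised by $b$. (Already for the Hilbert transform, high-frequency bumps $f_N=e^{2\pi iN\cdot}\psi$ give $\|\tau_{h_N}Hf_N-Hf_N\|_{L^2}\gtrsim 1$ with $h_N\to 0$.) Even if you recombine the two terms to recover the factor $(b(x)-b(y_j))$ under the integral, your splitting scale $2|h|$ still fails: the far-region kernel difference contributes
\[
\int_{\sum_i|x-y_i|>2|h|}\frac{\omega\bigl(|h|/\sum_i|x-y_i|\bigr)}{(\sum_i|x-y_i|)^{mn}}\prod_i|f_i(y_i)|\,d\vec y
\ \approx\ \sum_{k\ge 1}\omega(2^{-k})\,\mathcal M(\vec f)(x)\ \approx\ \|\omega\|_{\mathrm{Dini}}\,\mathcal M(\vec f)(x),
\]
which is bounded but has no dependence on $|h|$, so no $\omega^*(|h|)\to 0$ emerges.

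The paper avoids both problems by writing $[T,b]_{e_j}(\vec f)(x)=\int(b(x)-b(y_j))K(x,\vec y)\prod_i f_i(y_i)\,d\vec y$ and splitting this \emph{absolutely convergent} integral at an auxiliary scale $\delta$ independent of $h$: the pieces with $\sum_i|x-y_i|\le\delta$ contribute $O(\delta)\,\mathcal M(\vec f)$ thanks to $|b(x)-b(y_j)|\le\|\nabla b\|_\infty|x-y_j|$, while the far piece contributes $O\bigl(\int_0^{c|h|/\delta}\omega(t)\,dt/t\bigr)\,\mathcal M(\vec f)$. One then takes $\delta$ small and $|h|\ll\delta$. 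The remaining term, $(b(x)-b(x+h))$ times the $\delta$-truncated integral, is controlled by the maximal truncation $T_*$, not by $T$ itself; your Lemma~\ref{lem:bTTj} alternative is a legitimate second route, but note that the Dini condition gives only summability of the truncated pieces, not the geometric decay $2^{-\delta j}$ that the lemma's hypothesis (ii-1) demands, so you would need to adapt its proof accordingly.
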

%%%%%%%%%%%%%%%%%%%%%%% THEOREM THEOREM THEOREM %%%%%%%%%%%%%%%%%%%%%

%%%%%%%%%%%%%%%%%%%%%%%% REMARK REMARK REMARK %%%%%%%%%%%%%%%%%%%%%%
\begin{remark}
Theorem \ref{thm:CZO} improves the weighted boundedness given in \cite{LZ}, but also the weighted compactness for the bilinear Calder\'{o}n-Zygmund operator in \cite{BDMT2} since $w_i^{p_i} \in A_p$ $(i=1,\ldots,m)$ implies $\vec{w}=(w_1,\ldots,w_m) \in A_{\vec{p}}$. 
\end{remark}
%%%%%%%%%%%%%%%%%%%%%%%% REMARK REMARK REMARK %%%%%%%%%%%%%%%%%%%%%%

\begin{proof}[\textbf{Proof of Theorem $\ref{thm:CZO}$.}] 
Let $\omega \in \text{Dini}$ and $T$ be an m-linear $\omega$-Calder\'{o}n-Zygmund operator. From \cite[Theorem~1.2]{LZ}, one has 
\begin{align}\label{eq:Tbddw}
T \text{ is bounded from $L^{p_1}(w_1^{p_1}) \times \cdots \times L^{p_m}(w_m^{p_m})$ to $L^p(w^p)$}, 
\end{align}
for all $\vec{p}=(p_1,\ldots,p_m)$ with $1<p_1,\ldots,p_m<\infty$, and for all $\vec{w} \in A_{\vec{p}}$, where $\frac1p=\frac{1}{p_1}+\cdots+\frac{1}{p_m}$ and $w=\prod_{i=1}^m w_i$. Thus, Theorem \ref{thm:CZO} will follow from Theorem \ref{thm:Tb} for $\vec{r}=(1,\ldots,1)$ and the fact that 
\begin{align}\label{eq:wTbdd}
[T, b]_{e_j} \text{ is compact from $L^{p_1}(\Rn) \times \cdots \times L^{p_m}(\Rn)$ to $L^p(\Rn)$}, 
\end{align}
for all $\frac1p=\frac{1}{p_1}+\cdots+\frac{1}{p_m}$ with $1<p_1,\ldots,p_m<\infty$. 

It remains to demonstrate \eqref{eq:wTbdd}. Fix $\frac1p=\frac{1}{p_1}+\cdots+\frac{1}{p_m}$ with $1<p_1,\ldots,p_m<\infty$. We first note that 
\begin{align}\label{eq:wTBMO}
\|[T, b]_{e_j}\|_{L^p(\Rn)} \lesssim \|b\|_{\BMO} \prod_{i=1}^m \|f_i\|_{L^{p_i}(\Rn)}, 
\end{align}
for all $b \in \BMO$. This is contained in \cite[Theorem~1.3]{LZ}. Applying Theorem \ref{thm:FK-1}, \eqref{eq:wTBMO} and the fact that $\mathscr{C}^{\infty }_c(\Rn)$ is dense in $\CMO$,  we are reduced to showing that for any $b\in \mathscr{C}_c^{\infty }(\Rn)$, the following two conditions hold:
\begin{enumerate}
\item[(a)] Given $\varepsilon>0$, there exists an $A=A(\varepsilon)>0$ independent of $\vec{f}$ such that 
\begin{align}\label{eq:FK-1}
\|[T,b]_{e_j}(\vec{f})\mathbf{1}_{\{|x|>A\}}\|_{L^p(\Rn)} \lesssim \varepsilon \prod_{j=1}^m\|f_j\|_{L^{p_j}(\Rn)}. 
\end{align}
\item[(b)] Given $\varepsilon\in (0,1)$, there exists a sufficiently small $\delta_0=\delta_0(\varepsilon)$ independent of $\vec{f}$ such that for all $0<|h|<\delta_0$, 
\begin{align}\label{eq:FK-2}
\|\tau_h [T,b]_{e_j}(\vec{f}) - [T,b]_{e_j}(\vec{f})\|_{L^p(\Rn)}\lesssim \varepsilon \prod_{j=1}^m\|f_j\|_{L^{p_j}(\Rn)}. 
\end{align}
\end{enumerate}
The proof of \eqref{eq:FK-1} is just an application of size condition, or see \cite{BT} for details. We are going to deal with \eqref{eq:FK-2}. We only focus on the case $j=1$. Let $\varepsilon\in (0,1)$. Since $\omega \in {\text Dini}$, there exists $t_0=t_0(\varepsilon) \in (0, 1)$ small enough such that 
\begin{align}\label{eq:wte}
\int_{0}^{t_0} \omega(t)\frac{dt}{t} < \varepsilon.
\end{align}
For $\delta>0$ chosen later and $0<|h|<\frac{\delta}{4}$, we split 
\begin{align}\label{eq:IIII}
&[T,b]_{e_1}(\vec{f})(x+h)-[T,b]_{e_1}(\vec{f})(x)
\\ \nonumber
&=(b(x+h)-b(x)) \int_{\sum_{i=1}^m |x-y_i|>\delta} K(x,\vec{y}) \prod^{m}_{j=1} f_j (y_j)d\vec{y}
\\ \nonumber
&\qquad+\int_{\sum_{i=1}^m |x-y_i|>\delta} (K(x+h,\vec{y})-K(x,\vec{y}))(b(x+h)-b(y_1))\prod^{m}_{j=1}f_{j}(y_{j})d\vec{y}
\\ \nonumber
&\qquad+\int_{\sum_{i=1}^m |x-y_i|\le \delta} K(x,\vec{y})(b(y_{1})-b(x))\prod^{m}_{j=1}f_{j}(y_{j})d\vec{y}
\\ \nonumber
&\qquad+\int_{\sum_{i=1}^m |x-y_i| \le \delta} K(x+h,\vec{y})(b(x+h)-b(y_1))\prod^{m}_{j=1}f_{j}(y_{j})d\vec{y}
\\ \nonumber
&=:I_{1}+I_{2}+I_{3}+I_{4}.
\end{align}
We will bound $I_1$, $I_2$, $I_3$ and $I_4$ separately.   

Let  $T_{*}$ be the maximal truncated m-linear $\omega$-Calder\'{o}n-Zygmund operator defined by 
\begin{align*}
T_{*}(\vec{f})(x)=\sup_{\delta>0}\left|\int_{\sum_{i=1}^m |x-y_i|>\delta} K(x,\vec{y})\prod^{m}_{j=1}f_j(y_j)d\vec{y}\right|
\end{align*}
By the size condition, one has 
\begin{align}\label{eq:II-1}
I_1 \lesssim |h|\|\nabla b\|_{L^{\infty}(\Rn)} T_{*}(\vec{f})(x) 
\lesssim \delta \|\nabla b\|_{L^{\infty}(\Rn)} T_{*}(\vec{f})(x).
\end{align}
For $I_2$, the smooth condition gives that 
\begin{align}\label{eq:II-2}
I_2 &\lesssim \|b\|_{L^{\infty}(\Rn)} \int_{\sum_{i=1}^m |x-y_i|>\delta} 
\frac{\prod^{m}_{j=1}|f_j(y_j)|}{(\sum^{m}_{j=1}|x-y_j|)^{mn}} \omega\bigg(\frac{|h|}{\sum^{m}_{j=1}|x-y_{j}|}\bigg)d\vec{y}
\\ \nonumber
&\lesssim \int_{\max\limits_{1\le i\le m}\{|x-y_i|\}>\delta/2}\frac{\prod^{m}_{j=1}|f_{j}(y_{j})|}{(\sum_{j=1}^{m} |x-y_j|)^{mn}}\omega\bigg(\frac{|h|}{\sum_{j=1}^{m} |x-y_j|}\bigg)d\vec{y}
\\ \nonumber
&=\sum_{k=0}^{\infty} \int_{2^{k-1}\delta<\max\limits_{1\le i \le m}\{|x-y_i|\}\le 2^k\delta} \frac{\prod^{m}_{j=1}|f_j(y_j)|}{(\sum_{j=1}^{m} |x-y_j|)^{mn}} \omega\bigg(\frac{|h|}{\sum_{j=1}^{m} |x-y_j|}\bigg)d\vec{y}
\\ \nonumber
&\lesssim \sum^{\infty}_{k=0}\omega\left(\frac{|h|}{2^{k-1}\delta}\right) 
\prod_{j=1}^{m} \fint_{B(x,2^k\delta)} |f_j(y_j)|dy_j
\lesssim \int_{0}^{\frac{4|h|}{\delta}} \omega(t)\frac{dt}{t} \, \mathcal{M}(\vec{f})(x). 
\end{align}
To control $I_3$, we use the size condition: 
\begin{align}\label{eq:II-3}
I_3 & \lesssim \|\nabla b\|_{L^{\infty}(\Rn)} \int_{\sum_{i=1}^m |x-y_i|<\delta} 
\frac{\prod_{j=1}^{m} |f_j(y_j)|}{(\sum_{j=1}^m |x-y_j|)^{mn-1}}d\vec{y}
\\ \nonumber
&\lesssim \sum_{k=0}^{\infty} \int_{2^{-k-1} \delta \leq\sum_{i=1}^m |x-y_i|<2^{-k}\delta} 
\frac{\prod_{j=1}^{m} |f_j(y_j)|}{(\sum_{j=1}^{m}|x-y_{j}|)^{mn-1}}d\vec{y}
\\ \nonumber
&\lesssim \sum_{k=0}^{\infty} 2^{-k}\delta \prod_{j=1}^{m} \fint_{B(x,2^{-j}\delta)}|f_j(y_j)|dy_j
\lesssim \delta \mathcal{M}(\vec{f})(x).
\end{align}
Since $\sum_{i=1}^m |x-y_i| \le \delta$ implies $\sum_{i=1}^m |x+h-y_i| \le \delta+m|h|$, the same argument as $I_3$ leads 
\begin{align}\label{eq:II-4}
I_4 \lesssim (\delta+m|h|) \mathcal{M}(\vec{f})(x+h) \lesssim \delta \mathcal{M}(\vec{f})(x+h). 
\end{align}
Note that by \cite[Theorem~3.7]{LOPTT} and \cite[Theorem~3.6]{DHL}, $T_{*}$ and $\mathcal{M}$ are bounded from $L^{r_1}(\Rn) \times \cdots \times L^{r_m}(\Rn)$ to $L^r(\Rn)$ for all $\frac1r=\frac{1}{r_1}+\cdots+\frac{1}{r_m}$ with $1<r_1,\ldots,r_m<\infty$. Choose $\delta_0 \in (0, \varepsilon t_0)$ and $\delta=\frac{4\delta_0}{t_0}$. Then, gathering \eqref{eq:IIII}--\eqref{eq:II-4}, we deduce that for any $0<|h|<\delta_0$, 
\begin{align*}
\|\tau_h [T,b]_{e_1}(\vec{f}) -& [T,b]_{e_1}(\vec{f})\|_{L^p(\Rn)} 
\lesssim \bigg(\delta+\int_{0}^{\frac{4|h|}{\delta}} \omega(t)\frac{dt}{t}\bigg) \prod_{j=1}^m \|f_j\|_{L^{p_j}(\Rn)} 
\\%%%%%%%%%%%%%%%
&\lesssim \bigg(\frac{\delta_0}{t_0}+\int_{0}^{t_0} \omega(t)\frac{dt}{t}\bigg) \prod_{j=1}^m \|f_j\|_{L^{p_j}(\Rn)} 
\lesssim \varepsilon \prod_{j=1}^m \|f_j\|_{L^{p_j}(\Rn)}, 
\end{align*}
where \eqref{eq:wte} was used in the last inequality.  This shows \eqref{eq:FK-2} and completes the proof.  
\end{proof} 
%%%%%%%%%%%%%%%%%%%%%%% END END END PROOF %%%%%%%%%%%%%%%%%%%%%%%%%%

The rest of this subsection is devoted to presenting some examples, which lie in the category of $m$-linear $\omega$-Calder\'{o}n-Zygmund operators. Given $r \in \R$ and $\rho,\delta \in [0, 1]$, we say $\sigma \in S_{\rho,\delta}^r(n,m)$ if for each triple of multi-indices
$\alpha$ and $\beta=(\beta_1,\ldots,\beta_m)$ there exists a constant $C_{\alpha,\beta}$ such that
\begin{align*}
\big|\partial_{x}^{\alpha} \partial_{\xi_1}^{\beta_1} \cdots \partial_{\xi_m}^{\beta_m}  \sigma(x,\vec{\xi}) \big|
\leq C_{\alpha,\beta} \Big(1+\sum_{i=1}^m |\xi_i| \Big)^{r-\rho \sum_{j=1}^m |\beta_j| + \delta|\alpha|}.
\end{align*}
For $r \in \R$, $\rho \in [0, 1]$ and $\Omega:[0, \infty) \to [0, \infty)$, we say $\sigma \in S_{\rho,\omega, \Omega}^r(n,m)$ if for each multi-indix $\beta=(\beta_1,\ldots,\beta_m)$ there exists a constant $C_{\beta}$ such that
\begin{align*}
\big|\partial_{\xi_1}^{\beta_1} \cdots \partial_{\xi_m}^{\beta_m}  \sigma(x,\vec{\xi}) \big|
&\leq C_{\beta} \Big(1+\sum_{i=1}^m |\xi_i|\Big)^{r-\rho \sum_{j=1}^m |\beta_j|}, 
\\%%%%%%%%%%%%%%%%%
\big|\partial_{\xi_1}^{\beta_1} \cdots \partial_{\xi_m}^{\beta_m}  (\sigma(x,\vec{\xi})-\sigma(x',\vec{\xi}))\big|
&\leq C_{\beta} \omega(|x-x'|) \Omega\Big(\sum_{i=1}^m |\xi_i|\Big) \Big(1+\sum_{i=1}^m |\xi_i|\Big)^{r-\rho \sum_{j=1}^m |\beta_j|}, 
\end{align*}
for all $x, x' \in \Rn$ and $\vec{\xi} \in \R^{nm}$. 

Given a symbol $\sigma$, the $m$-linear pseudo-differential operators $T_{\sigma}$ is defined by
\begin{align*}
T_{\sigma}(\vec{f})(x) := \int_{(\Rn)^m} \sigma(x,\vec{\xi})
e^{2\pi i x \cdot (\xi_1+\cdots+\xi_m)} \widehat{f}_1(\xi_1)
\cdots \widehat{f}_m(\xi_m) d\vec{\xi},
\end{align*}
for all $\vec{f}=(f_1,\ldots,f_m) \in \S(\Rn) \times \cdots \times \S(\Rn)$, where $\widehat{f}$ denotes the Fourier transform of $f$.

From \cite[Theorem~1]{BO}, we see that for any $\sigma \in S_{1,0}^1(n, 2)$ and for each $i=1,2$, $[T_{\sigma}, a]_{e_i}$ is a bilinear Calder\'{o}n-Zygmund operator, where $a$ is a Lipschitz function such that $\nabla a \in L^{\infty}(\Rn)$. Using this fact and Theorem \ref{thm:CZO}, we obtain an extension of \cite[Theorem~2]{BO} to the weighted spaces and the case $p<1$ as follows. 
%%%%%%%%%%%%%%%%%%%%%%% THEOREM THEOREM THEOREM %%%%%%%%%%%%%%%%%%%%%
\begin{theorem}
Let $\sigma \in S_{1,0}^1(n, 2)$ and $a$ be a Lipschitz function such that $\nabla a \in L^{\infty}(\Rn)$. If $b\in \CMO$,  then for all $i, j=1,2$, $[[T_{\sigma}, a]_i, b]_j$ is compact from $L^{p_1}(w_1^{p_1}) \times L^{p_2}(w_2^{p_2})$ to $L^p(w^p)$ for all $\vec{p}=(p_1, p_2)$ with $1<p_1,p_2<\infty$ and for all $\vec{w}=(w_1,w_2) \in A_{\vec{p}}$, where $\frac1p=\frac{1}{p_1}+\frac{1}{p_2}$ and $w=w_1 w_2$.
\end{theorem}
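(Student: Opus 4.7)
The plan is to reduce this statement to a direct application of Theorem \ref{thm:CZO} by treating $T := [T_\sigma, a]_{e_i}$ as an auxiliary bilinear operator and then forming its commutator with $b$. The essential input is the result from \cite{BO} already cited in the paragraph preceding the theorem: for $\sigma \in S^1_{1,0}(n,2)$ and $a$ a Lipschitz function with $\nabla a \in L^\infty(\R^n)$, the first-order commutator $[T_\sigma, a]_{e_i}$ is a bilinear Calder\'on-Zygmund operator in the standard sense (i.e., an $\omega$-CZO with $\omega(t)=t^\delta$ for some $\delta \in (0,1]$, which obviously satisfies the Dini condition $\|\omega\|_{\operatorname{Dini}}<\infty$).

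Concretely, I would first fix $i \in \{1,2\}$ and set $T := [T_\sigma, a]_{e_i}$. Invoking \cite[Theorem~1]{BO}, $T$ is a bilinear Calder\'on-Zygmund operator and thus falls under the hypotheses of Theorem \ref{thm:CZO} in the present paper. Next, given $b \in \CMO$ and $j \in \{1,2\}$, I would form
\[
[T, b]_{e_j} = [[T_\sigma, a]_{e_i}, b]_{e_j},
\]
which is precisely the operator under consideration. Applying Theorem \ref{thm:CZO} to $T$ with the symbol $b \in \CMO$ in the $j$-th entry immediately yields the compactness of $[T, b]_{e_j}$ from $L^{p_1}(w_1) \times L^{p_2}(w_2)$ to $L^p(w)$ for every $\vec{p} = (p_1, p_2)$ with $1 < p_1, p_2 < \infty$ and every $\vec{w} = (w_1, w_2) \in A_{\vec{p}}$, where $\tfrac{1}{p} = \tfrac{1}{p_1}+\tfrac{1}{p_2}$ and $w = w_1^{p/p_1} w_2^{p/p_2}$. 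Since $i, j \in \{1,2\}$ were arbitrary, this covers all four cases $[[T_\sigma,a]_i, b]_j$.

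Viewed this way there is no substantive obstacle: the entire work has been carried out either in \cite{BO} (the CZ structure of $[T_\sigma, a]_{e_i}$) or in Theorem \ref{thm:CZO} (the weighted compactness of commutators of $\omega$-CZOs with $\CMO$ symbols, which itself relies on Corollary \ref{cor:Tb} and the unweighted compactness derived via the Fr\'echet-Kolmogorov criterion in Lemma \ref{lem:FK-1}). The only point worth a brief sentence in the write-up is to record that the standard Calder\'on-Zygmund kernel of $[T_\sigma, a]_{e_i}$ satisfies an $\omega$-Dini condition with $\omega(t) = t^\delta$, so that Theorem \ref{thm:CZO} indeed applies; beyond that, the proof reduces to quoting the two results cited above.
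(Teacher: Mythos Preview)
Your proposal is correct and follows exactly the paper's own argument: the paragraph preceding the theorem already records that, by \cite[Theorem~1]{BO}, $[T_\sigma,a]_{e_i}$ is a bilinear Calder\'on--Zygmund operator, and the theorem is then obtained by applying Theorem~\ref{thm:CZO} to this operator with $b\in\CMO$ in the $j$-th slot. No additional ingredients are needed.
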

%%%%%%%%%%%%%%%%%%%%%%% THEOREM THEOREM THEOREM %%%%%%%%%%%%%%%%%%%%%

Suppose that there exists $a \in (0, 1)$ such that 
\begin{align}\label{eq:wa} 
\sup_{0<t<1} \omega(t)^{1-a} \Omega(1/t)<\infty. 
\end{align}
If in addition it is assumed that $\sigma \in S_{1,\omega, \Omega}^0(n, 2)$, \cite[Theorem~4.1]{MN} asserts that $T_{\sigma}$ is a bilinear $\omega^a$-Caldr\'{o}n-Zygmund operator. Hence, this and Theorem \ref{thm:CZO} imply the following. 

%%%%%%%%%%%%%%%%%%%%%%% THEOREM THEOREM THEOREM %%%%%%%%%%%%%%%%%%%%%
\begin{theorem}
Let $\omega, \Omega:[0, \infty) \to [0, \infty)$ be nondecreasing functions with $\omega$ concave. Assume that $\sigma \in S_{1,\omega, \Omega}^0(n, 2)$, and $\omega$ satisfies \eqref{eq:wa} and $\omega^a \in {\text Dini}$. If $b\in \CMO$,  then for each $j=1,2$, $[T_{\sigma}, b]_j$ is compact from $L^{p_1}(w_1^{p_1}) \times L^{p_2}(w_2^{p_2})$ to $L^p(w^p)$ for all $\vec{p}=(p_1, p_2)$ with $1<p_1,p_2<\infty$ and for all $\vec{w}=(w_1,w_2) \in A_{\vec{p}}$, where $\frac1p=\frac{1}{p_1}+\frac{1}{p_2}$ and $w=w_1 w_2$.
\end{theorem}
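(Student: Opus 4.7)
The plan is to deduce this statement as an immediate composition of two previously cited results: the structural theorem of Maldonado and Naibo that identifies $T_\sigma$ as a bilinear $\omega^a$-Calder\'on--Zygmund operator, and Theorem \ref{thm:CZO} applied with modulus of continuity $\omega^a$. The hypotheses on $\sigma,\omega,\Omega$ and $a$ have been crafted precisely so that this two-step reduction goes through without any extra work.

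First I would invoke \cite[Theorem~4.1]{MN}: under the assumptions that $\omega,\Omega:[0,\infty)\to[0,\infty)$ are nondecreasing, $\omega$ is concave, $\sigma\in S^0_{1,\omega,\Omega}(n,2)$, and the compatibility condition \eqref{eq:wa} holds for some $a\in(0,1)$, the bilinear pseudo-differential operator $T_\sigma$ admits a kernel satisfying the size and H\"older smoothness bounds (in $x$ and each $y_i$) associated with the modulus $\omega^a$. In other words, $T_\sigma$ is a bilinear $\omega^a$-Calder\'on--Zygmund operator in the sense defined at the beginning of Section \ref{sec:App}.

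Next, since $\omega^a\in\mathrm{Dini}$ by assumption, Theorem \ref{thm:CZO} applies directly to $T_\sigma$ with $m=2$. For any $b\in\CMO$ and each $j\in\{1,2\}$, this gives that $[T_\sigma,b]_{e_j}$ is compact from $L^{p_1}(w_1)\times L^{p_2}(w_2)$ to $L^p(w)$ for every $\vec p=(p_1,p_2)$ with $1<p_1,p_2<\infty$ and every $\vec w=(w_1,w_2)\in A_{\vec p}$, where $\tfrac{1}{p}=\tfrac{1}{p_1}+\tfrac{1}{p_2}$ and $w=w_1^{p/p_1}w_2^{p/p_2}$. This is exactly the conclusion. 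Since both ingredients are cited results, there is no genuine obstacle; the content of the theorem is the observation that the hypotheses are perfectly matched so that $\omega^a$ inherits both the Dini property and the kernel estimates required to feed Theorem \ref{thm:CZO}.
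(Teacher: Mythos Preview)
Your proposal is correct and follows exactly the paper's approach: the paper derives this theorem by observing that \cite[Theorem~4.1]{MN} identifies $T_\sigma$ as a bilinear $\omega^a$-Calder\'on--Zygmund operator under \eqref{eq:wa}, and then applies Theorem~\ref{thm:CZO} with modulus $\omega^a\in\mathrm{Dini}$. There is nothing to add.
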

%%%%%%%%%%%%%%%%%%%%%%% THEOREM THEOREM THEOREM %%%%%%%%%%%%%%%%%%%%%

Let $\omega:[0, \infty) \to [0, \infty)$ be a nondecreasing and concave function. Given a dyadic cube $Q$, a function $\phi_Q: \Rn \to \C$ is called an $\omega$-molecule associated to $Q$ if for some $N>10n$, it satisfies the decay condition 
\begin{align*}
|\phi_Q(x)| \le \frac{A \cdot 2^{kn/2}}{(1+2^k|x-x_Q|)^N},\quad \forall x \in \Rn, 
\end{align*}
and the regularity condition
\begin{align*}
|\phi_Q(x)-\phi_Q(y)| \le A \bigg(\frac{2^{kn/2} \omega(2^k|x-y|) }{(1+2^k|x-c_Q|)^N} 
+ \frac{2^{kn/2} \omega(2^k|x-y|)}{(1+2^k|y-c_Q|)^N}\bigg), \quad \forall x, y \in \Rn, 
\end{align*}
where $\ell(Q)=2^{-k}$ and $c_Q$ is lower left-corner of $Q$. 
 
Given three families of $\omega$-molecules $\{\phi_Q^{i}\}_{Q\in \D}$, $i=1,2,3$, we define the para-product $\Pi_{\D}$ by
\begin{align*}
\Pi_{\D}(\vec{f}) := \sum_{Q \in \D} |Q|^{-\frac12} \langle f_1, \phi_Q^1 \rangle \langle f_2, \phi_Q^2 \rangle \phi_Q^3, 
\end{align*}
for all $\vec{f}=(f_1,\ldots,f_m) \in \S(\Rn) \times \cdots \times \S(\Rn)$. It was proved in \cite[Theorem~5.3]{MN} that $\Pi_{\D}$ is a bilinear $\widetilde{\omega}$-Calder\'{o}n-Zygmund operator, where $\widetilde{\omega}(t):=A^3A_N \omega(C_N t)$ for some positive constants $A_N$ and $C_N$. Observe that $\omega \in {\text Dini}$ implies $\widetilde{\omega} \in {\text Dini}$. As a consequence, together with Theorem \ref{thm:CZO}, these facts yield the weighted compactness of $[\Pi_{\D}, b]_j$ below. 

%%%%%%%%%%%%%%%%%%%%%%% THEOREM THEOREM THEOREM %%%%%%%%%%%%%%%%%%%%%
\begin{theorem}
Let $\omega$ be concave with $\omega \in {\text Dini}$, and $\{\phi_Q^j\}_{Q \in \D}$, $j=1, 2, 3$, be three families of $\omega$-molecules with decay $N>10n$ and such that at least two of them enjoy the cancellation property. If $b\in \CMO$,  then for each $j=1,2$, $[\Pi_{\D}, b]_j$ is compact from $L^{p_1}(w_1^{p_1}) \times L^{p_2}(w_2^{p_2})$ to $L^p(w^p)$ for all $\vec{p}=(p_1, p_2)$ with $1<p_1,p_2<\infty$ and for all $\vec{w}=(w_1,w_2) \in A_{\vec{p}}$, where $\frac1p=\frac{1}{p_1}+\frac{1}{p_2}$ and $w=w_1 w_2$.
\end{theorem}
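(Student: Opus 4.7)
The plan is a direct reduction to Theorem \ref{thm:CZO} applied with $T = \Pi_{\D}$. The bridge is the identification from \cite[Theorem~5.3]{MN} (already invoked in the excerpt): under the present hypotheses on the three families of $\omega$-molecules (decay $N>10n$, cancellation for at least two of them), $\Pi_{\D}$ is a bilinear $\widetilde{\omega}$-Calder\'{o}n-Zygmund operator with modulus $\widetilde{\omega}(t) = A^3 A_N\,\omega(C_N t)$ for explicit constants depending on $n$ and $N$. The cancellation in two of the three families is what powers the standard $T1$-type argument yielding the $L^{q_1}\times L^{q_2}\to L^q$ boundedness for some triple $(q_1,q_2,q)$ required in the definition of a bilinear $\omega$-CZO.

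Second, I would verify that $\widetilde{\omega}$ inherits the Dini condition from $\omega$. By the substitution $s = C_N t$,
\begin{equation*}
\int_0^1 \widetilde{\omega}(t)\,\frac{dt}{t} \;=\; A^3 A_N \int_0^{C_N} \omega(s)\,\frac{ds}{s}.
\end{equation*}
Splitting $(0, C_N]$ into $(0,1]\cup[1, C_N]$ (taking $C_N>1$ without loss of generality), the first piece equals $\|\omega\|_{\text{Dini}}<\infty$ by hypothesis; on $[1, C_N]$ the nondecreasing concave $\omega$ is bounded by $\omega(C_N)$, so the contribution is at most $\omega(C_N)\log C_N<\infty$. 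Hence $\widetilde{\omega}\in\text{Dini}$.

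With these two observations in hand, Theorem \ref{thm:CZO} applied to $T = \Pi_{\D}$ (with $m=2$ and modulus $\widetilde{\omega}$) delivers exactly the stated conclusion: for every $b\in\CMO$ and every $j=1,2$, $[\Pi_{\D}, b]_{e_j}$ is compact from $L^{p_1}(w_1) \times L^{p_2}(w_2)$ to $L^p(w)$ for all $\vec{p}=(p_1,p_2)$ with $1<p_1,p_2<\infty$ and all $\vec{w}\in A_{\vec{p}}$, where $w=w_1^{p/p_1}w_2^{p/p_2}$. There is no substantial obstacle; the heavy lifting was already performed in establishing Theorem \ref{thm:CZO} (which itself combined the weighted boundedness of $\omega$-CZOs under a Dini hypothesis with the extrapolation Corollary \ref{cor:Tb}) and in the paraproduct-to-CZO identification of \cite{MN}, so the present statement is extracted by a short verification of these two prerequisites.
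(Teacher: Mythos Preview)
Your proposal is correct and follows essentially the same route as the paper: the paragraph preceding the theorem already records that \cite[Theorem~5.3]{MN} makes $\Pi_{\D}$ a bilinear $\widetilde{\omega}$-Calder\'on--Zygmund operator with $\widetilde{\omega}(t)=A^3A_N\,\omega(C_Nt)$, observes that $\omega\in\text{Dini}$ implies $\widetilde{\omega}\in\text{Dini}$, and then invokes Theorem~\ref{thm:CZO}. Your write-up simply spells out the Dini verification in slightly more detail, but the argument is the same.
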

%%%%%%%%%%%%%%%%%%%%%%% THEOREM THEOREM THEOREM %%%%%%%%%%%%%%%%%%%%%

%%%%%%%%%%%%%%%%%%% SUBSECTION SUBSECTION SUBSECTION %%%%%%%%%%%%%%%%%%%%
\subsection{Multilinear fractional integral operators}
Given $0<\alpha<mn$, we define the multilinear fractional integral operator $\I_{\alpha}$ by 
\begin{align*}
\I_{\alpha}(\vec{f})(x) := \int_{\R^{nm}} 
\frac{\prod_{i=1}^m f_i(x-y_i)}{|(y_1,\ldots,y_m)|^{mn-\alpha}} d\vec{y}. 
\end{align*}
From \cite{CT}, one has that for each $j=1,\ldots,m$,  $b \in \CMO$ if and only if 
\begin{align}\label{eq:Ia-1}
[b, \I_{\alpha}]_{e_j} \text{ is compact from } L^{p_1}(\Rn) \times \cdots \times L^{p_m}(\Rn) \text{ to } L^q(\Rn), 
\end{align}
for all $\frac{1}{q}=\frac{1}{p_1}+\cdots+\frac{1}{p_m}-\frac{\alpha}{n}$ with $1<p, p_1,\ldots,p_m<\infty$. On the other hand, it follows from \cite[Theorem~3.5]{M} that 
\begin{align}\label{eq:Ia-2} 
\I_{\alpha} \text{ is bounded from $L^{p_1}(w_1^{p_1}) \times \cdots \times L^{p_m}(w_m^{p_m})$ to $L^q(w^q)$}, 
\end{align}
for all $\frac1q=\frac{1}{p_1}+\cdots+\frac{1}{p_m}-\frac{\alpha}{n}$ with $1<p_1,\ldots,p_m<\infty$, and for all $\vec{w}=(w_1,\ldots,w_m) \in A_{\vec{p}, q}$, where $w=\prod_{i=1}^m w_i$. Thus, \eqref{eq:Ia-1}, \eqref{eq:Ia-2} and Theorem  \ref{thm:Tb} give the following. 

%%%%%%%%%%%%%%%%%%%%%%% THEOREM THEOREM THEOREM %%%%%%%%%%%%%%%%%%%%%
\begin{theorem}\label{thm:Ia}
Let $0<\alpha<mn$. If $b \in \CMO$,  then for each $j=1,\ldots,m$, $[b, \I_{\alpha}]_{e_j}$ is compact from $L^{p_1}(w_1^{p_1}) \times \cdots \times L^{p_m}(w_m^{p_m})$ to $L^q(w^q)$ for all $\frac{1}{q}=\frac{1}{p_1}+\cdots+\frac{1}{p_m}-\frac{\alpha}{n}$ with $1<p_1,\ldots,p_m<\infty$ and for all $\vec{w}=(w_1, \ldots, w_m) \in A_{\vec{p}, q}$, where $w=\prod_{i=1}^m w_i$.  
\end{theorem}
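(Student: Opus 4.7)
The plan is a direct application of Corollary \ref{cor:Tb} to $T = \I_\alpha$ with the fractional vector $\vec{r} = (1,\ldots,1,r_{m+1})$ chosen so that $\frac{1}{r'_{m+1}} = \frac{\alpha}{n}$. Under this choice, the discussion following Definition \ref{def:Apr} identifies $A_{\vec{p}, \vec{r}}$ with the fractional class $A_{\vec{p}, q}$, where $\frac{1}{q} = \sum_{i=1}^m \frac{1}{p_i} - \frac{\alpha}{n}$; the relation $\vec{r} \prec \vec{p}$ reduces to the usual range $1 < p_1, \ldots, p_m < \infty$ together with $q > 0$.

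To verify hypothesis \eqref{eq:bT-1} of Corollary \ref{cor:Tb}, I pick any $\vec{q} = (q_1,\ldots,q_m)$ with $\vec{r} \preceq \vec{q}$ and invoke Moen's theorem \cite[Theorem~3.5]{M}, recorded as \eqref{eq:Ia-2}: $\I_\alpha$ is bounded from $L^{q_1}(u_1^{q_1}) \times \cdots \times L^{q_m}(u_m^{q_m})$ into the weighted Lebesgue space at the fractional target exponent $q$ (with $\frac{1}{q} = \sum \frac{1}{q_i} - \frac{\alpha}{n}$) for every $\vec{u} \in A_{\vec{q}, q} = A_{\vec{q}, \vec{r}}$.

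For hypothesis \eqref{eq:bT-2}, I appeal to the Chaffee-Torres characterization \cite{CT}, recorded as \eqref{eq:Ia-1}: for any $b \in \CMO$ (placed in the $j$-th slot of the symbol vector $\b$) the commutator $[\I_\alpha, \b]_{e_j}$ is compact from $L^{q_1}(\Rn) \times \cdots \times L^{q_m}(\Rn)$ into $L^q(\Rn)$ for the same exponent relation.

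With both hypotheses verified, Corollary \ref{cor:Tb} immediately yields that $[\I_\alpha, \b]_{e_j}$ is compact from $L^{p_1}(w_1^{p_1}) \times \cdots \times L^{p_m}(w_m^{p_m})$ into $L^q(w^q)$ for every $\vec{p}$ with $1 < p_i < \infty$ and every $\vec{w} \in A_{\vec{p}, q}$, which is exactly Theorem \ref{thm:Ia}. The argument is essentially bookkeeping; the only care needed is in the dictionary between the abstract $A_{\vec{p}, \vec{r}}$ framework (whose target exponent is implicitly given by $\frac{1}{p} - \frac{1}{r'_{m+1}}$) and the concrete fractional $A_{\vec{p}, q}$ class appearing in the statement, and I anticipate no substantive technical obstacle beyond that translation.
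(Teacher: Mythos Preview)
Your proposal is correct and follows essentially the same route as the paper: cite Moen's weighted boundedness \eqref{eq:Ia-2} for hypothesis \eqref{eq:bT-1}, cite the Chaffee--Torres unweighted compactness \eqref{eq:Ia-1} for hypothesis \eqref{eq:bT-2}, and apply Corollary~\ref{cor:Tb}. You in fact spell out more than the paper does---the explicit choice $\vec{r}=(1,\dots,1,r_{m+1})$ with $1/r'_{m+1}=\alpha/n$ and the resulting identification $A_{\vec{p},\vec{r}}=A_{\vec{p},q}$ is exactly the translation the paper records after Definition~\ref{def:Apr} but does not repeat in the proof of Theorem~\ref{thm:Ia}.
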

%%%%%%%%%%%%%%%%%%%%%%% THEOREM THEOREM THEOREM %%%%%%%%%%%%%%%%%%%%%

%%%%%%%%%%%%%%%%%%%%%%%% REMARK REMARK REMARK %%%%%%%%%%%%%%%%%%%%%%
\begin{remark}
Theorem \ref{thm:Ia} improves the conclusion in \cite[Theorem~3.1]{CT}, which obtained the weighted compactness of $\I_{\alpha}$ in the bilinear case and $w_1^{p_1q/p}, w_1^{p_2q/p} \in A_p$ with $1<p,q<\infty$. In fact, the latter is a particular case of $A_{\vec{p}, q}$ classes $($see \cite[Lemma~2.1]{CT}$)$. This also shows the power of extrapolation theorem. 
\end{remark}
%%%%%%%%%%%%%%%%%%%%%%%% REMARK REMARK REMARK %%%%%%%%%%%%%%%%%%%%%%

Assume that $\Omega_i \in L^{\infty}(\Sn)$ $(i=1,\ldots,m)$ is a homogeneous function with degree zero on $\Rn$, i.e. $\Omega_i(\lambda x)=\Omega_i(x)$ for any $\lambda>0$ and $x \in \Rn$. Given $0<\alpha<mn$, we define the multilinear fractional integral with homogeneous kernels as
\begin{align*}
\I_{\Omega, \alpha}(\vec{f})(x) := \int_{\R^{nm}} 
\frac{\prod_{i=1}^m \Omega_i(x-y_i) f_i(y_i)}{|(x-y_1,\ldots,x-y_m)|^{mn-\alpha}} d\vec{y}. 
\end{align*}

%%%%%%%%%%%%%%%%%%%%%%% THEOREM THEOREM THEOREM %%%%%%%%%%%%%%%%%%%%%
\begin{theorem}\label{thm:IOa}
Let $0<\alpha<mn$ and $\Omega_i \in \Lip(\Sn)$ be homogeneous of degree zero, $i=1,\ldots,m$. If $b \in \CMO$,  then for each $j=1,\ldots,m$, $[b, \I_{\Omega, \alpha}]_{e_j}$ is compact from $L^{p_1}(w_1^{p_1}) \times \cdots \times L^{p_m}(w_m^{p_m})$ to $L^q(w^q)$ for all $\frac{1}{q}=\frac{1}{p_1}+\cdots+\frac{1}{p_m}-\frac{\alpha}{n}$ with $1<p_1,\ldots,p_m<\infty$ and for all $\vec{w}=(w_1, \ldots, w_m) \in A_{\vec{p}, q}$, where $w=\prod_{i=1}^m w_i$.  
\end{theorem}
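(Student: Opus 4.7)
The plan is to apply Corollary \ref{cor:Tb} with $\vec{r}=(1,\ldots,1,r_{m+1})$, where $\frac{1}{r'_{m+1}}=\frac{1}{p}-\frac{1}{q}=\frac{\alpha}{n}$, so that $A_{\vec{p},\vec{r}}=A_{\vec{p},q}$. Two hypotheses must be checked: (i) the unweighted/weighted boundedness of $\I_{\Omega,\alpha}$ in the full $A_{\vec{p},q}$ range, and (ii) the compactness of $[b,\I_{\Omega,\alpha}]_{e_j}$ on some fixed tuple of unweighted Lebesgue spaces. The main obstacle is (ii); (i) is a direct consequence of Theorem \ref{thm:Ia} because $\Omega_i\in\Lip(\Sn)\subset L^{\infty}(\Sn)$ gives the pointwise domination
\begin{align*}
|\I_{\Omega,\alpha}(\vec{f})(x)|\le \Big(\prod_{i=1}^{m}\|\Omega_i\|_{L^{\infty}(\Sn)}\Big)\,\I_{\alpha}(|f_1|,\ldots,|f_m|)(x),
\end{align*}
so \eqref{eq:Ia-2} transfers verbatim, verifying \eqref{eq:bT-1} with the above choice of $\vec{r}$.

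For (ii), fix an arbitrary tuple $\frac{1}{q}=\frac{1}{p_1}+\cdots+\frac{1}{p_m}-\frac{\alpha}{n}$ with $1<p_1,\ldots,p_m<\infty$, and reduce the problem to $b\in C_c^{\infty}(\Rn)$ by density of $C_c^{\infty}$ in $\CMO$ together with the $\BMO$--continuity of $b\mapsto[b,\I_{\Omega,\alpha}]_{e_j}$ (which in turn follows by multilinear commutator theory from the $A_{\vec{p},q}$ boundedness already established; cf. \cite[Theorem~2.22]{LMO}). With $b\in C_c^{\infty}(\Rn)$, I would then verify the three hypotheses of Lemma \ref{lem:FK-1} for the image set $\{[b,\I_{\Omega,\alpha}]_{e_j}(\vec{f}):\|f_i\|_{L^{p_i}}\le 1\}$: uniform boundedness in $L^q(\Rn)$ is immediate, while the decay condition (a-2) follows from size estimates on the kernel using $\supp b$ compact, exactly as in the fractional Calder\'{o}n--Zygmund setting of \cite{CT} and Theorem \ref{thm:CZO}.

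The essential step is the equicontinuity (a-3), $\lim_{|h|\to 0}\|\tau_h[b,\I_{\Omega,\alpha}]_{e_j}(\vec{f})-[b,\I_{\Omega,\alpha}]_{e_j}(\vec{f})\|_{L^q(\Rn)}=0$ uniformly in the unit ball. Here is where the Lipschitz hypothesis on each $\Omega_i$ enters: the kernel
\begin{align*}
K(x,\vec{y})=\frac{\prod_{i=1}^{m}\Omega_i(x-y_i)}{|(x-y_1,\ldots,x-y_m)|^{mn-\alpha}}
\end{align*}
satisfies the fractional analogues of the $\omega$-Calder\'{o}n--Zygmund size and smoothness conditions with $\omega(t)=t$, which is trivially Dini. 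The argument of Theorem \ref{thm:CZO} then applies mutatis mutandis: for a small parameter $\delta>0$ and $|h|<\delta/4$, split the commutator difference into the four pieces $I_1+I_2+I_3+I_4$ (the $(b(x)-b(x+h))$-term, the kernel-regularity term on $\{\sum|x-y_i|>\delta\}$, and the two near-diagonal pieces on $\{\sum|x-y_i|\le\delta\}$). The terms $I_1,I_3,I_4$ are controlled by $\delta$ times the multilinear fractional maximal operator $\M_{\alpha}$ and the maximal truncation $\I_{\Omega,\alpha}^{*}$, while $I_2$ produces the Dini integral $\int_0^{4|h|/\delta}\omega(t)\,dt/t=\frac{4|h|}{\delta}$. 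Since $\M_{\alpha}$ and $\I_{\Omega,\alpha}^{*}$ are bounded from $L^{p_1}\times\cdots\times L^{p_m}$ to $L^q$, choosing $\delta=|h|^{1/2}$ yields the desired uniform decay as $|h|\to 0$. Lemma \ref{lem:FK-1} then delivers \eqref{eq:bT-2}, and Corollary \ref{cor:Tb} closes the argument.
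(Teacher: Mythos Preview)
Your proposal is correct and follows the same overall strategy as the paper: verify the weighted boundedness of $\I_{\Omega,\alpha}$ (your pointwise domination by $\I_\alpha$ is in fact simpler than the paper's route via the reverse H\"older property and \cite[Theorem~2.6]{CX}), verify unweighted compactness of the commutator, and then apply Corollary~\ref{cor:Tb}. The only tactical difference is in step~(ii): rather than redoing the Fr\'echet--Kolmogorov verification by hand along the lines of Theorem~\ref{thm:CZO}, the paper simply records that the kernel satisfies the fractional Calder\'on--Zygmund size and smoothness conditions (exactly as you note) and then invokes \cite[Theorem~2.1]{BDMT1}, which already delivers the unweighted compactness for such operators.
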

%%%%%%%%%%%%%%%%%%%%%%% THEOREM THEOREM THEOREM %%%%%%%%%%%%%%%%%%%%%

%%%%%%%%%%%%%%%%%%%%%%%%% PROOF PROOF PROOF %%%%%%%%%%%%%%%%%%%%%%%%
\begin{proof}
We first claim that 
\begin{align}\label{eq:IOa-1}
\I_{\Omega, \alpha} \text{ is bounded from $L^{p_1}(w_1^{p_1}) \times \cdots \times L^{p_m}(w_m^{p_m})$ to $L^q(w^q)$}, 
\end{align}
for all $\frac1q=\frac{1}{p_1}+\cdots+\frac{1}{p_m}-\frac{\alpha}{n}$ with $1<p_1,\ldots,p_m<\infty$, and for all $\vec{w}=(w_1,\ldots,w_m) \in A_{\vec{p}, q}$, where $w=\prod_{i=1}^m w_i$. Indeed, letting $\vec{w} \in A_{\vec{p}, q}$ and by \eqref{eq:Apqw-1}, we have $w^q \in A_{\infty}$. This gives that $w^q \in RH_r$ for some $r>1$. Then it follows from definition that $\vec{w} \in A_{\vec{p}, qr}$. Note that one can choose $0<\epsilon<\min\{\alpha, mn-\alpha\}$ small enough such that $\frac{1}{q_{\epsilon}}:=\frac{1}{q}-\frac{\epsilon}{n}>\frac{1}{qr}$, that is $q_{\epsilon}<qr$. By definition, it is easy to see that $A_{\vec{p}, qr} \subset A_{\vec{p}, q_{\epsilon}} \subset A_{\vec{p}, q} \subset A_{\vec{p}, q_{-\epsilon}}$, where $\frac{1}{q_{-\epsilon}}:=\frac{1}{q}+\frac{\epsilon}{n}$. Hence, \eqref{eq:IOa-1} follows at once from \cite[Theorem~2.6]{CX} for $\Omega_i \in L^{\infty}(\Sn)$, $i=1,\ldots,m$. 

We next claim that for any $b \in \CMO$, 
\begin{align}\label{eq:IOa-2}
[\I_{\Omega, \alpha}, b]_{e_j} \text{ is compact from $L^{p_1}(\Rn) \times \cdots \times L^{p_m}(\Rn)$ to $L^q(\Rn)$}, 
\end{align}
for all $\frac1q=\frac{1}{p_1}+\cdots+\frac{1}{p_m}-\frac{\alpha}{n}$ with $1 \le q<\infty$ and $1<p_1,\ldots,p_m<\infty$. 
Assuming that \eqref{eq:IOa-2} holds, one can immediately conclude Theorem \ref{thm:IOa} from \eqref{eq:IOa-1} and Theorem \ref{thm:Tb}. Thus, it suffice to demonstrate \eqref{eq:IOa-2}. To proceed, we denote 
\[
K_{\alpha}(x, \vec{y}) :=\frac{\prod_{i=1}^m \Omega_i(x-y_i)}{|(x-y_1,\ldots,x-y_m)|^{mn-\alpha}}. 
\]
Since $\Omega_i \in \Lip(\Sn)$ and it is homogeneous of degree zero, one has that $\Omega_i$ is bounded on $\Rn$. Using these, one can verify the following conditions:  
\begin{itemize}
\item Size condition: ${\displaystyle |K_{\alpha}(x,\vec{y})|  \lesssim  \frac{1}{\big(\sum_{j=1}^{m}|x-y_j|\big)^{mn-\alpha}} }$, 
\item Smoothness conditions: 
\begin{align*}
|K_{\alpha}(x,\vec{y}) - K_{\alpha}(x',\vec{y})| 
& \lesssim \frac{|x-x'|}{(\sum_{j=1}^{m}|x-y_j|)^{mn-\alpha+1}},
\end{align*}
whenever $|x-x'| \leq \frac{1}{2} \sum_{j=1}^m |x-y_j|$, and for each $i=1,\ldots,m$, 
\begin{align*}
|K_{\alpha}(x,\vec{y}) - K_{\alpha}(x,y_1,\ldots,y'_i,\ldots,y_m)| 
& \lesssim \frac{|y_i-y'_i|}{(\sum_{j=1}^{m}|x-y_j|)^{mn-\alpha+1}},
\end{align*}
whenever
$|y_i-y_i'| \leq \frac{1}{2} \sum_{j=1}^m |x-y_j|$. 
\end{itemize}
We here point out that although \cite[Theorem~2.1]{BDMT1} is stated in the bilinear case with $q \ge 1$, by the strategy of \cite{BDMT1} and Theorem \ref{thm:CZO}, it can be extended to in the $m$-linear setting for all exponents $0<q<\infty$. Therefore,  \eqref{eq:IOa-2} is a consequence of these. With \eqref{eq:IOa-1} and \eqref{eq:IOa-2} in hand, we conclude Theorem \ref{thm:IOa} from Theorem \ref{thm:Tb}. 
\end{proof}
%%%%%%%%%%%%%%%%%%%%%%%%% END END END PROOF %%%%%%%%%%%%%%%%%%%%%%%%

%%%%%%%%%%%%%%%%%%% SUBSECTION SUBSECTION SUBSECTION %%%%%%%%%%%%%%%%%%%%
\subsection{Multilinear Fourier multipliers} 
For $s \in \N$, a function $\m \in \mathscr{C}^s(\R^{nm} \setminus \{0\})$ is said to belong to $\mathcal{M}^s(\R^{nm})$ if 
\begin{align*}
\big|\partial_{\xi_1}^{\alpha_1} \cdots \partial_{\xi_m}^{\alpha_m}  \m(\vec{\xi}) \big|
&\leq C_{\alpha} (|\xi_1|+\cdots+|\xi_m|)^{-\sum_{i=1}^m |\alpha_i|}, \quad \forall \vec{\xi} \in \R^{nm} \setminus\{0\}, 
\end{align*}
for each multi-indix $\alpha=(\alpha_1,\ldots,\alpha_m)$ with $\sum_{i=1}|\alpha_i| \le s$. 

Given $s \in \R$, the (usual) Sobolev space $W^s(\R^{nm})$ is defined by the norm
\begin{align*} 
\|f\|_{W^s(\R^{nm})} :=\bigg(\int_{\R^{nm}} (1+|\vec{\xi}|^2)^{s}|\widehat{f}(\vec{\xi})|^2 d\vec{\xi}\bigg)^{\frac12}, 
\end{align*}
where $\widehat{f}$ is the Fourier transform in all the variables. For $\vec{s}=(s_1,\ldots,s_m) \in \R^m$, the Sobolev space of product type $W^{\vec{s}}(\R^{nm})$  is defined by 
\begin{align*} 
\|f\|_{W^{\vec{s}}(\R^{nm})} :=\bigg(\int_{\R^{nm}} (1+|\xi_1|^2)^{s_1} \cdots 
(1+|\xi_m|^2)^{s_m} |\widehat{f}(\vec{\xi})|^2 d\vec{\xi}\bigg)^{\frac12}. 
\end{align*}

Let $\Phi \in \S(\R^{nm})$ satisfy $\supp(\Phi) \subset \{(\xi_1, \ldots, \xi_m):\frac12 \le |\xi_1|+\cdots+|\xi_m|\le 2\}$ and $\sum_{j \in \Z} \Phi(2^{-j}\vec{\xi})=1$ for each $\vec{\xi} \in \R^{nm} \setminus\{0\}$. Denote $\m_j(\vec{\xi}) :=\Phi(\vec{\xi}) \m(2^j \vec{\xi})$ for each $j \in \Z$. Denote  
\begin{align*}
\mathcal{W}^s(\R^{nm}) &:= \big\{\m \in L^{\infty}(\R^{nm}): \sup_{j \in \Z} \|\m_j\|_{W^s(\R^{nm})}<\infty\big\}, 
\\%%%%%%%%%%%%
\mathcal{W}^{\vec{s}}(\R^{nm}) &:= \big\{\m \in L^{\infty}(\R^{nm}): \sup_{j \in \Z} \|\m_j\|_{W^{\vec{s}}(\R^{nm})}<\infty\big\}. 
\end{align*}
Then one has 
\begin{align}
\mathcal{M}^{s}(\R^{nm}) \subsetneq \mathcal{W}^{s}(\R^{nm}) 
\subsetneq \mathcal{W}^{(\frac{s}{m},\ldots,\frac{s}{m})}(\R^{nm}). 
\end{align}

Given a symbol $\m$, the $m$-linear Fourier multiplier $T_{\m}$ is defined by
\begin{align*}
T_{\m}(\vec{f})(x) := \int_{(\Rn)^m} \m(\vec{\xi})
e^{2\pi i x \cdot (\xi_1+\cdots+\xi_m)} \widehat{f}_1(\xi_1)
\cdots \widehat{f}_m(\xi_m) d\vec{\xi},
\end{align*}
for all $f_i \in \S(\Rn)$, $i=1,\ldots,m$. 

Let us present a result about the compactness of $T_{\m}$. Indeed, modifying the proof of \cite[Theorem~1.1]{Hu17} to the $m$-linear case, we get that for every $b \in \CMO$ and for each $j=1,\ldots,m$, 
\begin{align}\label{eq:Tm-com}
[T_{\m}, b]_{e_j} \text{ is compact from $L^{p_1}(\Rn) \times \cdots \times L^{p_m}(\Rn)$ to $L^p(\Rn)$}, 
\end{align}
for all $\frac1p=\frac{1}{p_1}+\cdots+\frac{1}{p_m}$ with $1<p<\infty$ and $r_i<p_i<\infty$, $i=1,\ldots,m$, where $\m \in \mathcal{W}^s(\R^{nm})$ with $s \in (mn/2, mn]$, and $\frac{s}{n}=\frac{1}{r_1}+\cdots+\frac{1}{r_m}$ with $1\le r_1,\ldots,r_m<2$. On the other hand, it follows from \cite{Hu14} that \eqref{eq:Tm-com} also holds for all $\frac1p=\frac{1}{p_1}+\cdots+\frac{1}{p_m}$ with $1\le p<\infty$ and $n/s_i=:r_i<p_i< \infty$, $i=1,\ldots,m$, provided $\m \in \mathcal{W}^{\vec{s}}(\R^{nm})$ with $\vec{s}=(s_1,\ldots,s_m)$ and $s_1,\ldots,s_m \in (n/2, n]$. 

We are going to extend \eqref{eq:Tm-com} to the weighted Lebesgue spaces. Let $\m \in \mathcal{W}^s(\R^{nm})$ with $s \in (mn/2, mn]$, and let $\frac{1}{r_1}+\cdots+\frac{1}{r_m}=\frac{s}{n}$ with $1\le r_1,\ldots,r_m<2$. Jiao \cite{J} obtained that for all $\vec{r}:=(r_1,\ldots,r_m,1) \prec \vec{p}$ and for all $\vec{w}=(w_1, \ldots, w_m) \in A_{\vec{p}, \vec{r}}$, 
\begin{align}\label{eq:TmWs}
T_{\m} \text{ is bounded from $L^{p_1}(w_1^{p_1}) \times \cdots \times L^{p_m}(w_m^{p_m})$ to $L^p(w^p)$}, 
\end{align}
where $\frac1p=\frac{1}{p_1}+\cdots+\frac{1}{p_m}$ and $w=\prod_{i=1}^m w_i$. Consequently, using \eqref{eq:Tm-com} with $\m \in \mathcal{W}^s(\R^{nm})$, \eqref{eq:TmWs} and Theorem \ref{thm:Tb}, we conclude the following. 

%%%%%%%%%%%%%%%%%%%%%% THEOREM THEOREM THEOREM %%%%%%%%%%%%%%%%%%%%%%
\begin{theorem}\label{thm:TaWs}
Assume that $\m \in \mathcal{W}^s(\R^{nm})$ with $s \in (mn/2, mn]$. Let $\frac{s}{n}=\frac{1}{r_1}+\cdots+\frac{1}{r_m}$ with $1\le r_1,\ldots,r_m<2$. If $b \in \CMO$, then for each $j=1,\ldots,m$, $[T_{\m}, b]_{e_j}$ is compact from $L^{p_1}(w_1^{p_1}) \times \cdots \times L^{p_m}(w_m^{p_m})$ to $L^p(w^p)$ for all $\frac1p=\frac{1}{p_1}+\cdots+\frac{1}{p_m}$ with $\vec{r} \prec \vec{p}$ and for all $\vec{w}=(w_1, \ldots, w_m) \in A_{\vec{p}, \vec{r}}$, where $\vec{r}=(r_1,\ldots,r_m,1)$ and $w=\prod_{i=1}^m w_i$. 
\end{theorem}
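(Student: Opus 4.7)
The plan is to obtain Theorem \ref{thm:TaWs} as a direct instance of Corollary \ref{cor:Tb}, feeding in Jiao's weighted bound \eqref{eq:TmWs} as hypothesis \eqref{eq:bT-1} and the unweighted compactness \eqref{eq:Tm-com} as hypothesis \eqref{eq:bT-2}. Since $\vec{r}=(r_1,\ldots,r_m,1)$ has $r_{m+1}=1$ and hence $r'_{m+1}=\infty$, the relation $\vec{r}\prec\vec{q}$ reduces to the strict inequalities $r_i<q_i$ for $i=1,\ldots,m$, with no additional \emph{a priori} constraint on $q$ built into $\prec$.

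The first step is to choose an auxiliary exponent vector $\vec{q}=(q_1,\ldots,q_m)$ in the intersection of the two admissible ranges of \eqref{eq:TmWs} and \eqref{eq:Tm-com}. Concretely, I would pick $\vec{q}$ with $r_i<q_i<\infty$ and additionally $\sum_{i=1}^m 1/q_i<1$, so that $q\in(1,\infty)$ is guaranteed for the Hu-type unweighted compactness; the explicit choice $q_i:=2mr_i$ works, since then $1/q=s/(2mn)\le 1/2$. Jiao's bound \eqref{eq:TmWs} is valid at any $\vec{q}$ with $\vec{r}\prec\vec{q}$, so this choice is compatible with it.

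The second step is to verify the hypotheses of Corollary \ref{cor:Tb} at this $\vec{q}$. Hypothesis \eqref{eq:bT-1} is exactly \eqref{eq:TmWs}: for every $\vec{u}\in A_{\vec{q},\vec{r}}$, $T_{\m}$ is bounded from $L^{q_1}(u_1^{q_1})\times\cdots\times L^{q_m}(u_m^{q_m})$ to $L^q(u^q)$, with $u=\prod_{i=1}^m u_i$ (after the standard reparameterization that converts Jiao's convention $L^{q_i}(u_i)$ to Corollary \ref{cor:Tb}'s convention $L^{q_i}(u_i^{q_i})$). Hypothesis \eqref{eq:bT-2} follows from the $m$-linear analogue of \eqref{eq:Tm-com}, which asserts that $[T_{\m},b]_{e_j}$ is compact from $L^{q_1}(\Rn)\times\cdots\times L^{q_m}(\Rn)$ to $L^q(\Rn)$ in precisely the range $1<q<\infty$, $r_i<q_i<\infty$ arranged above. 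Corollary \ref{cor:Tb} then delivers the weighted compactness of $[T_{\m},b]_{e_j}$ for every $\vec{p}$ with $\vec{r}\prec\vec{p}$ and every $\vec{w}\in A_{\vec{p},\vec{r}}$, which is Theorem \ref{thm:TaWs}.

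I do not anticipate any serious obstacle; the argument is essentially a bookkeeping of the two already-cited ingredients plugged into Corollary \ref{cor:Tb}. The only point that benefits from care is the selection of $\vec{q}$ in the intersection of the two admissible ranges, and the explicit choice above makes this transparent.
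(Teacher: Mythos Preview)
Your proposal is correct and follows exactly the same route as the paper: invoke Corollary \ref{cor:Tb} with Jiao's weighted bound \eqref{eq:TmWs} supplying hypothesis \eqref{eq:bT-1} and the unweighted compactness \eqref{eq:Tm-com} supplying hypothesis \eqref{eq:bT-2}. Your explicit choice $q_i=2mr_i$ to land in the range $1<q<\infty$ required by \eqref{eq:Tm-com} is a harmless elaboration; the paper simply asserts that the two ingredients combine via Corollary \ref{cor:Tb} without spelling out a particular $\vec{q}$.
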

%%%%%%%%%%%%%%%%%%%%%% THEOREM THEOREM THEOREM %%%%%%%%%%%%%%%%%%%%%%

For the general case $\m \in \mathcal{W}^{\vec{s}}(\R^{nm})$ with $s_1,\ldots,s_m \in (n/2, n]$, Fujita and Tomita \cite[Theorem~6.2]{FT12} proved that for all $(w_1^{p_1}, \ldots, w_m^{p_m}) \in A_{p_1/r_1} \times \cdots \times A_{p_m/r_m}$ with $n/s_i=:r_i<p_i< \infty$, $i=1,\ldots,m$, 
\begin{align}\label{eq:TmWss}
T_{\m} \text{ is bounded from $L^{p_1}(w_1^{p_1}) \times \cdots \times L^{p_m}(w_m^{p_m})$ to $L^p(w^p)$}, 
\end{align}
where $\frac1p=\frac{1}{p_1}+\cdots+\frac{1}{p_m}$ and $w=\prod_{i=1}^m w_i$. Accordingly, together with \eqref{eq:Tm-com} applied to $\m \in \mathcal{W}^{\vec{s}}(\R^{nm})$ and \eqref{eq:TmWss}, Theorem \ref{thm:limTb}  with $\p_1^{-}=\cdots=\p_m^{-}=1$ and $\p_1^{+}=\cdots=\p_m^{+}=\infty$ gives the following result.  

%%%%%%%%%%%%%%%%%%%%%% THEOREM THEOREM THEOREM %%%%%%%%%%%%%%%%%%%%%%
\begin{theorem}\label{thm:TaWss}
Assume that $\m \in \mathcal{W}^{\vec{s}}(\R^{nm})$ with $\vec{s}=(s_1,\ldots,s_m)$ and $s_1,\ldots,s_m \in (n/2, n]$. If $b \in \CMO$, then for each $j=1,\ldots,m$, $[T_{\m}, b]_{e_j}$ is compact from $L^{p_1}(w_1^{p_1}) \times \cdots \times L^{p_m}(w_m^{p_m})$ to $L^p(w^p)$ for all $\frac1p=\frac{1}{p_1}+\cdots+\frac{1}{p_m}$ with $r_i<p_i< \infty$, $i=1,\ldots,m$, and for all $(w_1^{p_1}, \ldots, w_m^{p_m}) \in A_{p_1/r_1} \times \cdots \times A_{p_m/r_m}$, where $r_i=n/s_i$ and $w=\prod_{i=1}^m w_i$. 
\end{theorem}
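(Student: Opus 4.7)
The plan is to verify the hypotheses of Corollary \ref{cor:limTb} with the choice $\p_i^{-}=r_i=n/s_i$ and $\p_i^{+}=\infty$ for each $i=1,\ldots,m$, and then invoke that corollary with $\alpha=e_j$. Since $s_i\in(n/2,n]$ forces $r_i\in[1,2)$, the constraint $\p_i^{-}<\p_i^{+}$ is satisfied. Note that with $\p_i^{+}=\infty$, the reverse H\"older factor $RH_{(\p_i^{+}/p_i)'}$ collapses to the trivial condition $RH_1$, so the weight condition $w_i^{p_i}\in A_{p_i/\p_i^{-}}\cap RH_{(\p_i^{+}/p_i)'}$ appearing in Corollary \ref{cor:limTb} reduces to $w_i^{p_i}\in A_{p_i/r_i}$. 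After rewriting $L^{p_i}(w_i)=L^{p_i}(\widetilde{w}_i^{p_i})$ with $\widetilde{w}_i:=w_i^{1/p_i}$, this matches the hypothesis $w_i\in A_{p_i/r_i}$ of Theorem \ref{thm:TaWss}.

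For the boundedness hypothesis \eqref{eq:limTb-1}, I fix a single tuple $(q_1,\ldots,q_m)$ with $q_i\in(r_i,\infty)$ for each $i$ (e.g.\ $q_i=2r_i$). Then for every $u_i^{q_i}\in A_{q_i/r_i}$, the Fujita--Tomita result \eqref{eq:TmWss} applied to the weights $(u_1^{q_1},\ldots,u_m^{q_m})\in A_{q_1/r_1}\times\cdots\times A_{q_m/r_m}$ yields that $T_{\m}$ is bounded from $L^{q_1}(u_1^{q_1})\times\cdots\times L^{q_m}(u_m^{q_m})$ to $L^q(u^q)$ with $u=\prod_{i=1}^m u_i$, which is precisely \eqref{eq:limTb-1}.

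For the compactness hypothesis \eqref{eq:limTb-2}, I invoke the unweighted commutator compactness recorded just before the theorem, namely \eqref{eq:Tm-com} for $\m\in\mathcal{W}^{\vec{s}}(\R^{nm})$, which gives that $[T_{\m},b]_{e_j}$ is compact from $L^{q_1}(\Rn)\times\cdots\times L^{q_m}(\Rn)$ to $L^q(\Rn)$ for any choice of $q_i>r_i$; this applies in particular to the tuple fixed above. With both \eqref{eq:limTb-1} and \eqref{eq:limTb-2} now in hand, Corollary \ref{cor:limTb} extrapolates the compactness of $[T_{\m},b]_{e_j}$ to the full range $p_i\in(r_i,\infty)$ with weights $w_i^{p_i}\in A_{p_i/r_i}$, which, under the convention change $w_i\leftrightarrow w_i^{p_i}$, is exactly the conclusion of Theorem \ref{thm:TaWss}.

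There is no substantive obstacle: the argument is a direct assembly of three pre-existing ingredients (the weighted boundedness of $T_{\m}$ from \cite{FT12}, the unweighted compactness of $[T_{\m},b]_{e_j}$ from \cite{Hu14}, and the limited-range compact extrapolation of Corollary \ref{cor:limTb}). The only bookkeeping to carry out carefully is the translation between the $L^p(w)$ and $L^p(w^p)$ weight conventions, and the verification that $\p_i^{+}=\infty$ trivializes the reverse H\"older factor. Both are immediate.
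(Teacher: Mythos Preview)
Your proposal is correct and follows exactly the same route as the paper: verify the hypotheses of Corollary~\ref{cor:limTb} using the Fujita--Tomita weighted boundedness \eqref{eq:TmWss} and the unweighted compactness \eqref{eq:Tm-com} from \cite{Hu14}, then extrapolate. Your parameter choice $\p_i^{-}=r_i$, $\p_i^{+}=\infty$ is the right one; the paper's sentence before the theorem records $\p_i^{-}=1$, but this appears to be a slip, since the Fujita--Tomita bound only gives weights in $A_{q_i/r_i}$, not $A_{q_i}$, so the hypothesis \eqref{eq:limTb-1} cannot be checked with $\p_i^{-}=1$.

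One small caveat: your illustrative choice $q_i=2r_i$ can give $q<1$ when $m\ge 3$ (e.g.\ $m=3$, $r_i=1$ forces $q=2/3$), which falls outside the range $1\le p<\infty$ where the unweighted compactness from \cite{Hu14} is recorded. Simply pick each $q_i$ large enough that $\sum_{i=1}^m 1/q_i\le 1$ (say $q_i=\max\{2r_i,\,m+1\}$); the rest of your argument is unchanged.
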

%%%%%%%%%%%%%%%%%%%%%% THEOREM THEOREM THEOREM %%%%%%%%%%%%%%%%%%%%%%

%%%%%%%%%%%%%%%%%%%%%%%% REMARK REMARK REMARK %%%%%%%%%%%%%%%%%%%%%%
\begin{remark}
By establishing the compactness, Theorem $\ref{thm:TaWs}$ recovers the weighted boundedness of commutators in \cite[Theorem~4.2]{BD} and \cite[Theorem~1.4]{LS}. Also, since $(w_1^{p_1}, \ldots, w_m^{p_m}) \in A_{p_1/r} \times \cdots \times A_{p_m/r}$ implies $\vec{w}=(w_1,\ldots,w_m) \in A_{\vec{p}/r}$, Theorem $\ref{thm:TaWs}$ improves the weighted compactness in \cite[Corollary~4]{ZL}. On the other hand, by enlarging the range of $p$ to the case $p\le 1$, Theorems $\ref{thm:TaWs}$ and $\ref{thm:TaWss}$ respectively refines the compactness on weighted Lebesgue spaces in \cite{Hu17} and \cite[Theorem~2]{ZL}. 

Maybe one would like to seek a better result than Theorems $\ref{thm:TaWs}$ and $\ref{thm:TaWss}$, that is, the weighted compactness holds for the more general case $\m \in \mathcal{W}^{\vec{s}}(\R^{nm})$ and $\vec{w} \in A_{\vec{p}, \vec{r}}$. Unfortunately, this is not true in the general case since the weighted boundedness \eqref{eq:TmWs} does not hold even if $\vec{s}=(\frac{s}{m}, \ldots, \frac{s}{m})$ and $s \in (mn/2, mn]$. This fact can be found in Theorem~$1.1$ and Remark~$3.2$ in \cite{FT14}. 
\end{remark}
%%%%%%%%%%%%%%%%%%%%%%%% REMARK REMARK REMARK %%%%%%%%%%%%%%%%%%%%%%

%%%%%%%%%%%%%%%%%%% SUBSECTION SUBSECTION SUBSECTION %%%%%%%%%%%%%%%%%%%%
\subsection{Higher order Calder\'{o}n commutators} 
In this subsection, we will consider the higher order Calder\'{o}n commutators. Let $A_1,\ldots, A_m$ be functions defined on $\R$ such that $a_j=A'_j$, $j=1,\ldots,m$. Given a function $A$ on $\R$, we define 
\begin{align*}
\mathcal{C}_{m, A}(\vec{a}; f)(x) := \mathrm{p.v. } \int_{\R} \frac{R(A;x,y) 
\prod_{j=1}^{m-1} (A_j(x)-A_j(y))}{(x-y)^{m+1}} f(y) dy, 
\end{align*}
where $R(A; x, y) :=A(x)-A(y)-A'(y)(x-y)$. The operator $\mathcal{C}_{m, A}$ with $a_j \in L^{\infty}(\R)$ was introduced by Cohen \cite{Coh}. When $m=2$, such type operator was introduced by A. Calder\'{o}n \cite{P.C} and then studied by C.  Calder\'{o}n \cite{C.C} and Christ and Journ\'{e} \cite{CJ}. The results for the higher order were also presented in \cite{DGGLY} and \cite{DGY}. 

Using the strategy in \cite{DGY}, we rewrite $\mathcal{C}_{m, A}$ as the following multilinear singular integral operator 
\begin{align}\label{eq:CmA}
\mathcal{C}_{m, A}(\vec{a}; f)(x) = \int_{\R^m} K_A(x, y_1,\ldots,y_m)  \prod_{j=1}^{m-1} a_j(y_j) f(y_m) d\vec{y}, 
\end{align}
where 
\begin{align}
\label{eq:KA-1} K_A(x, y_1,\ldots,y_m) &:= K(x, y_1,\ldots,y_m) \frac{R(A;x,y_m)}{x-y_m}, 
\\
\label{eq:KA-2} K(x, y_1,\ldots,y_m) &:= \frac{(-1)^{(m-1)e(y_m - x)}}{(x-y_m)^m} 
\prod_{j=1}^{m-1} {\bf 1}_{(x \land y_m, x \lor y_m)}(y_j). 
\end{align}
Here, $e(x)={\bf 1}_{(0, \infty)}(x)$, $x \land y=\min{x, y}$ and $x \lor y=\max\{x, y\}$. From \cite{CH}, one has 
\begin{equation}\label{eq:CCk-1}
|K(x, \vec{y})| \lesssim \frac{1}{(\sum_{j=1}^m |x-y_j|)^m}, 
\end{equation}
and 
\begin{align}\label{eq:CCk-2}
|K(x,\vec{y}) - K(x',\vec{y})| \lesssim \frac{|x-x'|}{(\sum_{j=1}^{m}|x-y_j|)^{m+1}},
\end{align}
whenever $|x-x'| \leq \frac{1}{8} \min\limits_{1\le j \le m} |x-y_j|$. 

To generalize $\mathcal{C}_{m, A}$, we define 
\begin{align}\label{eq:CAdef}
\mathscr{C}_{A}(\vec{f})(x) := \int_{\R^m} K_A(x, \vec{y})  \prod_{j=1}^m f_j(y_j) d\vec{y}, 
\end{align}
where the kernel $K_A$ is defined in \eqref{eq:KA-1} and \eqref{eq:KA-2}. 
Denote by $\mathscr{A}(\R)$ the closure of $\mathscr{C}_c^{\infty}(\R)$ in the seminorm $\|A\|_{\BMO_1}:=\|A'\|_{\BMO}$.

%%%%%%%%%%%%%%%%%%%%%%% THEOREM THEOREM THEOREM %%%%%%%%%%%%%%%%%%%%%
\begin{theorem}\label{thm:CC}
Suppose that $A \in \mathscr{A}(\R)$ and $\mathscr{C}_A$ is defined in \eqref{eq:CAdef}. Then $\mathscr{C}_A$ is compact from $L^{p_1}(w_1^{p_1}) \times \cdots \times L^{p_m}(w_m^{p_m})$ to $L^p(w^p)$ for all $\vec{p}=(p_1,\ldots,p_m)$ with $1<p_1,\ldots,p_m<\infty$, and for all $\vec{w} \in A_{\vec{p}}$, where $\frac1p=\frac{1}{p_1}+\cdots+\frac{1}{p_m}$ and $w=\prod_{i=1}^m w_i$.
\end{theorem}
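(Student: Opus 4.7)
My plan is to apply Theorem \ref{thm:Ap} to $T=\mathscr{C}_A$ with $\vec{r}=(1,\ldots,1)$ and reference weight $\vec{v}=(1,\ldots,1)$. Under the standard substitution $w_i \leftrightarrow w_i^{1/p_i}$, the class $A_{\vec{p},\vec{r}}$ with $\vec{r}=(1,\ldots,1)$ matches the class $A_{\vec{p}}$ from \cite{LOPTT} appearing in the statement, so the two hypotheses of Theorem \ref{thm:Ap} will reduce to: (i) $\mathscr{C}_A$ is bounded from $L^{q_1}(u_1)\times\cdots\times L^{q_m}(u_m)$ to $L^q(u)$ for every $\vec{u}\in A_{\vec{q}}$ and some fixed $\vec{q}$ with $1<q_1,\ldots,q_m<\infty$; and (ii) $\mathscr{C}_A$ is compact on the corresponding unweighted product Lebesgue spaces.

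I will obtain (i) by quoting the weighted theory of higher-order multilinear Calder\'on commutators developed in \cite{DGGLY, DGY}, which yields
\[
\|\mathscr{C}_A\|_{L^{q_1}(u_1)\times\cdots\times L^{q_m}(u_m)\to L^q(u)} \lesssim \|A'\|_{\BMO}
\]
for every $\vec{u}\in A_{\vec{q}}$ (note that $A\in\mathscr{A}(\R)$ forces $A'\in \BMO$). The crucial feature, which will drive the next step, is that $\mathscr{C}_A$ depends \emph{linearly} on $A$ with operator norm controlled by $\|A'\|_{\BMO}$. For (ii), I will exploit the density of $C_c^\infty(\R)$ in $\mathscr{A}(\R)$: given $A_k\in C_c^\infty(\R)$ with $\|A'-A_k'\|_{\BMO}\to 0$, the identity $\mathscr{C}_A-\mathscr{C}_{A_k}=\mathscr{C}_{A-A_k}$ combined with (i) forces $\mathscr{C}_{A_k}\to \mathscr{C}_A$ in operator norm; since the class of compact multilinear operators is closed under operator-norm limits, it will suffice to prove compactness of $\mathscr{C}_{A_k}$ for each fixed $A_k\in C_c^\infty(\R)$ with, say, $\supp(A_k)\subset[-M,M]$.

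I will verify this fixed-$A_k$ compactness via the three Fr\'echet--Kolmogorov conditions of Lemma \ref{lem:FK-1}. Condition (a-1) follows from (i). For (a-2), the decisive observation is that $R(A_k;x,y_m)=0$ whenever both $x$ and $y_m$ lie outside $\supp(A_k)=[-M,M]$; hence once $|x|\ge 2M$ the integrand forces $y_m\in[-M,M]$, and combined with the size bound \eqref{eq:CCk-1} this yields the required uniform decay of $\mathscr{C}_{A_k}(\vec{f})$ at infinity. The hard part will be (a-3): mimicking the splitting used in the proof of Theorem \ref{thm:CZO}, I plan to break the integration into near-diagonal $\big(\sum_j|x-y_j|\le\delta\big)$ and far-from-diagonal pieces, controlling the near-diagonal contribution by $\delta\,\M(\vec{f})$ via \eqref{eq:CCk-1}, and estimating the far piece by combining the kernel regularity \eqref{eq:CCk-2} with the mean-value bound $|R(A_k;x+h,y_m)-R(A_k;x,y_m)|\lesssim\|A_k'\|_\infty|h|$ coming from the smoothness of $A_k$. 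Once all three conditions are verified, Lemma \ref{lem:FK-1} gives the unweighted compactness of $\mathscr{C}_{A_k}$, the operator-norm approximation passes it to all $A\in\mathscr{A}(\R)$, and Theorem \ref{thm:Ap} delivers the compactness over the full $A_{\vec{p}}$ range.
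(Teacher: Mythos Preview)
Your overall strategy coincides with the paper's: apply Theorem~\ref{thm:Ap} with $\vec r=(1,\dots,1)$, import the weighted boundedness of $\mathscr{C}_A$ with norm $\lesssim\|A'\|_{\BMO}$, reduce by density to $A\in C_c^\infty(\R)$, and verify the three Fr\'echet--Kolmogorov conditions of Lemma~\ref{lem:FK-1} in the unweighted setting. Two points deserve attention.

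First, the weighted estimate you need (full $A_{\vec q}$ class, with the linear $\|A'\|_{\BMO}$ dependence) is not stated in \cite{DGGLY,DGY}; the paper obtains it from \cite[Theorem~1.4]{CH}, and you should cite that result instead.

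Second, your sketch of condition (a-3) understates the work. The smoothness estimate \eqref{eq:CCk-2} for $K$ requires $|h|\le\tfrac18\min_j|x-y_j|$, not merely $\sum_j|x-y_j|>\delta$; on the far-from-diagonal region there can still be some $y_j$ with $|x-y_j|\le\delta/m$, and this forces a further subdivision (the paper splits the far region into $J_{2,1}$ where all coordinates are far, and $J_{2,2}$ where one is close). Moreover, the factor $\frac{R(A;x,y_m)}{x-y_m}$ is not handled by the first-order mean-value bound alone: one must also control $\big|\frac{R(A;x+h,y_m)}{|x+h-y_m|}-\frac{R(A;x,y_m)}{|x-y_m|}\big|$ when $|x-y_m|$ is small, and here the paper uses the second-order Taylor identity $R(A;x,y_m)=\tfrac12 A''(\eta x+(1-\eta)y_m)(x-y_m)^2$ together with the compact support of $A$ (to dispose of the region where $y_m$ is far from $\supp A$). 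These are not obstructions to your plan---the approach is correct---but the near/far dichotomy you describe must be refined along these lines before the argument closes.
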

%%%%%%%%%%%%%%%%%%%%%%% THEOREM THEOREM THEOREM %%%%%%%%%%%%%%%%%%%%%

%%%%%%%%%%%%%%%%%%%%%%%%% PROOF PROOF PROOF %%%%%%%%%%%%%%%%%%%%%%%%
\begin{proof}
It was proved in \cite[Theorem~1.4]{CH} that for any $A' \in \BMO$, 
\begin{align}\label{eq:CC-1}
\|\mathscr{C}_A\|_{L^{p_1}(w_1^{p_1}) \times \cdots \times L^{p_m}(w_m^{p_m}) \to L^p(w^p)} 
\lesssim \|A\|_{\BMO_1} [\vec{w}]_{A_{\vec{p}}}^{\max\limits_{1\le i \le m}\{p, p'_i\}} [w_{m-1}^{-p'_{m-1}}]_{A_{\infty}}, 
\end{align}
for all $\vec{p}=(p_1,\ldots,p_m)$ with $1<p_1,\ldots,p_m<\infty$, and for all $\vec{w} \in A_{\vec{p}}$, where $\frac1p=\frac{1}{p_1}+\cdots+\frac{1}{p_m}$ and $w=\prod_{i=1}^m w_i$. Thus, by Theorem \ref{thm:Ap}, the matters are reduced to showing 
\begin{align}\label{eq:CC-2}
\mathscr{C}_A \text{ is compact from $L^{p_1}(\R) \times \cdots \times L^{p_m}(\R)$ to $L^p(\R)$}, 
\end{align}
for all (or for some) $\frac1p=\frac{1}{p_1}+\cdots+\frac{1}{p_m}$ with $1<p, p_1,\ldots,p_m<\infty$, whenever $A \in \mathscr{A}(\R)$. 

For any $A \in \mathscr{A}(\R)$, there exists a sequence $\{A_j\}_{j \in \N} \subset \mathscr{C}_c^{\infty}(\R)$ such that $\lim\limits_{j \to \infty} \|A_j-A\|_{\BMO_1} = 0$. Then, \eqref{eq:CC-1} gives that 
\begin{align*}
\|\mathscr{C}_{A_j} - \mathscr{C}_A\|_{L^{p_1}(\Rn) \times \cdots \times L^{p_2}(\Rn) \to L^p(\R)} 
&=\|\mathscr{C}_{A_j-A}\|_{L^{p_1}(\Rn) \times \cdots \times L^{p_2}(\Rn) \to L^p(\R)}
\\ 
&\lesssim \|A_j-A\|_{\BMO_1} \to 0,\quad\text{as } j \to 0. 
\end{align*}
Hence, it suffices to prove \eqref{eq:CC-2} for $A \in \mathscr{C}_c^{\infty}(\R)$. In what follows, we assume that $A \in \mathscr{C}_c^{\infty}(\R)$ with $\supp(A) \subset B(0, a_0)$ for some $a_0>1$. By Theorem \ref{thm:FK-1} and \eqref{eq:CC-1}, it is enough to show 
\begin{enumerate}
\item[(i)] Given $\varepsilon>0$, there exists an $a=a(\varepsilon)>0$ independent of $\vec{f}$ such that 
\begin{align}\label{eq:CC-3}
\|\mathscr{C}_A(\vec{f})\mathbf{1}_{\{|x|>a\}}\|_{L^p(\R)} 
\lesssim \varepsilon \prod_{j=1}^m\|f_j\|_{L^{p_j}(\R)}. 
\end{align}
\item[(ii)] Given $\varepsilon\in (0,1)$, there exists a sufficiently small $\delta_0=\delta_0(\varepsilon)$ independent of $\vec{f}$ such that for all $0<|h|<\delta_0$, 
\begin{align}\label{eq:CC-4}
\|\tau_h \mathscr{C}_A(\vec{f}) - \mathscr{C}_A(\vec{f})\|_{L^p(\R)} 
\lesssim \varepsilon \prod_{j=1}^m\|f_j\|_{L^{p_j}(\R)}. 
\end{align}
\end{enumerate}

Let $a>2a_0$ and $|x|>a$. Then $|x-y_m| \simeq |x|$ for any $y_m \in B(0, a_0)$. Note that $(x_1\cdots x_n)^{\frac1n} \le (x_1+\cdots+x_n)/n$ for all $x_1,\ldots,x_n \ge 0$. Using this, \eqref{eq:CCk-1} and H\"{o}lder's inequality, we deduce that 
\begin{align}\label{eq:CC-5}
\bigg|\int_{\R^m} &K(x, \vec{y}) A'(y_m) \prod_{j=1}^m f_j(y_j) d\vec{y} \bigg| 
\\  \nonumber
&\lesssim \|A'\|_{L^{\infty}(\R)} \int_{B(0, a_0)}\int_{\R^{m-1}} \frac{\prod_{j=1}^m |f_j(y_j)|}{(\sum_{i=1}^m |x-y_i|)^m} d\vec{y} 
\\  \nonumber
&\lesssim \int_{B(0, a_0)}\int_{\R^{m-1}} \frac{\prod_{j=1}^m |f_j(y_j)|}{(\sum_{i=1}^m (1+|x-y_i|))^m} d\vec{y} 
\\  \nonumber
&\lesssim \bigg(\prod_{j=1}^{m-1} \int_{\R}  \frac{|f_j(y_j)|}{1+|x-y_j|} dy_j \bigg) 
\int_{B(0, a_0)} \frac{|f_m(y_m)|}{1+|x-y_m|} dy_m 
\\  \nonumber
&\lesssim |x|^{-1} \prod_{j=1}^{m-1} \|f_j\|_{L^{p_j}(\R)} \bigg(\int_{\R}  \frac{dy_j}{(1+|x-y_j|)^{p'_j}}\bigg) 
\|f_m\|_{L^{p_m}(\R)} a_0^{\frac{1}{p'_m}} 
\\  \nonumber
&\lesssim |x|^{-1} \prod_{j=1}^m \|f_j\|_{L^{p_j}(\R)} 
\end{align}
Likewise, for any $\theta \in (0, 1)$, 
\begin{align}\label{eq:CC-6}
\bigg|\int_{\R^m} &K(x, \vec{y}) A'(\theta x +(1-\theta)y_m) \prod_{j=1}^m f_j(y_j) d\vec{y} \bigg| 
\lesssim |x|^{-1} \prod_{j=1}^m \|f_j\|_{L^{p_j}(\R)}.  
\end{align}
By the mean value theorem, there exists some $\theta \in (0, 1)$, 
\begin{equation}\label{eq:RAA}
R(A;x,y_m) = [A'(\theta x +(1-\theta)y_m) - A'(y_m)] (x-y_m). 
\end{equation}
Gathering \eqref{eq:CC-5}, \eqref{eq:CC-6} and \eqref{eq:RAA}, we have 
\begin{equation}\label{eq:CC-7}
|\mathscr{C}_A(\vec{f})(x)| \lesssim |x|^{-1} \prod_{j=1}^m \|f_j\|_{L^{p_j}(\R)},\quad |x|>a.   
\end{equation}
Pick $a>\max\{2a_0, \varepsilon^{-p'}\}$. Thus, \eqref{eq:CC-7} implies \eqref{eq:CC-3}. 

To show \eqref{eq:CC-4}, we may assume that $\|f_j\|_{L^{p_j}(\R)}=1$, $j=1,\ldots,m$. Let $\varepsilon>0$. By \eqref{eq:CC-1}, we choose $\widetilde{f}_m \in \mathscr{C}_c^{\infty}(\R)$ so that 
\begin{equation}\label{eq:fmfm}
\|\mathscr{C}_A(f_1,\ldots,f_{m-1}, f_m-\widetilde{f}_m)\|_{L^p(\R)} < \varepsilon. 
\end{equation}
Then for $\vec{\widetilde{f}}:=(f_1,\ldots,f_{m-1}, \widetilde{f}_m)$, \eqref{eq:fmfm} implies 
\begin{align*}
\|\tau_h \mathscr{C}_A(\vec{f}) - \mathscr{C}_A(\vec{f})\|_{L^p(\R)} 
&\le \|\tau_h \mathscr{C}_A(\vec{f}) - \tau_h \mathscr{C}_A(\vec{\widetilde{f}})\|_{L^p(\R)} 
+ \|\tau_h \mathscr{C}_A(\vec{\widetilde{f}}) - \mathscr{C}_A(\vec{\widetilde{f}})\|_{L^p(\R)} 
\\
&\qquad+ \|\mathscr{C}_A(\vec{\widetilde{f}}) - \mathscr{C}_A(\vec{f})\|_{L^p(\R)} 
\\
&\le 2\varepsilon + \|\tau_h \mathscr{C}_A(\vec{\widetilde{f}}) - \mathscr{C}_A(\vec{\widetilde{f}})\|_{L^p(\R)}. 
\end{align*}
This means that to prove \eqref{eq:CC-4} we may assume that $\supp(f_m) \subset B(0, b_0)$ for some $b_0>0$. 

In order to demonstrate \eqref{eq:CC-4}, we set $\delta>0$ chosen later and $0<|h|<\frac{\delta}{8m}$. Observe that 
\[
K_A(x, \vec{y}) = |K(x, \vec{y})| \frac{R(A; x, y_m)}{|x-y_m|}.
\]
Then, 
\begin{equation}\label{eq:JJJJ}
|\mathscr{C}_A(\vec{f})(x+h) - \mathscr{C}_A(\vec{f})(x)| 
\le J_1 + J_2 + J_3 + J_4, 
\end{equation}
where 
\begin{align*}
J_1 &:= \int_{\sum_{i=1}^m |x-y_i|>\delta} |K(x+h, \vec{y})|
\bigg|\frac{R(A;x+h,y_m)}{|x+h-y_m|} - \frac{R(A;x,y_m)}{|x-y_m|}\bigg| \prod_{j=1}^m |f_j(y_j)|d\vec{y}, 
\\ 
J_2 &:= \int_{\sum_{i=1}^m |x-y_i|>\delta} |K(x+h, \vec{y}) - K(x, \vec{y})| \frac{|R(A;x,y_m)|}{|x-y_m|} \prod_{j=1}^m |f_j(y_j)| d\vec{y}, 
\\ 
J_3 &:= \int_{\sum_{i=1}^m |x-y_i| \le \delta} |K(x, \vec{y})| \frac{|R(A;x,y_m)|}{|x-y_m|} \prod_{j=1}^m |f_j(y_j)| d\vec{y}, 
\\ 
J_4 &:= \int_{\sum_{i=1}^m |x-y_i| \le \delta} |K(x+h, \vec{y})| \frac{|R(A;x+h,y_m)|}{|x+h-y_m|} \prod_{j=1}^m |f_j(y_j)| d\vec{y}. 
\end{align*}
Considering $J_1$, we split $J_1=J_{1,1}+J_{1,2}$, where 
\begin{align*}
J_{1,1} &:= \int_{\substack{\sum_{i=1}^m |x-y_i|>\delta \\ |x-y_m|>\frac{\delta}{m}}} |K(x+h, \vec{y})|
\bigg|\frac{R(A;x+h,y_m)}{|x+h-y_m|} - \frac{R(A;x,y_m)}{|x-y_m|}\bigg| \prod_{j=1}^m |f_j(y_j)|d\vec{y}, 
\\ 
J_{1,2} &:= \int_{\substack{\sum_{i=1}^m |x-y_i|>\delta \\ |x-y_m| \le \frac{\delta}{m}}} |K(x+h, \vec{y})|
\bigg|\frac{R(A;x+h,y_m)}{|x+h-y_m|} - \frac{R(A;x,y_m)}{|x-y_m|}\bigg| \prod_{j=1}^m |f_j(y_j)|d\vec{y}. 
\end{align*}
The condition $|x-y_m|>\frac{\delta}{m}$ implies $|h| < \frac18 |x-y_m|$, and hence, by \eqref{eq:RAA}, 
\begin{align*}
\bigg|\frac{R(A;x+h,y_m)}{|x+h-y_m|} - \frac{R(A;x,y_m)}{|x-y_m|}\bigg| 
&\le \frac{|R(A;x+h,y_m)-R(A;x,y_m)|}{|x+h-y_m|} 
\\
&\quad + |R(A;x,y_m)| \bigg|\frac{1}{|x+h-y_m|} - \frac{1}{|x-y_m|}\bigg|
\\
&\lesssim \|A'\|_{L^{\infty}(\R)} \frac{|h|}{|x-y_m|}. 
\end{align*}
Then, this and \eqref{eq:CCk-1} yield 
\begin{align}\label{eq:J11}
J_{1,1} &\lesssim |h| \int_{\substack{\sum_{i=1}^m |x-y_i|>\delta \\ |x-y_m|>\frac{\delta}{m}}} 
\frac{\prod_{j=1}^{m-1} |f_j(y_j)|}{(\sum_{i=1}^m |x-y_i|)^m} \frac{|f_m(y_m)|}{|x-y_m|} d\vec{y} 
\\  \nonumber
&\lesssim |h| \int_{|x-y_m|>\frac{\delta}{m}} \bigg(\int_{\sum_{i=1}^m |x-y_i|>\delta}  
\frac{\prod_{j=1}^{m-1} |f_j(y_j)|\, dy_j}{(\sum_{i=1}^m |x-y_i|)^{m-\alpha}} \bigg) 
\frac{|f_m(y_m)|}{|x-y_m|^{1+\alpha}} dy_m
\\  \nonumber
&\lesssim |h| \delta^{\alpha-1} \prod_{j=1}^{m-1} Mf_j(x) 
\int_{|x-y_m|>\frac{\delta}{m}} \frac{|f_m(y_m)|}{|x-y_m|^{1+\alpha}} dy_m 
\lesssim \delta^{-1} |h| \prod_{j=1}^m Mf_j(x), 
\end{align}
where $\alpha \in (0, 1)$ is an auxiliary parameter. 
For $J_{1,2}$, we observe that 
\begin{equation}\label{eq:RA2}
R(A;x,y_m) = \frac12 A''(\eta x +(1-\eta)y_m) (x-y_m)^2,\quad \text{for some } \eta \in (0,1). 
\end{equation}
Additionally, the condition $\sum_{i=1}^m |x-y_i|>\delta$ and $|x-y_m| \le \frac{\delta}{m}$ implies that $\sum_{i=1}^m |x+h-y_i| \gtrsim \delta$ and $|x+h-y_m| \lesssim \delta$. Using these and \eqref{eq:CCk-1}, we derive 
\begin{align}\label{eq:J12}
J_{1,2} &\lesssim \int_{\substack{\sum_{i=1}^m |x+h-y_i| \gtrsim \delta \\ |x+h-y_m| \lesssim \delta}} 
|K(x+h, \vec{y})| (|x+h-y_m| + |x-y_m|) \prod_{j=1}^m |f_j(y_j)|d\vec{y} 
\\  \nonumber
&\lesssim \delta \int_{\substack{\sum_{i=1}^m |x+h-y_i| \gtrsim \delta \\ |x+h-y_m| \lesssim \delta}} 
\frac{\prod_{j=1}^m |f_j(y_j)|}{(\sum_{i=1}^m |x+h-y_i|)^m} d\vec{y} 
\lesssim \delta \prod_{j=1}^m Mf_j(x+h). 
\end{align}
Combining \eqref{eq:J11} and \eqref{eq:J12}, we obtain 
\begin{align}\label{eq:J1}
J_1 \lesssim (\delta + \delta^{-1}|h|) \prod_{j=1}^m Mf_j(x). 
\end{align}
To analyze $J_2$, we write 
\begin{align*}
J_{2,1} &:= \int_{\forall i: |x-y_i|>\frac{\delta}{m}} |K(x+h, \vec{y}) - K(x, \vec{y})| \frac{|R(A;x,y_m)|}{|x-y_m|} \prod_{j=1}^m |f_j(y_j)| d\vec{y}, 
\\ 
J_{2,2} &:= \int_{\substack{\sum_{i=1}^m |x-y_i|>\delta \\ \exists i: |x-y_i| \le \frac{\delta}{m}}} 
|K(x+h, \vec{y}) - K(x, \vec{y})| \frac{|R(A;x,y_m)|}{|x-y_m|} \prod_{j=1}^m |f_j(y_j)| d\vec{y}. 
\end{align*}
The estimates \eqref{eq:CCk-2} and \eqref{eq:RAA} lead 
\begin{align}\label{eq:J21}
J_{2,1} & \lesssim |h| \|A'\|_{L^{\infty}(\Rn)} \int_{\sum_{i=1}^m |x-y_i| > \delta} 
\frac{\prod_{j=1}^m |f_j(y_j)|}{(\sum_{i=1}^m |x-y_i|)^{m+1}} d\vec{y} 
\lesssim \delta^{-1} |h| \mathcal{M}(\vec{f})(x). 
\end{align} 
For $J_{2,2}$, we claim that 
\begin{align}\label{eq:J22}
J_{2,2} \lesssim \delta \mathcal{M}(\vec{f})(x+h) + \delta \mathcal{M}(\vec{f})(x).  
\end{align}
Indeed, if the case $|x-y_m| \lesssim \delta$ occurs in $J_{2,2}$,  then the same argument as $J_{1,2}$ yields \eqref{eq:J22}. Now we treat the case $|x-y_m|>N\delta$ for any large number $N$. Then, for any given $\eta \in (0, 1)$, 
\begin{equation}\label{eq:Nab}
|\eta x + (1-\eta)y_m| \ge \eta |x-y_m|- |y_m| \ge N\eta \delta-b_0>a_0, 
\end{equation}
provided that $N$ is large enough. Together with \eqref{eq:RA2} and $\supp(A) \subset B(0, a_0)$, \eqref{eq:Nab} implies that $J_{2,2}=0$, and hence \eqref{eq:J22} holds in this scenario. Collecting \eqref{eq:J21} and \eqref{eq:J22}, one has 
\begin{align}\label{eq:J2}
J_2 \lesssim (\delta + \delta^{-1}|h|) \mathcal{M}(\vec{f})(x) + \delta \mathcal{M}(\vec{f})(x+h). 
\end{align}
As for $J_3$, applying \eqref{eq:RA2} and the same calculation as \eqref{eq:II-3}, we obtain 
\begin{align}\label{eq:J3}
J_3 \lesssim \|A''\|_{L^{\infty}(\R)} \int_{\sum_{i=1}^m |x-y_i| \le \delta} 
\frac{\prod_{j=1}^m |f_j(y_j)|}{(\sum_{i=1}^m |x-y_i|)^{m-1}} d\vec{y} 
\lesssim \delta \mathcal{M}(\vec{f})(x). 
\end{align}
Analogously, 
\begin{align}\label{eq:J4}
J_4 \lesssim (\delta+m|h|) \mathcal{M}(\vec{f})(x+h) \lesssim \delta \mathcal{M}(\vec{f})(x+h). 
\end{align}

In order to conclude \eqref{eq:CC-4}, we pick $\delta=8m\varepsilon^{-1}|h|$ and $\delta_0=\frac{\varepsilon^2}{2(1+\varepsilon)}$ such that $|h|<\frac{\delta}{8m}$ and $\delta_0<\frac{\varepsilon^2}{1+\varepsilon}$. Now, using \eqref{eq:JJJJ}, \eqref{eq:J1}, \eqref{eq:J2}, \eqref{eq:J3} and \eqref{eq:J4}, we obtain that for $0<|h|<\delta_0$, 
\begin{align*}
\|\tau_h \mathscr{C}_A(\vec{f}) - \mathscr{C}_A(\vec{f})\|_{L^p(\R)} 
&\lesssim (\delta + |h| + \delta^{-1}|h|) \prod_{j=1}^m\|f_j\|_{L^{p_j}(\R)}  
\\ 
&=(8m\varepsilon^{-1} +1) |h| + \frac{\varepsilon}{8m}
\lesssim (\varepsilon^{-1} +1) \delta_0 + \varepsilon 
\lesssim \varepsilon. 
\end{align*}
This shows \eqref{eq:CC-4}. 
\end{proof}
%%%%%%%%%%%%%%%%%%%%%%%%% END END END PROOF %%%%%%%%%%%%%%%%%%%%%%%%

%%%%%%%%%%%%%%%%%%% SUBSECTION SUBSECTION SUBSECTION %%%%%%%%%%%%%%%%%%%%
\subsection{Bilinear rough singular integrals}  
Given $\Omega \in L^q(\mathbb{S}^{2n-1})$ with $1 \le q \le \infty$ and $\int_{\mathbb{S}^{2n-1}} \Omega\, d\sigma=0$,  we define the rough bilinear singular integral operator $T_{\Omega}$ by 
\begin{equation*}
T_{\Omega}(f, g)(x)=\mathrm{p.v.} \int_{\R^{2n}} K_{\Omega}(x-y, x-z) f(y) g(z) dy dz,  
\end{equation*}
where the rough kernel is given by 
\begin{equation*}
K_{\Omega}(y, z) = \frac{\Omega\left((y, z)/\left|(y, z)\right|\right)}{\left|(y, z)\right|^{2n}}. 
\end{equation*}

A typical example of the rough bilinear operators is the Calder\'{o}n commutator defined in \cite{P.C} as 
\begin{align*}
\mathcal{C}(a, f)(x):= \text{p.v.} \int_{\R} \frac{A(x)-A(y)}{|x-y|^2} f(y) dy, 
\end{align*}
where $a$ is the derivative of $A$. The boundedness of $\mathcal{C}(a, f)$ in the full range of exponents 
$1<p_1, p_2< \infty$ was established in \cite{C.C}. It was shown in \cite{P.C} that the Calder\'{o}n 
commutator can be written as 
\begin{align*}
\mathcal{C}(a, f)(x):= \text{p.v.} \int_{\R \times \R} K(x-y, x-z) f(y) a(z) dydz,  
\end{align*}
with the kernel 
\begin{align*}
K(y, z)=\frac{e(z)-e(z-y)}{y^2}=\frac{\Omega((y, z)/|(y,z)|)}{|(y, z)|^2}, 
\end{align*}
where $e(t)=1$ if $t>0$ and $e(t)=0$ if $t<0$. Observe that $K(y, z)$ is odd and homogeneous of degree $-2$ whose restriction on $\mathbb{S}^1$ is $\Omega(y,z)$. It is also easy to check that $\Omega$ is odd, bounded and thus Theorem \ref{thm:rough} below can be applied to $\mathcal{C}(a, f)$.

%%%%%%%%%%%%%%%%%%%%%%% THEOREM THEOREM THEOREM %%%%%%%%%%%%%%%%%%%%%
\begin{theorem}\label{thm:rough}
Let $\Omega \in L^q(\mathbb{S}^{2n-1})$ with $\frac43<q\le \infty$ and $\int_{\mathbb{S}^{2n-1}} \Omega\, d\sigma=0$. Let $\vec{r}=(r_1,r_2,r_3)$ with $r_1=r_2=r_3=1$ if $q=\infty$, $\max\big\{\frac{24n+3q-4}{8n+3q-4}, \frac{24n+q}{8n+q}\big\} <r_1, r_2, r_3<3$ if $q<\infty$. Then for each $k=1,2$ and $b \in \CMO$, $[T_{\Omega}, b]_{e_k}$ is compact from $L^{p_1}(w_1^{p_1}) \times L^{p_2}(w_2^{p_2})$ to $L^p(w^p)$ for all $\vec{p}=(p_1, p_2)$ with $\vec{r} \prec \vec{p}$ and for all $\vec{w}=(w_1, w_2) \in A_{\vec{p}, \vec{r}}$, where $\frac1p=\frac{1}{p_1}+\frac{1}{p_2}$ and $w=w_1w_2$. 
\end{theorem}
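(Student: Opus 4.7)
The plan is to deduce Theorem \ref{thm:rough} directly from Corollary \ref{cor:Tb} with the multi-index $\alpha = e_k$. Two ingredients must be supplied: the weighted boundedness \eqref{eq:bT-1} of $T_{\Omega}$ on the $A_{\vec{q}, \vec{r}}$ scale for some exponents $\vec{q}$ with $\vec{r} \preceq \vec{q}$, and the unweighted compactness \eqref{eq:bT-2} of the commutator $[T_\Omega, b]_{e_k}$ on the corresponding triple of unweighted Lebesgue spaces. Once both are in place, the multilinear extrapolation automatically upgrades the compactness to the full range $\vec{r} \prec \vec{p}$ and $\vec{w} \in A_{\vec{p},\vec{r}}$.

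For the weighted boundedness, I would invoke the known weighted theory for rough bilinear singular integrals. When $q=\infty$ (so $\vec{r}=(1,1,1)$), the $A_{\vec{p}}$-boundedness of $T_\Omega$ due to Grafakos, He and Honz\'ik applies; when $\tfrac{4}{3}<q<\infty$, the recent $A_{\vec{p},\vec{r}}$-bounds (in the precise range for $\vec{r}$ appearing in the statement) established through sparse-domination techniques for rough bilinear operators provide exactly \eqref{eq:bT-1}. In either case one may fix any $\vec{q}$ with $\vec{r}\preceq\vec{q}$, say $\vec{q}$ slightly above $\vec{r}$, and read off the weighted estimate for every $\vec{u}\in A_{\vec{q},\vec{r}}$.

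The unweighted compactness is the main technical step. I would approximate the symbol on the sphere by a sequence $\{\Omega_\ell\}\subset C^\infty(\mathbb{S}^{2n-1})$ with $\int_{\mathbb{S}^{2n-1}}\Omega_\ell\,d\sigma=0$ and $\|\Omega-\Omega_\ell\|_{L^q(\mathbb{S}^{2n-1})}\to 0$ (treating the case $q=\infty$ by viewing $\Omega\in L^{q_0}$ for some large finite $q_0$). For each smooth $\Omega_\ell$, the kernel $K_{\Omega_\ell}(y,z)=\Omega_\ell((y,z)/|(y,z)|)/|(y,z)|^{2n}$ is a bilinear $\omega$-Calder\'on-Zygmund kernel with Lipschitz modulus, so $T_{\Omega_\ell}$ is a bilinear $\omega$-CZO with $\omega\in\mathrm{Dini}$; Theorem \ref{thm:CZO} (applied on $\R^n$ with trivial weights) then gives that $[T_{\Omega_\ell}, b]_{e_k}$ is compact from $L^{q_1}(\R^n)\times L^{q_2}(\R^n)$ to $L^q(\R^n)$ for every $b\in\CMO$. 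The passage to the limit uses the BMO-type commutator estimate
\[
\bigl\|[T_{\Omega-\Omega_\ell},b]_{e_k}\bigr\|_{L^{q_1}(\R^n)\times L^{q_2}(\R^n)\to L^q(\R^n)}\lesssim \|\Omega-\Omega_\ell\|_{L^q(\mathbb{S}^{2n-1})}\|b\|_{\BMO},
\]
which is available precisely under the $q>\tfrac{4}{3}$ hypothesis from the literature on rough commutators. Since a norm limit of compact operators is compact, $[T_\Omega, b]_{e_k}$ is compact, giving \eqref{eq:bT-2}.

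The hard part is the operator-norm control of the commutator by $\|\Omega-\Omega_\ell\|_{L^q}$: it is this commutator estimate, rather than the boundedness of $T_\Omega$ itself, that dictates the lower bound $q>\tfrac{4}{3}$ and, via Cruz-Uribe--Martell-type limited-range considerations, the exact thresholds for $\vec{r}$ in the statement. Assuming this estimate (and the weighted boundedness) as inputs from the literature, the two hypotheses of Corollary \ref{cor:Tb} are both verified, and a direct application yields Theorem \ref{thm:rough}.
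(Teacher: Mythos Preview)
Your overall plan is sound and leads to a valid proof, but the route to the unweighted compactness differs from the paper's. The paper decomposes the kernel by a Littlewood--Paley partition: it writes $T_\Omega=\sum_{j\in\Z}T_j$ with each $T_j$ a genuine bilinear Calder\'on--Zygmund operator (Proposition~5/Lemma~11 in \cite{GHH} for $q=\infty$, and \cite[Lemmas~3.1,~4.3]{GWX} for $q<\infty$), obtains the decay $\|T_j\|\lesssim 2^{-\delta|j|}$ (respectively $|j|2^{-\delta|j|}$), invokes Theorem~\ref{thm:CZO} for each piece, and then assembles the pieces via Lemma~\ref{lem:bTTj}. You instead smooth the angular part $\Omega$, use Theorem~\ref{thm:CZO} for the smooth approximants (which are bilinear CZOs), and pass to the limit through a quantitative commutator bound. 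Both approaches ultimately feed Corollary~\ref{cor:Tb}; yours is conceptually simpler (one approximation instead of a dyadic sum), while the paper's is more self-contained because the required piecewise decay is already in the cited references.

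Two points to tighten in your write-up. First, for $q=\infty$ the weighted $A_{\vec p}$ bound is in \cite{CHS}, not \cite{GHH} (the latter is unweighted). Second, and more substantively, the inequality
\[
\bigl\|[T_{\Omega-\Omega_\ell},b]_{e_k}\bigr\|_{L^{q_1}\times L^{q_2}\to L^{q}}\lesssim \|\Omega-\Omega_\ell\|_{L^{q}(\mathbb S^{2n-1})}\,\|b\|_{\BMO}
\]
is not a black-box result ``from the literature on rough commutators''; you should derive it. The clean way is exactly what the paper does for \eqref{eq:Tj-1} and \eqref{eq:Tj-5}: the weighted bounds for $T_\Omega$ in \cite{CHS} and \cite{GWX} are linear in $\|\Omega\|_{L^q}$, and \cite[Theorem~2.22]{LMO} transfers weighted $T$-bounds to commutator bounds with the same linear dependence on the operator norm. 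For $q=\infty$ you also need to check that your chosen finite $q_0$ is large enough that the range in \cite{GWX} contains the exponent at which you take the limit (e.g.\ $(2,2)$), which forces $q_0>\tfrac{8n+4}{3}$; this deserves a sentence. With those fixes your argument goes through.
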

%%%%%%%%%%%%%%%%%%%%%% THEOREM THEOREM THEOREM %%%%%%%%%%%%%%%%%%%%%%

%%%%%%%%%%%%%%%%%%%%%%%%% PROOF PROOF PROOF %%%%%%%%%%%%%%%%%%%%%%%%
\begin{proof}
It was proved in \cite{CHS} that if $\Omega \in L^{\infty}(\mathbb{S}^{2n-1})$, then for every $w=(w_1, w_2) \in A_{(2,2)}$, 
\begin{align}\label{eq:TO-1} 
T_{\Omega}: L^2(w_1^2) \times L^2(w_2^2) \to L^1(w).  
\end{align}
For $\Omega \in L^q(\mathbb{S}^{2n-1})$ with $\frac43<q<\infty$, Grafakos et al. \cite{GWX} obtained that 
\begin{align}\label{eq:TO-2} 
T_{\Omega}: L^{p_1}(w_1^{p_1}) \times L^{p_2}(w_2^{p_2}) \to L^p(w^p), 
\end{align}
for all $\frac1p=\frac{1}{p_1}+\frac{1}{p_2}$ with $\vec{r} \prec \vec{p}$ and $1<p<\infty$ and for all $\vec{w}=(w_1, w_2) \in A_{\vec{p}, \vec{r}}$. Therefore, Theorem \ref{thm:rough} follows from Theorem \ref{thm:Tb}, \eqref{eq:TO-1}, \eqref{eq:TO-2} and that 
\begin{align}
\label{eq:TO-3} &[T_{\Omega}, b]_{e_k} \text{ is compact from $L^2(\Rn) \times L^2(\Rn)$ to $L^1(\Rn)$},  
\quad\text{if } q=\infty, 
\\%%%%%%%%%%%%%%
\label{eq:TO-4} & [T_{\Omega}, b]_{e_k} \text{ is compact from $L^3(\Rn) \times L^3(\Rn)$ to $L^{\frac32}(\Rn)$},  
\quad\text{if } q<\infty. 
\end{align}

Next, let us demonstrate \eqref{eq:TO-3} and \eqref{eq:TO-4}. Fix $k \in \{1,2\}$ and $b \in \CMO$. Let $\frac43<q\le \infty$ and $\Omega \in L^q(\mathbb{S}^{2n-1})$ with mean value zero. Pick a smooth function $\alpha$ in $\R^+$ such that $\alpha(t)=1$ for $t \in (0, 1]$, $0<\alpha(t)<1$ for $t \in (1, 2)$ and $\alpha(t)=0$ for $t \ge 2$. For $(y, z) \in \R^{2n}$ and $j \in \Z$ we introduce the function 
\[ 
\beta_j(y, z) = \alpha(2^{-j}|(y, z)|) - \alpha(2^{-j+1}|(y, z)|). 
\] 
We write $\beta:=\beta_0$, which is supported in $[1/2, 2]$. We denote $\Delta_j$ the Littlewood-Paley operator $\widehat{\Delta_j f} = \beta_ j f$. We decompose the kernel $K_{\Omega}$ as follows. Set 
\[
K^i := \beta_i K_{\Omega} \quad\text{and}\quad K_j^i := \Delta_{j-i} K^i, \quad\forall \, i, j \in \Z.
\] 
Then we decompose the kernel 
\begin{align*}
K_{\Omega} = \sum_{j \in \Z} K_j \quad\text{and}\quad K_j = \sum_{i \in \Z} K_j^i, 
\end{align*}
and hence, the operator $T_{\Omega}$ can be written as 
\begin{align*}
T_{\Omega}(f, g)(x) =\sum_{j \in \Z} \int_{\R^{2n}} K_j(x-y, x-z) f(y) g(z)dydz =: \sum_{j \in \Z}T_j(f, g)(x). 
\end{align*}

We first deal with the case $q=\infty$. By means of \cite[Theorem~2.22]{LMO}, \eqref{eq:TO-1} gives that 
\begin{align}\label{eq:Tj-1}
\|[T_{\Omega}, b]_{e_k}\|_{L^2(\Rn) \times L^2(\Rn) \to L^1(\Rn)} \lesssim \|b\|_{\BMO}.   
\end{align}
Additionally, it follows from Proposition~5 and Lemma~11 in \cite{GHH} that 
\begin{align}\label{eq:Tj-2}
T_j \text{ is a bilinear Calder\'{o}n-Zygmund operator}, \quad \forall j \in \Z, 
\end{align}
and 
\begin{align}\label{eq:Tj-3}
\|T_j\|_{L^2(\Rn) \times L^2(\Rn) \to L^1(\Rn)} 
\lesssim 2^{-|j| \delta} \|\Omega\|_{L^{\infty}(\mathbb{S}^{2n-1})},  \quad \forall j \in \Z,
\end{align}
where $\delta>0$ is a fixed constant. Then, Theorem \ref{thm:CZO} and \eqref{eq:Tj-2} imply that 
\begin{align}\label{eq:Tj-4}
[T_j, b]_{e_k} \text{ is compact from $L^2(\Rn) \times L^2(\Rn)$ to $L^1(\Rn)$}, \quad \forall j \in \Z. 
\end{align}
Consequently, \eqref{eq:TO-3} immediately follows from \eqref{eq:Tj-1}, \eqref{eq:Tj-3}, \eqref{eq:Tj-4} and Lemma \ref{lem:bTTj}.   

It remains to handle the case $q<\infty$. Invoking \cite[Theorem~2.22]{LMO} and \eqref{eq:TO-2}, we have 
\begin{align}\label{eq:Tj-5}
\|[T_{\Omega}, b]_{e_k}\|_{L^{p_1}(\Rn) \times L^{p_2}(\Rn) \to L^p(\Rn)} \lesssim \|b\|_{\BMO},    
\end{align}
for all $\frac1p=\frac{1}{p_1}+\frac{1}{p_2}$ with $\vec{r} \prec \vec{p}$. On the other hand, it was proved in \cite[Lemmas~3.1, 4.3]{GWX} that \eqref{eq:Tj-2} holds and 
\begin{align}\label{eq:Tj-6}
\|T_j\|_{L^{p_1}(\Rn) \times L^{p_2}(\Rn) \to L^p(\Rn)} 
\lesssim |j| 2^{-|j| \delta} \|\Omega\|_{L^q(\mathbb{S}^{2n-1})},  \quad \forall j \in \Z,
\end{align}
for all $\frac1p=\frac{1}{p_1}+\frac{1}{p_2}$ with $1\le p \le 2 \le p_1,p_2<\infty$, where $\delta=\delta(q)>0$ is independent of $j$.  By Theorem \ref{thm:CZO} and \eqref{eq:Tj-2} again, 
\begin{align}\label{eq:Tj-7}
[T_j, b]_{e_k} \text{ is compact from $L^{p_1}(\Rn) \times L^{p_2}(\Rn)$ to $L^p(\Rn)$}, 
\end{align}
for all $\frac1p=\frac{1}{p_1}+\frac{1}{p_2}$ with $1<p_1,p_2<\infty$. Therefore, by Lemma \ref{lem:bTTj}, \eqref{eq:TO-4} follows at once from \eqref{eq:Tj-5}, \eqref{eq:Tj-6} and \eqref{eq:Tj-7} for the exponents $p_1=p_2=3$ and $p=\frac32$.    
\end{proof}
%%%%%%%%%%%%%%%%%%%%%%%%% END END END PROOF %%%%%%%%%%%%%%%%%%%%%%%%

%%%%%%%%%%%%%%%%%%% SUBSECTION SUBSECTION SUBSECTION %%%%%%%%%%%%%%%%%%%%
\subsection{Bilinear Bochner-Riesz means}  
Given $\alpha>0$, the Bochner-Riesz multiplier $\B^{\alpha}$ is defined by 
\begin{align*}
\widehat{\B^{\alpha}f}(\xi) := (1-|\xi|^2)^{\alpha}_{+} \widehat{f}(\xi), \quad\forall f \in \S(\Rn). 
\end{align*}  
From \cite{BBL}, we see that for $n=2$ and $\alpha>\frac16$, 
\begin{align}\label{eq:BR-1}
\B^{\alpha} \text{ is bounded on } L^p(w^p),\quad \forall p \in [1.2, 2) \text{ and } \forall w^p \in A_{\frac{p}{1.2}} \cap RH_{(\frac2p)'}.
\end{align}
Recently, the compactness of commutators of $\B^{\alpha}$ was also established in \cite{BCH}. Indeed, for $n=2$ and $0<\alpha<\frac12$, 
\begin{align}\label{eq:BR-2}
[\B^{\alpha}, b] \text{ is compact on $L^p(\Rn)$},    
\quad \forall p \in \Big(\frac{4}{3+2\alpha}, \frac{4}{1-2\alpha}\Big). 
\end{align}

Observe that for any $\alpha>0$, 
\begin{align}\label{eq:BR-3}
\frac{4}{3+2\alpha}<\frac65 \quad\Longleftrightarrow\quad \alpha>\frac16, \quad\text{ and }\quad
2<\frac{4}{1-2\alpha} \quad\Longleftrightarrow\quad \alpha<\frac12. 
\end{align}
Thus, combining \eqref{eq:BR-1}, \eqref{eq:BR-2}, \eqref{eq:BR-3},  and Theorem \ref{thm:limTb}, we obtain the compactness of $[\B^{\alpha}, b]$ on the weighted Lebesgue spaces as follows.   
%%%%%%%%%%%%%%%%%%%%%% THEOREM THEOREM THEOREM %%%%%%%%%%%%%%%%%%%%%%
\begin{theorem}
Let $n=2$ and $\frac16<\alpha<\frac12$. If $b \in \CMO$, then $[\B^{\alpha}, b]$ is compact on $L^p(w^p)$ for all $p \in (1.2, 2)$ and for all $w^p \in A_{\frac{p}{1.2}} \cap RH_{(\frac2p)'}$. 
\end{theorem}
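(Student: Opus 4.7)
The plan is to deduce the result as a direct specialization of Corollary \ref{cor:limTb} to the linear setting $m=1$, with $\p_1^- = 6/5$, $\p_1^+ = 2$, and multi-index $\alpha_1 = 1$, so that the first-order commutator $[T,\b]_{\alpha}$ in the corollary becomes exactly $[\B^\alpha, b]$.

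First, I would verify the boundedness hypothesis \eqref{eq:limTb-1}. Fix any $q \in (6/5, 2)$. By \eqref{eq:BR-1}, $\B^\alpha$ is bounded on $L^q(u^q)$ for every weight $u$ with $u^q \in A_{q/(6/5)} \cap RH_{(2/q)'}$, which is exactly the class of weights required in \eqref{eq:limTb-1} for this choice of $\p_1^{\pm}$. Thus the boundedness hypothesis holds at the exponent $q$.

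Second, I would verify the compactness hypothesis \eqref{eq:limTb-2} at the same $q$. Since $\tfrac{1}{6}<\alpha<\tfrac{1}{2}$, the equivalences in \eqref{eq:BR-3} yield $\tfrac{4}{3+2\alpha}<\tfrac{6}{5}$ and $\tfrac{4}{1-2\alpha}>2$, so the interval $\big(\tfrac{4}{3+2\alpha},\tfrac{4}{1-2\alpha}\big)$ contains $(6/5,2)$, and in particular contains the chosen $q$. Applying \eqref{eq:BR-2} at this $q$ then gives that $[\B^\alpha,b]$ is compact on $L^q(\mathbb{R}^2)$, which is \eqref{eq:limTb-2} with $m=1$.

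With both hypotheses verified, Corollary \ref{cor:limTb} produces the compactness of $[\B^\alpha,b]$ on $L^p(\tilde w^{\,p})$ for every $p \in (6/5,2)$ and every weight $\tilde w$ with $\tilde w^{\,p} \in A_{p/(6/5)} \cap RH_{(2/p)'}$. A harmless relabeling $w := \tilde w^{\,p}$ (equivalently $\tilde w = w^{1/p}$) brings this to the form stated in the theorem, namely $w \in A_{p/(6/5)} \cap RH_{(2/p)'}$. No serious obstacle is anticipated: all substantive work (the unweighted compactness at a single exponent, the one-weight boundedness at a single exponent, and the general extrapolation machinery) has already been done elsewhere; the only thing to check carefully is the bookkeeping of the weight conventions and the inclusion of ranges enabled by $\alpha\in(1/6,1/2)$.
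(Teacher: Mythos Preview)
Your proposal is correct and follows essentially the same approach as the paper: verify the hypotheses of Corollary~\ref{cor:limTb} in the linear case $m=1$ with $\p_1^-=6/5$, $\p_1^+=2$ using \eqref{eq:BR-1} for boundedness and \eqref{eq:BR-2} together with the range inclusions \eqref{eq:BR-3} for unweighted compactness, then invoke the corollary. Your bookkeeping of the weight convention ($w$ versus $w^p$) is also handled correctly.
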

%%%%%%%%%%%%%%%%%%%%%% THEOREM THEOREM THEOREM %%%%%%%%%%%%%%%%%%%%%%

Next, we turn to bilinear Bochner-Riesz means of order $\alpha$, which is defined by 
\begin{align*}
\B^{\alpha}(f, g)(x) := \int_{\R^{2n}} (1-|\xi|^2-|\eta|^2)^{\alpha}_{+} \, 
\widehat{f}(\xi) \, \widehat{g}(\eta) e^{2\pi i x \cdot(\xi+\eta)} d\xi d\eta. 
\end{align*}  

%%%%%%%%%%%%%%%%%%%%%% THEOREM THEOREM THEOREM %%%%%%%%%%%%%%%%%%%%%%
\begin{theorem}
Let $n \ge 2$ and $b \in \CMO$. Then for each $k=1,2$, $[\B^{n-1/2}, b]_{e_k}$ is compact from $L^{p_1}(w_1^{p_1}) \times L^{p_2}(w_2^{p_2})$ to $L^p(w^p)$ for all $\vec{p}=(p_1, p_2)$ with $1<p_1,p_2<\infty$ and for all $\vec{w}=(w_1, w_2) \in A_{\vec{p}}$, where $\frac1p=\frac{1}{p_1}+\frac{1}{p_2}$ and $w=w_1 w_2$.  
\end{theorem}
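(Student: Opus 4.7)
\medskip

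\noindent\textbf{Proof plan.} The strategy is to reduce the statement to a direct application of Theorem \ref{thm:CZO}, by showing that at the critical order $\alpha = n-1/2$ the bilinear Bochner--Riesz operator $\B^{n-1/2}$ is a bilinear $\omega$-Calder\'on--Zygmund operator with $\omega\in\operatorname{Dini}$. Once this structural fact is in place, Theorem \ref{thm:CZO} with $m=2$ produces precisely the claimed weighted compactness of $[\B^{n-1/2},b]_{e_k}$ for every $\vec{p}=(p_1,p_2)$ with $1<p_1,p_2<\infty$ and every $\vec{w}\in A_{\vec{p}}$.

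To exhibit the Calder\'on--Zygmund structure, I would compute the kernel of $\B^{n-1/2}$ on $\R^n\times\R^n$. Since its symbol is radial on $\R^{2n}$, the standard Fourier inversion formula for radial multipliers gives
\begin{equation*}
K(u,v)=c_n\,\frac{J_{2n-1/2}\!\big(2\pi\sqrt{|u|^2+|v|^2}\big)}{(|u|^2+|v|^2)^{(2n-1/2)/2}},
\end{equation*}
and the Bessel asymptotic $J_\nu(t)=O(t^{-1/2})$ as $t\to\infty$, together with the smooth behaviour at the origin, yield the size bound $|K(u,v)|\lesssim (|u|+|v|)^{-2n}$. The gradient estimate, after differentiating the Bessel factor and using the identities $J'_\nu=\frac12(J_{\nu-1}-J_{\nu+1})$, likewise gives $|\nabla K(u,v)|\lesssim (|u|+|v|)^{-2n}$ uniformly, which is enough to verify the smoothness hypothesis with modulus $\omega(t)=t^\delta$ for some $\delta\in(0,1)$ after a standard trade-off between the decay and cancellation (averaging the oscillation against the regularity of the translation). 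In particular $\omega\in\operatorname{Dini}$.

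The remaining ingredient required by the definition of an $m$-linear $\omega$-CZO is the existence of \emph{some} strong-type bound for $\B^{n-1/2}$ on an unweighted tuple of Lebesgue spaces; this follows from the standard Plancherel--Stein--type argument for Bochner--Riesz multipliers (or from the fact that the symbol belongs to $L^\infty$, so that $\B^{n-1/2}:L^2\times L^\infty\to L^2$ is immediate, and interpolation with the size/cancellation of the kernel gives, e.g., $L^2\times L^2\to L^1$). With the kernel conditions and a single off-diagonal bound in hand, $\B^{n-1/2}$ is a bilinear $\omega$-CZO in the sense of Section \ref{sec:App}, and Theorem \ref{thm:CZO} applies.

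The main obstacle I anticipate is the verification of the regularity estimate for the kernel $K$: the Bessel asymptotic provides only cancellation from oscillation, so a naive differentiation of $J_{2n-1/2}$ loses a power, and one has to integrate by parts once (or split into the regions $|u|+|v|\lesssim 1$ and $|u|+|v|\gtrsim 1$ and use different representations) to recover the required modulus of continuity. If a direct kernel argument turns out to be delicate, a safer route is to decompose the symbol via a smooth dyadic partition of unity near the sphere $|\xi|^2+|\eta|^2=1$, write $\B^{n-1/2}=\sum_{j\ge 0}T_j$ where each $T_j$ has a $C^\infty$ compactly supported multiplier and is thus a bilinear CZO with $\|T_j\|\lesssim 2^{-\delta j}$ at the critical order, apply Theorem \ref{thm:CZO} to each $[T_j,b]_{e_k}$, and finish by Lemma \ref{lem:bTTj}; the $\BMO$-commutator bound required in hypothesis (i) of that lemma is supplied by Theorem 2.22 in \cite{LMO} together with the weighted bound for $\B^{n-1/2}$.
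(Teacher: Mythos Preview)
Your primary route---showing that $\B^{n-1/2}$ is itself a bilinear $\omega$-Calder\'on--Zygmund operator and then invoking Theorem \ref{thm:CZO} directly---does not go through. You correctly compute that the kernel behaves like $|w|^{-2n}\cos(2\pi|w|-\phi)$ for large $w=(u,v)\in\R^{2n}$, and that $|\nabla K(w)|\lesssim |w|^{-2n}$. But this gradient bound is \emph{one full power short} of what is needed: the $\omega$-CZO smoothness condition requires $|K(w+h)-K(w)|\lesssim |w|^{-2n}\,\omega(|h|/|w|)$ with $\omega\in\operatorname{Dini}$, whereas the oscillation $e^{2\pi i|w|}$ has wavelength of order $1$ independently of $|w|$, so the best available bound is $|K(w+h)-K(w)|\lesssim |w|^{-2n}\min(1,|h|)$. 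This forces $\omega(t)\gtrsim 1$ for all $t>0$ (take $|w|\to\infty$ with $|h|$ fixed), and a constant modulus is not Dini. No ``trade-off between decay and cancellation'' recovers a Dini $\omega$ here; at the critical index the bilinear Bochner--Riesz kernel genuinely falls outside the $\omega$-CZO class.

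Your backup plan, on the other hand, is exactly the argument the paper gives. One decomposes $\B^{n-1/2}=\sum_{j\ge 0}T_j^{n-1/2}$ via a smooth dyadic partition of $(1-|\xi|^2-|\eta|^2)$ near the unit sphere; each $T_j^{n-1/2}$ has a smooth compactly supported multiplier, hence is a bilinear CZO, so Theorem \ref{thm:CZO} gives compactness of each $[T_j^{n-1/2},b]_{e_k}$. The norm decay $\|T_j^{n-1/2}\|_{L^{p_1}\times L^{p_2}\to L^p}\lesssim 2^{-\delta j}$ is taken from \cite{LW}, the $\BMO$ bound for $[\B^{n-1/2},b]_{e_k}$ follows from the weighted boundedness in \cite{JSS} together with \cite[Theorem~2.22]{LMO}, and Lemma \ref{lem:bTTj} assembles the pieces. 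Finally Corollary \ref{cor:Tb} (with $\vec r=(1,1,1)$) upgrades the unweighted compactness to the full $A_{\vec p}$ range. So promote your ``safer route'' to the main argument and drop the direct kernel approach.
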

%%%%%%%%%%%%%%%%%%%%%% THEOREM THEOREM THEOREM %%%%%%%%%%%%%%%%%%%%%%

%%%%%%%%%%%%%%%%%%%%%%%%% PROOF PROOF PROOF %%%%%%%%%%%%%%%%%%%%%%%%
\begin{proof}
Fix $k \in \{1,2\}$. Let us present a weighted estimates for $\B^{n-1/2}$. Indeed, it was shown in \cite{JSS} that 
\begin{align}\label{eq:BB-1}
\B^{n-1/2} \text{ is bounded from $L^{p_1}(w_1^{p_1}) \times L^{p_2}(w_2^{p_2})$ to $L^p(w^p)$}, 
\end{align}
for all $\vec{p}=(p_1, p_2)$ with $1<p_1,p_2<\infty$ and for all $\vec{w}=(w_1, w_2) \in A_{\vec{p}}$, where $\frac1p=\frac{1}{p_1}+\frac{1}{p_2}$ and $w= w_1 w_2$. Considering Theorem \ref{thm:Tb} and \eqref{eq:BB-1}, we are reduced to showing that 
\begin{align}\label{eq:BB-2}
[\B^{n-1/2}, b]_{e_k} \text{ is compact from $L^{p_1}(\Rn) \times L^{p_2}(\Rn)$ to $L^p(\Rn)$},  
\end{align}
for all $b \in \CMO$ and for all (or for some) $\frac1p=\frac{1}{p_1}+\frac{1}{p_2}$ with $1<p_1,p_2<\infty$. 

The rest of the proof is devoted to demonstrating \eqref{eq:BB-2}. Pick a nonnegative function $\phi \in \mathscr{C}_c^{\infty}(1/2, 2)$  satisfying $\sum_{j \in \Z} \phi(2^j t)=1$ for $t>0$. For each $j \ge 0$, we set 
\begin{align*}
\m_j^{\alpha}(\xi, \eta) := (1-\xi^2-\eta^2)^{\alpha}_{+} \phi(2^j(1-\xi^2-\eta^2)), 
\end{align*}
and define the bilinear operator
\begin{align}\label{eq:Tja}
T_j^{\alpha}(f, g)(x) := \int_{\R^{2n}} \m_j^{\alpha}(\xi, \eta) \, 
\widehat{f}(\xi) \, \widehat{g}(\eta) e^{2\pi i x \cdot(\xi+\eta)} \, d\xi d\eta. 
\end{align} 
It is obvious that 
\begin{align}\label{eq:Ba-Tj}
\B^{\alpha} = \sum_{j=0}^{\infty} T_j^{\alpha}. 
\end{align}
By \cite[eq. (3.1)]{LW}, one has 
\begin{align}\label{eq:BB-3}
\|T_j^{\alpha}\|_{L^{p_1}(\Rn) \times L^{p_2}(\Rn) \to L^p(\Rn)} \le 2^{-\delta j}, \quad\forall j \ge 0,  
\end{align}
for some $\delta>0$, whenever $\frac1p=\frac{1}{p_1}+\frac{1}{p_2}$ with $1\le p_1,p_2\le 2$ and $\alpha>n(\frac1p-1)$. On the other hand, from \eqref{eq:BB-1} and \cite[Theorem~2.22]{LMO}, one has 
\begin{align}\label{eq:BB-4}
\|[\B^{n-1/2}, b]_{e_k}\|_{L^{p_1}(\Rn) \times L^{p_2}(\Rn) \to L^p(\Rn)} \lesssim \|b\|_{\BMO}, 
\end{align}
for all $b \in \BMO$ and for all $\frac1p=\frac{1}{p_1}+\frac{1}{p_2}$ with $1<p_1,p_2<\infty$. By \eqref{eq:Ba-Tj}, \eqref{eq:BB-3}, \eqref{eq:BB-4} and Lemma \ref{lem:bTTj}, it suffices to prove that for each $j \ge 0$ and for any $b \in \CMO$, 
\begin{align}\label{eq:BB-5}
[T_j^{\alpha}, b]_{e_k} \text{ is compact from $L^{p_1}(\Rn) \times L^{p_2}(\Rn)$ to $L^p(\Rn)$}, 
\end{align}
for all $\alpha \in \R$ and for all $\frac1p=\frac{1}{p_1}+\frac{1}{p_2}$ with $1<p_1,p_2<\infty$. 

To proceed, we may assume that $b \in \mathscr{C}_c^{\infty}(\Rn)$ with $\supp(b) \subset B(0, R)$ for some $R>0$. We will only focus on the case $k=1$. Let $K_j^{\alpha}$ denote the kernel of $T_j^{\alpha}$. By \eqref{eq:Tja}, we have 
\begin{equation}\label{eq:KK-1}
K_j^{\alpha}(x, y_1, y_2) = \mathbf{K}_j^{\alpha}(x-y_1, x-y_2)
\end{equation}
and 
\begin{equation}\label{eq:KK-2} 
\mathbf{K}_j^{\alpha}(x, y) = \int_{\R^{2n}} \m_j^{\alpha}(\xi, \eta) e^{2\pi i (x \cdot \xi + y \cdot \eta)}\, d\xi d\eta. 
\end{equation}
The estimates for $\mathbf{K}_j^{\alpha}$ will be given in Lemma \ref{lem:kernel} below. By \eqref{eq:kernel-1} with $\rho>n$, one has 
\begin{align}\label{eq:AA}
|[T_j^{\alpha}, b]_{e_1}(f_1, f_2)(x)|
&=\bigg|\int_{\R^{2n}} (b(x)-b(y_1)) K_j^{\alpha}(x, y_1, y_2) f_1(y_1) f_2(y_2) dy_1 dy_2\bigg|
\\  \nonumber
&\lesssim 2^{-j\alpha} \|b\|_{L^{\infty}(\Rn)} \prod_{i=1}^2 \int_{\Rn} \frac{|f_i(y_i)| dy_i}{(1+2^{-j}|x-y_i|)^{\rho}} 
\\  \nonumber
&\lesssim 2^{j(2n-\alpha)} \|b\|_{L^{\infty}(\Rn)} Mf_1(x) Mf_2(x),  
\end{align}
Then using \eqref{eq:AA} and H\"{o}lder's inequality, we deduce that 
\begin{align*}
\|[T_j^{\alpha}, b]_{e_1}(f_1, f_2)\|_{L^p(\Rn)} 
\lesssim 2^{j(2n-\alpha)} \|b\|_{L^{\infty}(\Rn)} \|f_1\|_{L^{p_1}(\Rn)} \|f_2\|_{L^{p_2}(\Rn)}
\end{align*}
and hence, 
\begin{align}\label{eq:AA-1}
\sup_{\substack{\|f_1\|_{L^{p_1}(\Rn)} \le 1 \\ \|f_2\|_{L^{p_2}(\Rn)} \le 1}} 
\|[T_j^{\alpha}, b]_{e_1}(f_1, f_2)\|_{L^p(\Rn)} \le C 2^{j(2n-\alpha)} \|b\|_{L^{\infty}(\Rn)}. 
\end{align}
Let $A>\max\{2R, 1\}$. Then for any $|x|>A$, 
\begin{align*}
[T_j^{\alpha}, b]_{e_1}(f_1, f_2)(x) 
=-\int_{B(0,R) \times \Rn} b(y_1) K_j^{\alpha}(x, y_1, y_2) f_1(y_1) f_2(y_2) dy_1 dy_2. 
\end{align*}
This and \eqref{eq:kernel-1} with $\rho>n$ give 
\begin{align*}
|[T_j^{\alpha}, b]_{e_1}(f_1, f_2)(x)| 
& \lesssim \|b\|_{L^{\infty}(\Rn)}  \int_{B(0, R)} \frac{|f_1(y_1)| dy_1}{(1+2^{-j}|x-y_1|)^{\rho}} 
\int_{\Rn} \frac{|f_2(y_2)| dy_2}{(1+2^{-j}|x-y_2|)^{\rho}} 
\\
& \lesssim 2^{j(\rho+n)} \|b\|_{L^{\infty}(\Rn)}  \int_{B(0, R)} \frac{|f_1(y_1)|}{(1+|x|)^{\rho}} dy_1 \, Mf_2(x) 
\\
&\lesssim 2^{j(\rho+n)} \|b\|_{L^{\infty}(\Rn)} R^{n/p'_1} \|f_1\|_{L^{p_1}(\Rn)} \frac{Mf_2(x)}{(1+|x|)^{\rho}}. 
\end{align*}
Hence, we have 
\begin{align*}
\|[T_j^{\alpha}, b]_{e_1}(f_1, f_2) \mathbf{1}_{\{|x|>A\}}\|_{L^p(\Rn)} 
&\lesssim \|f_1\|_{L^{p_1}(\Rn)} \bigg(\int_{|x|>A}\frac{Mf_2(x)}{(1+|x|)^{\rho}} dx \bigg)^{\frac1p}
\\ 
&\lesssim \|f_1\|_{L^{p_1}(\Rn)}  \|Mf_2\|_{L^{p_2}(\Rn)} \bigg(\int_{|x|>A}\frac{dx}{(1+|x|)^{\rho p_1}}\bigg)^{\frac{1}{p_1}}
\\
&\lesssim A^{-(\rho p_1-n)/p_1} \|f_1\|_{L^{p_1}(\Rn)}  \|f_2\|_{L^{p_2}(\Rn)}, 
\end{align*}
which implies 
\begin{align}\label{eq:AA-2}
\lim_{A \to \infty} \sup_{\substack{\|f_1\|_{L^{p_1}(\Rn)} \le 1 \\ \|f_2\|_{L^{p_2}(\Rn)} \le 1}} 
\|[T_j^{\alpha}, b]_{e_1}(f_1, f_2) \mathbf{1}_{\{|x|>A\}}\|_{L^p(\Rn)} =0. 
\end{align}

For $\delta \in (0, 1)$ chosen later and $0<|h|<\frac{\delta}{2}$, we split 
\begin{align}\label{eq:Tjab}
[T_j^{\alpha}, b]_{e_1}(\vec{f})(x+h)-[T_j^{\alpha}, b]_{e_1}(\vec{f})(x) =I_1 + I_2 + I_3 + I_4,  
\end{align}
where 
\begin{align*}
I_1 &:=(b(x+h)-b(x)) \int_{\max\limits_{i=1,2} \{|x-y_i|\}>\delta} K_j^{\alpha}(x, \vec{y}) f_1(y_1) f_2(y_2) d\vec{y}, 
\\
I_2 &:=\int_{\max\limits_{i=1,2}\{|x-y_i|\}>\delta} (K_j^{\alpha}(x+h, \vec{y})-K_j^{\alpha}(x, \vec{y})) 
(b(x+h)-b(y_1)) f_1(y_1) f_2(y_2) d\vec{y}, 
\\
I_3 &:=\int_{\max\limits_{i=1,2} \{|x-y_i|\}\le \delta} K_j^{\alpha}(x,\vec{y})(b(y_{1})-b(x)) f_1(y_1) f_2(y_2) d\vec{y}, 
\\ 
I_4 &:=\int_{\max\limits_{i=1,2} \{|x-y_i|\} \le \delta} K_j^{\alpha}(x+h,\vec{y})(b(x+h)-b(y_1)) f_1(y_1) f_2(y_2) d\vec{y}. 
\end{align*}
In view of \eqref{eq:kernel-1} with $\rho>n$, we obtain 
\begin{align}\label{eq:Tj-I1}
|I_1| \lesssim |h| \|\nabla b\|_{L^{\infty}(\Rn)} \prod_{i=1}^2 \int_{\Rn} \frac{|f_i(y_i)|}{(1+2^{-j}|x-y_i|)^{\rho}} dy_i
\lesssim \delta\, Mf_1(x) Mf_2(x). 
\end{align}
Denote 
\begin{align*}
\mathcal{E}_1(x, \vec{y}) &:= |\K_j^{\alpha}(x+h-y_1, x+h-y_2) - \K_j^{\alpha}(x-y_1, x+h-y_2)|, 
\\ 
\mathcal{E}_2(x, \vec{y}) &:= |\K_j^{\alpha}(x-y_1, x+h-y_2) - \K_j^{\alpha}(x-y_1, x-y_2)|. 
\end{align*}
Since $|h|<\frac{\delta}{2}$, the estimates \eqref{eq:kernel-2} and \eqref{eq:kernel-3} give 
\begin{align}\label{eq:Tj-I2}
|I_2| &\lesssim \|b\|_{L^{\infty}(\Rn)} \int_{|x-y_1|>\delta} \mathcal{E}_1(x, \vec{y})  |f_1(y_1)| |f_2(y_2)| d\vec{y} 
\\  \nonumber
&\qquad + \|b\|_{L^{\infty}(\Rn)} \int_{\substack{|x-y_1| \le \delta \\ |x-y_2|>\delta}} \mathcal{E}_1(x, \vec{y})| |f_1(y_1)| |f_2(y_2)| d\vec{y} 
\\  \nonumber
&\qquad + \|b\|_{L^{\infty}(\Rn)} \int_{|x-y_2|>\delta} \mathcal{E}_2(x, \vec{y})| |f_1(y_1)| |f_2(y_2)| d\vec{y} 
\\  \nonumber
&\qquad + \|b\|_{L^{\infty}(\Rn)} \int_{\substack{|x-y_1|>\delta \\ |x-y_2|<\delta}} \mathcal{E}_2(x, \vec{y})| |f_1(y_1)| |f_2(y_2)| d\vec{y} 
\\  \nonumber
&\lesssim |h| \|b\|_{L^{\infty}(\Rn)} \int_{\R^{2n}} \frac{|f_1(y_1)| |f_2(y_2)|}{1+|x-y_1|^{2\rho}+|x+h-y_2|^{2\rho}} d\vec{y}
\\  \nonumber
&\qquad+|h| \|b\|_{L^{\infty}(\Rn)} \int_{\substack{|x-y_1| \le \delta \\ |x-y_2|>\delta}} \frac{|f_1(y_1)| |f_2(y_2)|}{1+|x+h-y_2|^{2\rho}} d\vec{y}
\\  \nonumber
&\qquad+|h| \|b\|_{L^{\infty}(\Rn)} \int_{|x-y_2|>\delta} \frac{|f_1(y_1)| |f_2(y_2)|}{1+|x-y_1|^{2\rho}+|x-y_2|^{2\rho}} d\vec{y}
\\  \nonumber
&\qquad+|h| \|b\|_{L^{\infty}(\Rn)} \int_{\substack{|x-y_1|>\delta \\ |x-y_2| \le \delta}} \frac{|f_1(y_1)| |f_2(y_2)|}{(1+|x-y_1|)^{2\rho}} d\vec{y}
\\  \nonumber
&\lesssim |h| \int_{\Rn} \frac{|f_1(y_1)|}{1+|x-y_1|^{\rho}} dy_1 \int_{\Rn} \frac{|f_2(y_2)|}{1+|x+h-y_2|^{\rho}} dy_2
\\  \nonumber
&\qquad+|h| \int_{|x-y_1| \le \delta} |f_1(y_1)| dy_1 \int_{\Rn} \frac{|f_2(y_2)|}{1+|x+h-y_2|^{\rho}} dy_2
\\  \nonumber
&\qquad+|h| \int_{\Rn} \frac{|f_1(y_1)|}{1+|x-y_1|^{\rho}} dy_1 \int_{\Rn} \frac{|f_2(y_2)|}{1+|x-y_2|^{\rho}} dy_2
\\  \nonumber
&\qquad+|h| \int_{\Rn} \frac{|f_1(y_1)|}{1+|x-y_1|^{\rho}} dy_1 \int_{|x-y_2| \le \delta} |f_2(y_2)| dy_2
\\  \nonumber
&\lesssim \delta\, Mf_1(x) Mf_2(x) + \delta\, Mf_1(x) Mf_2(x+h). 
\end{align}
Furthermore, using \eqref{eq:kernel-1}, we get 
\begin{align}\label{eq:Tj-I3}
|I_3| \lesssim \delta \|\nabla b\|_{L^{\infty}(\Rn)} \prod_{i=1}^2 \int_{|x-y_i| \le \delta} |f_i(y_i)| \, dy_i 
\lesssim \delta^{2n+1} \M(f_1, f_2)(x). 
\end{align}
Similarly, one has 
\begin{align}\label{eq:Tj-I4}
|I_4| \lesssim (\delta+|h|) \delta^{2n} \M(f_1, f_2)(x) \lesssim \delta^{2n+1} \M(f_1, f_2)(x). 
\end{align}
Collecting \eqref{eq:Tjab}--\eqref{eq:Tj-I4} and using H\"{o}lder inequality and the boundedness of $\M$, we derive 
\begin{align*}
\|\tau_h [T_j^{\alpha}, b]_{e_1}(\vec{f})-[T_j^{\alpha}, b]_{e_1}(\vec{f})\|_{L^p(\Rn)} 
\lesssim \delta \|f_1\|_{L^{p_1}(\Rn)} \|f_2\|_{L^{p_2}(\Rn)}. 
\end{align*}
From the estimate above, for any $\varepsilon>0$, taking $\delta>0$ such that $\delta<\min\{\varepsilon, 1\}$, we conclude that 
\begin{align}\label{eq:AA-3}
\lim_{|h| \to 0} \sup_{\substack{\|f_1\|_{L^{p_1}(\Rn)} \le 1 \\ \|f_2\|_{L^{p_2}(\Rn)} \le 1}} 
\|\tau_h [T_j^{\alpha}, b]_{e_1}(\vec{f})-[T_j^{\alpha}, b]_{e_1}(\vec{f})\|_{L^p(\Rn)} =0. 
\end{align}
As a consequence, \eqref{eq:BB-5} follows from Theorem \ref{thm:FK-1}, \eqref{eq:AA-1}, \eqref{eq:AA-2} and \eqref{eq:AA-3}.
\end{proof}
%%%%%%%%%%%%%%%%%%%%%%%%% END END END PROOF %%%%%%%%%%%%%%%%%%%%%%%%

%%%%%%%%%%%%%%%%%%%%%%%% LEMMA LEMMA LEMMA %%%%%%%%%%%%%%%%%%%%%%%%
\begin{lemma}\label{lem:kernel}
Given $j \ge 0$ and $\alpha$, we define $\mathbf{K}_j^{\alpha}$ as in \eqref{eq:KK-2}. Then for any $\rho \in \N_+$, 
\begin{align}
\label{eq:kernel-1} |\K_j^{\alpha}(x, y)| &\lesssim \frac{2^{-j \alpha}}{(1+2^{-j}|x|)^{\rho}} \frac{2^{-j}}{(1+2^{-j}|y|)^{\rho}}, 
\\
\label{eq:kernel-2} |\K_j^{\alpha}(x+h, y) &- \K_j^{\alpha}(x, y)| \lesssim \frac{2^{-\alpha j} |h|}{1+|y|^{2\rho}}, \quad \forall h \in \Rn, 
\\ 
\label{eq:kernel-3} |\K_j^{\alpha}(x+h, y) - \K_j^{\alpha}(x, y)| 
&\lesssim \frac{2^{-j\alpha} |h|}{1+|x|^{2\rho}+|x|^{2\rho+1}+|y|^{2\rho}},\quad\forall |h| \le |x|/2, 
\\ 
\label{eq:kernel-4} |\K_j^{\alpha}(x+h, y+h) - \K_j^{\alpha}(x, y)| 
&\lesssim \frac{2^{-j\alpha} |h|}{1+|x|^{2\rho}+|y|^{2\rho}},\quad\forall |h| \le \min\{|x|, |y|\}/2. 
\end{align}
\end{lemma}
%%%%%%%%%%%%%%%%%%%%%%%% LEMMA LEMMA LEMMA %%%%%%%%%%%%%%%%%%%%%%%%

%%%%%%%%%%%%%%%%%%%%%%%%% PROOF PROOF PROOF %%%%%%%%%%%%%%%%%%%%%%%%
\begin{proof}
Set $\Delta_{\xi}:=\partial^2_{\xi_1}+\cdots+\partial^2_{\xi_n}$ and let $\Delta_{\xi}^k$ denote the $k$-th iteration of $\Delta_{\xi}$ for any $k \in \N$. Applying Leibniz's rule and the integration by parts, we obtain \eqref{eq:kernel-1} and 
\begin{equation}\label{eq:Dmj}
\|\Delta_{\xi}^k \m_j\|_{L^{\infty}} \le C_k 2^{2k j} 2^{-j\alpha}, \quad\forall k \in \N. 
\end{equation}
Note that for all $k, \ell \in \N$, 
\begin{align}\label{eq:DDmj}
\mathbf{K}_j^{\alpha}(x, y) = \frac{1}{(2\pi |x|)^{2k}} \frac{1}{(2\pi |y|)^{2\ell}} 
\int_{\R^{2n}} \Delta_{\xi}^k \Delta_{\eta}^{\ell} \m_j^{\alpha}(\xi, \eta) e^{2\pi i (x \cdot \xi + y \cdot \eta)}\, d\xi d\eta. 
\end{align}
Then using \eqref{eq:Dmj} and \eqref{eq:DDmj}, we get for all $h \in \Rn$, 
\begin{multline}\label{eq:KjKj-1}
|\K_j^{\alpha}(x+h, y)-\K_j^{\alpha}(x, y)| 
=\bigg|\int_{\R^{2n}} \m_j^{\alpha}(\xi, \eta) e^{2\pi i(x\cdot \xi+y \cdot \eta)} \big(e^{2\pi i h\cdot \xi} -1\big) d\xi d\eta\bigg| 
\\
\lesssim \|\m_j^{\alpha}\|_{L^{\infty}} |h| \bigg(1-\frac{1}{2^{j+1}}-\Big(1-\frac{2}{2^j}\Big) \bigg)
\lesssim 2^{-j(\alpha+1)} |h|, 
\end{multline}
and 
\begin{multline}\label{eq:KjKj-2}
|\K_j^{\alpha}(x+h, y)-\K_j^{\alpha}(x, y)| 
\simeq |y|^{-2k} \bigg|\int_{\R^{2n}} \Delta_{\eta}^k\m_j^{\alpha}(\xi, \eta) e^{2\pi i(x\cdot \xi+y \cdot \eta)} \big(e^{2\pi i h\cdot \xi} -1\big) d\xi d\eta\bigg| 
\\
\lesssim |y|^{-2k} \|\Delta^k_{\eta}\m_j^{\alpha}\|_{L^{\infty}} |h| 2^{-j}
\lesssim 2^{2kj} 2^{-j(\alpha+1)} |h| |y|^{-2k}. 
\end{multline}
Hence, \eqref{eq:KjKj-1} and \eqref{eq:KjKj-2} imply \eqref{eq:kernel-2}. To show \eqref{eq:kernel-3}, we apply \eqref{eq:DDmj} again to get 
\begin{align*}
\K_j^{\alpha}(x+h, y)-\K_j^{\alpha}(x, y) 
&=\frac{1}{(2\pi |x+h|)^{2k}} \int_{\R^{2n}} \Delta_{\xi}^k \m_j^{\alpha}(\xi, \eta) 
e^{2\pi i((x+h)\cdot \xi+y \cdot \eta)} d\xi d\eta 
\\
&\qquad-\frac{1}{(2\pi |x|)^{2k}} \int_{\R^{2n}} \Delta_{\xi}^k \m_j^{\alpha}(\xi, \eta) 
e^{2\pi i(x\cdot \xi+y \cdot \eta)} d\xi d\eta 
\\
&\simeq \bigg[\frac{1}{|x+h|^{2k}}-\frac{1}{|x|^{2k}}\bigg] \int_{\R^{2n}} \Delta_{\xi}^k \m_j^{\alpha}(\xi, \eta) 
e^{2\pi i((x+h)\cdot \xi+y \cdot \eta)} d\xi d\eta 
\\
&\qquad+ \frac{1}{|x|^{2k}} \int_{\R^{2n}} \Delta_{\xi}^k \m_j^{\alpha}(\xi, \eta) e^{2\pi i(x\cdot \xi+y \cdot \eta)}   \big(e^{2\pi i h\cdot \xi} -1\big) d\xi d\eta, 
\end{align*}
which together with \eqref{eq:DDmj} and $|h| \le |x|/2$ implies 
\begin{align}\label{eq:KjKj-3}
|\K_j^{\alpha}(x+h, y)-\K_j^{\alpha}(x, y)| 
& \lesssim \bigg[ 2^{-j} \bigg(\frac{1}{|x+h|^{2k}}-\frac{1}{|x|^{2k}}\bigg)  
+ \frac{|h|}{|x|^{2k}}\bigg] \|\Delta_{\xi}^k\m_j^{\alpha}\|_{L^{\infty}}
\\  \nonumber
&\lesssim 2^{2kj} 2^{-j(\alpha+1)} \bigg(\frac{|h|}{|x|^{2k+1}} + \frac{|h|}{|x|^{2k}}\bigg). 
\end{align}
Observe that for all $a_1,\ldots,a_n>0$, 
\begin{equation}\label{eq:aa}
\min_{1 \le j \le n} \frac{1}{a_j} \le \frac{n}{a_1+\cdots+a_n}. 
\end{equation}
Therefore, gathering \eqref{eq:KjKj-1}, \eqref{eq:KjKj-2}, \eqref{eq:KjKj-3} and \eqref{eq:aa}, we conclude that 
\begin{align*}
|\K_j^{\alpha}(x+h, y)-\K_j^{\alpha}(x, y)|  &\lesssim 2^{-j\alpha} |h| 
\min\bigg\{1,\, \frac{2^{2kj}}{|y|^{2k}},\, \frac{2^{2kj}}{|x|^{2k+1}} + \frac{2^{2kj}}{|x|^{2k}} \bigg\}
\\
&\lesssim \frac{2^{-j\alpha} |h| }{1+|x|^{2k+1}+|x|^{2k}+|y|^{2k}},  
\end{align*}
which agrees with \eqref{eq:kernel-3}. This in turn implies 
\begin{align*}
|&\K_j^{\alpha}(x+h, y+h)-\K_j^{\alpha}(x, y)|  
\\
&\qquad\le |\K_j^{\alpha}(x+h, y+h)-\K_j^{\alpha}(x, y+h)|  
+ |\K_j^{\alpha}(x, y+h)-\K_j^{\alpha}(x, y)|  
\\ 
&\qquad\lesssim \frac{2^{-j\alpha} |h|}{1+|x|^{2\rho+1}+|x|^{2\rho}+|y+h|^{2\rho}}  
+ \frac{2^{-j\alpha} |h| }{1+|x|^{2\rho}+|y|^{2\rho}+|y|^{2\rho+1}}  
\\
&\qquad\lesssim \frac{2^{-j\alpha} |h| }{1+|x|^{2\rho}+|y|^{2\rho}},   \quad\text{whenever } |h| \le \min\{|x|, |y|\}/2. 
\end{align*}
This proves \eqref{eq:kernel-4}.  
\end{proof}
%%%%%%%%%%%%%%%%%%%%%%%%% END END END PROOF %%%%%%%%%%%%%%%%%%%%%%%%

%%%%%%%%%%%%%%%%%%% SUBSECTION SUBSECTION SUBSECTION %%%%%%%%%%%%%%%%%%%%
\subsection{Riesz transforms related to Schr\"{o}dinger operators} 

Let $L=-\Delta+V$ be the Schr\"{o}dinger operator on $\Rn$ with $n \ge 3$. Here $V$ is a non-zero, non-negative potential, and belongs to $RH_q$ for some $q>n/2$. Denote 
\begin{align*}
\mathcal{R}_1 := VL^{-1},\quad \mathcal{R}_2 := V^{\frac12}L^{-\frac12}, \quad\text{and}\quad 
\mathcal{R}_3 := \nabla L^{-\frac12}. 
\end{align*}

By Theorem 5.6 and Remark 5.7 in \cite{BCDH}, one has that if $n/2 < q < n$, then $\mathcal{R}_i$ is bounded on $L^p(w^p)$ for all $p \in (1, p_i)$ and for all $w^p \in A_p \cap RH_{(p_i/p)'}$,  $i=1,2,3$, where $p_1=q$, $p_2=2q$ and $p_3=\frac{nq}{n-q}$. This together with \cite[Theorem~3.17]{BMMST} gives that if $b \in \BMO$, then for each $i=1,2,3$, 
\begin{align}\label{eq:Sch-1}
[\mathcal{R}_i, b] \text{ is bounded on $L^p(w^p)$}, \quad \forall p \in (1, p_i) \text{ and } \forall w^p \in A_p \cap RH_{(p_i/p)'}. 
\end{align}
On the other hand, it was shown in \cite{LP} that if if $n/2 < q < n$ and $b \in \CMO$, 
\begin{align}\label{eq:Sch-2}
[\mathcal{R}_i, b] \text{ is compact on $L^p(\Rn)$}, \quad \forall p \in (1, p_i),\quad i=1,2,3.  
\end{align}
As a consequence, from \eqref{eq:Sch-1}, \eqref{eq:Sch-2} and Theorem \ref{thm:lim}, we conclude the following. 

%%%%%%%%%%%%%%%%%%%%%% THEOREM THEOREM THEOREM %%%%%%%%%%%%%%%%%%%%%%
\begin{theorem}
Let $L=-\Delta+V$ be the Schr\"{o}dinger operator on $\Rn$ with $n \ge 3$. Assume that $V \in RH_q$ with $n/2 < q < n$.  If $b \in \CMO$, then $[\mathcal{R}_i, b]$, $i=1,2,3$, is compact on $L^p(w^p)$ for all $p \in (1, p_i)$ and for all $w^p \in A_p \cap RH_{(p_i/p)'}$, where $p_1=q$, $p_2=2q$ and $p_3=\frac{nq}{n-q}$. 
\end{theorem}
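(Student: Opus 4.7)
The strategy is a direct application of Corollary \ref{cor:limTb} in the linear case $m=1$. Fix $i \in \{1,2,3\}$ and $b \in \CMO$, and set $T = \mathcal{R}_i$, the multi-index $\alpha = 1 \in \N$, $\b = b$, and the endpoints $\p_1^{-} = 1$, $\p_1^{+} = p_i$. What must be checked in order to invoke the corollary are the two hypotheses \eqref{eq:limTb-1} and \eqref{eq:limTb-2} at a single, fixed exponent $q_0 \in [\p_1^{-}, \p_1^{+}]$; we choose any $q_0 \in (1, p_i)$.

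For the boundedness hypothesis \eqref{eq:limTb-1}, the operator in question is $T = \mathcal{R}_i$ itself, not its commutator. This is precisely what is provided by \cite[Theorem~5.6, Remark~5.7]{BCDH} recalled above: $\mathcal{R}_i$ is bounded on $L^{q_0}(u^{q_0})$ for every weight satisfying $u^{q_0} \in A_{q_0} \cap RH_{(p_i/q_0)'}$, which is exactly the class $A_{q_0/\p_1^{-}} \cap RH_{(\p_1^{+}/q_0)'}$. For the compactness hypothesis \eqref{eq:limTb-2}, we invoke \eqref{eq:Sch-2}: since $b \in \CMO$, the commutator $[\mathcal{R}_i, b]$ is compact on $L^{q_0}(\Rn)$; in other words we may take $\vec{v} = (1)$, which trivially lies in $A_{q_0} \cap RH_{(p_i/q_0)'}$.

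With both hypotheses verified, Corollary \ref{cor:limTb} applies and concludes that $[\mathcal{R}_i, b]$ is compact on $L^p(w^p)$ for all $p \in (\p_1^{-}, \p_1^{+}) = (1, p_i)$ and all weights satisfying $w^p \in A_{p/\p_1^{-}} \cap RH_{(\p_1^{+}/p)'} = A_p \cap RH_{(p_i/p)'}$. Relabelling $W := w^p$, this is exactly the assertion of the theorem. No genuine technical obstacle arises in this proof because the two deep ingredients, namely the weighted boundedness of the Riesz transforms associated with $L$ from \cite{BCDH} and the unweighted compactness of their commutators from \cite{LP}, are already available; the whole point of the compact extrapolation machinery developed in the paper (concretely Corollary \ref{cor:limTb}) is precisely to interpolate them into the full weighted-compactness statement with minimal effort.
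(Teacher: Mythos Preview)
Your proof is correct and essentially the same as the paper's. The only cosmetic difference is that the paper first records the weighted boundedness of the commutator $[\mathcal{R}_i,b]$ (equation \eqref{eq:Sch-1}, obtained from \cite{BCDH} together with \cite[Theorem~3.17]{BMMST}) and then invokes Theorem~\ref{thm:lim} directly with $T=[\mathcal{R}_i,b]$, whereas you invoke Corollary~\ref{cor:limTb} with $T=\mathcal{R}_i$; since the proof of Corollary~\ref{cor:limTb} consists precisely of passing from boundedness of $T$ to boundedness of $[T,\b]_\alpha$ via \cite{BMMST} and then applying Theorem~\ref{thm:lim}, the two routes are identical in substance.
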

%%%%%%%%%%%%%%%%%%%%%% THEOREM THEOREM THEOREM %%%%%%%%%%%%%%%%%%%%%%

%%%%%%%%%%%%%%%%%%% BIBLIGRAPHY BIBLIGRAPHY BIBLIGRAPHY %%%%%%%%%%%%%%%%%%
%%%%%%%%%%%%%%%%%%% BIBLIGRAPHY BIBLIGRAPHY BIBLIGRAPHY %%%%%%%%%%%%%%%%%%

\end{document}